\documentclass[fontsize=11pt,english,reqno]{amsart} 
\usepackage{ucs}
\usepackage{ae}
\usepackage{microtype}
\usepackage[utf8]{inputenc}
\usepackage[T1]{fontenc}
\usepackage[english]{babel}

\title[Locally analytic representations in mixed characteristic]{Locally analytic representations in mixed characteristic via stacks}
\author{Maximilian Hauck}
\date{}

\usepackage{amsmath}
\usepackage{amssymb}
\usepackage{amstext}
\usepackage{amsthm}
\usepackage{stmaryrd}

\usepackage{enumitem}

\usepackage{scrlayer-scrpage}
\usepackage{geometry}
\usepackage{xcolor}

\usepackage{mathtools}
\usepackage{slashed}

\usepackage{mathscinet}
\usepackage[numbers]{natbib}

\usepackage{graphicx}
\usepackage{tikz-cd}
\usepackage{caption}

\usepackage{imakeidx}
\makeindex[intoc]

\usepackage{tabularx}

\usepackage[colorlinks=true,hyperindex, linkcolor=magenta, urlcolor=black, pagebackref=false, citecolor=cyan,pdfpagelabels]{hyperref}
\usepackage[capitalize]{cleveref}

\newif\ifcomment
\newcommand{\comment}[1]{\ifcomment#1\fi}

\newlength{\currentparindent}
\makeatletter
\newcommand{\@minipagerestore}{\setlength{\parindent}{\currentparindent}}
\makeatother

\newcommand{\nospacepunct}[1]{\makebox[0pt][l]{#1}}

\usepackage{relsize}
\usepackage[bbgreekl]{mathbbol}
\usepackage{amsfonts}

\usepackage{accents}

\newcommand{\overcirc}[1]{\accentset{\circ}{#1}}

\DeclareSymbolFontAlphabet{\mathbb}{AMSb}
\DeclareSymbolFontAlphabet{\mathbbl}{bbold}

\newcommand{\solid}{\mathsmaller{\square}}

\DeclareMathOperator{\Ker}{Ker}

\let\Im\im

\DeclareMathOperator{\id}{id}

\DeclareMathOperator{\Hom}{Hom}
\DeclareMathOperator{\Ext}{Ext}

\DeclareMathOperator{\Spec}{Spec}

\DeclareMathOperator{\Mod}{Mod}
\DeclareMathOperator{\Perf}{Perf}
\DeclareMathOperator{\gr}{gr}
\DeclareMathOperator{\Fil}{Fil}
\DeclareMathOperator{\Rep}{Rep}

\DeclareMathOperator{\fib}{fib}

\DeclareMathOperator{\RHom}{RHom}

\DeclareMathOperator{\CoMod}{CoMod}

\DeclareMathOperator{\AnSpec}{AnSpec}

\DeclareMathOperator{\Cont}{Cont}

\newcommand{\colim}{\operatornamewithlimits{colim}}
\newcommand{\Lim}{\operatornamewithlimits{lim}}
\let\lim\Lim

\newcommand{\sHom}{\underline{\mathrm{Hom}}}

\newcommand{\Der}{\mathcal{D}\kern -.5pt er}

\newcommand\A{\mathbb{A}}
\newcommand\F{\mathbb{F}}
\newcommand\G{\mathbb{G}}
\newcommand\Q{\mathbb{Q}}

\newcommand\Z{\mathbb{Z}}

\newcommand\T{\mathbb{T}}

\newcommand\DD{\mathbb{D}}

\let\P\PP
\newcommand\R{\mathbb{R}}

\newcommand\D{\mathcal{D}}

\let\O\cO

\newcommand\an{\mathrm{an}}

\newcommand\la{\mathrm{la}}
\newcommand\sm{\mathrm{sm}}

\newcommand\Betti{\mathrm{Betti}}

\newcommand\alg{\mathrm{alg}}

\let\epsilon\varepsilon
\let\phi\varphi
\let\ol\overline
\let\ul\underline
\let\tensor\otimes

\let\cal\mathcal

\newtheorem{introthm}{Theorem}
\newtheorem{thm}{Theorem}[section]
\newtheorem{prop}[thm]{Proposition}
\newtheorem{lem}[thm]{Lemma}
\newtheorem{cor}[thm]{Corollary}

\theoremstyle{definition}
\newtheorem{defi}[thm]{Definition}

\newtheorem{remm}[thm]{Remark}

\newtheorem{exx}[thm]{Example}
\newenvironment{ex}
  {\pushQED{\qed}\exx}
  {\popQED\endexx}
\newenvironment{rem}
  {\pushQED{\qed}\remm}
  {\popQED\endremm}
  
\AddToHook{env/prop/begin}{\crefalias{thm}{prop}}
\AddToHook{env/lem/begin}{\crefalias{thm}{lem}}
\AddToHook{env/cor/begin}{\crefalias{thm}{cor}}
\AddToHook{env/defi/begin}{\crefalias{thm}{defi}}
\AddToHook{env/thmdefi/begin}{\crefalias{thm}{thmdefi}}
\AddToHook{env/lemdefi/begin}{\crefalias{thm}{lemdefi}}
\AddToHook{env/conv/begin}{\crefalias{thm}{conv}}
\AddToHook{env/rem/begin}{\crefalias{thm}{remm}}
\AddToHook{env/warn/begin}{\crefalias{thm}{warn}}
\AddToHook{env/ex/begin}{\crefalias{thm}{exx}}

\numberwithin{equation}{section}

\begin{document}

\begin{abstract}
For any $p$-adic Lie group $G$, we construct a group stack $G^\la$ over the $p$-adic branch of Clausen--Scholze's stack of norms such that quasicoherent sheaves on its classifying stack $*/G^\la$ recover locally analytic representations of $G$ after base change to any suitable Tate Huber pair, even in positive or mixed characteristic. We also show that smooth $\F_p$-representations of $G$ embed fully faithfully into sheaves on the mod $p$ fibre of $*/G^\la$ and establish Poincaré duality for cohomology of locally analytic representations in mixed characteristic.
\end{abstract}

\maketitle

\tableofcontents

\section{Introduction}

Let $p$ be a prime number.

\subsection{Motivation and background}

The theory of $p$-adic representations of $p$-adic Lie groups has played a central role in number theory over the last two decades, most visibly through its place in the $p$-adic local Langlands programme. Since the foundational works \cite{SchneiderTeitelbaum1}, \cite{SchneiderTeitelbaum2} of Schneider--Teitelbaum and Emerton's treatment of locally analytic vectors in \cite{EmertonLocAn}, it has become clear that a very convenient class of representations of $p$-adic Lie groups is given by locally analytic representations. They have found applications in the study of the cohomology of Shimura varieties as pioneered by Pan in \cite{Pan1}, \cite{Pan2} and have proved to be a crucial tool in understanding decompletion in $p$-adic Hodge theory, a point of view first brought forward by Berger--Colmez in \cite{BergerColmezSenTheory}. 

More recently, the theory of locally analytic representations has been given new theoretical foundations using Clausen--Scholze's condensed mathematics by Rodrigues Jacinto and Rodr\'iguez Camargo in \cite{SolidLocAnReps1}, \cite{SolidLocAnReps}. Namely, for any $p$-adic Lie group $G$, they define a functor of (derived) locally analytic vectors for solid $G$-representations over a $p$-adic field and establish that the resulting category of locally analytic representations enjoys a number of convenient properties. Most notably, they also establish a new geometric approach to locally analytic representations by showing that they may be interpreted as quasicoherent sheaves on the classifying stack of a certain ``locally analytic incarnation'' $G^\la$ of $G$ as a group stack in the world of Clausen--Scholze's analytic stacks. 

The upshot of this is that various functors and cohomological comparison theorems from the classical theory of $p$-adic representations of $p$-adic Lie groups can then easily be obtained from the six-functor formalism for quasicoherent sheaves on analytic stacks. More seriously, this geometric approach towards locally analytic representations is indispensable in the recent efforts of Anschütz--Le Bras--Rodríguez Camargo--Scholze to formulate a geometric $p$-adic local Langlands conjecture: the key player in their story on the automorphic side is a stack $\mathrm{Bun}_G^\la$, which is stratified by classifying stacks of group stacks indexed by the Kottwitz set $B(G)$ such that, for $b\in B(G)$ basic, the corresponding stratum is given by $*/G_b^\la$, where $G_b$ is the pure inner form of $G$ associated to $b$.

All of this, however, takes place in the world of representations with characteristic zero coefficients. While first steps towards setting up the condensed approach to locally analytic representations in mixed characteristic have been taken by Porat in \cite{Porat1} and \cite{Porat2}, the aim of this paper is to further clarify the theory in mixed characteristic and, most importantly, to establish a geometric approach to locally analytic representations in mixed characteristic via analytic stacks. In particular, we view this as the very first step towards a geometrisation of the mod $p$ local Langlands correspondence as well as a pathway towards new techniques in the study of mod $p$ cohomology of Shimura varieties.

\subsection{Main results}

Let us now describe our results in more detail. We write $\cal{N}$ for Clausen--Scholze's stack of norms over $\AnSpec\Z_\solid$ from \cite[Lecture 20]{AnStacks} and let $\cal{N}_{|p|<1}$ be its open substack defined by the condition that $p$ have norm less than $1$. Our first step is to construct a functor from paracompact $p$-adic manifolds to analytic stacks over $\cal{N}_{|p|<1}/\R_{>0}$, where the Betti stack $\R_{>0}$ acts by rescaling the norm. 

For this, we use the algebra
\begin{equation*}
C^\la(\Z_p, \Z(\!(\pi)\!))\coloneqq \colim_{\epsilon>0} \Z(\!(\pi)\!)\left\langle\pi^{\epsilon n} \binom{x}{n}: n\geq 0\right\rangle
\end{equation*}
of locally analytic $\Z(\!(\pi)\!)$-valued functions on $\Z_p$, where each term in the colimit is obtained by $\pi$-completing the module 
\begin{equation*}
\Z[\![\pi]\!]\left[\pi^{\epsilon n}\binom{x}{n}: n\geq 0\right]
\end{equation*}
and subsequently inverting $\pi$. This definition is inspired by Amice's theorem \cite[§10, Cor.\ 2]{Amice} and variants have appeared previously in \cite{BergerRozensztajn}, \cite{Porat1} and \cite{Porat2}. Namely, we show that the analytic stack $\AnSpec C^\la(\Z_p, \Z(\!(\pi)\!))$ is equipped with a canonical descent datum to $\cal{N}/\R_{>0}$ and we define $\Z_p^\la$ as the base change of this descent to $\cal{N}_{|p|<1}/\R_{>0}$. Then the central result is the following:

\begin{introthm}[\cref{thm:man-lafunctor}, \cref{prop:man-transverse}, \cref{prop:man-transmutation}]
\label{thm:intro-A}
The association $\Z_p^d\mapsto (\Z_p^\la)^d$ extends to a functor
\begin{equation*}
\begin{split}
\{\text{paracompact $p$-adic manifolds}\}&\rightarrow\{\text{analytic stacks over $\cal{N}_{|p|<1}/\R_{>0}$}\} \\
M&\mapsto M^\la
\end{split}
\end{equation*}
with the following properties:
\begin{enumerate}[label=(\arabic*)]
\item If $X$ is a smooth $\Z_p$-scheme, then $X(\Z_p)^\la$ is obtained by transmuting $X$ with respect to the ring stack $\Z_p^\la$, i.e.\ by sheafifying the presheaf over $\cal{N}_{|p|<1}/\R_{>0}$ given by 
\begin{equation*}
R\mapsto X(\Z_p^\la(R))\;.
\end{equation*}

\item If $M_1\rightarrow M\leftarrow M_2$ are transverse maps of paracompact $p$-adic manifolds, then $M_1\times_M M_2$ is a paracompact $p$-adic manifold and 
\begin{equation*}
M_1^\la\times_{M^\la} M_2^\la\cong (M_1\times_M M_2)^\la\;.
\end{equation*}
\end{enumerate}
\end{introthm}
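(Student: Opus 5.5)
The plan is to build $M\mapsto M^\la$ from its values on open compact pieces, using the fact that every paracompact $p$-adic manifold is a disjoint union of compact open subsets, each isomorphic to a disjoint union of copies of $\Z_p^d$. The first goal is the key structural input: functoriality of $(\Z_p^\la)^d$ with respect to locally analytic maps $f\colon U\to V$ between open subsets of $\Z_p^d$ and $\Z_p^e$. Concretely, I will show that pullback along $f$ induces a map
\begin{equation*}
C^\la(\Z_p^e,\Z(\!(\pi)\!))\to C^\la(\Z_p^d,\Z(\!(\pi)\!)),
\end{equation*}
where the source is the completed tensor power of the Mahler-type algebra. For this I need an integral, mixed-characteristic version of Amice's theorem: locally analytic functions are exactly those whose Mahler coefficients decay like $\pi^{\epsilon n}$ for some $\epsilon>0$. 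With that in hand, composing locally analytic maps preserves the growth condition after shrinking $\epsilon$. That is precisely why the colimit over $\epsilon$ is built into the definition.

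Next I check compatibility with the descent datum to $\cal N/\R_{>0}$. The $\R_{>0}$-action rescales $\pi$ and hence $\epsilon$, and the colimit is invariant under this. So the maps descend, and after base change to $\cal N_{|p|<1}/\R_{>0}$ we get a functor on the category of open subsets of $\Z_p^d$ with locally analytic maps. Open compact subsets go to open (and closed) substacks, since indicator functions of cosets $a+p^k\Z_p$ are idempotents in the algebra. This gives a cover-preserving functor, and I extend to arbitrary paracompact $M$ by gluing along an atlas, i.e.\ by left Kan extension / sheafification. Independence of the atlas follows because disjoint clopen decompositions become disjoint unions of stacks.

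For (1), transmutation: the first step is the case $X=\A^1$. There I must show that $\Z_p^\la$ represents $R\mapsto\Z_p^\la(R)$ tautologically, since the ring stack is by definition $\Z_p^\la$; the ring structure comes from addition and multiplication being analytic maps $\Z_p^2\to\Z_p$, via the previous step. For general smooth $X$, I work étale-locally: $X$ is étale over $\A^d$, and $X(\Z_p)$ is locally isomorphic to open subsets of $\Z_p^d$ by Hensel's lemma. I then need that étale maps $X\to\A^d$ induce open immersions of transmuted stacks which match the open substacks corresponding to $p$-adic open subsets. This is a Henselian property of $\Z_p^\la(R)$ along its ``ideal of definition'', and I expect it to be the main obstacle. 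My first approach is to reduce to showing that $\Z_p^\la(R)\to$ (its reduction mod $p^k$) behaves like a Henselian pair, using that $p$ is topologically nilpotent on $\cal N_{|p|<1}$.

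For (2), transversality means locally, in charts, the fibre product is the analytic implicit-function situation. Using the $p$-adic implicit function theorem, I choose charts in which $M_1\to M$ is a linear projection or coordinate inclusion. Then the fibre product of stacks reduces to the identity
\begin{equation*}
(\Z_p^\la)^a\times_{(\Z_p^\la)^b}(\Z_p^\la)^c\cong(\Z_p^\la)^{a+c-b}
\end{equation*}
for coordinate maps, which follows from the tensor product formula for the function algebras. The formula $C^\la(\Z_p^{a})\otimes C^\la(\Z_p^{c})$ relative to $C^\la(\Z_p^b)$ is computed by the Mahler basis. Finally I glue along the covers.
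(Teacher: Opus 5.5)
Your proposal has genuine gaps, in the functoriality step and in part (1).

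\textbf{Functoriality is set up over the wrong base.} Pullback along a locally analytic $f$ does \emph{not} give a map $C^\la(\Z_p^e,\Z(\!(\pi)\!))\to C^\la(\Z_p^d,\Z(\!(\pi)\!))$. There is also no ``mixed-characteristic Amice theorem'' phrased in terms of $\pi$-adic decay. The Mahler coefficients of $f$ lie in $\Z_p$ and decay $p$-adically, whereas over $\Z(\!(\pi)\!)$ nothing relates $p$ to $\pi$. Already the translation $x\mapsto x+x_0$ with $x_0\in\Z_p\setminus\Z$ sends $\binom{y}{n}$ to $\sum_j\binom{x_0}{n-j}\binom{x}{j}$, which is meaningless over $\Z(\!(\pi)\!)$. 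Even evaluation at $x_0$ only lands in $\Z(\!(\pi)\!)\langle p^k/\pi\rangle$ (\cref{lem:man-eval}).

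So the maps exist only after base change to $\Z(\!(\pi)\!)\langle p^k/\pi\rangle$, and ``composition preserves the growth condition after shrinking $\epsilon$'' hides the actual estimate. Write $a_{\ul{n}}=p^{e_{\ul{n}}}a'_{\ul{n}}$ and substitute $\sum_{\ul{n}}a_{\ul{n}}\binom{\ul{x}}{\ul{n}}$ into $\binom{y}{m}$. The coefficient of $\binom{z}{j}$ in $\binom{p^ez}{m}$ is only guaranteed $p$-adic valuation $ej-m/(p-1)$ (\cref{lem:app-binompseries}). This loss is paid for only by the $\pi$-decay of the outer coefficients, after enlarging $k$ until $\epsilon k\geq 1/(p-1)$ and trading $p^k$ for $\pi$. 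This is \cref{prop:man-endos}, the heart of the construction, and it must be proved rather than asserted.

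Once that is available, your idempotent approach to atlas-independence is a legitimate alternative to the paper's explicit inverse built from \cref{lem:app-valuationmahlercoeffs}. The idempotent $1_{a+p\Z_p}$ is the pullback of the coordinate along the locally analytic map $1_{a+p\Z_p}\colon\Z_p\to\Z_p$. The inverse to pullback along $x\mapsto a+px$ is $g\mapsto 1_{a+p\Z_p}\cdot\phi_a^*g$, where $\phi_a$ extends $y\mapsto(y-a)/p$ by zero. All identities can be checked by evaluating at nonnegative integers, which recovers Mahler coefficients. Note also that descent to $\cal{N}$ needs compatibility with the $\ol{\T}$-action $\pi\mapsto\pi T$ (\cref{lem:man-descent}), not only with $\R_{>0}$.

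\textbf{Part (1) stops exactly at the hard point, and the proposed reduction is off.} An \'etale map $X\to\A^d$ does not induce an open immersion on $\Z_p$-points; take $x\mapsto x^2$ from $\G_m$ to $\A^1$ with $p$ odd. You would therefore need $X(\Z_p^\la)\to(\Z_p^\la)^d$ to be, locally on the source, an isomorphism onto the matching pieces. That is a Henselian lifting property (existence and uniqueness) for the animated rings $\Z_p^\la(R)$, together with \'etale descent for the sheafified functor, and you argue neither.

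The paper avoids Henselianity altogether. It first shows that $X\mapsto X(\Z_p)^\la$ and $X\mapsto X(\Z_p^\la)$ both satisfy \emph{Zariski} descent. The kernel of $\Z_p^\la\to\Z_p^{\Betti}$ lies in $p\Z_p^\la$ (\cref{lem:smoothfp-zpladagger}), which lies in the Jacobson radical because inversion on $1+p\Z_p$ is locally analytic. Hence units can be detected in $\Z_p^{\Betti}(R)=\Cont(\pi_0\Spec R,\Z_p)$, and a Zariski cover can be refined by a finite clopen decomposition of $\pi_0\Spec R$, i.e.\ a product decomposition of $R$. Zariski locally $X$ is standard smooth, so $X\cong\A^n\times_{\A^m}\{0\}$ as derived schemes, and $X(\Z_p^\la)$ is the corresponding fibre product of powers of $\Z_p^\la$. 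Meanwhile $X(\Z_p)=\Z_p^n\times_{\Z_p^m}\{0\}$ is a transverse fibre product because the Jacobian has full rank, and part (2) identifies the two. Thus (1) is a corollary of (2), which your plan does not exploit.

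\textbf{A smaller issue in part (2).} You cannot in general choose charts making a transverse map a projection or a coordinate inclusion, since both maps may be neither immersions nor submersions. The paper first uses graphs to reduce to two transverse submanifolds of $M_1\times M_2\times M$. Then \cref{lem:man-transverseintersect} gives simultaneous coordinates, and the fibre product is formal from compatibility with finite products, with no relative tensor product to compute.
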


The key part in proving \cref{thm:intro-A} lies in establishing that any locally analytic endomorphism of $\Z_p$ gives rise to an endomorphism of the stack $\Z_p^\la$ over $\cal{N}_{|p|<1}/\R_{>0}$, which requires us to do some combinatorics involving binomial coefficients.

Next, we show that quasicoherent sheaves on the classifying stack $*/G^\la$ indeed recover the notion of locally analytic representation of $G$ from \cite[§5.3]{Porat2} in the situations where the latter is defined. More specifically, let $(B, B^+)$ be a complete Tate Huber pair equipped with a pseudouniformiser satisfying the assumptions of loc.\ cit., i.e.\ $(B, B^+)$ is of slope $\geq 1$ and residually of finite type in the sense of \cite[Def.\ 3.4]{Porat2} and \cite[Def.\ 3.6]{Porat2}, respectively. Then there is a canonical map $\AnSpec\hspace{1pt}(B, B^+)_\solid\rightarrow \cal{N}_{|p|<1}/\R_{>0}$ and we prove:

\begin{introthm}[\cref{thm:locan-compareporat}]
\label{thm:intro-B}
For any $p$-adic Lie group $G$, pullback along the map 
\begin{equation*}
\AnSpec\hspace{1pt}(B, B^+)_\solid\rightarrow \AnSpec\hspace{1pt}(B, B^+)_\solid/G^\la
\end{equation*}
induces an equivalence
\begin{equation*}
\D(\AnSpec\hspace{1pt}(B, B^+)_\solid/G^\la)\cong \D(\Rep^\la_{(B, B^+)_\solid} G)\;,
\end{equation*}
where the right-hand side is the derived category of locally analytic $B$-linear representations of $G$ as defined in \cite[Def.\ 5.9]{Porat2}.
\end{introthm}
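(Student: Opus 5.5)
The plan is the standard descent-theoretic identification of sheaves on a classifying stack with comodules. We reduce first to the case where $G$ is compact, then to $G$ a uniform pro-$p$ group, for which $G\cong\Z_p^d$ as $p$-adic manifolds.

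\textbf{Step 1: comodules over the function algebra.} The map $S\coloneqq\AnSpec\hspace{1pt}(B, B^+)_\solid\rightarrow S/G^\la$ is a $G^\la$-torsor and hence an effective epimorphism. Descent along its Čech nerve $S\times (G^\la)^{\bullet}$ writes $\D(S/G^\la)$ as the totalisation of $\D(S\times (G^\la)^n)$. By \cref{thm:intro-A}, $G^\la_S\cong(\Z_p^\la)^d_S$. I would show that, after base change along $S\rightarrow\cal{N}_{|p|<1}/\R_{>0}$, this is the affine analytic stack $\AnSpec C^\la(G,B)$, with the induced analytic ring structure. Here
\begin{equation*}
C^\la(G,B)=\colim_{\epsilon>0} B\left\langle\varpi^{\epsilon |n|}\tbinom{x}{n}\right\rangle
\end{equation*}
and $\varpi$ is the chosen pseudouniformiser. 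This is the main computation. It uses the slope $\geq1$ hypothesis to identify $\pi$ with a power of $\varpi$ under the map to the stack of norms, so that $\pi^{\epsilon n}$ matches $\varpi^{\epsilon' n}$ up to reindexing the colimit in $\epsilon$. The Künneth formula for these completed tensor products, $C^\la(G^n,B)\cong C^\la(G,B)^{\otimes_B n}$, should follow from the multivariable Mahler-type basis.

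\textbf{Step 2: from descent data to representations.} The maps in the Čech nerve are affine, and pullback along $S\times G^\la\rightarrow S$ is conservative and preserves colimits. By Barr--Beck--Lurie, $\D(S/G^\la)$ is the category of comodules over the coalgebra $C^\la(G,B)$ in $\D((B,B^+)_\solid)$. Concretely, these are objects $V$ with a coaction $V\rightarrow V\otimes_{(B,B^+)_\solid} C^\la(G,B)$ satisfying coherent associativity. Next, I would compare this with \cite[Def.\ 5.9]{Porat2}. There, locally analytic representations are solid $B[G]$-modules for which the orbit map $V\rightarrow R\Gamma(G,V\otimes C^\la(G,B))$ (derived locally analytic vectors) is an isomorphism. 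I would show that $V\mapsto V^{R\la}$ is right adjoint to the forgetful functor from comodules to $G$-representations, and that it is precisely the cofree-comodule comonad. The forgetful functor is fully faithful because $C^\la(G,B)$ is a coalgebra whose underlying maps to continuous functions make $V\otimes C^\la\rightarrow V\otimes C(G,B)$ injective in the appropriate derived sense; this step is essentially the argument of Porat. The two subcategories then agree.

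\textbf{Step 3: general $G$.} For general $G$, choose an open compact $H$. The map $S/H^\la\rightarrow S/G^\la$ has fibre the discrete set $(G/H)^\la=G/H$ (Betti). This lets us identify $\D(S/G^\la)$ with $H^\la$-equivariant sheaves carrying a compatible smooth $G$-action, matching Porat's definition via restriction to $H$. All identifications are compatible with pullback to $S$.

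The main obstacle is Step 1: proving that the base change of $\Z_p^\la$ to $S$ is really $\AnSpec$ of the colimit algebra with the correct induced (solid) analytic structure and with a well-behaved Künneth formula. This needs the residually-finite-type hypothesis to control the colimit over $\epsilon$ and to ensure base change commutes with the relevant completions.
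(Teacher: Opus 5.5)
The gap is in Step 2, and Step 2 carries essentially all the content of the theorem. Your Barr--Beck step is fine: it identifies $\D(S/G_0^\la)$ with $\infty$-categorical comodules over $C^\la(G_0,B)$ in $\D((B,B^+)_\solid)$. You then need the forgetful functor from these comodules to $\D((B,B^+)_\solid[G_0])$ to be fully faithful with essential image $\D(\Rep^\la_{(B,B^+)_\solid}G_0)$. Neither half is established.

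\emph{The essential image.} \cite[Def.\ 5.9]{Porat2} defines $V^\la=\colim_h\sHom_{(B,B^+)_\solid[G]}(D^{h\text{-}\an}(G,B),V)$ via distributions, not via $C^\la(G,V)$. (Also, for general $V$ there is no orbit map $V\to C^\la(G,V)$; the natural map goes $V^\la\to V$.) You take $V^\la\cong\sHom_{(B,B^+)_\solid[G]}(B,C^\la(G,V)_{\star_{1,3}})$ as the definition, but this requires comparing $\sHom_B(D^{h\text{-}\an}(G,B),V)$ with $C^{h\text{-}\an}(G,B)\otimes V$. Porat only has that for nuclear $V$. The paper obtains the identification for all $V$ only after taking the colimit over $h$, by showing the transition maps $C^{h'\text{-}\an}(G_0,B)\to C^{h''\text{-}\an}(G_0,B)$ are trace-class (\cref{lem:porat-removenuclear}).

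\emph{Full faithfulness.} ``Injectivity of $V\otimes C^\la(G,B)\to V\otimes C(G,B)$ in the appropriate derived sense'' is not an argument. There is no notion of injectivity for objects of a stable $\infty$-category, and even on hearts such injectivity says nothing about $\Ext$-groups. What is required is that maps of comodules, computed by the cobar totalisation $\lim_\Delta\sHom_B(M,C^\la(G_0^\bullet,B)\otimes N)$, agree with maps in $\D((B,B^+)_\solid[G_0])$. For $M=B$ this says that locally analytic cochains compute condensed group cohomology, a Lazard-type comparison. In the paper this appears only as a consequence of the very statement you are proving (\cref{cor:locan-embedgreps}), so you cannot simply invoke it, and your sketch gives no route to it.

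The paper avoids this by working on hearts, where the comparison becomes formal.
\begin{itemize}
\item Given \cref{lem:porat-removenuclear}, abelian comodules over $V\mapsto C^\la(G,V)$ in $\D((B,B^+)_\solid)^\heartsuit$ are exactly the heart $\Rep^\la_{(B,B^+)_\solid}G$ (\cref{lem:porat-comodules}).
\item The Lazard--Serre resolution bounds the cohomological dimension of $(-)^\la$ by $\dim G$ (\cref{lem:porat-cohdimla}). This gives the $t$-structure (\cref{lem:porat-underivedla}) and left-completeness.
\item $H^0((-)^\la)$ is right adjoint to the inclusion of the heart, so it preserves injectives. Hence $\D(\Rep^\la_{(B,B^+)_\solid}G)$ is the derived category of its heart (\cref{prop:porat-heart}).
\item On the stack side, $C^\la(G_0,B)$ is flat over $(B,B^+)_\solid$ (a colimit of $B$-Banach spaces) and $G_0^\la$ is prim. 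Together these let \cite[Prop.\ A.1.2]{MannThesis} identify $\D(S/G_0^\la)$ with the derived category of abelian descent data, i.e.\ of comodules in the heart.
\end{itemize}
Both sides are therefore derived categories of the same abelian category. Flatness, which your sketch never uses, is what makes this work.

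The slope and finiteness hypotheses enter through Porat's inputs to these steps: compactness of $B$ over $(B,B^+)_\solid[G]$, reflexivity of Banach spaces, and idempotency of $D_{h\text{-}\an}(G,B)$. They do not enter through Step 1, which is comparatively formal. $(B,B^+)$ lies over $\Z(\!(\pi)\!)$ with $\pi$ itself as pseudouniformiser, so no comparison of $\pi$ with another pseudouniformiser is involved. The pullback of $(\Z_p^\la)^d$ is $\AnSpec$ of $C^\la(\Z_p^d,\Z(\!(\pi)\!))\otimes_{\Z(\!(\pi)\!)}B$ with the induced structure, which is cofinal with $\colim_h C_{h\text{-}\an}(G_0,B)$. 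The K\"unneth statement is \cref{lem:man-zpdla}.

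Your Step 3 matches the paper's reduction via \cref{lem:porat-laopensub}.
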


Besides the conceptual advantages of a geometric approach to locally analytic representations, the above shows that interpreting a locally analytic representation as a quasicoherent sheaf on the classifying stack of $G^\la$ also has technical advantages: As we point out in \cref{rem:norms-maptatehuber}, any complete Tate Huber pair $(B, B^+)$ in which $p$ is topologically nilpotent admits a canonical map $\AnSpec\hspace{1pt}(B, B^+)_\solid\rightarrow\cal{N}_{|p|<1}/\R_{>0}$ and hence we can make sense of the classifying stack $\AnSpec\hspace{1pt}(B, B^+)_\solid/G^\la$. This allows us to get rid of almost all the choices and assumptions needed in \cite{Porat2} when working with locally analytic representations in mixed characteristic; i.e.\ we can omit the choice of pseudouniformiser and dispense with the slope and finiteness assumptions.

We also introduce a ``finite radius`` variant of $G^\la$ denoted $G^{h\dagger\text{-}\an}$ for any $h\in\R$ and a corresponding notion of $h\dagger$-analytic representation in mixed characteristic, the analogue of which over $\Q_p$ has recently appeared in \cite[§2]{Mikami}. Broadly speaking, being $h\dagger$-analytic instead of merely locally analytic requires that the orbit map of a representation overconverge on disks of radius $|p|^h$. While this notion is not often relevant in practice, it is of utmost technical importance for some six-functor arguments concerning quotient stacks by $G^\la$, which is why we include it here.

Given that the term ``locally analytic representation'', even in mixed characteristic, only makes sense over some Tate Huber pair $(B, B^+)$ in which $p$ is topologically nilpotent or, at best, over the somewhat mysterious base $\cal{N}_{|p|<1}/\R_{>0}$, it is natural to wonder how (smooth) $\F_p$-representations fit into the picture, which are what the mod $p$ local Langlands programme is usually concerned with. In this direction, we establish that smooth $\F_p$-representations of $G$ may actually be found inside our framework:

\begin{introthm}[\cref{thm:smoothfp-main}]
\label{thm:intro-C}
Let $G$ be a $p$-adic Lie group. Then there is a fully faithful embedding
\begin{equation*}
\widehat{\D}(\Rep^\sm_{\F_p} G)\hookrightarrow \D((\cal{N}_{\F_p}/\R_{>0})/G^\la)\;,
\end{equation*}
where $\widehat{\D}(\Rep^\sm_{\F_p} G)$ denotes the left-complete derived category of smooth $G$-representations on solid $\F_p$-modules.
\end{introthm}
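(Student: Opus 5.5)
Passing to an open compact subgroup is the natural first move, but full faithfulness does not survive it for free (compact induction does not simply commute with the relevant Homs). So I would first construct the functor for arbitrary $G$ and reduce only the full-faithfulness check to a compact open pro-$p$, uniform subgroup $H\subseteq G$.

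\textbf{Construction.} On the mod $p$ fibre the norm of $p$ is $0$. The ring stack $\Z_p^\la$ restricted to $\cal{N}_{\F_p}/\R_{>0}$ should then be computed by $C^\la(\Z_p,\F_p(\!(\pi)\!))$. My first guess is that there every locally analytic function is locally constant modulo ``infinitesimal'' directions: the conditions $\pi^{\epsilon n}\binom{x}{n}$ bounded, reduced mod $p$, should relate via Lucas' theorem to the locally constant functions $C^\sm(\Z_p,\F_p)$.

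More precisely, Lucas' theorem shows that mod $p$ the functions $\binom{x}{p^k}$ are the $k$-th digit functions. Hence $\F_p[\binom{x}{n}] = C^\sm(\Z_p,\F_p)$, and the $\pi$-adic growth condition produces a completed, overconvergent enlargement $A$ of $C^\sm(\Z_p,\F_p)\otimes\F_p(\!(\pi)\!)$.

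The key structural claim I would aim for is a map of group stacks $G^\la_{\F_p}\to \underline{G}$ (Betti/profinite stack) over $\cal{N}_{\F_p}/\R_{>0}$, i.e. a Hopf algebra map $C^\sm(G,\F_p)\to \mathcal{O}(G^\la_{\F_p})$. Using \cref{thm:intro-A} (1) or the charts $(\Z_p^\la)^d$, this reduces to $G=\Z_p^d$ via the coordinates of a uniform subgroup, and then to functions on $\Z_p$. Pullback along the induced map of classifying stacks
\begin{equation*}
(\cal{N}_{\F_p}/\R_{>0})/G^\la\to (\cal{N}_{\F_p}/\R_{>0})/\underline{G}
\end{equation*}
then gives the functor. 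Smooth solid $\F_p$-representations are identified with $\D(*/\underline{G})$ over $\F_p$ in the left-complete sense. Base change along $\cal{N}_{\F_p}/\R_{>0}\to\AnSpec\F_p$ composes with this.

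\textbf{Full faithfulness.} Both sides satisfy descent, and Homs are computed by group cohomology. On the source this is continuous cohomology $R\Gamma(G,-)$ of smooth representations. On the target it is cohomology of $G^\la$ with coefficients in the pulled-back sheaf. It suffices to show the unit $V\to f_*f^*V$ is an equivalence, where $f$ is the map above. Via Hochschild–Serre and a compact open reduction, this amounts to two statements:
\begin{enumerate}[label=(\roman*)]
\item $\Gamma(\cal{N}_{\F_p}/\R_{>0},\mathcal{O})=\F_p$, with the pullback $\D(\F_p)\to\D(\cal{N}_{\F_p}/\R_{>0})$ fully faithful on left-complete objects;
\item the map of \v{C}ech nerves induces an equivalence on cohomology, i.e. $C^\sm(G^n,\F_p)\to\mathcal{O}((G^\la)^n)$ induces iso on $\R_{>0}$-descended global sections.
\end{enumerate}
Statement (i) should follow from the structure of the stack of norms (Clausen–Scholze: $\mathcal{N}_{\F_p}/\R_{>0}$ relates to $\F_p(\!(\pi)\!)$ modulo Frobenius-like scaling, with invariants computed via $\pi^{\R_{>0}}$-weight zero). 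Statement (ii) would come from $A$'s kernel of $\pi$-scaling invariants being exactly $C^\sm$.

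\textbf{Main obstacle.} The hardest step is (ii): computing the $\R_{>0}$-invariant global sections of $\Z_p^\la$ mod $p$ and showing they equal $C^\sm(\Z_p,\F_p)$ with no higher cohomology. I would try to decompose $C^\la(\Z_p,\F_p(\!(\pi)\!))$ along Mahler-type bases $\binom{x}{n}$ graded by digit weights. The rescaling then acts by $\pi\mapsto\pi^t$, so the weight-zero part is spanned by $\binom{x}{n}$ with no $\pi$-power, i.e. the locally constant functions. Left-completeness enters when passing from bounded to unbounded complexes, via Postnikov limits commuting with pullback.
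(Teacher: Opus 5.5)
The step that does not go through as written is the sentence ``Via Hochschild--Serre and a compact open reduction, this amounts to two statements''. Your (i) and (ii) only concern the structure sheaf: they compute $\Hom(\mathcal{O},\mathcal{O})$ level by level, and nothing in the sketch transports this to an arbitrary $V$. On analytic stacks, $f_*$ need not commute with colimits, satisfy base change, or satisfy a projection formula $f_*f^*V\simeq V\otimes f_*\mathcal{O}$. Hochschild--Serre supplies none of this.

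This is exactly where the paper spends its effort. It proves that $f\colon(\mathcal{N}_{\F_p}/\R_{>0})/G_0^\la\to\AnSpec\F_{p,\solid}/G_0^\sm$ is prim (\cref{lem:smoothfp-glagsmprim}). That rests on descendability of the regular representation $C^\la(G_0,B_k)$, obtained by dualising the Kohlhaase resolution (\cref{lem:smoothfp-desc}), together with \cite[Cor.\ 4.7.5]{HeyerMann}. Primness gives base change, so $f_*\mathcal{O}\cong\mathcal{O}$ can be tested on the fibre $(\mathcal{N}_{\F_p}/\R_{>0})/(1\subseteq G_0^\la)^\dagger$. It also gives the projection formula, so $f_*\mathcal{O}\cong\mathcal{O}$ yields full faithfulness on all of $\D(\AnSpec\F_{p,\solid}/G_0^\sm)\cong\widehat{\D}(\Rep^\sm_{\F_p}G_0)$. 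In particular, left-completeness needs no Postnikov argument. The one you describe would not work anyway, since $f^*$ is a left adjoint and has no reason to commute with Postnikov limits.

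Your levelwise \v{C}ech framing can be completed more cheaply, and would then be a genuinely different route. Since $f^*$ is the limit over $\Delta$ of the pullbacks $\D((G^n)^\sm_{\F_p})\to\D((G^\la)^n\times_{\mathcal{N}_{|p|<1}/\R_{>0}}\mathcal{N}_{\F_p}/\R_{>0})$, levelwise full faithfulness suffices. Each level splits over a partition of $G^n$ into compact open balls $U\cong\Z_p^m$. The map $U^\la\times_{\mathcal{N}_{|p|<1}/\R_{>0}}\mathcal{N}_{\F_p}/\R_{>0}\to\AnSpec\F_{p,\solid}$ is prim: it is relatively affine with induced analytic ring structure, composed with \cref{lem:norms-primmodr}. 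Resolving a $C^\sm(U,\F_p)$-module by free ones, the projection formula for this map reduces full faithfulness to the statement that global sections of $\mathcal{O}$ on $U^\la\times_{\mathcal{N}_{|p|<1}/\R_{>0}}\mathcal{N}_{\F_p}/\R_{>0}$ are $C^\sm(U,\F_p)$.

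That last identity is where your Lucas observation is nicer than the paper's argument. Since $C(\Z_p,\F_p)=C(\Z_p,\Z_p)/p$ has $\F_p$-basis the reductions of the $\binom{x}{n}$, the weight-zero computation of \cref{lem:smoothfp-cohzpla} identifies global sections with smooth functions directly. The paper instead passes to the kernel $(0\subseteq(\Z_p^d)^\la)^\dagger$ of \cref{lem:smoothfp-zpladagger} and checks that $\binom{p^ix}{n}\equiv 0\bmod p$ for $i\gg 0$. This route avoids the Kohlhaase resolution altogether, but it has to be spelled out; as written, (ii) does not imply the theorem.

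A minor point: the grading that isolates the $\pi$-free part comes from the $\ol{\T}$-action of \cref{cor:norms-pres}, with higher cohomology killed by an extra codegeneracy as in \cref{prop:norms-embed}. It does not come from $\R_{>0}$, whose quotient is harmless because $\R_{>0}$ is contractible.
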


In particular, we propose that a tentative geometrisation of the mod $p$ local Langlands correspondence would produce a stack $\mathrm{Bun}_G^\la$ over $\cal{N}_{\F_p}/\R_{>0}$ stratified by classifying stacks of group stacks indexed by the Kottwitz set $B(G)$ such that, for $b\in B(G)$ basic, the corresponding stratum is given by $*/G_b^\la$, where $G_b$ is the pure inner form of $G$ associated to $b$. Then the usual formulation of the mod $p$ local Langlands correspondence in terms of smooth $\F_p$-representations would be recovered by virtue of the fully faithful embedding above; this point of view was first suggested to us by Peter Scholze.

Finally, as an application of our geometric approach to locally analytic representations in mixed characteristic, we use the six-functor formalism for quasicoherent sheaves on analytic stacks to establish Poincaré duality for cohomology of locally analytic representations. This takes the following shape:

\begin{introthm}[\cref{prop:duality-prim}, \cref{thm:duality-main}]
\label{thm:intro-D}
Assume that $G$ is a compact $p$-adic Lie group and let $*\coloneqq \cal{N}_{|p|<1}/\R_{>0}$. The morphism
\begin{equation*}
f: */G^\la\rightarrow *
\end{equation*}
is weakly cohomologically proper and cohomologically smooth. In particular, we have
\begin{equation*}
f_*(E^\vee\tensor f^!1)\cong (f_*E)^\vee
\end{equation*}
for any $E\in\D(*/G^\la)$.
\end{introthm}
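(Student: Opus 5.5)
Since $G$ is compact, I will first reduce to a uniform pro-$p$ open subgroup $H\subseteq G$. The map $*/H^\la\to */G^\la$ is a finite étale cover with fibre the finite set $G/H$, viewed as a Betti stack. Finite étale maps are both cohomologically smooth and proper, with trivial dualising sheaf. Both properties are local on the source and stable under composition, so they descend along this cover.

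For $H$ uniform, the $p$-adic manifold underlying $H$ is $\Z_p^d$. By \cref{thm:intro-A}, $H^\la\cong(\Z_p^\la)^d$ as a stack over $*$, and group structures are respected by functoriality. The classifying map factors through the Čech nerve $*\to */H^\la$, whose fibres are copies of $H^\la$. I would therefore verify the following for the structure map $g\colon H^\la\to *$:

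\begin{enumerate}[label=(\roman*)]
\item \emph{Smoothness of $g$.} By \cref{thm:intro-A}(2), $(\Z_p^\la)^d$ is a product, so it suffices to treat $d=1$. I expect $\Z_p^\la$ to be cohomologically smooth of dimension one with $g^!1\cong 1[0]$ up to a twist. It is an étale-local (open-cover) colimit of the closed discs $\AnSpec\Z(\!(\pi)\!)\langle \pi^{\epsilon n}\binom{x}{n}\rangle$ after descent. Each disc is a Tate-type disc over the norm stack, whose smoothness is Clausen--Scholze's computation of the relative disc together with its Serre-type dualising complex.

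\item \emph{Weak properness of $f$.} Here I would use the standard criterion: $*\to */H^\la$ is a $!$-cover (it is cohomologically smooth and surjective), and $H^\la\to *$ is suitably proper. Then $f_!\to f_*$ is an equivalence on compact/dualisable objects, or at least the comparison holds after the Lazard-type computation. Concretely, $f_*$ is computed as $R\Gamma(H,-)$ via the cobar complex of $\mathcal{O}(H^\la)=\colim_\epsilon$ of the Amice-type algebras. Weak properness amounts to showing that $f_*$ commutes with colimits and satisfies the projection formula. This follows if the trivial representation is compact in $\D(*/H^\la)$. I would prove that by a Koszul/Lazard resolution of $1$ by finite free $\D^\la(H)$-modules, using the Lie algebra of dimension $d$. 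This is the step where the $h\dagger$-analytic variants $H^{h\dagger\text{-}\an}$ enter: one writes $*/H^\la$ as a limit over $h$ and checks that each stage has finite-type distribution algebras. Overconvergence makes the transition maps trace-class, which yields the required compactness and dualisability.
\end{enumerate}

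Once $f$ is both cohomologically smooth and weakly cohomologically proper, $f^!1$ is invertible. It should be $\det(\mathrm{Lie}\,H)^{\pm1}[d]$ twisted appropriately, though this identification is not needed for the formula. The duality statement then follows formally. We have $f_*\cong f_!$ on the relevant objects, and $f^!$ is right adjoint to $f_!$. Combining these gives
\[
(f_*E)^\vee=\underline{\Hom}(f_!E,1)\cong f_*\underline{\Hom}(E,f^!1)\cong f_*(E^\vee\tensor f^!1),
\]
where the last isomorphism uses that $f^!1$ is invertible.

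The main obstacle is (ii): proving compactness of the trivial representation, equivalently the finiteness of locally analytic cohomology, uniformly over the mixed-characteristic base $*$. There is no Lie algebra action with $\Q_p$-coefficients available here. My first attempt would be to construct an explicit finite resolution via the Mahler/Amice basis $\pi^{\epsilon n}\binom{x}{n}$ for $d=1$, which is a two-term complex given by the difference operator $x\mapsto x+1$. I would then take tensor products for general $d$, and deduce the nonabelian case by filtering $H$ by its lower $p$-series.
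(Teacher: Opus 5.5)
Your reduction to a uniform open subgroup is fine, and so is the final formal derivation of $f_*(E^\vee\tensor f^!1)\cong (f_*E)^\vee$ from smoothness plus weak properness. The gap is in step (i): $H^\la\to *$ is \emph{not} cohomologically smooth.

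Over a chart, $H^\la$ is $\AnSpec C^\la(H,B)$ with the analytic ring structure induced from the base, so it is weakly cohomologically proper. Suppose it were also cohomologically smooth. Then $\sHom(g_*E,M)\cong g_*(E^\vee\tensor g^!1)\tensor M$ for dualisable $E$; this is the formula you want to prove, combined with the projection formula. It would make $g_*\mathcal{O}=C^\la(H,B)$ dualisable, hence compact, over $B$. That is false, since $C^\la(H,B)$ is a strictly increasing colimit of the Banach spaces $C^{h\text{-}\an}(H,B)$. Equivalently, the would-be dualising object $\sHom_B(C^\la(H,B),B)=D^\la(H,B)$ is not an invertible $C^\la(H,B)$-module.

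Geometrically, $\Z_p^\la$ is a \emph{limit} of shrinking unions of overconvergent closed discs carrying the induced structure. It is a germ of $\Z_p$ that behaves like a proper, profinite-type object. It is not an open-cover colimit of discs with solid coordinate, which is the setting where Clausen--Scholze's Serre duality applies.

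This also breaks part of (ii). Since $H^\la\to *$ is the base change of $*\to */H^\la$ along itself, the torsor $*\to */H^\la$ is not cohomologically smooth either. So your claim in (ii) that it is a smooth surjection is wrong, and smoothness of $*/H^\la$ cannot be obtained by locality on the source along this cover. That the classifying stack is smooth while the group is not is exactly what has to be proved.

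What is missing is a direct verification on $*/G_0^\la$, which is how the paper proceeds:
\begin{enumerate}
\item Take the candidate $L=\sHom_{\mathcal{O}[G_0]}(\mathcal{O},\mathcal{O}[G_0])^\vee$. It is invertible and concentrated in degree $-d$ by self-duality of the Koszul-type Lazard--Serre resolution (\cref{lem:duality-dualisingsheaf}).
\item Identify $L$ with the determinant of the dual adjoint representation, so it is locally analytic (\cref{lem:duality-dualisingsheaflocan}, via \cref{prop:porat-recogniselocan}).
\item Apply \cite[Lem.\ 4.2.2.(i)]{dRStack}. Construct a $G_0$-equivariant $s\colon f_!\mathcal{O}=C^\la(G_0,B)\to L$, dual to projecting the Lazard--Serre complex for $L^\vee$ onto its degree-zero term $(B,B^+)_\solid[G_0]$ and then mapping to $D_{h\text{-}\an}(G_0,B)$.
\item Construct $\eta\colon g_!L\cong L^\vee\tensor_{(B,B^+)_\solid[G_0]}L\to B$ and check $\eta\circ g_!s=\id$, using $g_!\cong g_*$ from weak properness.
\end{enumerate}

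For weak properness, compactness of the trivial representation is not the right criterion. You need two things. First, the diagonal must be weakly proper; it pulls back to $G^\la\to *$, which is affine with induced structure, so \cite[Lem.\ 3.1.21]{dRStack} applies. Second, $f$ must be prim. This follows from \cref{lem:smoothfp-desc}: $*\to */G_0^\la$ is prim, and $C^\la(G_0,B)$ is descendable by the dual of the Kohlhaase resolution, so \cite[Cor.\ 4.7.5]{HeyerMann} applies.

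No limit over $h$, trace-class argument or lower $p$-series filtration is needed. The Lazard--Serre and Kohlhaase resolutions exist integrally, so the lack of a $\Q_p$-Lie algebra action is no obstacle.
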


\subsection{Organisation of the paper}

In §2, we begin by recalling the definition of the stack of norms and proving some of its basic properties with respect to the six-functor formalism on analytic stacks. We also provide some results about its category of quasicoherent sheaves and, in particular, we show that perfect complexes on $\cal{N}_{|p|<1}/\R_{>0}$ are equivalent to perfect complexes over $\Z_p$. We then move on to constructing the functor $M\mapsto M^\la$ from (paracompact) $p$-adic manifolds to analytic stacks over $\cal{N}_{|p|<1}/\R_{>0}$ in §3 and prove its main properties. Subsequently, in §4, we establish some complements about the theory of locally analytic vectors in mixed characteristic to \cite{Porat2} and introduce the notion of $h\dagger$-analytic vectors in mixed characteristic. In §5, we define the stacks $G^{h\dagger\text{-}\an}$ over $\cal{N}_{|p|<1}$ for any $h\in\R$ and show that quasicoherent sheaves on the classifying stacks of $G^\la$ and $G^{h\dagger\text{-}\an}$ recover locally analytic and $h\dagger$-analytic representations, respectively. We then establish the fully faithful embedding of smooth $\F_p$-representations into quasicoherent sheaves on $(\cal{N}_{\F_p}/\R_{>0})/G^\la$ in §6 and, finally, prove several properties of the classifying stacks $*/G^\la$ and $*/G^{h\dagger\text{-}\an}$ with respect to the six-functor formalism on analytic stacks in §7. The appendix contains some miscellaneous results about binomial coefficients that are used in the main body, the arguments for which are mostly combinatorial.

\subsection*{Notations and conventions}

We freely use the language of Clausen--Scholze's analytic stacks throughout and, in particular, work in the light setup of condensed mathematics. Unless otherwise stated, all our modules and tensor products are solid. If $S$ is a topological space, we usually also use $S$ to denote its Betti stack, without explicitly using the superscript $(-)^{\Betti}$; in particular, we only ever write $\R_{>0}$ when we mean the Betti stack of $\R_{>0}$. We will frequently make use of the notation
\begin{equation*}
\Z(\!(\pi)\!)\langle T\rangle_{\leq 1}\coloneqq \colim_{\epsilon>0} \Z(\!(\pi)\!)\langle \pi^\epsilon T\rangle\;.
\end{equation*}

\subsection*{Acknowledgements}
 
I thank Saverio Caleca, Juan Esteban Rodríguez Camargo, Arthur-César Le Bras, Gal Porat and Peter Scholze for several helpful discussions and comments on a draft. This paper was written during my time as a PhD student at the Max Planck Institute for Mathematics in Bonn and I would like to thank the institute for its hospitality.

\section{The stack of norms}

We first recall the definition of the stack of norms $\cal{N}$ from \cite[Lecture 20]{AnStacks}. For any analytic ring $R$ over $\Z_\solid$, there is an ``algebraic'' affine line $\A^{1, \alg}_R$ over $R$ defined as the analytic spectrum of $R[T]$ endowed with the induced analytic ring structure from $R$. Moreover, gluing two copies of $\A^{1, \alg}_R$ along $\G_{m, R}^\alg=\AnSpec R[T, T^{-1}]$, where the analytic ring structure is again induced, we obtain the projective line $\P^1_R$ over $R$.

\begin{defi}
Let $R$ be an analytic ring over $\Z_\solid$. A \emph{norm} on $R$ is a map
\begin{equation*}
|\cdot|: \P^1_R\rightarrow [0, \infty]
\end{equation*}
satisfying the following properties:
\begin{enumerate}[label=(\roman*)]
\item The restriction of $|\cdot |$ along the zero section $\AnSpec R\xrightarrow{0} \P^1_R$ factors through $\{0\}\subseteq [0, \infty]$.
\item $|\cdot |$ intertwines the map $T\mapsto T^{-1}$ on $\P^1_R$ with the map $\lambda\mapsto \lambda^{-1}$ on $[0, \infty]$.
\item Let $\A^{1, \an}_R$ be the preimage of $[0, \infty)$ under $|\cdot |$. Then $|\cdot |$ intertwines the multiplication map on $\A^{1, \an}_R$ with the multiplication map on $[0, \infty)$.
\item The composite map
\begin{equation*}
\AnSpec R[\hat{T}]\rightarrow\P^1_R\xrightarrow{|\,\cdot\, |} [0, \infty]
\end{equation*}
factors through $[0, 1]$, where $R[\hat{T}]\coloneqq R\tensor \Z[\![T]\!]$ is the free $R$-algebra on a topologically nilpotent element, which we equip with the induced analytic ring structure from $R$. Moreover, the map $\AnSpec R[\hat{T}]\rightarrow\P^1_R$ becomes an isomorphism over $[0, 1)$.
\end{enumerate}
We let $\cal{N}$ denote the moduli stack of norms on analytic rings over $\Z_\solid$.
\end{defi}

\begin{rem}
\label{rem:norms-defi}
\begin{enumerate}[label=(\roman*)]
\item Note that we do not impose a compatibility condition between $|\cdot |$ and addition; in particular, we do not ask for a version of the triangle inequality.
\item Any element $f\in R(*)$ induces a map $\AnSpec R\rightarrow \AnSpec\A^{1, \alg}_R$ and hence we obtain a composite map
\begin{equation*}
|f|: \AnSpec R\rightarrow\AnSpec \A^{1, \alg}_R\xrightarrow{|\,\cdot\, |} [0, \infty]\;.
\end{equation*}
Then (i) says that $|0|$ is identically zero while (ii) implies that $|1|$ is identically $1$. 
\item Norms are functorial in the following sense: Given a normed analytic ring $R$ and a map $R\rightarrow R'$ of analytic rings, there is an induced norm on $R'$ given by
\begin{equation*}
\P^1_{R'}\rightarrow\P^1_R\xrightarrow{|\,\cdot\, |} [0, \infty]\;.
\end{equation*}
\item The map $\A^{1, \an}_R\rightarrow\P^1_R$ factors through $\A^{1, \alg}_R$. Indeed, this is because the pullback of $\A^{1, \an}_R$ along $\{\infty\}\subseteq [0, \infty]$ and hence \emph{a fortiori} along $\infty: \AnSpec R\rightarrow\P^1_R$ vanishes. \qedhere
\end{enumerate}
\end{rem}

We start by recalling some basic properties of the stack $\cal{N}$ from \cite[Lecture 20]{AnStacks}. To prepare, we record a useful lemma about $!$-covers.

\begin{lem}
\label{lem:norms-compactifycover}
Let $\AnSpec B\rightarrow \AnSpec A$ be a map of affine analytic stacks and consider its canonical compactification $\AnSpec \ol{B}^{/A}\rightarrow\AnSpec A$, i.e.\ the analytic ring $\ol{B}^{/A}$ has the same underlying condensed ring as $B$ and is equipped with the induced analytic ring structure from $A$. If $\AnSpec B\rightarrow\AnSpec A$ is a $!$-cover, then $\AnSpec\ol{B}^{/A}\rightarrow\AnSpec A$ is a $!$-cover as well.
\end{lem}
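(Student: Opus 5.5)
The idea is to factor the given map through its compactification and use that $!$-covers interact well with factorisations. Write $j\colon \AnSpec B\rightarrow\AnSpec\ol{B}^{/A}$ for the canonical map; it is an open immersion in the sense of the six-functor formalism, since $B$ is obtained from $\ol{B}^{/A}$ by changing only the analytic ring structure. So $\AnSpec B\to\AnSpec A$ factors as $g\circ j$ with $g\colon\AnSpec\ol{B}^{/A}\to\AnSpec A$ proper. In particular $g_!=g_*$ and $j_!$ is left adjoint to $j^*$.

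Recall that $!$-covers are defined in Clausen--Scholze's framework as the smallest class of maps containing covers for descent of $\D^!$ and stable under the usual operations. The key formal property I will use is that if $h\colon Y\to X$ is $!$-able and $h$ factors as $Y\to Z\to X$ with the composite a $!$-cover, then $Z\to X$ is a $!$-cover. Equivalently, if $h$ is a $!$-cover and satisfies universal $!$-descent after base change, then any map through which it factors also does. I would prove this directly as follows.

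\begin{enumerate}[label=(\arabic*)]
\item \textbf{Universal $!$-descent for $g$.} The functor $g^!$ is conservative on $\D(A)$. Indeed, $f^!=j^!g^!=j^*g^!$ is conservative because $f$ is a $!$-cover, so it descends $\D^!$, and descent implies conservativity for affines. The same argument applies after any base change $\AnSpec A'\to\AnSpec A$. This works because the formation of $\ol{B}^{/A}$ is compatible with base change: $\ol{B}^{/A}\tensor_A A'$ with the induced structure is $\ol{B\tensor_A A'}^{/A'}$.
\item \textbf{Comonadicity.} Since $g$ is proper, $g^!$ is right adjoint to $g_*$ and preserves colimits. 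Universal descent then follows from the Barr--Beck--Lurie criterion applied to the \v{C}ech nerve of $g$. Conservativity holds by (1), and the limit condition is automatic because $g^!$ commutes with all colimits and limits in the proper affine case. The \v{C}ech nerve of $g$ consists of canonical compactifications of the \v{C}ech nerve of $f$, so I can compare the two descent data along the maps $j$.
\end{enumerate}

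I expect the main obstacle to be matching this with the precise definition of $!$-cover in \cite{AnStacks}. That definition is closure-based, so a map such as $g$, which is proper with conservative $g^!$ universally, may not obviously lie in the class. If the definition allows it, the cleaner route is the lemma from \cite{AnStacks} that, for a factorisation $Y\to Z\to X$, if $Y\to X$ is a $!$-cover and $Y\to Z$ is $!$-able, then $Z\to X$ is a $!$-cover. Taking $Z=\AnSpec\ol{B}^{/A}$, the proof then reduces to observing that $j$ is $!$-able, being an open immersion.
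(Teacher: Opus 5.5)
The argument you actually carry out in (1)--(2) does not work as written. Step (1) is fine: conservativity of $g^!$ only uses $f^!=j^!g^!$, and this persists after any base change. Your supporting claim that compactification commutes with base change is false, though. Take $A=\Z_\solid[T]$ with induced structure, $B=\Z[T]_\solid$ and $A'=\Z_\solid(\!(T^{-1})\!)$. Then $\ol{B}^{/A}=A$, so $\ol{B}^{/A}\tensor_A A'=A'$, whereas $B\tensor_A A'=0$. For the same reason the \v{C}ech nerve of $g$ is not the compactification of that of $f$.

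The real gap is step (2). For the induced structure, $g^!=\sHom_A(\ol{B}^{/A},-)$, which in general does not commute with colimits: for $\ol{B}^{/A}=A[T]$ it is $\prod_{\mathbb{N}}$. So the Barr--Beck--Lurie hypothesis that $g^!$ preserves geometric realisations of $g^!$-split simplicial objects is not established. The Beck--Chevalley condition would in fact follow from proper base change $g^!g_*\simeq \mathrm{pr}_{1*}\mathrm{pr}_2^!$, but you do not address it either. As it stands, (1)--(2) is not a proof.

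Your fallback is the right idea and gives a correct, much shorter proof by a route different from the paper's. Sieves (among $!$-able maps) along which $\D^!$ satisfies universal descent form a Grothendieck topology. Hence the sieve generated by $g$, which contains $f=g\circ j$, is covering. You should state or prove this saturation property explicitly rather than leave it conditional on the definition. You also only need $j$ to be $!$-able, which the paper likewise uses implicitly in writing $f_!=p_*j_!$. You do not need $j$ to be an open immersion, which you have not justified.

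The paper instead works directly with $f$. $!$-descent gives $\colim_{n\in\Delta} f_{n!}f_n^!A\simeq A$. Compactness of $A$ then makes $A$ a retract of a finite partial colimit, so $A$ lies in the thick subcategory generated by the image of $f_!=p_*j_!$, hence by $p_*$-images. Therefore $\ol{B}^{/A}$ is descendable over $A$, and descendable plus induced analytic ring structure gives a $!$-cover.

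This buys a strictly stronger conclusion, namely descendability of $\ol{B}^{/A}$. That is exactly what the proof of \cref{prop:norms-normhalfsurjects} later invokes (``by descendability of $R[\hat{T}]\rightarrow R'$''). The formal saturation argument yields only the $!$-cover property.
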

\begin{proof}
By assumption, the map $f: X\coloneqq \AnSpec B\rightarrow\AnSpec A\eqqcolon Y$ is a $!$-cover and hence the image of $f_!: \D(B)\rightarrow\D(A)$ generates the unit $A\in\D(A)$ under cofibres, shifts and retracts. Indeed, this is because the functor $f^!: \D(Y)\rightarrow \lim_{n\in\Delta} \D(X^{n/Y})$ is an equivalence by assumption, where $X^{n/Y}$ denotes the $n$-fold self-fibre product of $X$ over $Y$ and $\Delta$ is the simplex category. One easily sees that the left-adjoint of $f^!$ is given by $(M_n)_n\mapsto \colim_{n\in\Delta} f_{n!} M_n$, where $f_n: X^{n/Y}\rightarrow Y$ is the projection, and hence full faithfulness of $f^!$ amounts to the counit
\begin{equation*}
\colim_{n\in\Delta} f_{n!} f_n^!M\rightarrow M
\end{equation*}
being an isomorphism. Applied to $M=A$, this yields $\colim_{n\in\Delta} f_{n!} f_n^!A\cong A$ and since $A\in\D(A)$ is compact, we conclude that $A$ is a retract of $\colim_{n\in\Delta_{\leq m}} f_{n!} f_n^!A$ for some $m\geq 0$, as claimed; here, $\Delta_{\leq m}$ is the full subcategory of $\Delta$ spanned by totally ordered sets with at most $m$ elements.

However, for $p: \AnSpec \ol{B}^{/A}\rightarrow\AnSpec A$, this \emph{a fortiori} implies that the image of the forgetful functor $p_!=p_*: \D(\ol{B}^{/A})\rightarrow \D(A)$ generates the unit under cofibres, shifts and retracts. In other words, we see that $\ol{B}^{/A}$ is descendable in $\D(A)$ and as $\ol{B}^{/A}$ has the induced analytic ring structure from $A$, this shows that $\AnSpec \ol{B}^{/A}\rightarrow\AnSpec A$ is a $!$-cover, as desired. 
\end{proof}

\begin{prop}
\label{prop:norms-normhalfsurjects}
For any normed analytic ring $R$ over $\Z_\solid$, the map
\begin{equation*}
\{|T|=1/2\}\subseteq \P^1_R\rightarrow\AnSpec R
\end{equation*}
is surjective. 
\end{prop}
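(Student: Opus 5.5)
The plan is to reduce the claim to two ingredients. The first is that $\P^1_R\rightarrow\AnSpec R$ is itself a $!$-cover. The second is that the multiplicativity axiom (iii) lets us move any fibre of the norm map onto the fibre over $1/2$. I read ``surjective'' as ``the pullback functor is conservative''; if the paper means ``$!$-cover'' instead, the same argument should give the stronger statement, using \cref{lem:norms-compactifycover} to pass between $\AnSpec R[\hat{T}]$ and its compactification over $R$.

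First I show that $\P^1_R\rightarrow\AnSpec R$ is surjective. It has sections, for instance the zero section, so this is immediate. Next I cover $\P^1_R$ by the open substacks $\{|T|<1\}$, $\{|T|>1/4\}$ and their images under $T\mapsto T^{-1}$. By (iv), $\{|T|<1\}$ is identified with the open part of $\AnSpec R[\hat{T}]$ over $[0,1)$. By (ii), $\{|T|>1\}$ is identified with the same thing via inversion. It therefore suffices to show that $\{|T|=1/2\}$ surjects onto the image of each stratum $\{|T|=\lambda\}$ for $\lambda\in[0,\infty]$. The problematic values are $\lambda\in\{0,1,\infty\}$, since there one cannot simply rescale.

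Second, I use multiplication. Consider the map $\A^{1,\an}_R\times_R\A^{1,\an}_R\rightarrow\A^{1,\an}_R$ given by $(S,T)\mapsto ST$, which by (iii) lies over multiplication $[0,\infty)^2\rightarrow[0,\infty)$. Restrict to $S,T$ in the disc $\AnSpec R[\hat{T}]$, which is defined over all of $\AnSpec R$ and is a $!$-cover of it. The locus where $|ST|=1/2$ maps to $\{|T|=1/2\}$. So it is enough to produce, locally on $R$, an element of norm exactly $1/2$.

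Third, I produce such an element by an intermediate value argument over the compact interval $[0,1]$. The norm of the universal topologically nilpotent element $T$ on $\AnSpec R[\hat{T}]$ takes the value $0$ on the zero section. I would compose this with squaring $T\mapsto T^2$ (norm $\lambda\mapsto\lambda^2$) and with multiplication by elements of $R[\hat T]$ to see that the preimages of $[1/4,1/2]$ and $[1/2,1)$ under iterated squaring exhaust the open disc $\{|T|\in(0,1)\}$. Once the disc is known to hit the annulus $\{|T|\in[1/4,1/2]\}$ over every point, rescaling by a suitable element of the disc lands in $|T|=1/2$.

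The main obstacle is this last step: ruling out, over some nonzero part of $R$, that the norm of $\AnSpec R[\hat T]$ only takes the values $0$ and $1$ (a ``trivially normed'' locus). I would first try to exclude this using the second half of (iv). The isomorphism $\AnSpec R[\hat T]\cong\P^1_R$ over $[0,1)$, combined with the fact that $\AnSpec R[\hat T]$ cannot split as a disjoint union of $\{|T|=0\}$ and $\{|T|=1\}$ (the idempotent would have to come from $R$, while the zero section and the Gauss-point region both map isomorphically onto $\AnSpec R$), should force the image of the norm to meet every interval $(a,2a]$ with $0<a<1/2$.
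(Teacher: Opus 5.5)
Your proposal has a genuine gap: its one substantive step, an intermediate value statement for the norm, is never proved.

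\textbf{The first two steps are circular.} A map $\AnSpec R'\to\{|T|=1/2\}$ over $\AnSpec R$ is exactly an element $\pi\in R'(*)$ with $|\pi|=1/2$. So ``it is enough to produce, locally on $R$, an element of norm exactly $1/2$'' just restates the proposition. Moreover, there is no notion of ``the image of a stratum in $\AnSpec R$'' to reduce to.

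\textbf{The third step does not work as sketched.} Iterated squaring only moves norms towards $0$: the sets $\{\lambda:\lambda^{2^n}\in[1/4,1/2]\}$ cover only $[1/4,1)$. Multiplying by elements of the disc cannot increase norms. Passing from norm in $[1/4,1/2]$ to norm exactly $1/2$ would need elements of norm in $[1,2]$ chosen compatibly over the base, which is the original problem again.

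\textbf{The obstacle you flag is left open, and the proposed fix is flawed.} The locus $\{|T|=1\}$ of $\AnSpec R[\hat{T}]$ does not in general map isomorphically onto $\AnSpec R$. For $R=\Z_\solid(\!(\pi)\!)$ with $|\pi|=1/2$ it has no section at all, because topologically nilpotent elements of $\Z(\!(\pi)\!)$ lie in $\pi\Z[\![\pi]\!]$ and so have norm $\leq 1/2$. More fundamentally, a clopen-splitting argument can at best show that the fibre is nonzero after every nonzero base change. Nothing in it yields descendability. Surjectivity here means admitting sections $!$-locally on $\AnSpec R$, which is what \cref{cor:norms-pres} needs; conservativity of pullback is not enough.

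\textbf{The missing idea is to prove the intermediate value statement linearly, by excision.} The closed cover $[0,\infty]=[0,1/2]\cup[1/2,\infty]$ pulls back to a bicartesian square with corners $\mathcal{O}_{\mathbb{P}^1_R}$, $\mathcal{O}_{\{|T|\leq 1/2\}}$, $\mathcal{O}_{\{|T|\geq 1/2\}}$ and $\mathcal{O}_{\{|T|=1/2\}}$.
\begin{itemize}
\item By (i), the zero section factors through $\{|T|\leq 1/2\}$; by (ii), the section $\infty$ factors through $\{|T|\geq 1/2\}$.
\item Pulling back sections along them gives maps $R\Gamma(\{|T|\leq 1/2\},\mathcal{O})\to R$ and $R\Gamma(\{|T|\geq 1/2\},\mathcal{O})\to R$. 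Both restrict to the identity on $R\Gamma(\mathbb{P}^1_R,\mathcal{O})\cong R$.
\item They therefore glue to a retraction of $R\to R\Gamma(\{|T|=1/2\},\mathcal{O})$. Hence the pushforward of the unit along $\{|T|=1/2\}\to\AnSpec R$ is descendable.
\end{itemize}

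To turn this into a $!$-cover, the fibre must also become, after a $!$-cover, an affine with induced analytic ring structure. The paper does this as follows.
\begin{itemize}
\item It uses (iv) to identify $\{|T|=1/2\}$ with the corresponding locus of the affine $\AnSpec R[\hat{T}]$, whose norm takes values in $[0,1]$.
\item It lifts that norm along a surjection from a Cantor set $S\to[0,1]$ on a $!$-cover $\AnSpec R'$, which has induced structure by \cref{lem:norms-compactifycover}.
\item The fibre over $1/2$ is then $\AnSpec A$ for a connective idempotent $R'$-algebra $A$.
\item Descendability of $A$ over $R$ follows from the retraction above together with descendability of $R[\hat{T}]\to R'$.
\end{itemize}
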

\begin{proof}
By assumption, the map $|\cdot |: \AnSpec R[\hat{T}]\rightarrow [0, \infty]$ factors through $[0, 1]$ and becomes isomorphic to $|\cdot |:\P^1_R\rightarrow [0, \infty]$ over $[0, 1)$. Let $S$ be the Cantor set and pick a surjection $S\rightarrow [0, 1]$. As $\AnSpec \Cont(S, \Z)\rightarrow [0, 1]$ is surjective, where $\Cont(S, \Z)$ is equipped with the induced analytic ring structure from $\Z_\solid$, there exists a $!$-cover $\AnSpec R'\rightarrow\AnSpec R[\hat{T}]$ such that the composite map $\AnSpec R'\rightarrow \AnSpec R[\hat{T}]\rightarrow [0, 1]$ admits a lift to $S$ and by \cref{lem:norms-compactifycover} we may assume that $R'$ is equipped with the induced analytic ring structure from $R[\hat{T}]$ and hence from $R$.

Now consider the sheaf $\Z_{\{1/2\}}\in\mathrm{Shv}(S, \D(\Z_\solid))\cong \D(S)$ supported on the preimage of $1/2\in [0, 1]$, where the equivalence is due to \cite[Cor.\ II.1.2]{RealLLC}. Since $S$ is profinite, we can write this as a colimit of constant sheaves supported on clopen neighbourhoods of the preimage of $1/2$ in $S$, which are in turn retracts of the constant sheaf. As the constant sheaf is sent to a connective object under the pullback functor $\D(S)\rightarrow \D(R')$, this implies that $\Z_{\{1/2\}}$ is sent to a connective idempotent algebra $A\in \D(R')$ and hence
\begin{equation*}
\AnSpec R'\times_{[0, 1]} \{1/2\}\cong \AnSpec A
\end{equation*}
with $A$ being equipped with the induced analytic ring structure from $R$. 

Clearly, it suffices to show that $A$ is descendable over $R$. As the pushforward of $A$ along $\AnSpec A\rightarrow \AnSpec R[\hat{T}]\times_{[0, 1]} \{1/2\}=\{|T|=1/2\}$ generates the unit object under shifts, cofibres and retracts by descendability of $R[\hat{T}]\rightarrow R'$, we are further reduced to showing that the pushforward of the unit along
\begin{equation*}
\{|T|=1/2\}\rightarrow\AnSpec R
\end{equation*} 
is descendable in $\D(R)$. For this, we will produce a section of the map $R\rightarrow R\Gamma(\{|T|=1/2\}, \O)$. Namely, observe that there is a bicartesian diagram
\begin{equation*}
\begin{tikzcd}
\O_{\P^1_R}\ar[r]\ar[d] & \O_{\{|T|\leq 1/2\}}\ar[d] \\
\O_{\{|T|\geq 1/2\}}\ar[r] & \O_{\{|T|=1/2\}}
\end{tikzcd}
\end{equation*}
in $\D(\P^1_R)$ pulled back from the corresponding diagram in $\mathrm{Shv}([0, 1], \D(\Z_\solid))$ and hence giving a map $R\Gamma(\{|T|=1/2\}, \O)\rightarrow R$ amounts to giving maps $R\Gamma(\{|T|\leq 1/2\}, \O)\rightarrow R$ and $R\Gamma(\{|T|\geq 1/2\}, \O)\rightarrow R$ which are compatible upon restriction to $R\Gamma(\P^1_R, \O)\cong R$. However, for $R\Gamma(\{|T|\leq 1/2\}, \O)\rightarrow R$ we can just take the pullback of sections along the map $0: \AnSpec R\rightarrow\P^1_R$ while for $R\Gamma(\{|T|\geq 1/2\}, \O)\rightarrow R$ we instead take pullback of sections along $\infty: \AnSpec R\rightarrow\P^1_R$. 
\end{proof}

\begin{prop}
\label{prop:norms-normviapseudouniformiser}
For any analytic ring $R$ over $\Z_\solid$, there is an equivalence of anima
\begin{equation*}
\{\AnSpec R\rightarrow\cal{N}\text{ together with }\pi\in R(*)\text{ such that }|\pi|=1/2\}\cong \{\text{maps }\Z(\!(\pi)\!)\rightarrow R\}
\end{equation*}
\end{prop}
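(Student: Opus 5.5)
We construct maps in both directions and check that they are mutually inverse.

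\emph{From maps to norms.} Given $\Z(\!(\pi)\!)\rightarrow R$, by functoriality of norms (\cref{rem:norms-defi}(iii)) it suffices to construct a canonical norm on $\Z(\!(\pi)\!)$, equipped with the induced analytic ring structure from $\Z_\solid$, with $|\pi|=1/2$. The natural candidate is the ``radius'' map on $\P^1_{\Z(\!(\pi)\!)}$. Cover $\A^{1,\alg}$ by the open subsets $\{|T|\leq |\pi|^{-n}\}=\AnSpec \Z(\!(\pi)\!)\langle \pi^n T\rangle$, and more finely by the annuli $\{|\pi|^{b}\leq |T|\leq |\pi|^{a}\}$. This presents $\P^1$ as the usual stratification by $\log_2$ of the radius. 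Concretely, one uses the idempotent algebras $\Z(\!(\pi)\!)\langle T\rangle_{\leq r}$ for $r$ real, as in the notation $\Z(\!(\pi)\!)\langle T\rangle_{\leq 1}$, to define a map $\P^1\rightarrow[0,\infty]$. Giving such a map amounts to a compatible family of closed/open idempotent algebras indexed by intervals, and $\{|T|\leq r\}$ and $\{|T|\geq r\}$ provide these. We then verify axioms (i)--(iv) directly:
\begin{itemize}
\item (i) and (ii) hold by construction.
\item (iii) is multiplicativity of radii, which amounts to checking that $T_1T_2$ lies in $\Z(\!(\pi)\!)\langle \pi^{\epsilon}\,\cdot\rangle$-type algebras.
\item (iv) is the statement that $\Z(\!(\pi)\!)\otimes\Z[\![T]\!]$ is the union over $r<1$ of the discs, together with the fact that $T$ is topologically nilpotent there.
\end{itemize}
Finally, $|\pi|=1/2$ holds tautologically.

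\emph{From norms to maps.} Given a norm on $R$ and $\pi$ with $|\pi|=1/2$, we must produce $\Z(\!(\pi)\!)\rightarrow R$ extending $\Z[\pi]\rightarrow R$.
\begin{itemize}
\item First, $|\pi|=1/2<1$ means $\AnSpec R\rightarrow\A^{1,\alg}$ factors through $\{|T|<1\}\cong\AnSpec R[\hat T]$ by axiom (iv). Hence $\pi$ is topologically nilpotent, giving $\Z[\![\pi]\!]\rightarrow R$.
\item Second, $|\pi^{-1}|=2$ is finite by (ii), so the map lands in $\G_m$ and $\pi$ is invertible in $R$ (using \cref{rem:norms-defi}(iv)).
\item Together these give $\Z(\!(\pi)\!)\rightarrow R$.
\end{itemize}
The key point is that the factorisation through $\AnSpec R[\hat T]$ is not a priori a map of analytic rings compatible with $R$'s structure. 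We handle this as in \cref{prop:norms-normhalfsurjects}, using that $\{|T|<1\}\rightarrow\P^1_R$ becomes an isomorphism over $[0,1)$.

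\emph{Mutual inverseness.} That the constructions are inverse is clear in one direction, since the composite recovers $\pi$'s map. In the other direction we need that any norm $|\cdot|$ on $R$ with $|\pi|=1/2$ agrees with the one pulled back from $\Z(\!(\pi)\!)$. Both are maps $\P^1_R\rightarrow[0,\infty]$, so it suffices to compare the preimages of closed intervals $[0,2^{-n}]$ (and their rational refinements), i.e.\ to show that
\[
\{|T|\leq |\pi|^{n}\}=\AnSpec R\langle T/\pi^n\rangle .
\]
We reduce this to $n=0$ using multiplicativity (iii) with $\pi^n$ invertible, and then use (iv). For radii $2^{-q}$ with $q$ rational we use $T^m$ and multiplicativity. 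For general real radii we pass to limits, using that both maps to $[0,\infty]$ are determined by their values on a dense set of closed intervals.

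I expect this uniqueness step to be the main obstacle. It requires showing that the space of norms with $|\pi|=1/2$ is discrete (contractible fibres), not merely that the two norms agree pointwise. It also requires carefully identifying the closed disc $\{|T|\leq 1\}$ with the overconvergent $\colim_\epsilon R\langle\pi^\epsilon T\rangle$-type algebra rather than with $R\langle T\rangle$; this is exactly why the notation $\Z(\!(\pi)\!)\langle T\rangle_{\leq 1}$ was introduced.
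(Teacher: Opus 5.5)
Your overall structure matches the paper's. The passage from a norm to a topologically nilpotent unit is exactly the paper's first paragraph. Your existence check corresponds to the paper's ``one easily checks that \dots\ is always a norm''. Uniqueness is, as in the paper, a comparison of the idempotent algebras $A_\lambda$ of $\{|T|\le\lambda\}$ using multiplicativity; your $T\mapsto T^n$ trick is a fine substitute for the paper's descent to a base where $\pi$ has all roots. (The detour through \cref{prop:norms-normhalfsurjects} in your second direction is unnecessary. Since $R[\hat T]$ carries the induced structure from $R$, a factorisation of $\AnSpec R\to\AnSpec R[T]$ through $\AnSpec R[\hat T]$ over $\AnSpec R$ is just a map of condensed rings $\Z[\![T]\!]\to R$.)

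The genuine gap is the uniqueness step, exactly where you expect trouble. The identity you aim for is false as stated: $\{|T|\le|\pi|^n\}$ is not $\AnSpec R\langle T/\pi^n\rangle$, as you concede at the end. More importantly, axiom (iv) does not by itself say what $\{|T|\le 1\}$ is. It only yields factorisations $\{|T|\le\lambda\}\to\AnSpec R[\hat T]\to\{|T|\le 1\}$ over $\mathbb{P}^1_R$ for $\lambda<1$. So ``reduce to $n=0$ and use (iv)'' does not finish the argument.

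The missing idea is the paper's cofinality argument:
\begin{enumerate}
\item Since $[0,1]=\bigcap_{\lambda>1}[0,\lambda]$, one has $A_1\cong\colim_{\lambda>1}A_\lambda$.
\item By multiplicativity, $\{|T|\le 2^q\mu\}$ is the pullback of $\{|T|\le\mu\}$ along $T\mapsto\pi^qT$. For rational $q$ this needs roots of $\pi$, or your powers of $T$.
\item Rescaling the factorisations from (iv) therefore gives $\{|T|\le 2^{q'}\}\to\AnSpec R[\widehat{\pi^qT}]\to\{|T|\le 2^q\}$ for rational $0<q'<q$.
\item The two systems are interleaved, so $A_1\cong\colim_{q>0}R[\widehat{\pi^qT}]$. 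This depends only on $\pi$.
\item Every other $A_\lambda$ with $\lambda<\infty$ is a rescaling of $A_1$ or a colimit of such, and $A_\infty=\mathcal{O}$.
\end{enumerate}

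This also settles your worry about discreteness. By \cite[Prop.~II.1.3]{RealLLC}, a map $\mathbb{P}^1_R\to[0,\infty]$ is the same as the family $(A_\lambda)_\lambda$ of idempotent algebras, and idempotent algebras form a poset. Hence norms with $|\pi|=1/2$ form a discrete anima, and uniqueness is a property to be checked, not a contractibility statement needing separate proof. Without this argument, your ``mutual inverseness'' paragraph only states what has to be shown.
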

\begin{proof}
For the map from left to right, we note that $\pi$ determines a map
\begin{equation*}
\AnSpec R\rightarrow\A^{1, \alg}_R\times_{[0, \infty]} \{1/2\}\cong \AnSpec R[\hat{T}]\times_{[0, 1]} \{1/2\}\rightarrow \AnSpec R[\hat{T}]
\end{equation*}
and hence $\pi$ is topologically nilpotent. It is also a unit since 
\begin{equation*}
\AnSpec R\xrightarrow{\pi} \P^1_R\xrightarrow{T\mapsto T^{-1}}\P^1_R\rightarrow [0, \infty]
\end{equation*}
is supported above $2\in [0, \infty]$ by assumption and hence factors through $\A^{1, \an}_R\subseteq \A^{1, \alg}_R$.

It remains to show that any topologically nilpotent unit $\pi\in R(*)$ determines a unique norm with $|\pi|=1/2$. By descent, we may assume that $\pi$ admits all roots in $R(*)$. By \cite[Prop.\ II.1.3]{RealLLC}, the data of a norm $|\cdot |: \P^1_R\rightarrow [0, \infty]$ amounts to the data of a family $\{A_\lambda\}_{\lambda\in [0, \infty]}$ of idempotent algebras in $\D(\P^1_R)$ corresponding to the pullbacks of the constant sheaves supported on the intervals $[0, \lambda]$ for $\lambda\in [0, \infty]$. As we must have $A_\infty=\O_{\P^1_R}$, we can assume $\lambda<\infty$ and then all $A_\lambda$ will be idempotent algebras in $\D(\A^{1, \alg}_R)=\D(R[T])$. 

By multiplicativity of the norm, we must have
\begin{equation*}
A_{(1/2)^{m/n}}\cong A_1\tensor_{R[T], T\mapsto \pi^{-m/n} T} R[T] 
\end{equation*}
for all $m, n\in\Z$ with $n\neq 0$; as we moreover have $A_\lambda=\colim_{\lambda'>\lambda} A_{\lambda'}$ for all $\lambda$ due to $[0, \lambda]=\bigcap_{\lambda'>\lambda} [0, \lambda']$ and rational powers of $1/2$ form a cofinal system among all $\lambda'>\lambda$, this shows that all $A_\lambda$ are uniquely determined by $A_1$. To determine $A_1$, we recall that the map
\begin{equation*}
\AnSpec R[\hat{T}]\rightarrow\P^1_R\rightarrow [0, \infty]
\end{equation*}
has to factor over $[0, 1]$ and hence there is a map $R[\hat{T}]\rightarrow A_1$. At the same time, the map $\AnSpec R[\hat{T}]\rightarrow\P^1_R$ becomes an isomorphism over $[0, 1)$ and thus there are maps $A_\lambda\rightarrow R[\hat{T}]$ for all $\lambda<1$. Using multiplicativity once again, we conclude that
\begin{equation*}
A_1\cong \colim_{\lambda>1} A_\lambda\cong \colim_{m/n>0} R[\widehat{\pi^{m/n} T}]
\end{equation*}
by cofinality and hence also $A_1$ is uniquely determined by the requirement that $|\pi|=1/2$. Finally, one easily checks that the map $\P^1_R\rightarrow [0, \infty]$ induced by the above choice of idempotent algebras $A_\lambda$ is always a norm on $R$, which finishes the proof.
\end{proof}

By the previous proposition, there is a unique norm on $\Z_\solid(\!(\pi)\!)$ satisfying $|\pi|=1/2$ and we let $\ol{\T}$ denote the subspace $\{|T|=1\}$ of $\P^1_{\Z(\!(\pi)\!)}$. Note that $\ol{\T}$ is the analytic spectrum of
\begin{equation*}
\Z(\!(\pi)\!)\langle T^{\pm 1}\rangle_{\leq 1}\coloneqq \colim_{\epsilon>0} \Z(\!(\pi)\!)\langle \pi^{\epsilon} T^{\pm 1}\rangle
\end{equation*}
equipped with the induced analytic ring structure.

\begin{cor}
\label{cor:norms-pres}
We have 
\begin{equation*}
\cal{N}\cong \AnSpec \Z_\solid(\!(\pi)\!)\,\big/\,\ol{\T}\;,
\end{equation*}
where $\ol{\T}$ acts by multiplication on $\pi$.
\end{cor}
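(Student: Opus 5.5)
The plan is to obtain the presentation from the map $\AnSpec\Z_\solid(\!(\pi)\!)\rightarrow\cal{N}$ given by the norm with $|\pi|=1/2$, and to check that it is surjective with Čech nerve equal to the action groupoid of $\ol{\T}$. Concretely, consider the map
\begin{equation*}
q: \AnSpec\Z_\solid(\!(\pi)\!)\rightarrow\cal{N}
\end{equation*}
classifying the unique norm on $\Z_\solid(\!(\pi)\!)$ with $|\pi|=1/2$ provided by \cref{prop:norms-normviapseudouniformiser}. It suffices to show two things: that $q$ is a $!$-cover (or at least surjective in the topology used to define stacks), and that the fibre product $\AnSpec\Z(\!(\pi)\!)\times_{\cal{N}}\AnSpec\Z(\!(\pi)\!)$ is identified with $\ol{\T}\times\AnSpec\Z(\!(\pi)\!)$, with the two projections being the projection and the action map. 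The compatibilities on higher fibre products then follow formally, and we get $\cal{N}$ as the geometric realisation of the action groupoid.

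\emph{Surjectivity.} Let $\AnSpec R\rightarrow\cal{N}$ be a normed analytic ring. By \cref{prop:norms-normviapseudouniformiser}, the fibre product $\AnSpec R\times_{\cal{N}}\AnSpec\Z(\!(\pi)\!)$ represents the choice of $\pi\in R$ with $|\pi|=1/2$. This is exactly the subspace $\{|T|=1/2\}\subseteq\P^1_R$, and by the factorisation in \cref{rem:norms-defi}(iv) it lies in $\A^{1,\alg}_R$. By \cref{prop:norms-normhalfsurjects}, its map to $\AnSpec R$ is surjective; more precisely, the proof shows that the pushforward of the structure sheaf is descendable. Hence $q$ is a cover.

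\emph{The fibre product.} Again by \cref{prop:norms-normviapseudouniformiser}, a point of $\AnSpec\Z(\!(\pi)\!)\times_{\cal{N}}\AnSpec\Z(\!(\pi)\!)$ over $R$ is a map $\Z(\!(\pi)\!)\rightarrow R$ together with a second element $\pi'$ with $|\pi'|=1/2$ for the norm induced by $\pi$. Set $u=\pi'/\pi$, which makes sense since $\pi$ is a unit. By multiplicativity of the norm (property (iii)) together with (ii), the condition $|\pi'|=1/2$ is equivalent to $|u|=1$. The element $u$ is a unit by the argument in the proof of \cref{prop:norms-normviapseudouniformiser}. So the fibre product is $\{|T|=1\}\subseteq\P^1_{\Z(\!(\pi)\!)}$ base-changed appropriately, which is $\ol{\T}\times\AnSpec\Z(\!(\pi)\!)$, with second projection $\pi\mapsto u\pi$. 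This is precisely the action by multiplication. The groupoid structure (composition corresponding to multiplication of the $u$'s) then matches the group structure on $\ol{\T}$.

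\emph{Main obstacle.} The delicate step is making the identification $\{|T|=1/2\}\cong$ fibre precise as analytic stacks rather than on underlying points. In particular, one must check that the analytic ring structure on $\ol{\T}$, namely the one induced from $\Z(\!(\pi)\!)$ on $\colim_\epsilon\Z(\!(\pi)\!)\langle\pi^\epsilon T^{\pm1}\rangle$, is the correct one. For this I would use the explicit idempotent algebras $A_\lambda$ from the proof of \cref{prop:norms-normviapseudouniformiser}. Intersecting $A_1$ for $T$ and for $T^{-1}$ produces exactly this colimit of Tate algebras, and the rescaling $T\mapsto\pi^{-1}T$ identifies the condition $|T|=1/2$ with $|u|=1$.
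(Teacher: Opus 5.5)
Your proposal is correct and follows essentially the same route as the paper: surjectivity of $\AnSpec\Z_\solid(\!(\pi)\!)\rightarrow\cal{N}$ comes from combining \cref{prop:norms-normhalfsurjects} with \cref{prop:norms-normviapseudouniformiser}, and the self-fibre product is identified with $\ol{\T}$ because two pseudouniformisers $\pi,\pi'$ induce the same norm exactly when $|\pi'/\pi|_\pi=1$. One small notational point: $\ol{\T}$ is already defined as a space over $\AnSpec\Z_\solid(\!(\pi)\!)$, so the fibre product is $\ol{\T}$ itself rather than $\ol{\T}\times\AnSpec\Z(\!(\pi)\!)$.
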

\begin{proof}
Combining \cref{prop:norms-normhalfsurjects} and \cref{prop:norms-normviapseudouniformiser}, we already know that there is a surjective map
\begin{equation*}
\AnSpec \Z_\solid(\!(\pi)\!)\rightarrow \cal{N}
\end{equation*}
defined by requiring that $|\pi|=1/2$. Thus, we are done once we show that the self-fibre product of this map is given by $\ol{\T}$. However, given two pseudouniformisers $\pi, \pi'\in R$, the norms $|\cdot |_\pi$ and $|\cdot |_{\pi'}$ induced by requiring $|\pi|_\pi=1/2$ or $|\pi'|_{\pi'}=1/2$, respectively, are the same if and only if $x\coloneqq \pi'/\pi$ satisfies $|x|_\pi=1$, i.e.\ if and only if $x$ defines a section of $\ol{\T}$ over $\AnSpec \Z_\solid(\!(\pi)\!)$. This yields the claim.
\end{proof}

The above presentation of the stack of norms will be of utmost importance in what follows. For instance, we can use it to establish that $\cal{N}$ is prim.

\begin{lem}
\label{lem:norms-prim}
The map $f: \cal{N}\rightarrow \AnSpec\Z_\solid$ is prim.
\end{lem}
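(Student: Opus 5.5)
We use the presentation $\cal{N}\cong \AnSpec \Z_\solid(\!(\pi)\!)/\ol{\T}$ of \cref{cor:norms-pres}. Primness is $!$-local on the source, so it suffices to find a $!$-cover of $\cal{N}$ along which the composite to $\AnSpec\Z_\solid$ is prim and which is itself prim. We take the cover $g: \AnSpec\Z_\solid(\!(\pi)\!)\rightarrow\cal{N}$. Its base change along itself is the projection $\ol{\T}\times\AnSpec\Z_\solid(\!(\pi)\!)\rightarrow\AnSpec\Z_\solid(\!(\pi)\!)$, i.e.\ the structure map of $\ol{\T}$. So the argument splits into three steps:
\begin{enumerate}[label=(\arabic*)]
\item $\AnSpec\Z_\solid(\!(\pi)\!)\rightarrow\AnSpec\Z_\solid$ is prim;
\item $\ol{\T}\rightarrow\AnSpec\Z_\solid(\!(\pi)\!)$ is prim, which gives primness of $g$ by descent;
\item combine (1) and (2) by the usual locality and stability of prim maps under composition and $!$-descent.
\end{enumerate}
For the last step we may have to use that $g$ is a $!$-cover, which holds by \cref{prop:norms-normhalfsurjects} and \cref{prop:norms-normviapseudouniformiser}.

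For (1), write $\Z_\solid(\!(\pi)\!)$ as the localisation $\Z[\![\pi]\!]\to\Z(\!(\pi)\!)$. First, $\Z[\![\pi]\!]=\Z[\hat T]$ with induced solid structure is prim over $\Z_\solid$: it is the compactification of the open unit disc, and its structure sheaf is given by a compact-projective dualisable object. Second, the localisation $\Z[\![\pi]\!]\to\Z(\!(\pi)\!)$ is the complement of the closed subspace $\{\pi=0\}$. It is an open immersion, hence cohomologically smooth and in particular prim. Alternatively, one writes $\Z(\!(\pi)\!)$ as $\Z[\![\pi]\!]\otimes_{\Z[T]}\Z[T^{\pm1}]$ with induced structure and uses that $\G_m^{\alg}\to\A^{1,\alg}$ is prim.

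For (2), the main step is to show that $h:\ol{\T}\rightarrow\AnSpec\Z(\!(\pi)\!)$ is prim. We write $\ol{\T}=\{|T|=1\}$ as $\bigcap_{\epsilon>0}\{|\pi|^{\epsilon}\le |T|\le |\pi|^{-\epsilon}\}$, with affinoid annuli $\Z(\!(\pi)\!)\langle \pi^\epsilon T^{\pm1}\rangle$. Each annulus is proper over the base, being a rational subset with induced analytic structure. Each is also prim: its ring is a retract of a product of copies of the base, with $\pi$-adically completed direct sums dualising to products. The plan is then to identify the dualising (prim) object of $h$ with
\begin{equation*}
\colim_{\epsilon\to 0}\ \Z(\!(\pi)\!)\langle\pi^\epsilon T^{\pm1}\rangle^\vee,
\end{equation*}
that is, with the space of functions on a slightly larger overconvergent annulus. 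We then check the codescent condition for prim maps, namely that the diagonal $\ol{\T}\to\ol{\T}\times\ol{\T}$ is handled by the same colimit argument, using the explicit basis $\{\pi^{\epsilon|n|}T^n\}$. Concretely, we expect
\begin{equation*}
h_!\cong \sHom_{\Z(\!(\pi)\!)}\bigl(\Z(\!(\pi)\!)\langle T^{\pm1}\rangle_{\le 1},-\bigr)
\end{equation*}
in the appropriate sense. This follows because $\Z(\!(\pi)\!)\langle T^{\pm1}\rangle_{\le1}$ is a filtered colimit along trace-class transition maps, and is therefore nuclear and dualisable over $\Z(\!(\pi)\!)$. The trace-class property of $\Z(\!(\pi)\!)\langle\pi^{\epsilon}T^{\pm1}\rangle\to\Z(\!(\pi)\!)\langle\pi^{\epsilon'}T^{\pm1}\rangle$ for $\epsilon'<\epsilon$ is the expected technical point. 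It is verified directly, since $\pi^{(\epsilon-\epsilon')|n|}\to 0$.

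I expect step (2) to be the main obstacle, specifically showing that the overconvergent annulus, which is not affinoid in the naive sense but a colimit, satisfies the precise definition of prim from the analytic stacks lectures. Should the direct approach be awkward, the fallback is as follows. We view $\ol{\T}$ as an open inside $\{|T|\le 1\}\cap\{|T^{-1}|\le1\}$, computed via idempotent algebras pulled back from $[0,\infty]$, and use that $\P^1_{\Z(\!(\pi)\!)}\to\AnSpec\Z(\!(\pi)\!)$ is prim (indeed cohomologically smooth). Since restriction to subsets cut out by the norm map to $[0,\infty]$ corresponds to tensoring with idempotent algebras, which preserves primness, this gives the claim.
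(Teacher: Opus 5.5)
The conclusions of your steps (1) and (2) are right, but there is a genuine gap in step (3), and that is where the content of the lemma lies. You treat step (3) as a formality, but the principle it relies on is false: primness is not local on the source for $!$-covers, even prim ones.

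Here is a counterexample. Work over $\AnSpec\F_{p,\solid}$ and take a finite group $G$ of order divisible by $p$. The cover $*\to */G$ is prim, since its base change along itself is a finite disjoint union of copies of $*$. The composite $*\to */G\to *$ is the identity, which is prim. Yet $*/G\to *$ is not prim: for a prim $f$ the functor $f_*\cong f_!(D\otimes -)$ commutes with colimits, but $R\Gamma(G,-)$ does not commute with $\bigoplus_n\F_p[2n]$. That sum equals the product, and $H^{2n}(G,\F_p)\neq 0$ for infinitely many $n$.

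The criterion the paper actually invokes, \cite[Cor.~4.7.5]{HeyerMann}, lets you pass primness from $g$ and $f\circ g$ down to $f$ only if $g$ is prim \emph{and} $g_*\mathcal{O}$ is descendable in $\D(\mathcal{N})$. In the example above this is exactly what fails: the Tate construction kills $\F_p[G]$ and everything in the thick tensor ideal it generates, but not $\F_p$.

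So the missing idea is a proof that $g_*\mathcal{O}$ is descendable. The paper obtains it from the presentation $\mathcal{N}\cong\AnSpec\Z_\solid(\!(\pi)\!)/\overline{\mathbb{T}}$:
\begin{enumerate}
\item $g_*\mathcal{O}$ is the regular $\overline{\mathbb{T}}$-comodule.
\item The unit $\mathcal{O}\to g_*\mathcal{O}$ is the inclusion $\Z(\!(\pi)\!)\to\Z(\!(\pi)\!)\langle T^{\pm 1}\rangle_{\leq 1}$.
\item Taking the constant term of a Laurent series is an $\mathcal{O}(\overline{\mathbb{T}})$-colinear retraction of this map.
\end{enumerate}
Hence $\mathcal{O}$ is even a retract of $g_*\mathcal{O}$, which gives descendability.

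Conversely, the part you flag as the main obstacle is immediate. Any map $\AnSpec B\to\AnSpec A$ where $B$ carries the analytic ring structure induced from $A$ is weakly cohomologically proper: lower-shriek is the forgetful functor, so the map is prim. This covers both $\AnSpec\Z_\solid(\!(\pi)\!)\to\AnSpec\Z_\solid$ and $\overline{\mathbb{T}}=\AnSpec\Z(\!(\pi)\!)\langle T^{\pm1}\rangle_{\leq 1}\to\AnSpec\Z_\solid(\!(\pi)\!)$.

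Several of your intermediate justifications are also incorrect:
\begin{enumerate}
\item $\AnSpec\Z(\!(\pi)\!)\to\AnSpec\Z[\![\pi]\!]$ with induced structure is not an open immersion. Its pullback $M\mapsto M[1/\pi]$ does not commute with products, so it has no left adjoint. The map is cut out by an idempotent algebra and behaves like a closed immersion.
\item Cohomologically smooth maps need not be prim; open immersions typically are not.
\item $\Z[\![\pi]\!]=\prod_{\mathbb{N}}\Z$ is compact projective but not dualisable in $\D(\Z_\solid)$.
\item For $h\colon\overline{\mathbb{T}}\to\AnSpec\Z_\solid(\!(\pi)\!)$ one has $h_!=h_*$, the forgetful functor, and the prim dual of $\mathcal{O}$ is $\mathcal{O}$. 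It is nothing like $\sHom(\Z(\!(\pi)\!)\langle T^{\pm1}\rangle_{\leq 1},-)$, so no nuclearity or trace-class argument is needed.
\end{enumerate}
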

\begin{proof}
Note that $\AnSpec\Z_\solid(\!(\pi)\!)\rightarrow\AnSpec\Z_\solid$ is prim as the analytic ring structure on $\Z_\solid(\!(\pi)\!)$ is induced from the one on $\Z_\solid$; similarly, the map $g: \AnSpec\Z_\solid(\!(\pi)\!)\rightarrow\cal{N}$ is prim as pulling back along the cover $\AnSpec\Z_\solid(\!(\pi)\!)\rightarrow\cal{N}$ yields $\ol{\T}\rightarrow \AnSpec\Z_\solid(\!(\pi)\!)$, which is a map between affine analytic stacks with the induced analytic ring structure. 

By \cite[Cor.\ 4.7.5]{HeyerMann}, it thus suffices to show that $g_*\O\in \D(\cal{N})$ is descendable. For this, we use the presentation of $\cal{N}$ from \cref{cor:norms-pres} to conclude that $g_*\O$ identifies with the $\O(\ol{\T})$-comodule $\O(\ol{\T})$ over $\Z(\!(\pi)\!)$, i.e.\ $g_*\O$ identifies with the regular representation of $\ol{\T}$. Then the natural map $\O\rightarrow g_*\O$ identifies with the canonical map
\begin{equation*}
\Z(\!(\pi)\!)\rightarrow \Z(\!(\pi)\!)\langle T^{\pm 1}\rangle_{\leq 1}\;.
\end{equation*}
Noting that sending a power series in $\Z(\!(\pi)\!)\langle T^{\pm 1}\rangle_{\leq 1}$ to its constant term provides an $\O(\ol{\T})$-colinear section of this map, we deduce that $\O$ is a retract of $g_*\O$ and hence $g_*\O$ is descendable, as desired.
\end{proof}

Observe that $\cal{N}$ admits an action of the Betti stack $\R_{>0}$ given by exponentiating the norm, i.e.\ $\lambda.|\cdot |=|\cdot |^\lambda$ for any $\lambda\in\R_{>0}$. Instead of working over $\cal{N}$, we will want to work over the slightly deeper base $\cal{N}/\R_{>0}$ in the sequel. 

\begin{rem}
\label{rem:norms-maptatehuber}
Given any Tate Huber pair $(B, B^+)$, the choice of a pseudouniformiser $\pi\in B$ determines a map
\begin{equation*}
f_\pi: \AnSpec\hspace{1pt}(B, B^+)_\solid\rightarrow\AnSpec \Z_\solid(\!(\pi)\!)\xrightarrow{|\pi|=1/2}\cal{N}\;.
\end{equation*}
The upshot of replacing $\cal{N}$ by the quotient $\cal{N}/\R_{>0}$ is that, after mapping further to $\cal{N}/\R_{>0}$, all maps as above become identified: Indeed, if $\pi'$ is another pseudouniformiser, then $f_{\pi'}$ is sent to $f_\pi$ by the action of the map $\AnSpec\,(B, B^+)_\solid\rightarrow \R_{>0}$ induced by the map of topological spaces
\begin{equation*}
\operatorname{Spa}(B, B^+)\rightarrow\R_{>0}\;, \hspace{0.5cm} x\mapsto \frac{\log |\pi'(\tilde{x})|}{\log |\pi(\tilde{x})|}\;,
\end{equation*}
where $\tilde{x}$ denotes the maximal generalisation of $x$. We conclude that any Tate adic space admits a canonical map to $\cal{N}/\R_{>0}$.
\end{rem}

We first note that replacing $\cal{N}$ by $\cal{N}/\R_{>0}$ leaves the result of \cref{lem:norms-prim} intact:

\begin{lem}
\label{lem:norms-primmodr}
The map $\cal{N}/\R_{>0}\rightarrow\AnSpec\Z_\solid$ is prim. 
\end{lem}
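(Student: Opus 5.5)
Since $\cal{N}\to\AnSpec\Z_\solid$ is prim by \cref{lem:norms-prim} and prim maps are stable under composition, the plan is to show that the quotient map $q: \cal{N}\to\cal{N}/\R_{>0}$ is prim and a $!$-cover along which descendability can be checked. We then argue exactly as in the proof of \cref{lem:norms-prim}, using \cite[Cor.\ 4.7.5]{HeyerMann}.

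\textbf{Step 1: $q$ is prim.} Pulling $q$ back along itself gives the projection $\cal{N}\times\R_{>0}\to\cal{N}$. It therefore suffices to check that the Betti stack $\R_{>0}\to *$ is prim. Since $\R_{>0}$ is a locally compact Hausdorff space, we can cover it by compact intervals $[a,b]$. For a compact Hausdorff $K$, the map $K\to *$ is given by $\AnSpec$ of $\Cont(K,\Z)$-type objects with the induced analytic ring structure, or more precisely it is cohomologically proper with $\D(K)\cong\mathrm{Shv}(K)$ by \cite{RealLLC}. Proper maps with induced structure of this kind are prim, since there $f_*=f_!$. Primness is local on the source for $!$-covers, so this settles Step 1.

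\textbf{Step 2: descendability.} By \cite[Cor.\ 4.7.5]{HeyerMann} it suffices to know that $q$ is a prim $!$-cover with $q_*\O$ descendable in $\D(\cal{N}/\R_{>0})$. Then $\cal{N}/\R_{>0}\to\AnSpec\Z_\solid$ is prim, because $\cal{N}\to\AnSpec\Z_\solid$ is.

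\textbf{Step 3: reduce to a compact subgroup quotient.} Descendability is the delicate point, since $\R_{>0}$ is not compact and $q_*\O$ is the "regular representation" $C(\R_{>0},\O)$. The cleanest way around this is to factor the quotient. Write $\R_{>0}\cong\R$ via $\log$, and let $\Z\subseteq\R$ act with quotient the circle $\R/\Z$. Then
\begin{equation*}
\cal{N}\to\cal{N}/\Z\to\cal{N}/\R_{>0}
\end{equation*}
factors $q$. The second map is a $\R/\Z$-torsor. As $\R/\Z$ is compact Hausdorff, its pushforward of $\O$ is the sheaf of continuous functions, and the unit admits the section given by "evaluation at the identity coset". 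More robustly, $\O$ is a retract of $R\Gamma(\R/\Z,\Z)=\Z\oplus\Z[-1]$ via $H^0$, mirroring the constant-term section in \cref{lem:norms-prim}. This gives descendability, and proper prim-ness also follows directly here.

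\textbf{Step 4: the discrete quotient.} For $\cal{N}/\Z$ over $\AnSpec\Z_\solid$, the action of $\Z$, i.e.\ $|\cdot|\mapsto|\cdot|^{2^n}$, is free. Moreover $\cal{N}\to\cal{N}/\Z$ is an étale-type cover by a disjoint union, so primness of $\cal{N}/\Z$ is local and follows from \cref{lem:norms-prim}. The main obstacle I expect is making Step 3 rigorous: checking that torsors under the Betti stack of a compact Lie group are prim with descendable structure sheaf. If the retract argument is troublesome, I would instead pass to $\cal{N}/\R_{>0}$ through the presentation $\AnSpec\Z_\solid(\!(\pi)\!)\to\cal{N}/\R_{>0}$, whose Čech nerve is built from $\ol{\T}$ and $\R_{>0}$, and check descendability of the pushforward directly by producing an explicit section.
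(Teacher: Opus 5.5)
There is a genuine gap, and it sits in Step 1. The Betti stack $\R_{>0}\to *$ is \emph{not} prim. For a prim map $f$ one has $f_*\cong f_!(P\otimes -)$ with $P$ the prim dual of $1$, so $f_*$ commutes with colimits; the paper uses exactly this later. But under $\D(\R_{>0})\cong\mathrm{Shv}(\R_{>0},\D(\Z_\solid))$ the pushforward is $R\Gamma(\R_{>0},-)$. The direct sum of the skyscrapers at the closed discrete set $\{1,2,3,\dots\}$ has global sections $\prod_n\Z$, not $\bigoplus_n\Z$.

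Your justification fails because primness is local on the target, not on the source. The cover by compact intervals is an infinite disjoint union of intervals, which is not prim over the point for the same reason.

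Pulling this example back to $\cal{N}\times\R_{>0}$ and pushing forward to $\cal{N}$ gives $\prod_n\O$ instead of $\bigoplus_n\O$. These are distinguished by $R\Gamma(\cal{N},-)$, which commutes with colimits by \cref{lem:norms-prim} and satisfies $R\Gamma(\cal{N},\O)=\Z$. So the self-pullback $\cal{N}\times\R_{>0}\to\cal{N}$ of $q$ is not prim, hence neither is $q$, and \cite[Cor.~4.7.5]{HeyerMann} cannot be applied to $q$. You also put the difficulty in the wrong place: $q^*$ is fully faithful because $\R_{>0}$ is contractible, so $q_*\O\cong\O$ and descendability is trivial. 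It is primness that fails.

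Steps 3--4 only move the problem. $\cal{N}\to\cal{N}/\Z$ is a $\Z$-torsor whose self-pullback $\cal{N}\times\Z\to\cal{N}$ is an infinite disjoint union, hence not prim. There is no principle letting primness descend along \'etale covers of the source, so nothing shows that $\cal{N}/\Z\to *$ is prim. Your fallback has the same defect: by \cref{lem:norms-suavecover}, the \v{C}ech nerve of $\AnSpec\Z_\solid(\!(\pi)\!)\to\cal{N}/\R_{>0}$ is the punctured open disc, which is suave but not prim.

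The missing idea is to work locally on the target and let the quotient absorb the non-compactness of $\R_{>0}$. Factor the map as $\cal{N}/\R_{>0}\to */\R_{>0}\to *$ with $*=\AnSpec\Z_\solid$. The first map pulls back along the cover $*\to */\R_{>0}$ to $\cal{N}\to *$, so it is prim by \cref{lem:norms-prim}.

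It remains to show that $*/\R_{>0}\to *$ is prim, which holds even though $\R_{>0}\to *$ is not. The map $f: *\to */\R_{>0}$ is suave with dualising complex $\Z[1]$, so Barr--Beck gives $\D(*/\R_{>0})\cong\Mod_{f^!f_!\O}(\D(\Z_\solid))$. Moreover $f^!f_!\O\cong R\Gamma_c(\R_{>0},\Z)[1]\cong\Z$. Hence $\D(*/\R_{>0})\cong\D(\Z_\solid)$, and $g_!$ and $g^!$ for $g: */\R_{>0}\to *$ both become the identity. Primness of $g$ then follows from \cite[Lem.~4.2.2]{dRStack}.
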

\begin{proof}
Factoring the given map as $\cal{N}/\R_{>0}\rightarrow */\R_{>0}\rightarrow *$, where we write $*$ for $\AnSpec\Z_\solid$, we see that the first map is prim by \cref{lem:norms-prim} and hence we are reduced to showing that $*/\R_{>0}$ is prim. For this, we first note that $\R_{>0}$ is cohomologically smooth with dualising sheaf concentrated in degree $-1$ by \cite[Thm.\ 4.8.9.(iii)]{HeyerMann} and hence Barr--Beck yields
\begin{equation*}
\D(*/\R_{>0})\cong \Mod_{f^!f_!\O}(\D(\Z_\solid))
\end{equation*}
via $f^!$ for $f: *\rightarrow */\R_{>0}$ and $*=\AnSpec\Z_\solid$. However, since the map $f$ is an $\R_{>0}$-torsor, we conclude that it is suave with dualising complex $\Z[1]$ and hence $f^!f_!\O\cong f^*f_!\O[1]$ is a shift by $1$ of the compactly supported cohomology of $\R_{>0}$ by base change. As the latter is given by $\Z[-1]$, we conclude that $f^!f_!\O\cong\Z$ and as this only admits one algebra structure, we conclude that $\D(*/\R_{>0})\cong \D(\Z_\solid)$ via $f^!$. 

Using the adjunction between $f^!$ and $f_!$, we conclude that $f_!$ identifies with the identity on $\D(\Z_\solid)$ under the above equivalence, and the same is true for the functors $g_!$ and $g^!$, where $g$ denotes the map $g: */\R_{>0}\rightarrow *$. Now one easily sees that $g$ is prim with the prim dual of $\O\in\D(*/\R_{>0})$ corresponding to $\Z$ under the above equivalence by applying \cite[Lem.\ 4.2.2]{dRStack}.
\end{proof}

One advantage of working with the quotient $\cal{N}/\R_{>0}$ is that it admits a straightforward suave cover.

\begin{lem}
\label{lem:norms-suavecover}
The map $\AnSpec\Z_\solid(\!(\pi)\!)\rightarrow \cal{N}/\R_{>0}$ is suave.
\end{lem}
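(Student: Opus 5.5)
To prove that $g\colon \AnSpec\Z_\solid(\!(\pi)\!)\rightarrow \cal{N}/\R_{>0}$ is suave, I will factor it as
\begin{equation*}
\AnSpec\Z_\solid(\!(\pi)\!)\xrightarrow{a}\cal{N}\xrightarrow{b}\cal{N}/\R_{>0}
\end{equation*}
and use that suaveness is local on the target with respect to $!$-covers. Since $\AnSpec\Z_\solid(\!(\pi)\!)\rightarrow\cal{N}$ is surjective (\cref{cor:norms-pres}), and hence so is its composite with the quotient map, I will check suaveness after base change along $g$ itself. I therefore need to compute the fibre product
\begin{equation*}
\AnSpec\Z_\solid(\!(\pi)\!)\times_{\cal{N}/\R_{>0}}\AnSpec\Z_\solid(\!(\pi)\!).
\end{equation*}
By \cref{cor:norms-pres} and the definition of the $\R_{>0}$-action, this parametrises pairs $(\pi,\pi')$ of pseudouniformisers together with $\lambda\in\R_{>0}$ such that $|\pi'|_\pi=(1/2)^{\lambda}$. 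Equivalently, it is the subspace of $\R_{>0}\times\G_m^{\alg}$ over $\AnSpec\Z_\solid(\!(\pi)\!)$, with coordinate $x=\pi'$, cut out by $|x|=(1/2)^\lambda$. In other words, it is the family $\{x\in\A^{1,\an}: 0<|x|<1\}$ of punctured open unit discs, where $\lambda$ is determined by $|x|$ through the norm map to $(0,1)\cong\R_{>0}$.

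The key step is then to show that the projection
\begin{equation*}
D^*\coloneqq \{0<|T|<1\}\subseteq\P^1_{\Z(\!(\pi)\!)}\rightarrow\AnSpec\Z_\solid(\!(\pi)\!)
\end{equation*}
is suave. I plan to write $D^*$ as an increasing union of closed annuli $\{(1/2)^{a}\le|T|\le(1/2)^{b}\}$ with rational $0<b<a$. After adjoining roots of $\pi$, each of these is affinoid, namely $\AnSpec$ of a Tate algebra $\Z(\!(\pi^{1/n})\!)\langle\pi^{-b}T,\pi^{a}T^{-1}\rangle$ with the induced analytic ring structure. This reduces the question to the standard fact that open discs and annuli over $\Z(\!(\pi)\!)$ are suave, with dualising complex $\O[1]$ up to a twist by differentials. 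This fact is obtained by the same argument as for $\R_{>0}$ combined with the suaveness of the open unit disc $\{|T|<1\}$, which is the open subset of $\AnSpec R[\hat T]$ given by axiom (iv). Open immersions are suave, $\{|T|<1\}=\AnSpec\Z(\!(\pi)\!)[\hat T]\times_{[0,1]}[0,1)$, and $\AnSpec\Z(\!(\pi)\!)[\hat T]$ is suave over the base, since $\Z[\![T]\!]$ is a smooth formal disc over $\Z_\solid$ and the analytic ring structure is induced. Removing the closed zero section gives $D^*$, which is open in $\{|T|<1\}$ via the norm map to $(0,1)$.

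Finally, suaveness descends: $g$ base changes along the $!$-cover $g$ to a suave map, and suaveness is $!$-local on the target (\cite{HeyerMann}), so $g$ is suave. I expect the main obstacle to be the identification of the fibre product in the first step, specifically making precise that the $\R_{>0}$-coordinate is recovered continuously from $|x|$, so that the fibre product is exactly $D^*$ rather than $D^*\times\R_{>0}$. A second point needing care is the suaveness of $\AnSpec\Z(\!(\pi)\!)[\hat T]$ over $\Z(\!(\pi)\!)$ with the induced structure. For this I would first try citing the known suaveness of the open unit disc and adic formal discs from \cite{AnStacks}. Failing that, I would check the criterion via the diagonal, which is a closed immersion cut out by the regular element $T_1-T_2$.
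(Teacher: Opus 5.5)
Your reduction is the same as the paper's. You check suaveness after pulling back along the cover itself, and you identify $\AnSpec\Z_\solid(\!(\pi)\!)\times_{\mathcal{N}/\R_{>0}}\AnSpec\Z_\solid(\!(\pi)\!)$ with the punctured open disc $\{0<|T|<1\}\subseteq\mathbb{P}^1_{\Z(\!(\pi)\!)}$. Your worry about an extra factor $\R_{>0}$ is harmless: $\lambda$ is recovered from $|\pi'|_\pi=(1/2)^\lambda$ via the homeomorphism $\R_{>0}\cong(0,1)$, exactly as in the paper.

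The gap is in your justification that this disc is suave. You deduce it from openness of $\{|T|<1\}$ in $\AnSpec\Z(\!(\pi)\!)[\hat T]$, together with the claim that the latter, with the induced analytic ring structure, is suave over the base. That claim is false.
\begin{itemize}
\item With the induced structure, $f\colon\AnSpec\Z(\!(\pi)\!)[\hat T]\to\AnSpec\Z_\solid(\!(\pi)\!)$ satisfies $f_!=f_*$; it is the compactified closed disc, which is weakly cohomologically proper. If $f$ were suave, then $f^!N\cong f^!\mathcal{O}\otimes f^*N$ and the projection formula would give $\sHom(f_*\mathcal{O},N)\cong(f_*f^!\mathcal{O})\otimes N$ for all $N$. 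So $f_*\mathcal{O}=\Z[\![\pi,T]\!][1/\pi]$ would be dualisable over $\Z(\!(\pi)\!)$, which it is not, since it is not perfect.
\item The reason you give already fails over $\Z_\solid$. There $f^!\Z=\sHom_\Z(\Z[\![T]\!],\Z)\cong\bigoplus_{n\geq 0}\Z$, with $T$ acting by shifting down. This is a $T$-power-torsion module, killed by $-\otimes_{\Z[\![T]\!]}\Z[\![T]\!][T^{-1}]$, hence not invertible.
\item Conceptually, a disc together with its boundary is proper but not smooth.
\item The same objection applies to your closed annuli with the induced structure. Suaveness is in any case only local for \emph{open} covers of the source.
\item Your fallback cannot work either. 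The diagonal of $\AnSpec\Z(\!(\pi)\!)[\hat T]$ is indeed cut out by the regular element $T_1-T_2$, yet the map is not suave, so a regular diagonal does not suffice.
\end{itemize}

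The missing input is the right ambient space. As in the paper, $\{0<|T|<1\}$ is the preimage of the open interval $(0,1)\subseteq[0,\infty]$ under the norm, hence an open substack of $\mathbb{P}^1_{\Z(\!(\pi)\!)}$. The paper only says that the disc is open in $\mathbb{P}^1$ and hence suave. The supporting fact, my addition, is that $\mathbb{P}^1$ is suave over $\Z_\solid(\!(\pi)\!)$: by coherent duality its dualising complex is $\Omega^1[1]$. Since open immersions are suave, the punctured disc is suave, and no decomposition into annuli is needed. With this replacement, the rest of your argument (checking suaveness locally on the target for the $!$-cover) goes through as in the paper.
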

\begin{proof}
We already know from \cref{cor:norms-pres} that the map in question is a cover and hence we may check suaveness after pulling back the map along itself. We claim that
\begin{equation*}
\AnSpec\Z_\solid(\!(\pi)\!)\times_{\cal{N}/\R_{>0}}\AnSpec\Z_\solid(\!(\pi)\!)\cong \overcirc{\DD}^\times_{\Z(\!(\pi)\!)}=\{0<|T|<1\}\subseteq\P^1_{\Z(\!(\pi)\!)}\;,
\end{equation*}
which would prove the lemma as the punctured open disk $\overcirc{\DD}^\times_{\Z(\!(\pi)\!)}$ is open inside $\P^1_{\Z(\!(\pi)\!)}$ and hence suave over $\Z_\solid(\!(\pi)\!)$. To establish the claim, we observe that an $R$-point of the fibre product in question amounts to an $R$-point of $\Z_\solid(\!(\pi)\!)$ together with a pseudouniformiser $\pi'\in R(*)$ such that the norms $|\cdot|_\pi$ and $|\cdot |_{\pi'}$ induced by requiring $|\pi|_\pi=1/2$ and $|\pi'|_{\pi'}=1/2$, respectively, only differ by the action of $\R_{>0}$. This happens if and only if $|\pi'|_\pi=(1/2)^\lambda$ for some $\lambda\in\R_{>0}$ or, in other words, if $\pi'$ corresponds to an $R$-point of $\overcirc{\DD}^\times_{\Z(\!(\pi)\!)}$. This finishes the proof.
\end{proof}

\comment{
\begin{prop}
\label{prop:norms-dnmodrasmodules}
There is an equivalence of categories
\begin{equation*}
\D(\cal{N}/\R_{>0})\cong \D(\Z(\!(\pi)\!)\{T\}^\dagger\oplus T^{-1}\Z(\!(\pi)\!)\langle T^{-1}\rangle_{\leq 1})\;,
\end{equation*}
where 
\begin{equation*}
\Z(\!(\pi)\!)\{T\}^\dagger\coloneqq \colim_k \Z(\!(\pi)\!)\langle \pi^{-k}T\rangle
\end{equation*}
and we endow $\Z(\!(\pi)\!)\{T\}^\dagger\oplus T^{-1}\Z(\!(\pi)\!)\langle T^{-1}\rangle_{\leq 1}$ with the noncommutative $\Z$-linear algebra structure given by the relations
\begin{equation*}
\pi T^i=T^{i+1}\pi\;, \hspace{0.3cm} T^iT^j=\delta_{ij} T^i\;, \hspace{0.3cm} \pi^i\pi^j=\pi^{i+j}\;.
\end{equation*}
Under this equivalence, the cohomology of a quasicoherent sheaf $M$ on $\cal{N}/\R_{>0}$ is given by the retract $M^0$ of $M$ corresponding to the idempotent endomorphism $T^0$. 
\end{prop}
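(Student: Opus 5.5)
Since the map $g: \AnSpec\Z_\solid(\!(\pi)\!)\rightarrow\cal{N}/\R_{>0}$ is a suave cover by \cref{lem:norms-suavecover} (and a cover by \cref{cor:norms-pres}), I would first describe $\D(\cal{N}/\R_{>0})$ via descent along $g$. The proof of \cref{lem:norms-suavecover} identifies $\AnSpec\Z_\solid(\!(\pi)\!)\times_{\cal{N}/\R_{>0}}\AnSpec\Z_\solid(\!(\pi)\!)$ with the punctured open disk $\overcirc{\DD}^\times_{\Z(\!(\pi)\!)}$. The groupoid structure comes from the monoid structure on the punctured disk, i.e.\ the pair $(\pi,\pi')$ corresponds to $\pi'/\pi^\lambda$ up to units of norm one, together with a real rescaling. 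Rather than using the $*$-pullback comonadic description, I would use the $!$-side. The functor $g^!$ is conservative, and it commutes with colimits because $g$ is suave. Barr--Beck then gives $\D(\cal{N}/\R_{>0})\cong\Mod_{g^!g_!\O}(\D(\Z(\!(\pi)\!)))$, exactly as in the proof of \cref{lem:norms-primmodr}. Base change identifies $g^!g_!\O$ with the $!$-pushforward to $\AnSpec\Z(\!(\pi)\!)$ of the dualising sheaf on the fibre product, i.e.\ a twisted form of compactly supported functions on $\overcirc{\DD}^\times$.

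The main computation is then to identify this algebra, with its convolution structure, with $\Z(\!(\pi)\!)\{T\}^\dagger\oplus T^{-1}\Z(\!(\pi)\!)\langle T^{-1}\rangle_{\leq 1}$. I would use the standard computation of compactly supported cohomology of the open disk. Write $\overcirc{\DD}^\times$ as the open unit disk minus the origin. The compactly supported cohomology of the open unit disk is the dual of overconvergent functions near $|T|\leq 1$, and removing $0$ contributes the germ of functions at $0$. The resulting pieces are $\Z(\!(\pi)\!)\{T\}^\dagger$, the overconvergent functions at the origin, together with the principal parts $T^{-1}\Z(\!(\pi)\!)\langle T^{-1}\rangle_{\leq 1}$ coming from $|T|$ close to $1$. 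The twist by the dualising complex, which is $\Z(\!(\pi)\!)[2]$-like for the disk but loses a degree through the $\R_{>0}$-quotient, should shift everything into degree $0$. This is analogous to how $\R_{>0}$ contributed $\Z[-1]\cdot[1]$ in \cref{lem:norms-primmodr}.

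Next I would determine the multiplication. The convolution along the groupoid must be computed: elements of the fibre product are ``ratios'' $\pi'$, and composition multiplies them. On the functional side, the monomials $T^i$ become orthogonal idempotents, since convolution of delta-like classes supported at distinct norm shells vanishes. The copy of $\pi$ acts by shifting the shells, which gives the relation $\pi T^i=T^{i+1}\pi$. I would verify these relations on the dense subalgebra generated by monomials and then extend by continuity, using that both sides are solid and the relations are linear.

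Finally, the statement about cohomology follows by computing $\RHom(\O,M)$. The unit $\O$ corresponds to the module $g^!\O$, which should be the projective module cut out by the idempotent $T^0$. Hence $\RHom(\O,M)=T^0M$. I expect the main obstacle to be the precise identification of the convolution algebra $g^!g_!\O$, in particular controlling the overconvergence and the degree shifts. I would tackle this first on the $\R_{>0}$-torsor $\AnSpec\Z(\!(\pi)\!)\to\cal{N}$ level before quotienting.
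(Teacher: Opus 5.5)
Your skeleton matches the paper's proof: Barr--Beck for $g^!$ along the suave cover $g\colon\AnSpec\Z_\solid(\!(\pi)\!)\to\cal{N}/\R_{>0}$, base change to compactly supported cohomology of $\overcirc{\DD}^\times$, excision for the underlying object, and the $T^0$-retract for cohomology. The problems are in how you pass from the monad to an algebra, and in the picture of the groupoid you plan to use for the multiplication.

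Barr--Beck only gives modules over the monad $g^!g_!$. To replace it by $g^!g_!\mathcal{O}\otimes(-)$, you must check, as the paper does, that $g^!g_!N\cong g^!g_!\mathcal{O}\otimes N$ for $N$ pulled back from $\D(\Z_\solid)$. This uses the projection formula for $g_!$ and suaveness for $g^!$, together with the fact that such $N$ generate. The check gives only $\Z_\solid$-linearity. The monad is \emph{not} linear over $\D(\Z_\solid(\!(\pi)\!))$: $g^!g_!\mathcal{O}$ is a bimodule whose two $\Z(\!(\pi)\!)$-structures come from the source $\pi$ and from the target coordinate $T=\pi'$ of the fibre product. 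So the output is modules over an associative $\Z$-algebra receiving a non-central map from $\Z(\!(\pi)\!)$. ``Exactly as in \cref{lem:norms-primmodr}'' does not transfer verbatim, since there the base is $\Z_\solid$ and linearity is automatic.

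The same point undermines your plan for the multiplication, which you rightly single out as the main step; the paper itself only identifies the underlying object by excision.
\begin{itemize}
\item The fibre product is not the action groupoid of a group of ``ratios $\pi'/\pi^\lambda$''; there is no $\pi^\lambda$ for real $\lambda$. Its composition is $(\pi,\pi')\circ(\pi',\pi'')=(\pi,\pi'')$, not multiplication.
\item If you model the groupoid by a monoid structure on $\overcirc{\DD}^\times$ over a fixed base, source and target coincide and $\pi$ comes out central. Then $\pi T^i=T^{i+1}\pi$ can never appear, since that relation is exactly the mismatch between the source and target actions.
\item The $T^i$ are not classes supported on distinct norm shells: each monomial class already comes from $R\Gamma_c$ of every open annulus $\{a<|T|<b\}$. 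Rather, they play the role of projectors onto weight spaces for $\ol{\T}$, under which $\pi$ has weight $1$. This is why $T^0M$ is the weight-zero part and computes cohomology, matching the computation in \cref{lem:smoothfp-cohzpla}.
\item Your degree bookkeeping is also off. The punctured open disk is open in $\mathbb{P}^1_{\Z(\!(\pi)\!)}$, so its relative dualising complex is $\Omega^1[1]$, not ``$[2]$-like''. This gives $g^!g_!\mathcal{O}\cong R\Gamma_c(\overcirc{\DD}^\times,\mathcal{O})[1]$, which lies in degree $0$ because excision puts $R\Gamma_c$ in degree $1$. The $\R_{>0}$-quotient is already built into the fibre product and costs no further degree.
\end{itemize}
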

\begin{proof}
We want to apply Barr--Beck to the suave cover $f: \AnSpec\Z_\solid(\!(\pi)\!)\rightarrow \cal{N}/\R_{>0}$ from \cref{lem:norms-suavecover}. First observe that the functor $f^!f_!$ is $\Z_\solid(\!(\pi)\!)$-linear: Indeed, by suaveness, we know that $f^!$ commutes with colimits and the same is true for $f_!$ and hence the equivalence of functors $f^!f_!\cong f^!f_!\O\tensor (-)$ may be checked on generators of the category $\D(\Z_\solid(\!(\pi)\!))$. As a system of generators is given by pullbacks of solid $\Z$-modules, we have to check that
\begin{equation*}
f^!f_!f^*g^*M\cong f^!f_!\O\tensor f^*g^*M
\end{equation*}
for any $M\in\D(\Z_\solid)$ and $g: \cal{N}/\R_{>0}\rightarrow\AnSpec\Z_\solid$. However, by the projection formula, we have
\begin{equation*}
f^!f_!f^*g^*M\cong f^!(f_!\O\tensor g^*M)\cong f^!f_!\O\tensor f^*g^*M\;,
\end{equation*}
where the last step uses the suaveness of $f$, as desired. Overall, we thus conclude that
\begin{equation*}
\D(\cal{N}/\R_{>0})\cong \Mod_{f^!f_!\O}(\D(\Z_\solid(\!(\pi)\!))\;.
\end{equation*}
By suave base change as in \cite[Lem.\ 4.5.13]{HeyerMann}, we see that $f^!f_!\O$ identifies with compactly supported cohomology of the punctured open disk over $\Z(\!(\pi)\!)$ up to a shift using the calculation from the proof of \cref{lem:norms-suavecover} and hence we obtain
\begin{equation*}
f^!f_!\O\cong \Z(\!(\pi)\!)\{T\}^\dagger\oplus T^{-1}\Z(\!(\pi)\!)\langle T^{-1}\rangle_{\leq 1}
\end{equation*}
by excision, as desired.

For the second part, we note that the unit on $\cal{N}/\R_{>0}$ corresponds to the module structure on $\Z(\!(\pi)\!)$ given by
\begin{equation*}
T^i\cdot \pi^j=\delta_{ij}\pi^j\;, \hspace{0.3cm} \pi^i\cdot \pi^j=\pi^{i+j}
\end{equation*}
and our task is to compute $\RHom(\Z(\!(\pi)\!), M)$ as modules over $\Z(\!(\pi)\!)\{T\}^\dagger\oplus T^{-1}\Z(\!(\pi)\!)\langle T^{-1}\rangle_{\leq 1}$. For this, we observe that $\Z(\!(\pi)\!)$ is the retract of $\Z(\!(\pi)\!)\{T\}^\dagger\oplus T^{-1}\Z(\!(\pi)\!)\langle T^{-1}\rangle_{\leq 1}$ corresponding to the idempotent endomorphism $T^0$. Consequently, the space $\RHom(\Z(\!(\pi)\!), M)$ must be the retract of $\RHom(\Z(\!(\pi)\!)\{T\}^\dagger\oplus T^{-1}\Z(\!(\pi)\!)\langle T^{-1}\rangle_{\leq 1}, M)=M$ corresponding to the idempotent endomorphism $T^0$, as desired.
\end{proof}
}

\begin{prop}
\label{prop:norms-embed}
Let $S$ be any analytic stack over $\Z_\solid$ and let $\cal{N}_S$ denote the base change of $\cal{N}$ along $S\rightarrow\AnSpec\Z_\solid$. Then pullback along the map $\cal{N}_S/\R_{>0}\rightarrow S$ induces a fully faithful embedding
\begin{equation*}
\D(S)\hookrightarrow\D(\cal{N}_S/\R_{>0})\;.
\end{equation*}
In particular, we have fully faithful embeddings
\begin{equation*}
\D(\Z_\solid)\hookrightarrow \D(\cal{N}/\R_{>0})\;, \hspace{0.5cm} \D(\F_{p, \solid})\hookrightarrow \D(\cal{N}_{\F_p}/\R_{>0})\;.
\end{equation*}
\end{prop}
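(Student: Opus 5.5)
We would reduce the statement to a single cohomology computation over $\AnSpec\Z_\solid$. Write $g\colon \cal{N}_S/\R_{>0}\to S$. We want the unit $M\to g_*g^*M$ to be an isomorphism for every $M\in\D(S)$.

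\textbf{Reduction to $\Z_\solid$.} By \cref{lem:norms-primmodr}, the map $\cal{N}/\R_{>0}\to\AnSpec\Z_\solid$ is prim. Prim maps are stable under base change, so $g$ is prim. For prim maps, $g_*$ commutes with colimits and satisfies both the projection formula and base change (see \cite{HeyerMann}). The projection formula identifies the unit with
\begin{equation*}
M\cong M\tensor\O_S\to M\tensor g_*\O\;.
\end{equation*}
It therefore suffices to show that $\O_S\to g_*\O$ is an isomorphism. Base change along $S\to\AnSpec\Z_\solid$ further reduces this to $S=\AnSpec\Z_\solid$. There we must prove
\begin{equation*}
R\Gamma(\cal{N}/\R_{>0},\O)\cong\Z\;.
\end{equation*}
The two displayed embeddings of the statement are then the special cases $S=\AnSpec\Z_\solid$ and $S=\AnSpec\F_{p,\solid}$.

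\textbf{The cohomology computation.} We factor $\cal{N}/\R_{>0}\to */\R_{>0}\to *$ as in the proof of \cref{lem:norms-primmodr}. That proof identifies $\D(*/\R_{>0})\cong\D(\Z_\solid)$ and shows that pushforward along $*/\R_{>0}\to *$ becomes the identity. It thus remains to compute the pushforward of $\O$ along $\cal{N}/\R_{>0}\to */\R_{>0}$. After pulling back along $*\to */\R_{>0}$, this is $R\Gamma(\cal{N},\O)$ equipped with its residual $\R_{>0}$-equivariance.

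For $R\Gamma(\cal{N},\O)$ we use the presentation of \cref{cor:norms-pres}. It gives $R\Gamma(\cal{N},\O)$ as the cohomology of the group $\ol{\T}$ with coefficients in the comodule $\Z(\!(\pi)\!)$, with coaction $\pi^n\mapsto \pi^nT^n$. This comodule splits, after completion, into weight pieces $\Z\pi^n$ on which $\ol{\T}$ acts through the character $T^n$. We then carry out two steps:
\begin{enumerate}[label=(\arabic*)]
\item Show that the $\ol{\T}$-cohomology of each nonzero character vanishes, uniformly enough to pass to the completed sum. For this we compute with the explicit cobar complex built from $\Z(\!(\pi)\!)\langle T^{\pm1}\rangle_{\leq 1}$. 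On weight-$n$ terms, a contracting homotopy is given by projecting to the coefficient of $T^{-n}$, analogous to the constant-term section used in \cref{lem:norms-prim}.
\item Compute the trivial-character piece $R\Gamma(*/\ol{\T},\O)$. We expect this to be $\Z\oplus\Z[-1]$, as for a circle. The extra class should be exactly cancelled by the $\R_{>0}$-quotient, much as $R\Gamma_c(\R_{>0})\cong\Z[-1]$ entered the proof of \cref{lem:norms-primmodr}.
\end{enumerate}

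\textbf{Alternative route and main obstacle.} To avoid separating these two effects, one can compute $R\Gamma(\cal{N}/\R_{>0},\O)$ directly. By \cref{lem:norms-suavecover}, the map $\AnSpec\Z_\solid(\!(\pi)\!)\to\cal{N}/\R_{>0}$ is a suave cover. Its Čech nerve has $1$-simplices the punctured open disk $\{0<|T|<1\}$ over $\Z(\!(\pi)\!)$, and higher simplices are iterated products of such disks. One then shows that the totalization of global sections is $\Z$, again by sorting Laurent coefficients by weight and contracting the nonzero weights.

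The main obstacle is the interplay between the weight-zero part and the $\R_{>0}$-quotient: we must verify that the degree-one class coming from $\ol{\T}$ (a ``$\log|T|$'' class) is precisely killed by $\R_{>0}$, rather than merely having the right Euler characteristic. To settle this, we would first compute everything explicitly over $\Q$ or after base change to $\Z(\!(\pi)\!)$, and then match the resulting boundary map against the $\R_{>0}$-action on norms.
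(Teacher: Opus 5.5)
\textbf{The reduction is fine; the computation has a gap.} Your reduction to $R\Gamma(\cal{N}/\R_{>0},\mathcal{O})\cong\Z$ is correct and parallels the paper. The paper does it in the other order: it first discards $\R_{>0}$, since pullback along any $\R_{>0}$-torsor is fully faithful by contractibility \cite[Cor.\ II.1.2]{RealLLC}, and then uses primness of $\cal{N}\to\AnSpec\Z_\solid$ from \cref{lem:norms-prim}. The gap is in the computation itself. Your step (2), and the ``main obstacle'' built on it, rest on a false expectation: there is no degree-one class.

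\textbf{Why the weight-zero piece has no $H^1$.} In the \v{C}ech complex $C^r=\Z(\!(\pi)\!)\langle X_1^{\pm1},\dots,X_r^{\pm1}\rangle_{\leq 1}$, all coface maps preserve the exponent $m$ of $\pi$. The convergence condition forces the weight-$m$ part of $C^r$ to be $\pi^m\Z[X_1^{\pm1},\dots,X_r^{\pm1}]$. So the weight-$m$ subcomplex is exactly the cobar complex of the character $X^m$ of the algebraic torus $\G_m^{\alg}$ over $\Z$. For $m=0$ this is the trivial representation of a diagonalisable group, which has no higher cohomology. Taking the constant term in $X_1$ contracts it; this is the Haar integral, i.e.\ the $\ol{\T}$-colinear section already used in \cref{lem:norms-prim}. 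So $\ol{\T}=\{|T|=1\}$ behaves like $\G_m$, not like a Betti circle, and $R\Gamma(\cal{N},\mathcal{O})\cong\Z$ on the nose.

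\textbf{The proposed cancellation cannot occur.} Even in principle, $\R_{>0}$ cannot kill such a class. Because $\R_{>0}$ is contractible, $\mathcal{O}\to q_*\mathcal{O}$ is an isomorphism for $q\colon\cal{N}\to\cal{N}/\R_{>0}$, so $R\Gamma(\cal{N}/\R_{>0},\mathcal{O})\cong R\Gamma(\cal{N},\mathcal{O})$. Your own factorisation through $*/\R_{>0}$ gives the same thing, since you note that $*/\R_{>0}$ is cohomologically a point. A nonzero $H^1$ on $\cal{N}$ would therefore be a counterexample to the proposition, not something the $\R_{>0}$-quotient removes. The $\Z[-1]$ in \cref{lem:norms-primmodr} is compactly supported cohomology. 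It is cancelled by the dualising complex $\Z[1]$ of the torsor and has nothing to do with global sections.

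\textbf{What is actually needed.} The paper uses a single extra codegeneracy $s^{-1}\colon C^r\to C^{r-1}$. It keeps exactly the monomials $\pi^m\ul{X}^{\ul{\alpha}}$ with $\alpha_1=m$ and shifts the remaining variables down. This handles all weights at once; in weight $0$ it is the constant-term projection. Because it is defined on the completed complex with an evident convergence check, it also removes your worry about passing from individual weight pieces to the completed sum. The suave-cover route via punctured disks is then unnecessary.
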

\begin{proof}
\comment{
By \cref{lem:norms-primmodr}, the map $f: \cal{N}/\R_{>0}\rightarrow \AnSpec\Z_\solid$ is prim and from \cref{prop:norms-dnmodrasmodules} we conclude that $f_*\O$ is given by the degree zero subspace of $\Z(\!(\pi)\!)$ with respect to the torus action, hence $f_*\O\cong \Z$. By base change for prim maps as in \cite[Lem.\ 4.5.13]{HeyerMann}, we deduce that any base change of $f$ is also prim and satisfies $f_*\O\cong \O$, whence the claimed full faithfulness follows from the projection formula.
}
We first note that the pullback functor $\D(\cal{N}_S/\R_{>0})\rightarrow\D(\cal{N}_S)$ is fully faithful: Indeed, as $\R_{>0}$ is contractible, this is true for pullback along any $\R_{>0}$-torsor by \cite[Cor.\ II.1.2]{RealLLC}. Thus, we are reduced to showing that $\D(S)\hookrightarrow\D(\cal{N}_S)$ is fully faithful. For this, letting $f: \cal{N}\rightarrow \AnSpec\Z_\solid$ be the projection, we will show that $f$ is prim and that $f_*\O\cong \O$. Then any base change $f'$ of $f$ is prim as well and satisfies $f'_*\O\cong \O$ by base change, see \cite[Lem.\ 4.5.13]{HeyerMann}, hence the claim follows by the projection formula.

Primness of $f$ is the content of \cref{lem:norms-prim}, so it remains to show that $f_*\O\cong \O$. Using the presentation of $\cal{N}$ from \cref{cor:norms-pres}, this amounts to checking that the augmented cosimplicial diagram
\begin{equation*}
\begin{tikzcd}
\Z\ar[r] & \Z(\!(\pi)\!)\ar[r,shift left=.5ex]
  \ar[r,shift right=.5ex] & \Z(\!(\pi)\!)\langle X_1^{\pm 1}\rangle_{\leq 1} \ar[r,shift left=1ex]\ar[r]
  \ar[r,shift right=1ex] & \Z(\!(\pi)\!)\langle X_1^{\pm 1}, X_2^{\pm 1}\rangle_{\leq 1} \ar[r, shift left=1.5ex] \ar[r,shift left=.5ex]
  \ar[r,shift right=.5ex] \ar[r, shift right=1.5ex] & \cdots
\end{tikzcd}
\end{equation*}
obtained from the \v{C}ech nerve of the surjection $\AnSpec\Z_\solid(\!(\pi)\!)\rightarrow \cal{N}$ is exact. Here, the codegeneracy maps $s^j: C^n\rightarrow C^{n-1}$ are given by $X_{j+1}\mapsto 1$; moreover, the coface maps $d^j: C^{n-1}\rightarrow C^n$ for $0<j<n$ are given by $X_j\mapsto X_jX_{j+1}, X_{j+1}\mapsto X_{j+2}, \dots, X_{n-1}\mapsto X_n$ while $d^n$ is the canonical inclusion and $d^0$ is given by $\pi\mapsto \pi X_1$ and $X_i\mapsto X_{i+1}$ for $0<i<n$.

To show exactness, we produce an extra codegeneracy $s^{-1}$ given by the formula
\begin{equation}
\label{eq:norms-extradegen}
s^{-1}\left(\sum_{\ul{\alpha}}\sum_n c_{n, \ul{\alpha}}\pi^n\ul{X}^{\ul{\alpha}}\right)=\sum_{\ul{\alpha}} \sum_n \delta_{\alpha_1, n} c_{n, \ul{\alpha}} \pi^n\ul{X}^{\ul{\alpha}'}\;,
\end{equation}
where $c_{n, \ul{\alpha}}\in\Z$ and $\ul{\alpha}=(\alpha_1,\alpha_2, \dots)$ is a multiindex with almost all entries zero, for which we set $\ul{X}^{\ul{\alpha}}\coloneqq \prod_i X_i^{\alpha_i}$; moreover, $\delta_{ij}$ is the Kronecker delta and $\ul{\alpha}'\coloneqq (\alpha_2, \alpha_3, \dots)$ denotes the left shift of $\ul{\alpha}$. We first note that $s^{-1}$ is well-defined, i.e.\ that the right-hand side of (\ref{eq:norms-extradegen}) converges: Indeed, we have $|\ul{\alpha}'|\leq |\ul{\alpha}|$, where we put $|\ul{\alpha}|\coloneqq \sum_i |\alpha_i|$, and hence convergence of the right-hand side of (\ref{eq:norms-extradegen}) follows from the fact that the input of $s^{-1}$ on the left-hand side converges. Thus, it remains to check the cosimplicial identities
\begin{equation*}
s^{-1}\circ d^0=\id\;, \hspace{0.3cm} s^{-1}\circ d^j=d^{j-1}\circ s^{-1}\;, \hspace{0.3cm} s^{-1}\circ s^j=s^{j-1}\circ s^{-1}
\end{equation*}
for $j\geq 1$, but this is easy and we omit the details.
\end{proof}

In fact, we will restrict our base even further to the (quotient by $\R_{>0}$) of the open substack of $\cal{N}$ where $p$ has norm less than $1$, which we denote by $\cal{N}_{|p|<1}$. In other words, the stack $\cal{N}_{|p|<1}$ is the moduli space of normed analytic rings $R$ such that the map
\begin{equation*}
|p|: \AnSpec R\rightarrow \A^{1, \alg}_R\xrightarrow{|\,\cdot\, |} [0, \infty]
\end{equation*}
factors through $[0, 1)$. Note that any such normed analytic ring $R$ will automatically be a $\Z_p$-algebra, i.e.\ there is a natural map $\cal{N}_{|p|<1}/\R_{>0}\rightarrow\AnSpec\Z_{p, \solid}$. We first observe that the additional restriction $|p|>0$ collapses the stack of norms (modulo $\R_{>0}$) to a point.

\begin{lem}
\label{lem:norms-0<p<1}
There is an isomorphism
\begin{equation*}
\cal{N}_{0<|p|<1}\cong \AnSpec\Q_{p, \solid}\times (0, 1)\;,
\end{equation*}
whose postcomposition with projection onto the second factor is given by the norm of $p$. In particular, we have
\begin{equation*}
\cal{N}_{0<|p|<1}/\R_{>0}\cong \AnSpec\Q_{p, \solid}\;.
\end{equation*}
\end{lem}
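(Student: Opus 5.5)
The plan is to build mutually inverse maps between $\cal{N}_{0<|p|<1}$ and $\AnSpec\Q_{p,\solid}\times(0,1)$, using $p$ itself as the pseudouniformiser.

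\textbf{Step 1: a map to the product.} Let $R$ be a normed analytic ring with $0<|p|<1$ pointwise.

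\begin{enumerate}[label=(\alph*)]
\item The composite $|p|\colon \AnSpec R\to[0,\infty]$ factors through $(0,1)$. This gives the map to the second factor.
\item As in the proof of \cref{prop:norms-normviapseudouniformiser}, the condition $|p|<1$ together with axiom (iv) makes $p$ topologically nilpotent. Indeed, over $[0,1)$ the map $\AnSpec R[\hat T]\to\P^1_R$ is an isomorphism, so $T\mapsto p$ factors through $R[\hat T]$.
\item The condition $|p|>0$ together with axiom (ii) shows that $p^{-1}$ lands in $\A^{1,\an}_R\subseteq\A^{1,\alg}_R$ (\cref{rem:norms-defi}(iv)), so $p$ is a unit.
\end{enumerate}

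Hence $R$ is a $\Z_\solid(\!(p)\!)=\Q_{p,\solid}$-algebra. Here I am using that $\Z_\solid(\!(\pi)\!)$ with $\pi\mapsto p$ is $\Q_p$ with its induced solid structure: the $\pi$-adic completion of $\Z[\![\pi]\!]$ modulo $\pi-p$ is $\Z_p$. This yields a natural map $\cal{N}_{0<|p|<1}\to\AnSpec\Q_{p,\solid}\times(0,1)$.

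\textbf{Step 2: the inverse map.} Take an $R$-point of the product, given by $\Q_p\to R$ and a map $r\colon\AnSpec R\to(0,1)$.

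\begin{enumerate}[label=(\alph*)]
\item Write $r=(1/2)^{\lambda}$ with $\lambda\colon\AnSpec R\to\R_{>0}$ continuous.
\item Use $p$ as the pseudouniformiser. By \cref{prop:norms-normviapseudouniformiser}, the map $\Z(\!(\pi)\!)\to R$ with $\pi\mapsto p$ gives the norm $|\cdot|_p$ with $|p|_p=1/2$.
\item Rescale by the $\R_{>0}$-action to get $|\cdot|_p^{\lambda}$, which has $|p|=r$. The action makes sense for a map $\AnSpec R\to\R_{>0}$, not just a constant, because the action of the Betti stack $\R_{>0}$ on $\cal N$ is by maps $\AnSpec R\to\R_{>0}$.
\end{enumerate}

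\textbf{Step 3: the two composites are the identity.} The composite product $\to\cal N\to$ product is clearly the identity. For the other composite, take a norm $|\cdot|$ on $R$ with $0<|p|<1$. We must show it equals $|\cdot|_p^{\lambda}$, where $(1/2)^{\lambda}=|p|$.

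\begin{enumerate}[label=(\alph*)]
\item Both norms take the same value on $p$.
\item I would compare them by a local argument. Pass to a $!$-cover as in the proof of \cref{prop:norms-normhalfsurjects} so that $\lambda$ becomes locally constant on profinite pieces; more naively, reduce to the case where $\lambda$ is constant by descent.
\item Once $\lambda$ is constant, the rescaled norm $|\cdot|^{1/\lambda}$ satisfies $|p|=1/2$.
\item The uniqueness part of \cref{prop:norms-normviapseudouniformiser} then identifies $|\cdot|^{1/\lambda}$ with $|\cdot|_p$.
\end{enumerate}

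I expect this uniqueness comparison to be the main obstacle: making the rescaling by a non-constant $\lambda$ rigorous and matching it with the descent data. An alternative route is to use \cref{cor:norms-pres}, which exhibits $\cal{N}$ as $\AnSpec\Z_\solid(\!(\pi)\!)/\ol{\T}$. Over $0<|p|<1$, the fibre product with $\AnSpec\Q_{p,\solid}$ (via $\pi\mapsto p$) becomes the locus $\{0<|p/\pi|\}$, and one checks directly that the torsor trivialises.

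\textbf{Step 4: the quotient by $\R_{>0}$.} The action of $\lambda\in\R_{>0}$ sends $|p|$ to $|p|^{\lambda}$. Under the isomorphism it is therefore the action on $(0,1)$ by $r\mapsto r^{\lambda}$, and trivial on $\AnSpec\Q_{p,\solid}$. This action is simply transitive, so $(0,1)/\R_{>0}\cong *$ and hence $\cal{N}_{0<|p|<1}/\R_{>0}\cong\AnSpec\Q_{p,\solid}$.
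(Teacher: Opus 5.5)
Your argument is correct and rests on the same ingredients as the paper's proof: the uniqueness in \cref{prop:norms-normviapseudouniformiser} applied with $\pi=p$, the identification $\Z(\!(\pi)\!)/(\pi-p)\cong\Q_p$, and simple transitivity of exponentiation on $(0,1)$; the paper only runs the argument in the other order, first identifying $\cal{N}_{0<|p|<1}/\R_{>0}$ with the slice $\cal{N}_{|p|=1/2}\cong\AnSpec\Q_{p,\solid}$ and then recovering the product decomposition from $\R_{>0}$-equivariance. The obstacle you anticipate in Step 3 is not real, and the reduction to constant $\lambda$ is unnecessary (descent as in \cref{prop:norms-normhalfsurjects} would only make $\lambda$ factor through a profinite set, not make it constant): acting by $\lambda^{-1}\colon\AnSpec R\to\R_{>0}$, exactly as you already do in Step 2(c), directly gives a norm with $|p|=1/2$, and uniqueness identifies it with $|\cdot|_p$.
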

\begin{proof}
Clearly, there is a canonical map $\cal{N}_{0<|p|<1}\rightarrow\AnSpec\Q_{p, \solid}\times (0, 1)$ whose second component is given by the norm of $p$. As this is equivariant with respect to the exponentiation action of $\R_{>0}$ on the target, it thus suffices to show that $\cal{N}_{0<|p|<1}/\R_{>0}\cong \AnSpec\Q_{p, \solid}$. For this, we note that $\R_{>0}$ acts transitively on $(0, 1)$ via exponentiation and hence $\cal{N}_{0<|p|<1}/\R_{>0}\cong \cal{N}_{|p|=1/2}$. Finally, by \cref{prop:norms-normviapseudouniformiser}, we see that a map from $\AnSpec R$ to $\cal{N}_{|p|=1/2}$ for some analytic ring $R$ over $\Z_\solid$ corresponds to a map $\Z_\solid(\!(\pi)\!)/(\pi-p)\rightarrow R$ and this yields the claim due to $\Z(\!(\pi)\!)/(\pi-p)\cong \Q_p$.
\end{proof}

We have the following analogue of \cref{prop:norms-embed}:

\begin{prop}
\label{prop:norms-embedzp}
Pullback induces a fully faithful embedding
\begin{equation*}
\D(\Z_{p, \solid})\hookrightarrow \D(\cal{N}_{|p|<1}/\R_{>0})\;.
\end{equation*}
\end{prop}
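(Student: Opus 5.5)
We follow the strategy of \cref{prop:norms-embed}. First, pullback along the $\R_{>0}$-torsor $\cal{N}_{|p|<1}\rightarrow\cal{N}_{|p|<1}/\R_{>0}$ is fully faithful because $\R_{>0}$ is contractible, by \cite[Cor.\ II.1.2]{RealLLC}. It therefore suffices to show that pullback along $f\colon \cal{N}_{|p|<1}\rightarrow\AnSpec\Z_{p,\solid}$ is fully faithful. To do this we establish two facts: $f$ is prim, and $f_*\O\cong\O$. Full faithfulness then follows from the projection formula, $f_*f^*M\cong f_*\O\tensor M\cong M$, where the first isomorphism is available because $f$ is prim.

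For primness, pull back the presentation of \cref{cor:norms-pres} along the open immersion $\cal{N}_{|p|<1}\subseteq\cal{N}$. This presents $\cal{N}_{|p|<1}$ as the quotient of $U\coloneqq\AnSpec\Z_\solid(\!(\pi)\!)\times_{\cal{N}}\cal{N}_{|p|<1}=\{|p|_\pi<1\}$ by $\ol{\T}$. Since the condition $|p|<1$ is invariant under rescaling $\pi$ by units of norm $1$, it makes sense on the quotient. Concretely, $U=\bigcup_{\epsilon>0}\{|p|\leq|\pi|^{\epsilon}\}$, so $U$ is the union of the affine pieces $\AnSpec\Z(\!(\pi)\!)\langle p/\pi^{\epsilon}\rangle$ with induced analytic ring structure. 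In fact $U$ is the open $\AnSpec$ of the $p$-adically and $\pi$-adically completed ring $\Z_p(\!(\pi)\!)$, suitably overconvergent in $p$. The argument of \cref{lem:norms-prim} then goes through unchanged. The map $U\rightarrow\cal{N}_{|p|<1}$ is a base change of the prim cover and is therefore prim. Moreover, the constant-term map $\O(U\times\ol{\T})\rightarrow\O(U)$ is an $\O(\ol{\T})$-colinear retraction, which gives descendability. Combining these with \cite[Cor.\ 4.7.5]{HeyerMann} shows that $f$ is prim.

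The main obstacle is the computation of $f_*\O$. This is the totalisation of the Čech complex
\[
\O(U)\rightrightarrows\O(U)\langle X_1^{\pm1}\rangle_{\leq1}\cdots,
\]
and we must show that its augmentation from $\Z_p$ is an equivalence. We would compute the global sections of $U$ as a limit over $\epsilon$ of the rings $\Z(\!(\pi)\!)\langle p/\pi^{\epsilon}\rangle$, and similarly for the higher terms. Each term has a description as convergent sums $\sum c_{n,\ul\alpha}\pi^n\ul X^{\ul\alpha}$ with coefficients in $\Z$, or better in $\Z_p$-type completions. The aim is to reuse the extra codegeneracy $s^{-1}$ from (\ref{eq:norms-extradegen}), which picks out the terms with $\alpha_1=n$. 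We need to check that $s^{-1}$ preserves the convergence conditions defining the $|p|<1$ locus. Since $s^{-1}$ only selects monomials and shifts the multi-index, the coefficients $c_{n,\ul\alpha}$, which may be divisible by powers of $p$, are untouched; the required estimate should therefore follow as in the proof of \cref{prop:norms-embed}.

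The subtle point is that a homotopy of this kind computes $f_*\O$ as the degree-zero part under the torus grading, namely the $\pi^0$-coefficients. On $U$, the relation that $p$ is small relative to powers of $\pi$ means that the degree-zero part of $\O(U)$ is precisely $\Z_p$, the $p$-adic completion of $\Z$, rather than $\Z$. For instance, $\sum_n p^n$ converges in every $\Z(\!(\pi)\!)\langle p/\pi^\epsilon\rangle$ and lies in degree zero. So the key check is an explicit identification of the degree-zero part of each $\Z(\!(\pi)\!)\langle p/\pi^\epsilon\rangle$ with $\Z_p$ (in fact the degree-zero part of $\Z[\![\pi]\!]\langle p/\pi^\epsilon\rangle$ is already $\Z_p$), compatibly in $\epsilon$. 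Once $f_*\O\cong\Z_p$ is established, base change and the projection formula finish the proof exactly as in \cref{prop:norms-embed}.
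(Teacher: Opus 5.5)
Your reduction along the $\R_{>0}$-torsor is fine, but the central claim that $f\colon \cal{N}_{|p|<1}\to\AnSpec\Z_{p,\solid}$ is prim is false. Without it, the projection formula $f_*f^*M\cong f_*\mathcal{O}\tensor M$ is not available. The mistake is treating $U=\{|p|_\pi<1\}$ as affine. It is only the increasing union $\colim_k\AnSpec\Z_\solid(\!(\pi)\!)\langle p^k/\pi\rangle$, whereas the argument of \cref{lem:norms-prim} needs the cover to be an affine with induced analytic ring structure, so that it is prim over the base.

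Primness genuinely fails. A prim map satisfies $f_*\cong f_!(D_f\tensor -)$, so $f_*$ would commute with colimits. Now take $t_i\coloneqq 1-1/(i+3)$; these form a closed discrete subset of $[0,1)$. Let $N_i$ be the pushforward of the structure sheaf of $\{|p|=t_i\}\cong\AnSpec\Q_{p,\solid}$, using \cref{lem:norms-0<p<1}. Then $\bigoplus_i N_i$ is pushed forward along the closed immersion of $\bigsqcup_i\AnSpec\Q_{p,\solid}$. Hence $f_*\bigl(\bigoplus_i N_i\bigr)\cong\prod_i\Q_p$, whereas $\bigoplus_i f_*N_i\cong\bigoplus_i\Q_p$. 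The culprit is the non-quasicompactness of $\cal{N}_{|p|<1}$ as $|p|\to 1$. Your computation $f_*\mathcal{O}\cong\lim_\epsilon(\dots)\cong\Z_p$ is correct, but it does not extend to arbitrary $M$ via primness.

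This is exactly why the paper first cuts down to a quasicompact piece. It uses $\cal{N}_{0<|p|<1}\cong\AnSpec\Q_{p,\solid}\times(0,1)$, on which pullbacks from $\Z_{p,\solid}$ are constant sheaves, and restricting constant sheaves from $(0,1)$ to $(0,1/2]$ is fully faithful. Together with the gluing $\D(\cal{N}_{|p|<1})\cong\D(\cal{N}_{|p|\leq 1/2})\times_{\D(\cal{N}_{0<|p|\leq 1/2})}\D(\cal{N}_{0<|p|<1})$, this reduces everything to $\cal{N}_{|p|\leq 1/2}\cong\AnSpec\Z_\solid(\!(\pi)\!)\langle p/\pi\rangle_{\leq 1}/\ol{\T}$. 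That is a quotient of a genuine affine with induced structure, hence prim over $\Z_{p,\solid}$. Alternatively, you could exhaust $\cal{N}_{|p|<1}$ by the $\cal{N}_{|p|\leq c}$ for $c<1$, which are all isomorphic over $\Z_{p,\solid}$ via the $\R_{>0}$-action, and pass to the limit of the resulting fully faithful functors.

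A secondary point concerns the affine chart. Your extra codegeneracy with ``$\Z_p$-type'' coefficients presupposes unique expansions $\sum c_{n,\ul\alpha}\pi^n\ul X^{\ul\alpha}$ in $\Z(\!(\pi)\!)\langle p/\pi^\epsilon\rangle\langle \ul X^{\pm1}\rangle_{\leq 1}$, and you have not established these (negative powers of $\pi$ with $p$-divisible coefficients occur). The paper sidesteps this by adjoining a free variable $T$ for $p/\pi$, of weight $-1$ for $\ol{\T}$. It then uses the extra codegeneracy with $\delta_{\alpha_1,n-m}$ to show the \v{C}ech complex resolves $\Z[\![\pi T]\!]$, and only afterwards sets $\pi T=p$.
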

\begin{proof}
We first observe that it suffices to show that $\D(\Z_{p, \solid})\rightarrow\D(\cal{N}_{|p|<1})$ is fully faithful. Indeed, this implies the claim as $\D(\cal{N}_{|p|<1}/\R_{>0})\rightarrow\D(\cal{N}_{|p|<1})$ due to $\R_{>0}$ being contractible, see \cite[Cor.\ II.1.2]{RealLLC}. From \cref{lem:norms-0<p<1}, we now deduce that
\begin{equation*}
\D(\cal{N}_{0<|p|<1})\cong \operatorname{Shv}((0, 1), \D(\Q_{p, \solid}))
\end{equation*}
by \cite[Prop.\ II.1.1]{RealLLC} and similarly for $\D(\cal{N}_{0<|p|\leq 1/2})$. We conclude that the essential image of $\D(\Z_{p, \solid})$ in $\D(\cal{N}_{0<|p|<1})$ embeds fully faithfully into the category $\D(\cal{N}_{0<|p|\leq 1/2})$ as this essential image precisely consists of the constant sheaves. Due to
\begin{equation*}
\D(\cal{N}_{|p|<1})\cong \D(\cal{N}_{|p|\leq 1/2})\times_{\D(\cal{N}_{0<|p|\leq 1/2})} \D(\cal{N}_{0<|p|<1})\;,
\end{equation*}
it thus suffices to show that $\D(\Z_{p, \solid})$ embeds fully faithfully into $\D(\cal{N}_{|p|\leq 1/2})$.

Now note that
\begin{equation*}
\cal{N}_{|p|\leq 1/2}\cong \AnSpec \Z_\solid(\!(\pi)\!)\langle \tfrac{p}{\pi}\rangle_{\leq 1}\,\big/\,\ol{\T}
\end{equation*}
by \cref{cor:norms-pres} and hence the same argument as in \cref{lem:norms-prim} shows that the map $f: \cal{N}_{|p|\leq 1/2}\rightarrow \AnSpec\Z_{p, \solid}$ is prim. Thus, as in the proof of \cref{prop:norms-embed}, it suffices to show that $f_*\O\cong \O$. To this end, we prove more generally that
\begin{equation*}
\begin{tikzcd}
\Z(\!(\pi)\!)\langle T\rangle_{\leq 1}\ar[r,shift left=.5ex]
  \ar[r,shift right=.5ex] & \Z(\!(\pi)\!)\langle T\rangle_{\leq 1}\langle X_1^{\pm 1}\rangle_{\leq 1} \ar[r,shift left=1ex]\ar[r]
  \ar[r,shift right=1ex] & \Z(\!(\pi)\!)\langle T\rangle_{\leq 1}\langle X_1^{\pm 1}, X_2^{\pm 1}\rangle_{\leq 1} \ar[r, shift left=1.5ex] \ar[r,shift left=.5ex]
  \ar[r,shift right=.5ex] \ar[r, shift right=1.5ex] & \cdots
\end{tikzcd}
\end{equation*}
is a cosimplicial resolution of $\Z[\![\pi T]\!]$, where the $\ol{\T}$-action on $T$ is given by division, then the claim follows by modding out $\pi T-p$. For this, we note that the cosimplicial diagram above receives a canonical augmentation map from $\Z[\![\pi T]\!]$ and hence it suffices to construct an extra codegeneracy $s^{-1}$. However, as in the proof of \cref{prop:norms-embed}, one easily checks that 
\begin{equation*}
s^{-1}\left(\sum_{\ul{\alpha}}\sum_{m, n} c_{m, n, \ul{\alpha}}\pi^n T^m\ul{X}^{\ul{\alpha}}\right)=\sum_{\ul{\alpha}} \sum_{m, n} \delta_{\alpha_1, n-m} c_{n, \ul{\alpha}} \pi^n T^m\ul{X}^{\ul{\alpha}'}\;,
\end{equation*}
does the job.
\comment{
This will be given by the formula
\begin{equation}
\label{eq:norms-extracodegeneracyzp}
s^{-1}\left(\sum_{\ul{\alpha}}\sum_{m, n} c_{m, n, \ul{\alpha}}\pi^n T^m\ul{X}^{\ul{\alpha}}\right)=\sum_{\ul{\alpha}} \sum_{m, n} \delta_{\alpha_1, n-m} c_{n, \ul{\alpha}} \pi^n T^m\ul{X}^{\ul{\alpha}'}\;,
\end{equation}
where $c_{n, \ul{\alpha}}\in\Z$ and $\ul{\alpha}=(\alpha_1,\alpha_2, \dots)$ is a multiindex with almost all entries zero, for which we set $\ul{X}^{\ul{\alpha}}\coloneqq \prod_i X_i^{\alpha_i}$; moreover, $\delta_{ij}$ is the Kronecker delta and $\ul{\alpha}'\coloneqq (\alpha_2, \alpha_3, \dots)$ denotes the left shift of $\ul{\alpha}$. We first note that $s^{-1}$ is well-defined, i.e.\ that the right-hand side of (\ref{eq:norms-extracodegeneracyzp}) converges: Indeed, we have $|\ul{\alpha}'|\leq |\ul{\alpha}|$, where we put $|\ul{\alpha}|\coloneqq \sum_i |\alpha_i|$, and hence convergence of the right-hand side of (\ref{eq:norms-extracodegeneracyzp}) follows from the fact that the input of $s^{-1}$ on the left-hand side converges. Thus, it remains to check the cosimplicial identities
\begin{equation*}
s^{-1}\circ d^0=\id\;, \hspace{0.3cm} s^{-1}\circ d^j=d^{j-1}\circ s^{-1}\;, \hspace{0.3cm} s^{-1}\circ s^j=s^{j-1}\circ s^{-1}
\end{equation*}
for $j\geq 1$, but this is easy and we omit the details.
}
\comment{
First note that the simplicial diagram above indeed receives a natural augmentation map from $\Z[\![\pi T]\!]$ as $\pi T$ is fixed by the $\ol{\T}$-action. Now we proceed as in the proof of \cref{prop:norm-embedarbitrarybase} and first observe that
\begin{equation*}
\begin{tikzcd}
\pi^{-1}\Z[\pi^{-1}][T]\ar[r,shift left=.5ex]
  \ar[r,shift right=.5ex] & \pi^{-1}\Z[\pi^{-1}][T][X^{\pm 1}] \ar[r,shift left=1ex]\ar[r]
  \ar[r,shift right=1ex] & \pi^{-1}\Z[\pi^{-1}][T][X^{\pm 1}, Y^{\pm 1}] \ar[r, shift left=1.5ex] \ar[r,shift left=.5ex]
  \ar[r,shift right=.5ex]\ar[r, shift right=1.5ex] & \cdots
\end{tikzcd}
\end{equation*}
is exact by the same argument as in loc.\ cit. Thus, it suffices to show that
\begin{equation*}
\begin{tikzcd}
\Z[\![\pi]\!]\langle T\rangle_{\leq 1}\ar[r,shift left=.5ex]
  \ar[r,shift right=.5ex] & \Z[\![\pi]\!]\langle T\rangle_{\leq 1}\langle X^{\pm 1}\rangle_{\leq 1} \ar[r,shift left=1ex]\ar[r]
  \ar[r,shift right=1ex] & \Z[\![\pi]\!]\langle T\rangle_{\leq 1}\langle X^{\pm 1}, Y^{\pm 1}\rangle_{\leq 1} \ar[r, shift left=1.5ex] \ar[r,shift left=.5ex]
  \ar[r,shift right=.5ex] \ar[r, shift right=1.5ex] & \cdots
\end{tikzcd}
\end{equation*}
is a simplicial resolution of $\Z[\![\pi T]\!]$, which again reduces to showing that
\begin{equation*}
\begin{tikzcd}
\Z[\![\pi]\!]\langle \pi^\epsilon T\rangle\ar[r,shift left=.5ex]
  \ar[r,shift right=.5ex] & \Z[\![\pi]\!]\langle \pi^{\epsilon/2} T\rangle\langle \pi^{\epsilon/2} X^{\pm 1}\rangle \ar[r,shift left=1ex]\ar[r]
  \ar[r,shift right=1ex] & \Z[\![\pi]\!]\langle \pi^{\epsilon/4} T\rangle\langle \pi^{\epsilon/4} X^{\pm 1}, \pi^{\epsilon/4} Y^{\pm 1}\rangle \ar[r, shift left=1.5ex] \ar[r,shift left=.5ex]
  \ar[r,shift right=.5ex] \ar[r, shift right=1.5ex] & \cdots
\end{tikzcd}
\end{equation*}
is a simplicial resolution of $\Z[\![\pi T]\!]$ for each $\epsilon>0$. As all terms in the above simplicial object are $\pi$-complete, the decreasing filtration $\Fil^\bullet$ whose $i$-th step is given by
\begin{equation*}
\begin{tikzcd}
\pi^i\Z[\![\pi]\!]\langle \pi^\epsilon T\rangle\ar[r,shift left=.5ex]
  \ar[r,shift right=.5ex] & \pi^i\Z[\![\pi]\!]\langle \pi^{\epsilon/2} T\rangle\langle \pi^{\epsilon/2} X^{\pm 1}\rangle \ar[r,shift left=1ex]\ar[r]
  \ar[r,shift right=1ex] & \pi^i\Z[\![\pi]\!]\langle \pi^{\epsilon/4} T\rangle\langle \pi^{\epsilon/4} X^{\pm 1}, \pi^{\epsilon/4} Y^{\pm 1}\rangle \ar[r, shift left=1.5ex] \ar[r,shift left=.5ex]
  \ar[r,shift right=.5ex] \ar[r, shift right=1.5ex] & \cdots
\end{tikzcd}
\end{equation*}
is separated and it is moreover compatible with the $\pi T$-adic filtration on $\Z[\![\pi T]\!]$ under the augmentation map we have produced. Thus, we may check the claim on graded pieces, where it turns into the assertion that
\begin{equation*}
\begin{tikzcd}
\gr^i= \pi^i\Z[T]\ar[r,shift left=.5ex]
  \ar[r,shift right=.5ex] & \pi^i\Z[T, X^{\pm 1}] \ar[r,shift left=1ex]\ar[r]
  \ar[r,shift right=1ex] & \pi^i\Z[T, X^{\pm 1}, Y^{\pm 1}] \ar[r, shift left=1.5ex] \ar[r,shift left=.5ex]
  \ar[r,shift right=.5ex] \ar[r, shift right=1.5ex] & \cdots
\end{tikzcd}
\end{equation*}
is a simplicial resolution of $\pi^i T^i\Z$ for each $i\geq 0$. However, this again follows from the identification of $\G_m^\alg$-representations with graded solid abelian groups using the fact that the degree zero piece of $\pi^i\Z[T]$ is given by $\pi^i T^i$.
}
\end{proof}
\comment{
\begin{rem}
Let us reiterate the following statement that was used in the proof: There is an isomorphism $\cal{N}_{0<|p|<1}\cong \AnSpec\Q_{p, \solid}\times (0, 1)$ and, under this isomorphism, the $\R_{>0}$-action on $\cal{N}_{0<|p|<1}$ translates to the exponentiation action of $\R_{>0}$ on $(0, 1)$. In particular, we obtain
\begin{equation*}
\cal{N}_{0<|p|<1}/\R_{>0}\cong \AnSpec\Q_{p, \solid}\;. \qedhere
\end{equation*}
\end{rem}
}

\begin{rem}
Note that the argument from the proof of \cref{prop:norms-embedzp} is stable under base change. In other words, it also shows that there is a fully faithful embedding
\begin{equation*}
\D(S)\hookrightarrow \D(\cal{N}_{|p|<1}/\R_{>0}\times_{\AnSpec\Z_{p, \solid}} S)
\end{equation*}
for any analytic stack $S$ over $\AnSpec\Z_{p, \solid}$.
\end{rem}

Finally, we show that perfect complexes on $\cal{N}_{|p|<1}/\R_{>0}$ just identify with perfect complexes over $\Z_p$.

\begin{prop}
\label{prop:norms-perfzp}
The fully faithful embedding from \cref{prop:norms-embedzp} induces an equivalence of categories
\begin{equation*}
\Perf(\Z_p)\cong \Perf(\cal{N}_{|p|<1}/\R_{>0})\;.
\end{equation*}
\end{prop}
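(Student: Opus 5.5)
Since pullback is symmetric monoidal, it preserves dualisable objects, so the embedding from \cref{prop:norms-embedzp} sends $\Perf(\Z_p)$ fully faithfully into $\Perf(\cal{N}_{|p|<1}/\R_{>0})$. What remains is essential surjectivity: every perfect complex $E$ on $\cal{N}_{|p|<1}/\R_{>0}$ should be pulled back from $\Z_p$. Writing $f:\cal{N}_{|p|<1}/\R_{>0}\rightarrow\AnSpec\Z_{p,\solid}$, the natural candidate is $f_*E$. I would aim to show two things: that $f_*E$ is a perfect $\Z_p$-complex, and that the counit $f^*f_*E\rightarrow E$ is an isomorphism.

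The plan is to use the gluing from the proof of \cref{prop:norms-embedzp}:
\begin{equation*}
\D(\cal{N}_{|p|<1})\cong \D(\cal{N}_{|p|\leq 1/2})\times_{\D(\cal{N}_{0<|p|\leq 1/2})} \D(\cal{N}_{0<|p|<1}),
\end{equation*}
and to work $\R_{>0}$-equivariantly. On the generic part we have $\cal{N}_{0<|p|<1}/\R_{>0}\cong\AnSpec\Q_{p,\solid}$ by \cref{lem:norms-0<p<1}, so the restriction of $E$ there is just a perfect $\Q_p$-complex $V$.

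The closed-ish piece $\cal{N}_{|p|\leq 1/2}\cong \AnSpec\Z_\solid(\!(\pi)\!)\langle p/\pi\rangle_{\leq 1}/\ol{\T}$ needs more work. A perfect complex on it is a perfect complex $M$ over $A\coloneqq\Z(\!(\pi)\!)\langle T\rangle_{\leq 1}/(\pi T-p)$ with a $\ol{\T}$-equivariant structure, i.e.\ an $\O(\ol{\T})$-comodule structure. I would try to identify such comodules with suitably graded or filtered objects, by analogy with $\G_m$-representations being graded modules. The extra codegeneracy argument then suggests that the invariants $M^{\ol{\T}}$ form a perfect $\Z[\![\pi T]\!]/(\pi T-p)=\Z_p$-module with $M\cong A\otimes_{\Z_p} M^{\ol{\T}}$. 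To get there:
\begin{enumerate}[label=(\alph*)]
\item reduce modulo $\pi$ (or modulo $p=\pi T$) using derived Nakayama, since all rings involved are $\pi$-complete at the integral level;
\item on the graded pieces, use that equivariant perfect complexes over $\Z[T]$ with the grading action are perfect complexes over $\Z[T]$ with a $\Z$-grading;
\item observe that vector bundles of this kind which extend over the whole stack are forced to be generated in degree $0$ by the condition $|T|\leq 1$ relative to $|\pi|$.
\end{enumerate}
Having established this, gluing with the generic part identifies $E$ with a triple consisting of a perfect $\Z_p$-complex $L$, the perfect $\Q_p$-complex $V$, and an isomorphism $L[1/p]\cong V$ over the overlap $\AnSpec\Q_{p,\solid}\times(0,1/2]$ modulo $\R_{>0}$. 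Such a triple is exactly $f^*L$.

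I expect the main obstacle to be the middle step: controlling $\ol{\T}$-equivariant perfect complexes over $A$ and proving they descend to $\Z_p$. The torus $\ol{\T}$ is an overconvergent, analytic torus rather than $\G_m^{\alg}$, so the grading heuristic is only approximate. My first attempt would be to bypass classification entirely, as follows:
\begin{enumerate}[label=(\roman*)]
\item show that $f$ is prim, which holds by the argument of \cref{lem:norms-prim}, so that $f_*$ commutes with colimits and satisfies the projection formula;
\item deduce that $f_*$ preserves compact objects, because the unit is compact and perfect objects are dualisable;
\item show that $f^*$ and $f_*$ are mutually inverse on dualisable objects, by checking the counit after pulling back to the cover $\AnSpec \Z(\!(\pi)\!)\langle p/\pi\rangle_{\leq 1}$ and using the cosimplicial resolution from \cref{prop:norms-embedzp} tensored with $E$.
\end{enumerate}
The key input I would need for step (iii) is that a dualisable $E$ is a retract of a finite complex built from $\O$. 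That statement is where the real content lies: it is equivalent to the unit generating the dualisable objects, and I would try to prove it by reduction modulo $\pi$ to an algebraic computation.
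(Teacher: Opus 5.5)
Your reduction agrees with the paper. You get full faithfulness from symmetric monoidality, and you glue with the generic part $\mathcal{N}_{0<|p|<1}/\R_{>0}\cong\AnSpec\Q_{p,\solid}$ to reduce to perfect complexes on $\mathcal{N}_{|p|\leq 1/2}\cong\AnSpec\Z_\solid(\!(\pi)\!)\langle\tfrac{p}{\pi}\rangle_{\leq 1}/\overline{\T}$. The remaining step is that every perfect complex on $\mathcal{N}_{|p|\leq 1/2}$ is pulled back from $\Z_p$. That step is never proved, and it is the entire content of the proposition.

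Your plan (a)--(c) is heuristic. Your fallback (i)--(iii) reduces everything to the claim that a dualisable object is a retract of a finite complex built from $\mathcal{O}$. Since $f^*$ is exact and fully faithful with $f^*\Z_p=\mathcal{O}$, that thick subcategory is exactly the essential image of $\Perf(\Z_p)$. So your ``key input'' is a restatement of what is to be proved. The tools you point to do not supply it:
\begin{itemize}
\item There is no reduction modulo $\pi$. $\pi$ is a unit in every ring of the presentation, and it is not even a function on the stack, since the $\overline{\T}$-coaction sends $\pi\mapsto\pi X$. Reducing modulo $p$ instead only trades the problem for the equally nontrivial analogue over $\mathcal{N}_{\F_p}$, plus a completeness argument you have not given.
\item In (ii), compact objects of $\D(\Z_{p,\solid})$ need not be perfect (e.g.\ $\prod_{\mathbb{N}}\Z_p$). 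Nothing shows that $f_*$ preserves dualisability, so perfectness of $f_*E$ does not follow.
\item In (iii), the extra codegeneracy of \cref{prop:norms-embedzp} is special to $\mathcal{O}$ and does not survive tensoring with a general comodule. Finiteness has to enter essentially: the counit $f^*f_*E\rightarrow E$ genuinely fails for non-perfect $E$, e.g.\ for the pushforward of $\mathcal{O}$ along the cover $\AnSpec\Z_\solid(\!(\pi)\!)\langle\tfrac{p}{\pi}\rangle_{\leq 1}\rightarrow\mathcal{N}_{|p|\leq 1/2}$.
\item Heuristic (c) also misplaces the mechanism. On $\A^1/\G_m$, graded perfect complexes are not generated in degree $0$, since twists exist. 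What kills the twists here is that $\pi$ is an invertible function of weight one for the $\overline{\T}$-action, not the bound $|T|\leq 1$.
\end{itemize}

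The missing idea is the paper's analytic replacement for the grading argument. Let $A\coloneqq\Z(\!(\pi)\!)\langle\tfrac{p}{\pi}\rangle_{\leq 1}$, and let $M$ be a static comodule that is finitely presented as an $A$-module.
\begin{itemize}
\item The coaction followed by taking the $X^0$-coefficient is an idempotent $\Z_p$-linear endomorphism $\mathrm{ev}_0$ of $M$. Its image $V$ is a $\Z_p$-module, and $V\otimes_{\Z_p}A\rightarrow M$ is a map of comodules.
\item Since $\pi$ is a unit, every $m\in M(*)$ is a possibly infinite convergent sum of the elements $\pi^k\mathrm{ev}_0(\pi^{-k}m)$, $k\in\Z$. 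So this map has dense image.
\item A dense submodule of $A(*)^n$ is all of $A(*)^n$, because it contains vectors $e_i+v_i$ with $v_i$ topologically nilpotent. Together with finite presentation, this upgrades density to a surjection $f^*W\rightarrow M$ for some finite free $\Z_p$-module $W$.
\item For a perfect $M$, represent it by a bounded complex $M^a\rightarrow\dots\rightarrow M^b$ of finite projective $A$-modules. Projectivity of $M^b$ and flatness of the coordinate ring of $\overline{\T}$ show that the coaction restricts to $M^b$. This gives $f^*W[-b]\rightarrow M$ surjective on $H^b$.
\item Passing to the cofibre and iterating produces a resolution by finite free sheaves pulled back from $\Z_p$. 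Its maps come from $\Z_p$ by full faithfulness of $f^*$ (i.e.\ $f_*\mathcal{O}\cong\mathcal{O}$). Hence $M\cong f^*L$ with $L$ pseudocoherent over $\Z_p$.
\item Finally, $L$ is perfect because $A(*)$ is faithfully flat over $\Z_p$.
\end{itemize}
Neither the density-plus-finite-presentation mechanism nor the pseudocoherent-then-perfect step has a counterpart in your proposal.
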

\begin{proof}
As $\R_{>0}$ is contractible, an application of \cite[Cor.\ 2.7]{Syntomic} shows that it suffices to show that $\Perf(\Z_p)\cong \Perf(\cal{N}_{|p|<1})$ via pullback. Due to $\cal{N}_{0<|p|<1}\cong\AnSpec\Q_{p, \solid}\times (0, 1)$, another application of loc.\ cit.\ then further reduces us to showing that $\Perf(\Z_p)\cong\Perf(\cal{N}_{|p|\leq 1/2})$ via pullback along $f: \cal{N}_{|p|\leq 1/2}\rightarrow\AnSpec\Z_{p, \solid}$. As the proof of \cref{prop:norms-embedzp} already yields a fully faithful embedding $\D(\Z_{p, \solid})\hookrightarrow\D(\cal{N}_{|p|\leq 1/2})$, it suffices to show that every perfect complex on $\cal{N}_{|p|\leq 1/2}$ is pulled back from a perfect complex over $\Z_p$.

We first prove that any perfect complex is in the essential image of $f^*$. For this, note that the category $\D(\cal{N}_{|p|\leq 1/2})$ is equivalent to the category of semilinear comodules $M$ over $\Z(\!(\pi)\!)\langle\tfrac{p}{\pi}, X^{\pm 1}\rangle_{\leq 1}$ in $\D(\Z_\solid(\!(\pi)\!)\langle\tfrac{p}{\pi}\rangle_{\leq 1})$; by abuse of notation, we also use $M$ to denote the quasicoherent sheaf on $\cal{N}_{|p|\leq 1/2}$ corresponding to such a comodule. We begin by establishing the following

\bigskip

\textbf{Claim.} Assume that $M$ lies in the heart of the canonical $t$-structure on $\D(\Z_\solid(\!(\pi)\!)\langle\tfrac{p}{\pi}\rangle_{\leq 1})$ and that $M$ is finitely presented as a $\Z(\!(\pi)\!)\langle\tfrac{p}{\pi}\rangle_{\leq 1}$-module. Then $M$ admits a surjection from a finite free module on $\cal{N}_{|p|\leq 1/2}$.

\bigskip

\textit{Proof of the claim.} For simplicity, write $A\coloneqq \Z(\!(\pi)\!)\langle\tfrac{p}{\pi}\rangle_{\leq 1}$. Consider the $\Z_p$-linear map
\begin{equation*}
\mathrm{ev}_0: M\xrightarrow{\mathrm{coact}} M\langle X^{\pm 1}\rangle_{\leq 1}\xrightarrow{X\mapsto 0} M\;,
\end{equation*}
where the first map is the given comodule structure on $M$, and note that it is idempotent by coassociativity. We let $V$ denote the corresponding retract of $M$, i.e.\ the kernel (not the fibre!) of the endomorphism $\id-\,\mathrm{ev}_0$, and note that $V$ is a $\Z_p$-module. Moreover, by construction, the map
\begin{equation*}
V\tensor_{\Z_p} A\rightarrow M
\end{equation*}
is a morphism of comodules, where we use the trivial comodule structure on $V$. We claim that there exists a finite free $\Z_p$-module $W$ together with a map $W\rightarrow V$ such that the composite map
\begin{equation*}
W\tensor_{\Z_p} A\rightarrow V\tensor_{\Z_p} A\rightarrow M
\end{equation*}
is surjective, which would yield the desired conclusion.

First note that it suffices to see that
\begin{equation*}
V(*)\tensor_{\Z_p(*)} A(*)\rightarrow M(*)
\end{equation*}
is surjective. Indeed, as $M(*)$ is finitely generated over $A(*)$, this would imply that $V'\tensor_{\Z_p(*)} A(*)$ already surjects onto $M(*)$ for some finitely generated submodule $V'$ of $V(*)$, which yields the claim. To see this surjectivity, note that $M$ being finitely presented yields $M\cong H^0(M(*)\tensor_{A(*)} A)$. However, this implies that the image of $V(*)\tensor_{\Z_p(*)} A(*)$ in $M(*)$ is dense for the natural topology as counitality and semilinearity of the comodule structure on $M$ shows that any $m\in M(*)$ is a possibly infinite sum of the elements $\pi^k\mathrm{ev}_0(\pi^{-k}m)$ for $k\in\Z$.

This is enough to conclude: Picking a surjection $A^n(*)\rightarrow M(*)$, we see that the preimage in $A^n(*)$ of the image of $V(*)\tensor_{\Z_p(*)} A(*)\rightarrow M(*)$ is dense as well, so it suffices to show that any dense submodule $N$ of $A^n(*)$ is already equal to all of $A^n(*)$. However, letting $e_i$ denote the standard basis vectors, the fact that $N$ is dense in $A^n(*)$ implies that $e_i+v_i\in N$ for some $v_i\in A^n(*)$ all of whose entries are topologically nilpotent. Then the base change matrix from the $e_i$ to the $e_i+v_i$ is invertible, which yields the result. \hfill\qed

\bigskip

To apply the above, take a perfect complex $M$ on $\cal{N}_{|p|\leq 1/2}$ and write its pullback to $\Z(\!(\pi)\!)\langle\tfrac{p}{\pi}\rangle_{\leq 1}$ as a complex 
\begin{equation*}
\dots\rightarrow 0\rightarrow M^a\rightarrow M^{a+1}\rightarrow\dots\rightarrow M^{b-1}\rightarrow M^b\rightarrow 0\rightarrow\dots
\end{equation*}
of finite projective modules over $\Z(\!(\pi)\!)\langle \tfrac{p}{\pi}\rangle_{\leq 1}$, which we also denote by $M$ by abuse of notation. We claim that the map
\begin{equation*}
M^b[-b]\rightarrow M\rightarrow M\langle X^{\pm 1}\rangle_{\leq 1}\;,
\end{equation*}
where the latter arrow comes from the comodule structure on $M$, admits a factorisation through $M^b\langle X^{\pm 1}\rangle_{\leq 1}[-b]$. Indeed, there is a fibre sequence
\begin{equation*}
M^b\langle X^{\pm 1}\rangle_{\leq 1}[-b]\rightarrow M\langle X^{\pm 1}\rangle_{\leq 1}\rightarrow \sigma^{\leq b-1}M\langle X^{\pm 1}\rangle_{\leq 1}
\end{equation*}
and the term on the right is concentrated in cohomological degrees $\leq b-1$ by flatness of $\Z(\!(\pi)\!)\langle X^{\pm 1}\rangle$ as a $\Z_\solid(\!(\pi)\!)$-algebra. As $\Ext^{\geq 1}(M^b, -)$ is identically zero by projectivity of $M^b$, this implies that the composite map
\begin{equation*}
M^b[-b]\rightarrow M\langle X^{\pm 1}\rangle_{\leq 1}\rightarrow \sigma^{\leq b-1}M\langle X^{\pm 1}\rangle_{\leq 1}
\end{equation*}
is nullhomotopic and hence admits a factorisation through $M^b\langle X^{\pm 1}\rangle_{\leq 1}[-b]$, as desired. 

This equips $M^b$ with a $\Z(\!(\pi)\!)\langle\tfrac{p}{\pi}\rangle_{\leq 1}\langle X^{\pm 1}\rangle_{\leq 1}$-comodule structure compatible with the one on $M$ and by the claim we find a surjective map $f^*V\rightarrow M^b$ for some finite free $\Z_p$-module $V$, where we interpret $M^b$ as a sheaf on $\cal{N}_{|p|\leq 1/2}$ via the comodule structure we have constructed. In particular, the induced map $f^*V[-b]\rightarrow M^b[-b]\rightarrow M$ surjects onto $H^b(M)$. Replacing $M$ by the cofibre of $f^*V[-b]\rightarrow M$, which is now concentrated in cohomological degrees $\leq b-1$, and repeating this argument inductively, we obtain a possibly infinite resolution of $M$ by finite free modules on $\cal{N}_{|p|\leq 1/2}$. As the proof of \cref{prop:norms-embedzp} already shows that $f_*\O\cong \O$, any map between finite free modules on $\cal{N}_{|p|\leq 1/2}$ is pulled back from a map between finite free modules over $\Z_p$ and hence we conclude that $M$ is the pullback of a pseudocoherent complex over $\Z_p$.

To finish the proof, we have to show that if $V$ is a pseudocoherent complex over $\Z_p$ whose pullback $f^*V$ to $\cal{N}_{|p|\leq 1/2}$ is perfect, then $V$ is perfect itself. For this, note that $f^*V$ being perfect in particular implies that the base change of $V$ along $\Z_{p, \solid}\rightarrow \Z_\solid(\!(\pi)\!)\langle\tfrac{p}{\pi}\rangle_{\leq 1}$ is perfect. Moreover, while this pullback is in general computed using the solid tensor product, this agrees with the algebraic tensor product in our case as $V$ is pseudocoherent and hence our question is purely algebraic and we may ignore the condensed structures. Thus, it suffices to check that $\Z(\!(\pi)\!)\langle\tfrac{p}{\pi}\rangle_{\leq 1}$ is a faithfully flat $\Z_p$-algebra. However, this is clear: Being flat over $\Z_p$ is the same as being torsionfree and both the special and the generic point of $\Spec\Z_p(*)$ are clearly in the image of the map from $\Spec  \Z(\!(\pi)\!)\langle\tfrac{p}{\pi}\rangle_{\leq 1}(*)$.
\end{proof}

\begin{rem}
Using a similar argument, one can also show that the fully faithful embeddings $\D(\Z_{\solid})\hookrightarrow\D(\cal{N}/\R_{>0})$ and $\D(\F_{p, \solid})\hookrightarrow\D(\cal{N}/\R_{>0})$ from \cref{prop:norms-embed} induce equivalences 
\begin{equation*}
\Perf(\Z)\cong\Perf(\cal{N}/\R_{>0}) \hspace{0.5cm}\text{and}\hspace{0.5cm} \Perf(\F_p)\cong\Perf(\cal{N}_{\F_p}/\R_{>0})\;. \qedhere
\end{equation*}
\end{rem}

\section{$p$-adic manifolds in mixed characteristic}

The goal of this section is to define a functor
\begin{equation*}
\begin{split}
\{\text{paracompact $p$-adic manifolds}\}&\rightarrow\{\text{analytic stacks over $\cal{N}_{|p|<1}/\mathbb{R}_{>0}$}\} \\
M&\mapsto M^\la\;.
\end{split}
\end{equation*}
Here, we use the term \emph{$p$-adic manifold} in the sense of \cite[§8]{Schneider}. The essential difficulty will be to define $\Z_p^\la$ and then to argue that any endomorphism of $\Z_p$ as a $p$-adic manifold induces an endomorphism of the stack $\Z_p^\la$.

First recall from \cref{lem:app-prodbinom} that there are unique integers $a_j\in\Z$ such that
\begin{equation*}
\binom{x}{n}\cdot \binom{x}{m}=\sum_{j=0}^{m+n} a_j\binom{x}{j}
\end{equation*} 
in $\Q[x]$. This allows us to make sense of the following definition, a variant of which has recently been proposed by Porat, see \cite{Porat1}.

\begin{defi}
The algebra of locally analytic $\Z(\!(\pi)\!)$-valued functions on $\Z_p$ is given by
\begin{equation*}
C^\la(\Z_p, \Z(\!(\pi)\!))\coloneqq \colim_{\epsilon>0} \Z(\!(\pi)\!)\left\langle \pi^{\epsilon n}\binom{x}{n}: n\geq 0\right\rangle\;,
\end{equation*}
where the algebra in the colimit is obtained by $\pi$-completing the ring
\begin{equation*}
\Z[\pi]\left[\pi^{\lceil\epsilon n\rceil}\binom{x}{n}: n\geq 0\right]
\end{equation*}
and then inverting $\pi$.
\end{defi}

\begin{rem}
A formal power series $f=\sum_n a_n\binom{x}{n}$ with $a_n\in\Z(\!(\pi)\!)$ is contained in the ring $C^\la(\Z_p, \Z(\!(\pi)\!))$ if and only if $\pi^{-\epsilon n}a_n\rightarrow 0$ in the $\pi$-adic topology for some $\epsilon>0$. In other words, the $\pi$-adic valuations of the $a_n$ grow at least linearly. 
\end{rem}

We immediately warn the reader that the terminology above is a bit of a misnomer: Indeed, given an element $x_0\in\Z_p$, it is \emph{not} always possible to ``evaluate'' an element of $C^\la(\Z_p, \Z(\!(\pi)\!))$ at $x=x_0$ and obtain a well-defined element of $\Z(\!(\pi)\!)$. However, we \emph{can} always produce a well-defined element of $\Z(\!(\pi)\!)\langle \tfrac{p^k}{\pi}\rangle$ for any $k\geq 1$, i.e.\ there are maps
\begin{equation*}
C^\la(\Z_p, \Z(\!(\pi)\!))\tensor_{\Z(\!(\pi)\!)} \Z(\!(\pi)\!)\langle \tfrac{p^k}{\pi}\rangle\rightarrow \operatorname{Maps}(\Z_p, \Z(\!(\pi)\!)\langle \tfrac{p^k}{\pi}\rangle)\;.
\end{equation*}
This is the content of the following lemma.

\begin{lem}
\label{lem:man-eval}
For each $k\geq 1$, there is a well-defined evaluation map
\begin{equation*}
\begin{split}
\mathrm{ev}: C^\la(\Z_p, \Z(\!(\pi)\!))\times \Z_p&\rightarrow \Z(\!(\pi)\!)\langle\tfrac{p^k}{\pi}\rangle \\
(f, x)&\mapsto f(x)\;.
\end{split}
\end{equation*}
\end{lem}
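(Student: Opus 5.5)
The plan is to define $f(x)\coloneqq\sum_n a_n\binom{x}{n}$ for $f=\sum_n a_n\binom{x}{n}$ and to show that this series converges in $\Z(\!(\pi)\!)\langle\tfrac{p^k}{\pi}\rangle$. The key input is a lower bound on the $p$-adic valuation of $\binom{x_0}{n}$, for $x_0\in\Z_p$, that grows roughly linearly in $n$.

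First fix $\epsilon>0$ and reduce to a single term $\Z(\!(\pi)\!)\langle\pi^{\epsilon n}\binom{x}{n}\rangle$ of the colimit; compatibility of the resulting maps with the transition maps is automatic, since the formula is the same. An element of this term has the form $\pi^{-N}\sum_n b_n\pi^{\lceil\epsilon n\rceil}\binom{x}{n}$ with $b_n\in\Z[\![\pi]\!]$ and $b_n\to 0$ $\pi$-adically. Note that $\Z(\!(\pi)\!)\langle\tfrac{p^k}{\pi}\rangle$ contains $\Z[\![\pi]\!]\langle\tfrac{p^k}{\pi}\rangle$, which is $\pi$-complete and in which $p^{k}=\pi\cdot\tfrac{p^k}{\pi}$ is divisible by $\pi$. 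So it suffices to show that $\pi^{\lceil\epsilon n\rceil}\binom{x_0}{n}$ lies in $\Z[\![\pi]\!]\langle\tfrac{p^k}{\pi}\rangle$. Since $b_n\to 0$, the sum then converges there.

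The crux is the case $x_0\notin\Z_{\geq 0}$, where $\binom{x_0}{n}\in\Q_p$ need not be integral. Its valuation satisfies
\begin{equation*}
v_p\left(\binom{x_0}{n}\right)\geq -v_p(n!)\geq -\frac{n}{p-1}\;.
\end{equation*}
In fact $\binom{x_0}{n}\in\Z_p$ for all $x_0\in\Z_p$ by continuity, because $\binom{x}{n}$ is integer-valued on $\Z_{\geq0}$, which is dense in $\Z_p$. So $\binom{x_0}{n}$ is a $p$-adic integer. The real issue is then that the ring $\Z(\!(\pi)\!)\langle\tfrac{p^k}{\pi}\rangle$ has no a priori map from $\Z_p$, since we only invert $\pi$ and not $p$, so a $p$-adic integer must be made sense of as a convergent series. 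Write $\binom{x_0}{n}=\sum_i c_i p^i$ with $c_i\in\{0,\dots,p-1\}$. Then $p^{ki}=\pi^i(\tfrac{p^k}{\pi})^i$, and the series $\sum_i c_ip^i$, grouped in blocks of $k$ powers, converges $\pi$-adically in $\Z[\![\pi]\!]\langle\tfrac{p^k}{\pi}\rangle$. This yields a continuous map $\Z_p\to\Z[\![\pi]\!]\langle\tfrac{p^k}{\pi}\rangle$, which is a ring map because the target is $p$-complete and $\pi$-complete. Hence each $\binom{x_0}{n}$ is already integral there, and the factor $\pi^{\lceil\epsilon n\rceil}$ is not even needed for integrality; it serves only to match the decay condition.

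Finally, check that the resulting evaluation is independent of the presentation and well defined on the completed algebra. The term-by-term map
\begin{equation*}
\Z[\pi]\left[\pi^{\lceil\epsilon n\rceil}\binom{x}{n}\right]\to\Z[\![\pi]\!]\langle\tfrac{p^k}{\pi}\rangle,\qquad x\mapsto x_0,
\end{equation*}
is a ring homomorphism, by the identity of \cref{lem:app-prodbinom} and the fact that specialisation $x\mapsto x_0$ respects products of binomial polynomials in $\Q_p$. It therefore extends to the $\pi$-completion and then to the localisation at $\pi$. The main obstacle I expect is the map $\Z_p\to\Z[\![\pi]\!]\langle\tfrac{p^k}{\pi}\rangle$, namely verifying that the target is $p$-adically complete so that this map exists. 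The claim $p^k\in\pi\cdot(\text{target})$ gives $p$-adic completeness from $\pi$-completeness, and this is exactly why some $k\geq1$ with $\tfrac{p^k}{\pi}$ adjoined is required.
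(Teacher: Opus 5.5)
Your argument is correct, but it takes a genuinely different route from the paper's.

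\textbf{The paper's route.} The paper first defines $\mathrm{ev}$ on $\Z$ and extends by density. Writing $x=\sum_i p^ix_i$ with $x_i\in\Z$, it uses \cref{lem:app-binompseries} to expand $\binom{\sum_i p^ix_i}{n}$ into integer combinations of the integers $\binom{\ul{x}}{\ul{j}}$. The coefficients $\lambda_{n,\ul{j}}$ come with $p$-adic valuation bounds. The paper enlarges $k$ so that $\epsilon k\geq\tfrac{1}{p-1}$ and then plays these bounds off against the linear decay $v_\pi(a_n)\geq\epsilon n$ to get convergence of the resulting double series.

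\textbf{Your route.} You observe that $\binom{x_0}{n}\in\Z_p$. You also observe that $\Z\to A^+\coloneqq\Z[\![\pi]\!]\langle\tfrac{p^k}{\pi}\rangle$ is continuous from the $p$-adic to the $\pi$-adic topology, since $p^{km}=\pi^m(\tfrac{p^k}{\pi})^m$. Hence it extends uniquely to a ring map $\Z_p\to A^+$ (this is the clean justification for your ``ring map'' claim). Every term $a_n\binom{x_0}{n}$ then lies in $\pi^{v_\pi(a_n)}A^+$, and convergence needs only $a_n\to 0$.

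\textbf{What each buys.} Your argument is shorter and needs no enlargement of $k$. It shows that linear decay is irrelevant for mere evaluation: any Mahler series with $\pi$-adically null coefficients can be evaluated. It also gives for free that $f\mapsto f(x_0)$ is a ring map. Moreover, $x_0\mapsto f(x_0)$ is continuous by uniform convergence, so your map agrees with the paper's density extension from $\Z$. The paper's more laborious route sets up the expansion-with-valuation-estimates technique that it needs again in \cref{prop:man-endos}. There, boundedness of the terms is no longer enough, because one must show that a composite again has linearly decaying coefficients.

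\textbf{Small expository points.} Your opening remarks about a bound $v_p(\binom{x_0}{n})\geq-\tfrac{n}{p-1}$, and about $\binom{x_0}{n}$ possibly being non-integral, are superseded by your own observation that it always lies in $\Z_p$. The appeal to \cref{lem:app-prodbinom} is unnecessary, since specialising polynomials is already multiplicative.
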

\begin{proof}
We first observe that there is a straightforward map
\begin{equation*}
\mathrm{ev}: C^\la(\Z_p, \Z(\!(\pi)\!))\times \Z\rightarrow \Z(\!(\pi)\!)\;.
\end{equation*}
Indeed, for any $x\in\Z$, the binomial coefficients $\binom{x}{n}$ will be integers and hence $f(x)=\sum_n a_n\binom{x}{n}$ makes sense in $\Z(\!(\pi)\!)$ due to $a_n\rightarrow 0$ in the $\pi$-adic topology. As $\Z(\!(\pi)\!)\langle \tfrac{p^k}{\pi}\rangle$ is $\pi$-adically separated, there can be at most one extension of $\mathrm{ev}$ from $\Z$ to $\Z_p$ and our task is just to check that $\lim_{n\rightarrow\infty} \mathrm{ev}(f, x_n)$ exists in $\Z(\!(\pi)\!)\langle\tfrac{p^k}{\pi}\rangle$ for any $f\in C^\la(\Z_p, \Z(\!(\pi)\!))$ and $x_n\in\Z$ converging $p$-adically.

For this, take any $x=\sum_i p^i x_i\in\Z_p$, where the $x_i$ are integers. Our task is to make sense of the expression
\begin{equation*}
\sum_n a_n\binom{\sum_i p^i x_i}{n}
\end{equation*}
in the ring $\Z(\!(\pi)\!)\langle\tfrac{p^k}{\pi}\rangle$ for any sequence $a_n\in\Z(\!(\pi)\!)$ satisfying $\pi^{-\epsilon n}a_n\rightarrow 0$ in the $\pi$-adic topology for some $\epsilon>0$. As $\Z(\!(\pi)\!)\langle\tfrac{p^k}{\pi}\rangle$ only gets smaller if we enlarge $k$, we may assume that $\epsilon k\geq \tfrac{1}{p-1}$. Then \cref{lem:app-binompseries} shows that we can write
\begin{equation}
\label{eq:man-evalexpand}
\sum_n a_n\binom{\sum_i p^i x_i}{n}=\sum_n \sum_{|\ul{j}|=n} a_n\lambda_{n, \ul{j}}\binom{\ul{x}}{\ul{j}}
\end{equation}
for integers $\lambda_{n, \ul{j}}$ satisfying
\begin{equation*}
\frac{1}{k}v_p(\lambda_{n, \ul{j}})+\epsilon n\geq \sum_i \frac{ij_i}{k}\geq \max\left\{\frac{\max\{i: j_i\neq 0\}}{k}, \frac{|\ul{j}|}{k}\right\}\;,
\end{equation*}
where we recall that $\binom{\ul{x}}{\ul{j}}\coloneqq \prod_i \binom{x_i}{j_i}$, which is in particular an integer. Ordering the multiindices $\ul{j}$ in each inner sum by the largest index $i$ such that $j_i\neq 0$, we see that the $\pi$-adic valuation of the coefficients
\begin{equation*}
a_n\cdot \frac{\lambda_{n, \ul{j}}}{p^{k\lfloor v_p(\lambda_{n, \ul{j}})/k\rfloor}}\cdot \pi^{\lfloor v_p(\lambda_{n, \ul{j}})/k\rfloor}\cdot \left(\frac{p^k}{\pi}\right)^{\lfloor v_p(\lambda_{n, \ul{j}})/k\rfloor }
\end{equation*}
in the sum on the right-hand side of (\ref{eq:man-evalexpand}) grows faster than $i/k$ since the $\pi$-adic valuation of $a_n$ grows faster than $\epsilon n$ by assumption. In particular, it goes to infinity and hence each inner sum on the right-hand side of (\ref{eq:man-evalexpand}) makes sense in $\Z(\!(\pi)\!)\langle\tfrac{p^k}{\pi}\rangle$. To see that also the outer sum makes sense, we invoke the second bound from above: in the $n$-th inner sum, the $\pi$-adic valuation of each coefficient is at least $|\ul{j}|/k$ and this goes to infinity as $|\ul{j}|=n\rightarrow\infty$, which finishes the proof.
\end{proof}

From the previous lemma, we conclude that $C^\la(\Z_p, \Z(\!(\pi)\!))$ may indeed be interpreted as a space of functions on $\Z_p$ after base changing to the locus $\{|p|<1\}$ inside $\AnSpec\Z_\solid(\!(\pi)\!)$. Next, we check that locally analytic endomorphisms of $\Z_p$ induce endomorphisms of $C^\la(\Z_p, \Z(\!(\pi)\!))$. Again, this will only be true after base change to $\{|p|<1\}$. We begin with the following preparatory lemma:

\begin{lem}
\label{lem:man-zpdla}
Let $d\geq 0$. If $C^\la(\Z_p^d, \Z(\!(\pi)\!))$ denotes the $d$-fold tensor product of $C^\la(\Z_p, \Z(\!(\pi)\!))$ over $\Z(\!(\pi)\!))$, then 
\begin{equation*}
C^\la(\Z_p^d, \Z(\!(\pi)\!))=\colim_{\epsilon>0} \Z(\!(\pi)\!)\left\langle \pi^{\epsilon |\ul{n}|}\binom{\ul{x}}{\ul{n}}: \ul{n}\geq 0\right\rangle\;,
\end{equation*}
where $\ul{n}=(n_1, \dots, n_d)$ is a multiindex, for which we set $|\ul{n}|\coloneqq n_1+\dots+n_d$, and we recall that $\binom{\ul{x}}{\ul{n}}$ denotes the product of the $\binom{x_i}{n_i}$ for $i=1, \dots, d$.
\end{lem}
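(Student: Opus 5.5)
The plan is to reduce to a statement about tensor products of Banach-type algebras, one $\epsilon$ at a time, and then pass to the colimit. Write $A_\epsilon\coloneqq \Z(\!(\pi)\!)\langle \pi^{\epsilon n}\binom{x}{n}: n\geq 0\rangle$. Solid tensor products over $\Z(\!(\pi)\!)$ commute with filtered colimits in each variable, and the diagonal $\epsilon\mapsto(\epsilon,\dots,\epsilon)$ is cofinal in $\R_{>0}^d$ (ordered by reverse inclusion, i.e.\ shrinking $\epsilon$). It therefore suffices to identify
\begin{equation*}
A_\epsilon\tensor_{\Z(\!(\pi)\!)}\cdots\tensor_{\Z(\!(\pi)\!)} A_\epsilon\cong \Z(\!(\pi)\!)\left\langle \pi^{\epsilon|\ul{n}|}\binom{\ul{x}}{\ul{n}}: \ul{n}\geq 0\right\rangle
\end{equation*}
compatibly in $\epsilon$. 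To avoid fractional powers, I will allow $\epsilon$ to range over rationals with the relevant roots of $\pi$ adjoined, or use $\lceil \epsilon n\rceil$ as in the definition. A careful bound shows that $\lceil\epsilon n_1\rceil+\dots+\lceil\epsilon n_d\rceil$ lies between $\epsilon|\ul{n}|$ and $\epsilon|\ul{n}|+d$, so the discrepancy is a bounded power of $\pi$. After inverting $\pi$ this is harmless, and it changes no module up to isomorphism.

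The key step is to describe each $A_\epsilon$ concretely. By definition it is $\pi$-adically completed and then $\pi$-inverted from a ring that, as a $\Z[\![\pi]\!]$-module, is free on the elements $e_n\coloneqq\pi^{\lceil\epsilon n\rceil}\binom{x}{n}$. Freeness holds because the $\binom{x}{n}$ form a $\Z$-basis of the integer-valued polynomials, so in particular they are linearly independent. The ring structure only matters in that $\Z[\pi][\pi^{\lceil\epsilon n\rceil}\binom{x}{n}]$ must be spanned as a module by the $e_n$. For this I use \cref{lem:app-prodbinom}: the coefficients $a_j$ in $\binom{x}{n}\binom{x}{m}$ vanish unless $\max(n,m)\le j\le n+m$, and subadditivity $\lceil\epsilon n\rceil+\lceil\epsilon m\rceil\geq\lceil\epsilon j\rceil$ holds for $j\le n+m$. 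Hence $e_ne_m$ is a $\Z[\pi]$-combination of the $e_j$. As a solid module, $A_\epsilon$ is then $\big(\prod_{\N}\Z[\![\pi]\!]\big)^{\wedge}_{\mathrm{null}}[1/\pi]$, that is, the $\pi$-adic completion of $\bigoplus_{\N}\Z[\![\pi]\!]$ with $\pi$ inverted.

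The tensor computation is then the standard solid identity. The $\pi$-completed direct sum $\widehat{\bigoplus}_I\Z[\![\pi]\!]$ equals $\Z[\![\pi]\!]\tensor^\solid_{\Z}\widehat{\bigoplus}_I\Z$, or can be handled directly as $\lim_k\bigoplus_I\Z[\pi]/\pi^k$. Solid tensor products of such $\pi$-complete modules over $\Z[\![\pi]\!]$, modulo $\pi^k$, are plain tensor products of discrete free $\Z[\pi]/\pi^k$-modules. This gives $\bigoplus_{I\times J}$ modulo $\pi^k$, and taking the limit gives $\widehat{\bigoplus}_{I\times J}\Z[\![\pi]\!]$. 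I need to check that solid tensor products commute with this $\pi$-adic limit. That holds because these modules are $\pi$-complete and flat, so each one is a compact projective object after passing through $\prod$-type descriptions; this is the main technical point. I would cite the standard fact that $\Z[\![\pi]\!]\langle T\rangle\tensor\Z[\![\pi]\!]\langle S\rangle\cong\Z[\![\pi]\!]\langle T,S\rangle$ in solid $\Z[\![\pi]\!]$-modules. Inverting $\pi$ commutes with tensor products. The basis element $e_{n_1}\tensor\cdots\tensor e_{n_d}$ maps to $\pi^{\sum\lceil\epsilon n_i\rceil}\binom{\ul{x}}{\ul{n}}$, which yields the displayed description. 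Compatibility with the algebra structure and the transition maps in $\epsilon$ is clear from the formulas.

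The expected obstacle is the solid completed tensor product identity, that is, commuting $\tensor^\solid$ with $\pi$-adic completion. Everything else is bookkeeping with the ceilings.
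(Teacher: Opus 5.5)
Your proposal is correct and is essentially the paper's argument: fix $\epsilon$, pass to the $\pi$-completed $\Z[\![\pi]\!]$-lattices, use that their solid tensor product is again derived $\pi$-complete, and compare the resulting discrete free modules on the evident bases, modulo $\pi$ in the paper and modulo $\pi^k$ in your version. Your freeness check via \cref{lem:app-prodbinom} and your ceiling bookkeeping just make explicit what the paper leaves implicit.

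One correction to how you justify the key step: $\widehat{\bigoplus}_{\mathbb{N}}\Z[\![\pi]\!]$ is not compact projective, since modulo $\pi$ it becomes $\bigoplus_{\mathbb{N}}\Z$. It is, however, the filtered union of the compact projectives $\{(a_n)_n : \pi^{f(n)}\mid a_n\}\cong\prod_{\mathbb{N}}\Z[\![\pi]\!]$ over functions $f\to\infty$, and tensoring these pieces is what yields $\widehat{\bigoplus}_{\mathbb{N}\times\mathbb{N}}\Z[\![\pi]\!]$. That is exactly the identity $\Z[\![\pi]\!]\langle T\rangle\otimes\Z[\![\pi]\!]\langle S\rangle\cong\Z[\![\pi]\!]\langle T,S\rangle$ you propose to cite.
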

\begin{proof}
It suffices to show that
\begin{equation*}
\Z[\![\pi]\!]\left\langle \pi^{\epsilon n}\binom{x}{n}: n\geq 0\right\rangle^{\tensor_{\Z[\![\pi]\!]} d}\cong \Z[\![\pi]\!]\left\langle \pi^{\epsilon |\ul{n}|}\binom{\ul{x}}{\ul{n}}: \ul{n}\geq 0\right\rangle
\end{equation*}
for each $\epsilon>0$, then the result follows by inverting $\pi$ and taking colimits over $\epsilon>0$. As the solid tensor product preserves derived $\pi$-completeness, both sides of the claimed isomorphism are derived $\pi$-complete and hence we may check it after reducing mod $\pi$. However, then both sides are discrete free $\Z$-modules of countable dimension and the claim is clear by looking at the basis elements.
\end{proof}

\comment{
\begin{cor}
\label{cor:man-zpfuncsembed}
Let $C^\la(\Z_p^d, \Z_p)$ denote the space of maps $\Z_p^d\rightarrow \Z_p$ of $p$-adic manifolds for any $d\geq 0$. For any $k\geq 1$, there is a canonical map
\begin{equation*}
C^\la(\Z_p^d, \Z_p)\rightarrow C^\la(\Z_p^d, \Z(\!(\pi)\!))\tensor_{\Z(\!(\pi)\!)} \Z(\!(\pi)\!)\langle \tfrac{p^k}{\pi}\rangle
\end{equation*}
of $\Z_p$-algebras which is compatible with the evaluation map from \cref{lem:man-eval} under the canonical map $\Z_p\rightarrow \Z(\!(\pi)\!)\langle\tfrac{p^k}{\pi}\rangle$.
\end{cor}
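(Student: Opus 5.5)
Our approach is to use Amice's theorem to expand a locally analytic function in binomial coefficients on small balls, and then to control the $p$-adic valuations of the coefficients so that, after passing to $\Z(\!(\pi)\!)\langle\tfrac{p^k}{\pi}\rangle$, the resulting $\pi$-adic decay is at least linear.

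\emph{Reduction to one ball.} By compactness, a locally analytic $g\colon\Z_p^d\to\Z_p$ is analytic on each coset of $p^r\Z_p^d$ for some $r\geq 0$. The characteristic functions of these cosets already lie in $C^\la(\Z_p^d,\Z(\!(\pi)\!))\tensor\Z(\!(\pi)\!)\langle\tfrac{p^k}{\pi}\rangle$. To see this, write $\mathbf{1}_{a+p^r\Z_p}(x)$ as a finite $\Z_p$-linear combination of the $\binom{x}{n}$ with $n<p^r$; this is Mahler's theorem for locally constant functions. Each $p$-adic coefficient $c$ is then sent to an element of $\Z(\!(\pi)\!)\langle\tfrac{p^k}{\pi}\rangle$ via the structure map $\Z_p\to\Z(\!(\pi)\!)\langle\tfrac{p^k}{\pi}\rangle$, which exists because $p^k=\pi\cdot\tfrac{p^k}{\pi}$ is topologically nilpotent and the target is $\pi$-complete.

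It therefore suffices to treat $g(x)=\sum_{\ul{m}} b_{\ul{m}}(x-a)^{\ul{m}}$ on $a+p^r\Z_p^d$, with $b_{\ul{m}}p^{r|\ul{m}|}\to 0$. Translating by $a$ and rescaling, this is the same as treating functions $h(y)=\sum c_{\ul{m}}\,y^{\ul{m}}$ with $y=(x-a)/p^r$ and $v_p(c_{\ul{m}})\to\infty$ linearly. Since $C^\la$ is an algebra, we then multiply by the indicator functions.

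\emph{Key estimate.} We expand $y^m=\sum_{n\leq m} n!\,S(m,n)\binom{y}{n}$, where $S(m,n)$ are the Stirling numbers of the second kind. We also need to compose with the affine substitution $x\mapsto (x-a)/p^r$. For the latter we invoke \cref{lem:app-binompseries} and \cref{lem:app-prodbinom} to rewrite $\binom{(x-a)/p^r}{n}$ in terms of the $\binom{x}{j}$ with controlled $p$-adic valuations. An elementary alternative is to use Amice's theorem directly: a function analytic on the balls of radius $p^{-r}$ has Mahler coefficients $a_n$ satisfying $v_p(a_n)\geq \frac{n}{p^{r}(p-1)}\cdot c - C$ up to the factor $v_p(n!)$ bookkeeping. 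Concretely, Amice's theorem \cite[§10]{Amice} gives, for some $\delta>0$ and $C$,
\begin{equation*}
g=\sum_{\ul{n}} a_{\ul{n}}\binom{\ul{x}}{\ul{n}}\;,\qquad v_p(a_{\ul{n}})\geq \delta|\ul{n}|-C\;.
\end{equation*}

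\emph{Mapping to the target.} We send $a_{\ul{n}}=p^{e}u$, with $u\in\Z_p^\times$, to $u\,\pi^{\lfloor e/k\rfloor}(\tfrac{p^k}{\pi})^{\lfloor e/k\rfloor}p^{e-k\lfloor e/k\rfloor}$. The $\pi$-adic valuation of the image is then at least $e/k-1\geq (\delta/k)|\ul{n}|-C'$. Taking $\epsilon<\delta/k$, the series therefore lies in the $\epsilon$-term of the colimit from \cref{lem:man-zpdla}, base changed to $\Z(\!(\pi)\!)\langle\tfrac{p^k}{\pi}\rangle$.

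\emph{Compatibility, multiplicativity and independence.} Compatibility with evaluation holds on integer points by construction. It extends to all of $\Z_p^d$ by the continuity and uniqueness argument of \cref{lem:man-eval}. Multiplicativity and $\Z_p$-linearity follow because both sides of each identity evaluate to the same function. For this we need that an element of the target is determined by its values, which we check on the dense set $\Z^d$ (values at $\Z_{\geq0}^d$ recover the Mahler coefficients via finite differences) together with $\pi$-adic separatedness. The same uniqueness shows that the map is independent of the choice of the radius $r$.

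The main obstacle is the key estimate: obtaining linear growth of $v_p(a_{\ul{n}})$ for a function that is merely locally analytic, rather than analytic on all of $\Z_p^d$. The multiplication by indicator functions and the passage from monomials to binomials introduce factorials, and one has to check that these cost only a bounded slope. If citing Amice is not acceptable, the first thing I would try is to deduce it from the valuation bounds of \cref{lem:app-binompseries}.
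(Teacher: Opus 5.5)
Your final argument is the paper's: take the multivariable Mahler expansion $g=\sum_{\ul{n}} a_{\ul{n}}\binom{\ul{x}}{\ul{n}}$ with $v_p(a_{\ul{n}})$ growing linearly (Amice, \cref{rem:app-amicemult}), rewrite each coefficient as $a_{\ul{n}}=\frac{a_{\ul{n}}}{p^{k\lfloor e/k\rfloor}}\,\pi^{\lfloor e/k\rfloor}(\tfrac{p^k}{\pi})^{\lfloor e/k\rfloor}$ with $e=v_p(a_{\ul{n}})$ to get linear $\pi$-adic decay, and conclude with \cref{lem:man-zpdla}. Your check of multiplicativity and of compatibility with evaluation, using values at $\Z_{\geq 0}^d$ (recovered by finite differences), fills in what the paper leaves implicit; you should, however, drop the ``reduction to one ball'' detour, which is unnecessary once Amice is applied on all of $\Z_p^d$, and whose claim that $\mathbf{1}_{a+p^r\Z_p}$ is a finite combination of the $\binom{x}{n}$ with $n<p^r$ is false, because a nonconstant locally constant function is not a polynomial (its Mahler coefficients are infinitely many and only decay linearly, cf.\ \cref{lem:app-valuationmahlercoeffs} with $n=0$).
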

\begin{proof}
By \cref{rem:app-amicemult}, any map of $p$-adic manifolds $f: \Z_p^d\rightarrow\Z_p$ is given by
\begin{equation*}
f(x_1, \dots, x_d)=\sum_{\ul{n}} a_{\ul{n}}\binom{\ul{x}}{\ul{n}}
\end{equation*}
with $a_{\ul{n}}\in\Z_p$ such that $v_p(a_{\ul{n}})$ grows faster than $\epsilon |\ul{n}|$ for some $\epsilon>0$, where $\ul{n}=(n_1, \dots, n_d)$ is a multiindex, for which we set $|\ul{n}|\coloneqq n_1+\dots+n_d$. All we have to check is that this series converges in $C^\la(\Z_p^d, \Z(\!(\pi)\!))\tensor_{\Z(\!(\pi)\!)} \Z(\!(\pi)\!)\langle \tfrac{p^k}{\pi}\rangle$. However, rewriting
\begin{equation*}
a_{\ul{n}}=\frac{a_{\ul{n}}}{p^{k\lfloor v_p(a_{\ul{n}})/k\rfloor}}\cdot \pi^{\lfloor v_p(a_{\ul{n}})/k\rfloor}\cdot\left(\frac{p^k}{\pi}\right)^{\lfloor v_p(a_{\ul{n}})/k\rfloor}\;,
\end{equation*}
we see that the $\pi$-adic valuation of the coefficients grows faster than $\epsilon |\ul{n}|/k$ and this yields the desired map by \cref{lem:man-zpdla}.
\end{proof}
}

\begin{prop}
\label{prop:man-endos}
Let $f: \Z_p^d\rightarrow \Z_p^{d'}$ be a map of $p$-adic manifolds with $d, d'\geq 0$. For any $k\geq 1$, there is an induced map
\begin{equation*}
f^*: C^\la(\Z_p^{d'}, \Z(\!(\pi)\!))\tensor_{\Z(\!(\pi)\!)} \Z(\!(\pi)\!)\langle\tfrac{p^k}{\pi}\rangle\rightarrow C^\la(\Z_p^d, \Z(\!(\pi)\!))\tensor_{\Z(\!(\pi)\!)} \Z(\!(\pi)\!)\langle\tfrac{p^k}{\pi}\rangle\;,
\end{equation*}
of $\Z(\!(\pi)\!)\langle\tfrac{p^k}{\pi}\rangle$-algebras. Moreover, the association $f\mapsto f^*$ is compatible with composition and commutes with the evaluation map $\mathrm{ev}$ from \cref{lem:man-eval}.
\end{prop}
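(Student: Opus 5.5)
The map $f^*$ is a substitution: it is determined by where the coordinate functions go, followed by a convergence estimate. Write $A_k\coloneqq \Z(\!(\pi)\!)\langle\tfrac{p^k}{\pi}\rangle$ and $f=(f_1,\dots,f_{d'})$. By Amice's theorem in its multivariable form (the appendix remark on Amice), each component expands as
\begin{equation*}
f_i(\ul{x})=\sum_{\ul{n}} a_{i,\ul{n}}\binom{\ul{x}}{\ul{n}}, \qquad a_{i,\ul{n}}\in\Z_p,
\end{equation*}
with $v_p(a_{i,\ul{n}})\geq \delta|\ul{n}|-C$ for some $\delta>0$. The plan is to define $f^*$ on the generators $\binom{\ul{y}}{\ul{m}}$ of $C^\la(\Z_p^{d'},\Z(\!(\pi)\!))\tensor A_k$, given by \cref{lem:man-zpdla}, by
\begin{equation*}
f^*\binom{\ul{y}}{\ul{m}}\coloneqq\prod_i\binom{f_i(\ul{x})}{m_i}.
\end{equation*}
Each $\binom{f_i}{m_i}$ is computed inside $C^\la(\Z_p^d,\Z(\!(\pi)\!))\tensor A_k$ by expanding binomials of sums. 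For this I would use the same appendix identities already used in the proof of \cref{lem:man-eval}, namely the multinomial Vandermonde expansion of $\binom{\sum_{\ul{n}} a_{\ul{n}}\binom{\ul{x}}{\ul{n}}}{m}$ in terms of products of $\binom{a_{\ul{n}}\binom{\ul{x}}{\ul{n}}}{j}$, together with \cref{lem:app-prodbinom} to re-expand products of binomial coefficients in the basis $\binom{\ul{x}}{\ul{n}}$ with integer coefficients. The $p$-adic coefficients are converted into $\pi$-adic ones exactly as in \cref{lem:man-eval}, by writing $p^{v}=p^{v-k\lfloor v/k\rfloor}\,\pi^{\lfloor v/k\rfloor}(p^k/\pi)^{\lfloor v/k\rfloor}$. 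Hence a $p$-adic valuation $v$ turns into a $\pi$-adic valuation of roughly $v/k$ in $A_k$.

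The main step, and the main obstacle, is the estimate. One must show that if $g=\sum_{\ul{m}} b_{\ul{m}}\binom{\ul{y}}{\ul{m}}$ with $v_\pi(b_{\ul{m}})\geq\epsilon|\ul{m}|-C$, then $f^*g=\sum_{\ul{n}} c_{\ul{n}}\binom{\ul{x}}{\ul{n}}$ has $v_\pi(c_{\ul{n}})\geq\epsilon'|\ul{n}|-C'$ for some $\epsilon'>0$. To reduce the difficulty, I would first treat the case where $f$ is affine-linear modulo a small perturbation. Locally, after shrinking to a ball $a+p^r\Z_p^d$ and rescaling, one may assume $f(\ul{x})=f(0)+p^s h(\ul{x})$ with $h$ having coefficients $a_{\ul{n}}$ of large valuation for $|\ul{n}|\geq 1$. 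Here the key bound I would aim for is that $\binom{c+p^s h}{m}$ has coefficient in front of $\binom{\ul{x}}{\ul{n}}$ of $p$-adic valuation $\gtrsim \delta'|\ul{n}|-$(const), uniformly in $m$, up to a loss of order $m/(p-1)$. This loss is then absorbed, as in \cref{lem:man-eval}, by enlarging $k$ relative to $\epsilon$ when needed. Since the statement is for every $k\geq 1$, I would note that enlarging $k$ can be avoided by first composing with multiplication maps by $p^r$ and translations, which are handled by direct formulas: translation via Vandermonde, and $x\mapsto p^rx$ via \cref{lem:app-binompseries}. The general case then follows by composing finitely many such charts, using that $\Z_p^d$ is covered by finitely many balls. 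Gluing over balls is possible because the indicator functions of cosets $a+p^r\Z_p$ lie in $C^\la(\Z_p,\Z(\!(\pi)\!))\tensor A_k$, being locally constant functions, which are finite combinations of binomials up to Mahler-type convergence.

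Once $f^*$ is defined and continuous, the remaining claims are formal. It is a ring map: on integer points $\Z^d$, with $f$ replaced by a density argument, evaluation separates elements of the $\pi$-adically separated target, and $\mathrm{ev}$ from \cref{lem:man-eval} commutes with $f^*$ by construction, since $\mathrm{ev}_{\ul{x}}(f^*g)=g(f(\ul{x}))$ holds on generators and both sides are continuous. Compatibility with composition, $(g\circ f)^*=f^*\circ g^*$, follows because both sides agree after evaluation at all points of $\Z_p^d$, and evaluation is jointly injective. The injectivity holds because an element with all values zero has all Mahler-type coefficients zero: the coefficients are recovered by finite differences at nonnegative integers.
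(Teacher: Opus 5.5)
Your setup is the paper's: you substitute the Amice expansions into the binomial basis, expand with the multinomial Vandermonde identity, and trade $p^k$ for $\pi$ in your $A_k=\Z(\!(\pi)\!)\langle\tfrac{p^k}{\pi}\rangle$. Your evaluation arguments for multiplicativity and functoriality are also fine. The gap is the convergence estimate, which is essentially the whole content of the proposition. You never prove it; you only state the bound you would ``aim for''.

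The localisation you propose does not make the estimate easier. After shrinking to a ball and writing $f=c+p^sh$, you still have to expand $\binom{c+p^sh}{m}$ in the basis $\binom{\ul{x}}{\ul{n}}$ with uniform control, which is the same problem as the global one. The detour also adds an unjustified step. Gluing requires the decomposition $C^\la(\Z_p^d,\Z(\!(\pi)\!))\tensor A_k\cong\prod_a C^\la(a+p^r\Z_p^d,\Z(\!(\pi)\!))\tensor A_k$, i.e.\ that a convergent series on a ball, in rescaled coordinates, extends by zero to a convergent global series. Knowing that indicator functions lie in the ring does not give this. (They are not finite combinations of binomials either; even their membership needs the bound of \cref{lem:app-valuationmahlercoeffs}.) What is needed is the extension map (\ref{eq:man-decompinverse}) with its own estimate from the proof of \cref{thm:man-lafunctor}. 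Finally, there is no reason to avoid enlarging $k$. For $k'\geq k$ there is a map $\Z(\!(\pi)\!)\langle\tfrac{p^{k'}}{\pi}\rangle\rightarrow\Z(\!(\pi)\!)\langle\tfrac{p^{k}}{\pi}\rangle$, so it suffices to construct $f^*$ for $k$ large relative to $\epsilon$ and then base change, exactly as in \cref{lem:man-eval}.

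The missing idea is that the estimate can be done globally in one stroke, with no localisation or gluing:
\begin{enumerate}[label=(\arabic*)]
\item Write $a_{\ul{n}}=p^{e_{\ul{n}}}a'_{\ul{n}}$ with $a'_{\ul{n}}\in\Z_p^\times$; by Amice, $e_{\ul{n}}$ grows at least linearly in $|\ul{n}|$. Enlarge $k$ so that $\epsilon k\geq\tfrac{1}{p-1}$.
\item Apply the claim inside the proof of \cref{lem:app-binompseries} with $y=a'_{\ul{n}}\binom{\ul{x}}{\ul{n}}$. This gives $\binom{p^{e_{\ul{n}}}y}{m_{\ul{n}}}=\sum_j\lambda_j\binom{y}{j}$ with $\tfrac1k v_p(\lambda_j)+\epsilon m_{\ul{n}}\geq e_{\ul{n}}j/k$.
\item Combine this with Vandermonde and the decay of $b_m$, which absorbs the total loss $\epsilon m$. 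This rewrites $\sum_m b_m\binom{f}{m}$ as $\sum_{\ul{j}}c_{\ul{j}}\prod_{\ul{n}}\binom{a'_{\ul{n}}\binom{\ul{x}}{\ul{n}}}{j_{\ul{n}}}$, where $v_\pi(c_{\ul{j}})$ grows at least like $\epsilon\sum_{\ul{n}}j_{\ul{n}}|\ul{n}|/k$ up to constants.
\item The decisive step is a degree count. \cref{lem:app-prodbinom} cannot supply it, since these are compositions of binomials, not products. Each such product is a $\Z_p$-valued polynomial of degree at most $\sum_{\ul{n}}j_{\ul{n}}|\ul{n}|$. By \cref{rem:app-intbasiszpmult} it is therefore a $\Z_p$-combination of the $\binom{\ul{x}}{\ul{i}}$ with $|\ul{i}|\leq\sum_{\ul{n}}j_{\ul{n}}|\ul{n}|$.
\item The decay thus transfers directly to the coefficients of $\binom{\ul{x}}{\ul{i}}$, and \cref{lem:man-zpdla} concludes.
\end{enumerate}
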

\begin{proof}
As $f$ is given by $d'$ locally analytic functions $\Z_p^d\rightarrow\Z_p$, we may reduce to the case $d'=1$ by the universal property of the tensor product. As in the previous proof, the map $f: \Z_p^d\rightarrow\Z_p$ can then be written as
\begin{equation*}
f(x_1, \dots, x_d)=\sum_{\ul{n}} a_{\ul{n}}\binom{\ul{x}}{\ul{n}}
\end{equation*}
with $a_{\ul{n}}\in\Z_p$ such that $v_p(a_{\ul{n}})$ grows faster than $\epsilon |\ul{n}|$ for some $\epsilon>0$. By a similar density argument as in the proof of \cref{lem:man-eval}, our task is to show that, given any $\sum_m b_m\binom{y}{m}\in C^\la(\Z_p, \Z(\!(\pi)\!))$, the expression
\begin{equation}
\label{eq:man-endoscomp}
\sum_m b_m\binom{\sum_{\ul{n}} a_{\ul{n}} \binom{\ul{x}}{\ul{n}}}{m}
\end{equation}
makes sense in the ring $C^\la(\Z_p^d, \Z(\!(\pi)\!))\tensor_{\Z(\!(\pi)\!)} \Z(\!(\pi)\!)\langle\tfrac{p^k}{\pi}\rangle$. To this end, we may assume that $\pi^{-\epsilon m}b_m\rightarrow 0$ in the $\pi$-adic topology after possibly lowering $\epsilon$; furthermore, as in the proof of \cref{lem:man-eval}, we may then increase $k$ to assume that $\epsilon k\geq \tfrac{1}{p-1}$.

Now first note that
\begin{equation*}
\sum_m b_m\binom{\sum_{\ul{n}} a_{\ul{n}} \binom{\ul{x}}{\ul{n}}}{m}=\sum_m b_m\sum_{\sum_{\ul{n}} m_{\ul{n}}=m} \prod_{\ul{n}} \binom{a_{\ul{n}}\binom{\ul{x}}{\ul{n}}}{m_{\ul{n}}}
\end{equation*}
by double counting. Writing $a_{\ul{n}}=p^{e_{\ul{n}}} a'_{\ul{n}}$ with $a'_{\ul{n}}\in\Z_p^\times$, the proof of \cref{lem:app-binompseries} shows that
\begin{equation*}
\binom{a_{\ul{n}}\binom{\ul{x}}{\ul{n}}}{m_{\ul{n}}}=\sum_j \lambda_{\ul{n}, m_{\ul{n}}, j}\binom{a_{\ul{n}}'\binom{\ul{x}}{\ul{n}}}{j}
\end{equation*}
for integers $\lambda_{\ul{n}, m_{\ul{n}}, j}$ satisfying
\begin{equation*}
\frac{1}{k}v_p(\lambda_{\ul{n}, m_{\ul{n}}, j})+\epsilon m_{\ul{n}}\geq \frac{e_{\ul{n}} j}{k}\;.
\end{equation*}
As the $\pi$-adic valuation of $b_m$ grows faster than $\epsilon m$ and $e_{\ul{n}}$ grows faster than $\epsilon |\ul{n}|$, we conclude that we may rewrite (\ref{eq:man-endoscomp}) as
\begin{equation*}
\sum_{\substack{\ul{j}=(j_{\ul{n}})_{\ul{n}} \\ \text{almost all $j_{\ul{n}}=0$}}} c_{\ul{j}}\prod_{\ul{n}} \binom{a_{\ul{n}}'\binom{\ul{x}}{\ul{n}}}{j_{\ul{n}}}
\end{equation*}
with elements $c_{\ul{j}}\in \Z(\!(\pi)\!)\langle\tfrac{p^k}{\pi}\rangle$ whose $\pi$-adic valuation grows faster than $\epsilon\sum_{\ul{n}} j_{\ul{n}} |\ul{n}|/k$.

To move on, we observe that $\prod_{\ul{n}}\binom{a_{\ul{n}}'\binom{\ul{x}}{\ul{n}}}{j_{\ul{n}}}$ is a $\Z_p$-valued polynomial over $\Q_p$ in $d$ variables and hence can be written as
\begin{equation*}
\prod_{\ul{n}}\binom{a_{\ul{n}}'\binom{\ul{x}}{\ul{n}}}{j_{\ul{n}}}=\sum_{|\ul{i}|\leq \sum_{\ul{n}} j_{\ul{n}}|\ul{n}|} \mu_{\ul{i}}\binom{\ul{x}}{\ul{i}}
\end{equation*}
for some $\mu_{\ul{i}}\in\Z_p$ by \cref{rem:app-intbasiszpmult}. Using this, we conclude that (\ref{eq:man-endoscomp}) can be written as
\begin{equation*}
\sum_{\ul{i}} c'_{\ul{i}}\binom{\ul{x}}{\ul{i}}
\end{equation*}
for elements $c'_{\ul{i}}\in\Z(\!(\pi)\!)\langle\tfrac{p^k}{\pi}\rangle$ whose $\pi$-adic valuation grows faster than $\epsilon |\ul{i}|/k$ and this now yields the claim using the description of $C^\la(\Z_p^d, \Z(\!(\pi)\!))$ from \cref{lem:man-zpdla}.
\end{proof}

We are now almost in position to construct our desired functor from $p$-adic manifolds to analytic stacks over $\cal{N}_{|p|<1}/\R_{>0}$. However, we first have to observe the following descent property of our constructions so far:

\begin{lem}
\label{lem:man-descent}
The analytic stack $\AnSpec C^\la(\Z_p, \Z(\!(\pi)\!))$ is equipped with a canonical descent datum along the cover
\begin{equation*}
\AnSpec\Z_\solid(\!(\pi)\!)\rightarrow \cal{N}/\R_{>0}
\end{equation*}
from \cref{cor:norms-pres}.
\end{lem}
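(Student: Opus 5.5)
We have to produce an isomorphism between the two pullbacks of $\AnSpec C^\la(\Z_p,\Z(\!(\pi)\!))$ to the fibre product
\begin{equation*}
\AnSpec\Z_\solid(\!(\pi)\!)\times_{\cal{N}/\R_{>0}}\AnSpec\Z_\solid(\!(\pi)\!)\cong\overcirc{\DD}^\times_{\Z(\!(\pi)\!)}.
\end{equation*}
This identification is the one from the proof of \cref{lem:norms-suavecover}: a point is a ring $R$ with a pseudouniformiser $\pi$ and a second pseudouniformiser $\pi'=T$ with $0<|\pi'|_\pi<1$. The isomorphism should then satisfy the cocycle condition on the triple fibre product. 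Concretely, the first pullback is $C^\la$ built with $\pi$, i.e.\ $\colim_\epsilon R\langle\pi^{\epsilon n}\binom{x}{n}\rangle$, and the second is the same construction with $\pi'$ in place of $\pi$. So the task is to show that, over any such $R$, the two colimit algebras agree as subalgebras of (a completion of) $R[\binom{x}{n}]$, compatibly with the algebra structures. Since the identification is an equality of subobjects, it is canonical, and the cocycle condition becomes automatic.

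The key step is a comparison of the ind-systems. Over $\overcirc{\DD}^\times$, the condition $0<|\pi'|_\pi<1$ means that locally (after passing to the rational loci cut out by the norm) there are $0<a\le b<\infty$ with $\pi'^{\,1/a}$ dividing a power-bounded multiple of $\pi$, and $\pi^{b}$ dividing $\pi'$ in the corresponding sense. More precisely, on $\{(1/2)^b\le|T|\le(1/2)^a\}$ we have $T/\pi^{a}$ and $\pi^b/T$ power-bounded (using rational exponents, after adjoining roots as in \cref{prop:norms-normviapseudouniformiser}). Then $\pi'^{\epsilon n}\binom{x}{n}=(\pi'/\pi^{a})^{\epsilon n}\pi^{a\epsilon n}\binom{x}{n}$, which gives a map
\begin{equation*}
R\left\langle\pi'^{\epsilon n}\binom{x}{n}\right\rangle\to R\left\langle\pi^{a\epsilon n}\binom{x}{n}\right\rangle.
\end{equation*}
Symmetrically, $R\langle\pi^{\epsilon n}\binom{x}{n}\rangle\to R\langle\pi'^{\epsilon n/b}\binom{x}{n}\rangle$. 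These maps are mutually inverse on colimits over $\epsilon$, since they are compatible with the inclusions into $R[\binom{x}{n}]\otimes R[1/\pi]$. The algebra structure is preserved because the product formula of \cref{lem:app-prodbinom} has integer coefficients and $\deg\le m+n$, exactly as in the definition. To globalise over the non-quasicompact disk, write $\overcirc{\DD}^\times$ as the union of the annuli above. The colimit over $\epsilon$ is insensitive to the rescaling by $a$ or $b$, so the isomorphisms on the annuli agree on overlaps and glue.

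I would take the main obstacle to be making the rescaling precise as maps of analytic rings, given that $|\cdot|$ need not satisfy a triangle inequality and only rational powers are defined. I would handle this by working on the cover where $\pi$ admits all roots (allowed by descent). There I would use the explicit rings $\Z(\!(\pi)\!)\langle T/\pi^{a},\pi^b/T\rangle$ together with $\pi$-completeness, and check the isomorphism mod $\pi$ on the explicit bases $\pi^{\lceil\epsilon n\rceil}\binom{x}{n}$, as in \cref{lem:man-zpdla}. The cocycle condition on $\overcirc{\DD}^\times\times\overcirc{\DD}^\times$ then follows because every comparison map is the identity on the dense subring $R[\pi^{-1}][\binom{x}{n}]$.
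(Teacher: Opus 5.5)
Your argument is correct, but it is organised differently from the paper's. The paper descends in two steps. First it descends along $\AnSpec\Z_\solid(\!(\pi)\!)\rightarrow\cal{N}$, whose self-fibre product is the torus $\ol{\T}$ of \cref{cor:norms-pres}. There it shows that the ind-systems $\Z(\!(\pi)\!)\langle\pi^{\epsilon}T^{\pm1},\pi^{\epsilon n}\binom{x}{n}\rangle$ and $\Z(\!(\pi)\!)\langle\pi^{\epsilon}T^{\pm1},(\pi T)^{\epsilon n}\binom{x}{n}\rangle$ are mutually cofinal, using the integer shift $T^m\binom{x}{n}=T^{m-\lfloor n\epsilon\rfloor}\cdot T^{\lfloor n\epsilon\rfloor}\binom{x}{n}$, so no roots are needed. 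It then handles $\cal{N}\rightarrow\cal{N}/\R_{>0}$ by the one-line remark that $\lambda\in\R_{>0}$ rescales $\epsilon$ to $\lambda\epsilon$.

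You instead work in one step over the full fibre product $\overcirc{\DD}^\times_{\Z(\!(\pi)\!)}$ from \cref{lem:norms-suavecover}, covered by annuli. On each annulus the same cofinality holds, with $\epsilon$ rescaled by the bounds $a\le b$. The core estimate is therefore the same. Your route buys two things. It makes the continuity in the Betti direction $\R_{>0}$ explicit, since on an annulus the exponent relating $T$ to $\pi$ ranges over the compact interval $[a,b]$ and the comparison is uniform in it. It also addresses the cocycle condition, which the paper leaves implicit. The cost is the gluing over the non-quasicompact punctured disk and the check on overlaps. That check is harmless, because every comparison map is characterised by $\binom{x}{n}\mapsto\binom{x}{n}$ and composes to transition maps of the $\epsilon$-systems.

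A few imprecisions should be fixed.
\begin{itemize}
\item In this norm formalism the closed annulus $\{(1/2)^b\le|T|\le(1/2)^a\}$ is overconvergent. It is a colimit over $\delta>0$ of $\Z(\!(\pi)\!)\langle T/\pi^{a-\delta},\pi^{b+\delta}/T\rangle$ (compare $\ol{\T}=\colim_\epsilon\Z(\!(\pi)\!)\langle\pi^\epsilon T^{\pm1}\rangle$), so $T/\pi^a$ is not literally power-bounded there. This is harmless: replacing $a,b$ by $a-\delta,b+\delta$ is absorbed by the colimit over $\epsilon$.
\item Your verbal description of the divisibilities is garbled. For example, $|T|\ge(1/2)^b$ means that $T$ divides $\pi^b$, not the other way round. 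Your precise formulation, with $T/\pi^a$ and $\pi^b/T$ bounded, is the correct one.
\item You can avoid adjoining roots of $\pi$ and $T$ entirely by using the ceilings $\pi^{\lceil\epsilon n\rceil}$ from the definition of $C^\la$, as the paper does.
\end{itemize}
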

\begin{proof}
We first show descent to $\cal{N}$, for which we have to give a canonical isomorphism
\begin{equation*}
C^\la(\Z_p, \Z(\!(\pi)\!))\tensor_{\Z(\!(\pi)\!), \pi\mapsto \pi} \Z(\!(\pi)\!)\langle T^{\pm 1}\rangle_{\leq 1}\cong C^\la(\Z_p, \Z(\!(\pi)\!))\tensor_{\Z(\!(\pi)\!), \pi\mapsto \pi T} \Z(\!(\pi)\!)\langle T^{\pm 1}\rangle_{\leq 1}\;,
\end{equation*}
which after unravelling definitions comes down to showing that
\begin{equation*}
\colim_{\epsilon>0} \Z(\!(\pi)\!)\left\langle\pi^{\epsilon} T^{\pm 1}, \pi^{\epsilon n}\binom{x}{n}: n\geq 0\right\rangle\cong \colim_{\epsilon>0} \Z(\!(\pi)\!)\left\langle\pi^{\epsilon} T^{\pm 1}, (\pi T)^{\epsilon n}\binom{x}{n}: n\geq 0\right\rangle\;.
\end{equation*}
To establish this, we will show that the filtered systems on both sides are cofinal. Namely, given
\begin{equation*}
\sum_{m, n} a_{mn} T^m\binom{x}{n}\in \Z(\!(\pi)\!)\left\langle\pi^{\epsilon} T^{\pm 1}, \pi^{\epsilon n}\binom{x}{n}: n\geq 0\right\rangle\;,
\end{equation*}
we have $v_\pi(a_{mn})-\epsilon(|m|+n)\rightarrow\infty$ as $|m|+n\rightarrow\infty$ and may assume $\epsilon<1$ without loss of generality. Then $\epsilon'=\epsilon/2$ satisfies 
\begin{equation*}
\begin{split}
v_\pi(a_{mn})-\epsilon'(|m-\lfloor n\epsilon\rfloor|+n)&\geq v_\pi(a_{mn})-\epsilon'(|m|+\lfloor n\epsilon\rfloor +n) \\
&\geq v_\pi(a_{mn})-\epsilon(|m|/2+(\epsilon+1)n/2) \\
&\geq v_\pi(a_{mn})-\epsilon(|m|+n)\rightarrow\infty
\end{split}
\end{equation*}
and hence
\begin{equation*}
\sum_{m, n} a_{mn} T^m\binom{x}{n}=\sum_{m, n} a_{mn} T^{m-\lfloor n\epsilon\rfloor} T^{\lfloor n\epsilon\rfloor}\binom{x}{n}\in \Z(\!(\pi)\!)\left\langle\pi^{\epsilon} T^{\pm 1}, (\pi T)^{\epsilon n}\binom{x}{n}: n\geq 0\right\rangle\;,
\end{equation*}
as desired. The argument for the other direction is similar. Finally, for the further descent along $\cal{N}\rightarrow\cal{N}/\R_{>0}$, note that the action of $\lambda\in \R_{>0}$ on
\begin{equation*}
C^\la(\Z_p, \Z(\!(\pi)\!))=\colim_{\epsilon>0} \Z(\!(\pi)\!)\left\langle\pi^{\epsilon n} \binom{x}{n}: n\geq 0\right\rangle
\end{equation*}
replaces $\epsilon$ by $\lambda\epsilon$, but this does not change the colimit.
\end{proof}

By the previous lemma, the analytic stack $\AnSpec C^\la(\Z_p, \Z(\!(\pi)\!))$ canonically descends to $\cal{N}/\R_{>0}$ and we let $\Z_p^\la$ denote the base change of this descent along $\cal{N}_{|p|<1}/\R_{>0}\rightarrow\cal{N}/\R_{>0}$. Then observe that also the result of \cref{prop:man-endos} descends to $\cal{N}_{|p|<1}/\R_{>0}$, i.e.\ any map $\Z_p^d\rightarrow\Z_p^{d'}$ of $p$-adic manifolds functorially induces a morphism
\begin{equation*}
(\Z_p^\la)^d\rightarrow (\Z_p^\la)^{d'}
\end{equation*}
over $\cal{N}_{|p|<1}/\R_{>0}$. 

\begin{thm}
\label{thm:man-lafunctor}
The association $\Z_p^d\mapsto (\Z_p^\la)^d$ extends to a functor
\begin{equation*}
\begin{split}
\{\text{paracompact $p$-adic manifolds}\}&\rightarrow\{\text{analytic stacks over $\cal{N}_{|p|<1}/\R_{>0}$}\} \\
M&\mapsto M^\la\;.
\end{split}
\end{equation*}
\end{thm}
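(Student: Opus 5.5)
We will build the functor by gluing from charts, in the standard way for a sheaf-type construction. A paracompact $p$-adic manifold $M$ of dimension $d$ (dimension may vary on components; we treat components separately) admits, by \cite[§8]{Schneider}, a covering by pairwise disjoint compact open subsets each isomorphic to $\Z_p^d$, in fact to $p^r\Z_p^d$ for any prescribed $r$. So we first define the functor on the full subcategory $\cal{C}$ of manifolds of the form $\coprod_{i\in I}\Z_p^{d_i}$ with $I$ arbitrary. We set
\begin{equation*}
\Bigl(\coprod_{i\in I}\Z_p^{d_i}\Bigr)^\la\coloneqq \coprod_{i\in I}(\Z_p^\la)^{d_i}\,.
\end{equation*}
The coproduct here is the disjoint union of analytic stacks. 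Morphisms in $\cal{C}$ are locally analytic maps. Such a map $\coprod_i\Z_p^{d_i}\to\coprod_j\Z_p^{d'_j}$ restricts on each $\Z_p^{d_i}$ to a finite disjoint decomposition $\Z_p^{d_i}=\coprod_\ell (a_\ell+p^{r}\Z_p^{d_i})$ on which it lands in a single $\Z_p^{d'_j}$.

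The key additional input is that a decomposition $\Z_p^d=\coprod_{a\in(\Z/p^r)^d}(a+p^r\Z_p^d)$ induces a decomposition $(\Z_p^\la)^d\cong\coprod_a (\Z_p^\la)^d_a$ into open and closed substacks. Moreover, each piece is identified with $(\Z_p^\la)^d$ via the affine chart $x\mapsto a+p^rx$, using \cref{prop:man-endos} in both directions. To get the decomposition, we show that the indicator functions $1_{a+p^r\Z_p}$, which are locally analytic, define idempotents in $C^\la(\Z_p,\Z(\!(\pi)\!))\tensor\Z(\!(\pi)\!)\langle\tfrac{p^k}{\pi}\rangle$ via \cref{prop:man-endos}. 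Their sum is $1$ and their pairwise products vanish, by the compatibility of $f\mapsto f^*$ with composition and ring structure, checked on evaluations via \cref{lem:man-eval} and density of $\Z$. These idempotents are compatible with the descent data of \cref{lem:man-descent}, so they descend to $\cal{N}_{|p|<1}/\R_{>0}$.

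We must also check that the affine chart $c\colon x\mapsto a+p^rx$ is an isomorphism onto the piece. Its inverse $y\mapsto (y-a)/p^r$ is only defined on $a+p^r\Z_p$. So we extend it to a locally analytic map $g\colon\Z_p\to\Z_p$, say by zero off that coset. We then verify $g\circ c=\id$ and $c^*$ of the idempotent $=1$, and $c\circ g=\id$ after multiplying by $1_{a+p^r\Z_p}$. These identities follow from functoriality in \cref{prop:man-endos}.

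With this in hand, we define $f^\la$ for a morphism in $\cal{C}$ by refining the source decomposition so that $f$ is a disjoint union of maps $\Z_p^{d}\to\Z_p^{d'}$, and applying \cref{prop:man-endos} on each piece. Independence of the refinement and compatibility with composition again reduce to the functoriality in \cref{prop:man-endos}, since any two refinements have a common refinement.

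Finally, for general paracompact $M$ we choose a disjoint covering $M\cong\coprod_i U_i$ with $U_i\cong\Z_p^{d_i}$, and set $M^\la$ to be the value on this object of $\cal{C}$. Any two such choices are related by a common refinement. A refinement map is an isomorphism in $\cal{C}$, and $(-)^\la$ sends it to an isomorphism by the decomposition statement above, so $M^\la$ is well-defined up to unique isomorphism. More cleanly, $(-)^\la$ is an equivalence-invariant functor on $\cal{C}$, and every paracompact manifold is isomorphic to an object of $\cal{C}$. Hence the functor extends uniquely up to equivalence along the essentially surjective inclusion $\cal{C}\hookrightarrow\{\text{paracompact manifolds}\}$; in fact that inclusion is an equivalence of categories.

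The main obstacle is the decomposition step: showing the locally analytic idempotents and charts behave as claimed in the rings $C^\la\tensor\Z(\!(\pi)\!)\langle\tfrac{p^k}{\pi}\rangle$, where pointwise evaluation is not literally available. I would handle this by reducing every identity between pulled-back functions to the corresponding identity of maps of $p$-adic manifolds, using the compatibility with composition in \cref{prop:man-endos}. The remainder is formal bookkeeping.
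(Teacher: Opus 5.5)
Your argument is correct. Its overall architecture matches the paper's: define $M^\la$ via a decomposition of $M$ into disjoint balls, and reduce well-definedness to showing that $(-)^\la$ turns a coset decomposition $\Z_p^d=\coprod_a(a+p^r\Z_p^d)$ into a disjoint union. Your packaging via the category $\cal{C}$ and the equivalence $\cal{C}\hookrightarrow\{\text{paracompact manifolds}\}$ is a tidier version of the paper's ``well-defined, hence functorial''.

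The real difference is how you prove this key decomposition. The paper first reduces to $d=1$ and $r=1$. It then writes the inverse of $\binom{x}{n}\mapsto(\binom{a+px}{n})_a$ explicitly as the Mahler expansion of $1_{a+p\Z_p}(x)\binom{(x-a)/p}{n}$, see (\ref{eq:man-decompinverse}). Convergence in $C^\la(\Z_p,\Z(\!(\pi)\!))\tensor\Z(\!(\pi)\!)\langle\tfrac{p^k}{\pi}\rangle$ comes from the generating-function estimate of \cref{lem:app-valuationmahlercoeffs}, and the two composites are checked by Mahler-coefficient identities.

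Your inverse is literally the same element, since $e_a\cdot g_a^*\binom{y}{n}$ evaluates to exactly that function. You obtain it for free by applying \cref{prop:man-endos} to the locally analytic maps $1_{a+p^r\Z_p^d}$ and $g_a$, with all identities coming from functoriality. This treats all $d$ and $r$ uniformly and avoids the appendix estimate. The price is leaning on the full strength of \cref{prop:man-endos}, including for maps that are only locally analytic, and on the principle that identities in these rings can be tested by evaluation.

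The right justification for that principle is not density of $\Z$. It is that evaluation at points of $\Z_{\geq 0}^d$ is injective. Indeed, $F(m)=\sum_{\ul{n}\leq m}a_{\ul{n}}\binom{m}{\ul{n}}$ is a finite unitriangular system, so Mahler inversion recovers the coefficients of $F$. Together with multiplicativity of evaluation and its compatibility with $f\mapsto f^*$, this gives $e_a^2=e_a$, $\sum_a e_a=1$, $c_a^*(e_b)=\delta_{ab}$ and $e_a\cdot(c_a\circ g_a)^*(s)=e_a s$.

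You also need not descend the idempotents separately. As in the paper, the map $\coprod_a(\Z_p^\la)^d\rightarrow(\Z_p^\la)^d$ induced by the charts is already defined over $\cal{N}_{|p|<1}/\R_{>0}$. So it suffices to check that it is an isomorphism after pullback to $\AnSpec\Z_\solid(\!(\pi)\!)\langle\tfrac{p^k}{\pi}\rangle$.
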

\begin{proof}
Given a paracompact $p$-adic manifold $M$, we can write it as a disjoint union of open balls by \cite[Prop.\ 8.7]{Schneider} using the fact that any open subset of $\Q_p^d$ is a disjoint union of open balls $M=\bigsqcup_{i\in I} B_i$. Noting that $B_i\cong \Z_p^{d_i}$ for each $i\in I$ and some $d_i\geq 0$, we set
\begin{equation*}
M^\la\coloneqq \bigsqcup_{i\in I} \,(\Z_p^\la)^{d_i}\;.
\end{equation*}
If we can show that this association is well-defined, i.e.\ independent of the chosen cover, then \cref{prop:man-endos} guarantees that it is functorial. 

For well-definedness, we take two covers of $M$ by open balls and can then produce a common refinement which is a cover by disjoint open balls as well. Then we are reduced to proving the following statement: Assume that
\begin{equation*}
\bigsqcup_{i\in I} \,\Z_p^d\xrightarrow{\cong} \Z_p^d
\end{equation*}
is a cover of $\Z_p^d$ by disjoint open balls. Then the induced map
\begin{equation*}
\bigsqcup_{i\in I} \,(\Z_p^d)^\la\xrightarrow{\cong} (\Z_p^d)^\la
\end{equation*}
is an isomorphism. By compactness we know that $I$ is finite and then we may reduce to the case where all balls have the same radius. From here, we may inductively reduce to the case of the cover
\begin{equation*}
\bigsqcup_{a\in [p]^d} (a+p\Z_p^d)\xrightarrow{\cong} \Z_p^d\;,
\end{equation*}
where $[p]=\{0, 1, \dots, p-1\}$. As disjoint unions commute with base change, we may furthermore reduce to the case $d=1$. Then we have to prove that
\begin{equation*}
\bigsqcup_{a\in [p]} (a+p\Z_p)^\la\rightarrow \Z_p^\la
\end{equation*}
is an isomorphism.

To check this, we may pull back to the cover $\colim_k \AnSpec\Z_\solid(\!(\pi)\!)\langle\tfrac{p^k}{\pi}\rangle$ of $\cal{N}_{|p|<1}/\R_{>0}$. Then we have to show that the map
\begin{equation}
\label{eq:man-decomp}
\begin{split}
C^\la(\Z_p, \Z(\!(\pi)\!))&\rightarrow \prod_{a\in [p]} C^\la(a+p\Z_p, \Z(\!(\pi)\!)) \\
\binom{x}{n}&\mapsto \left(\binom{a+px}{n}\right)_{a\in [p]}
\end{split}
\end{equation}
is an isomorphism after base changing to $\Z(\!(\pi)\!)\langle\tfrac{p^k}{\pi}\rangle$ for any $k\geq 1$. It suffices to give an inverse and we claim that this is provided by summing the maps
\begin{equation}
\label{eq:man-decompinverse}
\begin{split}
C^\la(a+p\Z_p, \Z(\!(\pi)\!))&\rightarrow C^\la(\Z_p, \Z(\!(\pi)\!))\tensor_{\Z(\!(\pi)\!)} \Z(\!(\pi)\!)\langle\tfrac{p^k}{\pi}\rangle \\
\binom{x}{n}&\mapsto \sum_m\left(\sum_{a+p\ell\leq m} (-1)^{m-a-p\ell}\binom{m}{a+p\ell}\binom{\ell}{n}\right)\binom{x}{m}
\end{split}
\end{equation}
for $a\in [p]$, where we note that the right-hand side is the Mahler expansion of the $\Z_p$-valued function on $\Z_p$ given by $\binom{(x-a)/p}{n}$ if $x\in a+p\Z_p$ and zero otherwise. We point out that we do \emph{not} need to show that this map is multiplicative: Once we know that it is inverse to (\ref{eq:man-decomp}), this will follow from the fact that (\ref{eq:man-decomp}) is a ring map.

We first claim that (\ref{eq:man-decompinverse}) induces a well-defined $\Z(\!(\pi)\!)$-linear map, for which we have to check that
\begin{equation}
\label{eq:man-decompinverseconverges}
\sum_n c_n\sum_m\left(\sum_{a+p\ell\leq m} (-1)^{m-a-p\ell}\binom{m}{a+p\ell}\binom{\ell}{n}\right)\binom{x}{m}
\end{equation}
makes sense in $C^\la(\Z_p, \Z(\!(\pi)\!))\tensor_{\Z(\!(\pi)\!)} \Z(\!(\pi)\!)\langle\tfrac{p^k}{\pi}\rangle$ whenever the $\pi$-adic valuations of $c_n\in\Z(\!(\pi)\!)$ grow faster than $\epsilon n$ for some $\epsilon>0$; without loss of generality, we may assume that $\epsilon<1/k$. In other words, we have to show that the $\pi$-adic valuation of
\begin{equation*}
\sum_n c_n\sum_{a+p\ell\leq m} (-1)^{m-a-p\ell}\binom{m}{a+p\ell}\binom{\ell}{n}
\end{equation*}
grows at least linearly in $m$. Denoting the inner sum by $b_{mn}$, we notice that $b_{mn}=0$ for $n>(m-a)/p$ and hence the above simplifies to
\begin{equation*}
\sum_{n\leq (m-a)/p} c_n b_{mn}\;.
\end{equation*}

We now estimate the $\pi$-adic valuation of each summand individually. By assumption, we have $v_\pi(c_n)\geq \epsilon n-C$ for some constant $C\gg 0$. Moreover, by \cref{lem:app-valuationmahlercoeffs}, we have 
\begin{equation*}
v_p(b_{mn})\geq \frac{m-pn}{p-1}-1\;.
\end{equation*}
Since we have base changed to $\Z(\!(\pi)\!)\langle\tfrac{p^k}{\pi}\rangle$, we may replace factors $p^k$ by $\pi$ up to a powerbounded element and thus we conclude from the above that
\begin{equation*}
v_\pi(b_{mn})\geq \frac{m-pn}{k(p-1)}-1\;.
\end{equation*}
Overall, this implies
\begin{equation*}
\begin{split}
v_\pi(c_nb_{mn})&\geq \epsilon n+\frac{m-pn}{k(p-1)}-C-1=\frac{m}{k(p-1)}+n\left(\epsilon-\frac{p}{k(p-1)}\right)-C-1 \\
&\geq \frac{m}{k(p-1)}+\frac{m-a}{p}\left(\epsilon-\frac{p}{k(p-1)}\right)-C-1\geq \frac{m\epsilon}{p}-C'
\end{split}
\end{equation*}
for some $C'\gg 0$, where the first estimate in the second line uses that $\epsilon<1/k<p/(k(p-1))$ and $n\leq (m-a)/p$ by assumption. This yields the claimed linear growth of the $\pi$-adic valuation of $\sum_n c_nb_{mn}$ in $m$ and thus the expression (\ref{eq:man-decompinverseconverges}) makes sense in $C^\la(\Z_p, \Z(\!(\pi)\!))\tensor_{\Z(\!(\pi)\!)} \Z(\!(\pi)\!)\langle\tfrac{p^k}{\pi}\rangle$, as desired.

It remains to show that the maps (\ref{eq:man-decomp}) and (\ref{eq:man-decompinverse}) are pairwise inverse. As both the source and target are $\pi$-adically separated, it suffices to check this on a dense subset and thus linearity reduces us to checking the identities
\begin{equation*}
\sum_{m\geq 0}\left(\sum_{a\in [p]} \sum_{a+p\ell\leq m} (-1)^{m-a-p\ell}\binom{m}{a+p\ell}\binom{a+p\ell}{n}\right)\binom{x}{m}=\binom{x}{n}
\end{equation*}
for $n\geq 0$ and
\begin{equation*}
\sum_{m\geq 0}\left(\sum_{a+p\ell\leq m} (-1)^{m-a-p\ell}\binom{m}{a+p\ell}\binom{\ell}{n}\right)\binom{a+px}{m}=\binom{x}{n}
\end{equation*}
for $a\in [p]$ and $n\geq 0$ in the ring of formal power series over $\Q_p$. For the first identity, we see that the inner double sum on the left-hand side collapses to
\begin{equation*}
\sum_{\ell\leq m} (-1)^{m-\ell}\binom{m}{\ell}\binom{\ell}{n}\;,
\end{equation*}
which is the $m$-th Mahler coefficient of the function $\binom{x}{n}$ on $\Z_p$ and this yields the claim. The second identity is proved similarly: The inner sum is the $m$-th Mahler coefficient of the function on $\Z_p$ given by $\binom{(y-a)/p}{n}$ if $y\in a+p\Z_p$ and zero otherwise and evaluating this for $y=a+px$ yields $\binom{x}{n}$, as desired.
\end{proof}

\begin{ex}
For $M=\Q_p$, we have
\begin{equation*}
\Q_p^\la\cong \colim_n\,(p^{-n}\Z_p)^\la\;.
\end{equation*}
Indeed, following the proof above, we have
\begin{equation*}
\Q_p^\la\cong \bigsqcup_{a\in \Q_p/\Z_p} (a+\Z_p)^\la\cong \colim_n \bigsqcup_{a\in p^{-n}\Z_p/\Z_p} (a+\Z_p)^\la\cong \colim_n \,(p^{-n}\Z_p)^\la\;.\qedhere
\end{equation*}
\end{ex}

\begin{ex}
Assume that $M=G$ is a $p$-adic Lie group. Then the paracompactness is automatic and, by \cite[Thm.\ 8.32]{ProPGroups}, there is an open subgroup $G_0\subseteq G$ such that $G_0\cong \Z_p^d$ as $p$-adic manifolds. This yields
\begin{equation*}
G^\la\cong \bigsqcup_{g\in G/G_0} (gG_0)^\la\;. \qedhere
\end{equation*}
\end{ex}

Our next goal is to prove that the functor $(-)^\la$ preserves fibre products of transverse maps. For this, recall that maps $f: M'\rightarrow M$ and $g: M''\rightarrow M$ of $p$-adic manifolds are called \emph{transverse} if 
\begin{equation*}
\Im T_x(f) + \Im T_y(g) = T_zM
\end{equation*}
for all points $x\in M', y\in M''$ with $f(x)=g(y)\eqqcolon z$. Here, $T_zM$ denotes the tangent space of $M$ at $z$ and $T_x(f): T_xM'\rightarrow T_zM$ is the map on tangent spaces induced by $f$ at $x$. We first establish the following $p$-adic version of the ``transverse intersection theorem'' for real manifolds:

\begin{lem}
\label{lem:man-transverseintersect}
Let $M$ be a $p$-adic manifold and assume that $M', M''\subseteq M$ are submanifolds such that the inclusion maps are transverse. If $x\in M'\cap M''$ is any point in the intersection of the two submanifolds, then there is an open neighbourhood $x\in U\subseteq M$ with coordinates $x_1, \dots, x_d$ such that 
\begin{equation*}
M'\cap U=V(x_1, \dots, x_r)\subseteq U\;, \hspace{0.5cm} M''\cap U=V(x_{r+1}, \dots, x_{r+s})\subseteq U
\end{equation*}
for some $r, s\geq 0$ with $r+s\leq d$. In particular, the intersection $M'\cap M''$ is a $p$-adic manifold.
\end{lem}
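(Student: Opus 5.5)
The plan is to mimic the proof of the real transverse intersection theorem, using the $p$-adic inverse function theorem (e.g.\ \cite[Prop.\ 9.1]{Schneider}, or its analogue for locally analytic maps) in place of the smooth one. First I reduce to a local statement. Since $M'$ and $M''$ are submanifolds, there are coordinates near $x$ cutting out each of them. Concretely, I pick a chart $U_0\cong$ an open ball in $\Q_p^d$ around $x$, with $x\mapsto 0$. Shrinking $U_0$, I obtain locally analytic functions $f_1,\dots,f_r$ and $g_1,\dots,g_s$ on $U_0$ such that $M'\cap U_0=V(f_1,\dots,f_r)$ and $M''\cap U_0=V(g_1,\dots,g_s)$. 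Here $r=\operatorname{codim}M'$, $s=\operatorname{codim}M''$, and the differentials $df_1(x),\dots,df_r(x)$, respectively $dg_1(x),\dots,dg_s(x)$, are linearly independent. This is just the definition of a submanifold, i.e.\ the local straightening of $M'$ and $M''$ separately.

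Next I translate transversality into linear algebra. We have $T_xM'=\bigcap_i\ker df_i(x)$ and $T_xM''=\bigcap_j\ker dg_j(x)$, and the hypothesis gives $T_xM'+T_xM''=T_xM$. Passing to annihilators in $T_x^*M$, this says
\begin{equation*}
\operatorname{span}\{df_i(x)\}\cap\operatorname{span}\{dg_j(x)\}=0\;.
\end{equation*}
Hence the $r+s$ covectors $df_1(x),\dots,df_r(x),dg_1(x),\dots,dg_s(x)$ are linearly independent. In particular $r+s\leq d$, and I can complete them by linear coordinate functions $h_{r+s+1},\dots,h_d$ to a basis of $T^*_xM$.

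Now I set $x_i=f_i$ for $i\leq r$, $x_{r+j}=g_j$ for $j\leq s$, and $x_k=h_k$ for the remaining indices. The map $(x_1,\dots,x_d):U_0\to\Q_p^d$ is locally analytic and has invertible derivative at $x$. By the $p$-adic inverse function theorem, it restricts to a locally analytic isomorphism from some open neighbourhood $U$ of $x$ onto an open ball, so $x_1,\dots,x_d$ are coordinates on $U$. By construction $M'\cap U=V(x_1,\dots,x_r)$ and $M''\cap U=V(x_{r+1},\dots,x_{r+s})$.

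For the final assertion, note that in these coordinates
\begin{equation*}
M'\cap M''\cap U=V(x_1,\dots,x_{r+s})\;,
\end{equation*}
which is a coordinate subspace of a ball. Since $x$ was arbitrary, these charts exhibit $M'\cap M''$ as a submanifold of $M$ of dimension $d-r-s$. The compatibility of the charts is inherited from $M$.

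The only step I expect to need care is the first: making sure that the paper's notion of submanifold (following \cite{Schneider}) provides local defining functions with independent differentials. If it is instead defined via charts in which $M'$ is a coordinate subspace, I will simply take the $f_i$ to be the first $r$ of those chart coordinates, and similarly for the $g_j$. Either way the rest of the argument goes through.
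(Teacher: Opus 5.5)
Your proposal is correct and follows essentially the same route as the paper: take local defining functions for $M'$ and $M''$ with independent differentials, use transversality to see that their conormal spaces meet trivially so that all $r+s$ differentials are jointly independent, complete them to a basis of $T_x^*M$, and apply the $p$-adic inverse function theorem to get the new coordinates. The final assertion is then read off from $M'\cap M''\cap U=V(x_1,\dots,x_{r+s})$, exactly as in the paper.
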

\begin{proof}
By definition, there is some neighbourhood $x\in U\subseteq M$ with coordinates $x_1, \dots, x_d$ and $\Q_p$-valued analytic functions $f_1, \dots, f_r$ whose derivatives $\mathrm{d}_xf_1, \dots, \mathrm{d}_xf_r$ at $x$ are linearly independent in $(T_xM)^\vee$ such that $M'\cap U$ is cut out by $f_1=\dots=f_r=0$; similarly, we find functions $f_{r+1}, \dots, f_{r+s}$ with linearly independent derivatives at $x$ such that $M''\cap U$ is the common vanishing locus of $f_{r+1}, \dots, f_{r+s}$. Now note that $\mathrm{d}_xf_1, \dots, \mathrm{d}_x f_r$ form a basis of the conormal space $N_x(M'/M)^\vee=\Ker((T_xM)^\vee\rightarrow (T_xM')^\vee)$ and, similarly, $\mathrm{d}_xf_{r+1}, \dots, \mathrm{d}_xf_{r+s}$ form a basis of $N_x(M''/M)^\vee$. As $T_xM'+T_xM''=T_xM$ by assumption, the conormal spaces $N_x(M'/M)^\vee$ and $N_x(M''/M)^\vee$ intersect trivially, hence $\mathrm{d}_xf_1, \dots, \mathrm{d}_xf_{r+s}$ are jointly linearly independent. 

In particular, we have $r+s\leq d$ and, using the derivatives of the coordinate functions $x_i$, we can extend $\mathrm{d}_xf_1, \dots, \mathrm{d}_xf_{r+s}$ to a basis $\mathrm{d}_xf_1, \dots, \mathrm{d}_xf_{r+s}, \mathrm{d}_xg_{r+s+1}, \dots, \mathrm{d}_xg_d$ of $T_xM$. By the inverse function theorem in the form from \cite[Prop.\ 9.3]{Schneider}, the functions $f_1, \dots, f_{r+s}, g_{r+s+1}, \dots, g_d$ then induce an isomorphism from $U$ onto a small open subset of $\Q_p^d$ after possibly shrinking $U$. In these new coordinates $x_1', \dots, x_d'$, we will then have $M'\cap U=V(x_1', \dots, x_r')$ and $M''\cap U=V(x_{r+1}', \dots, x_{r+s}')$ by construction, as desired. For the second part, we just note that $M'\cap M''\cap U=V(x_1, \dots, x_{r+s})$ is a $p$-adic manifold. This concludes the proof.
\end{proof}

\begin{prop}
\label{prop:man-transverse}
Let $M_1\rightarrow M\leftarrow M_2$ be transverse maps of paracompact $p$-adic manifolds. Then the fibre product $M_1\times_M M_2$ has the structure of a $p$-adic manifold and the diagram
\begin{equation*}
\begin{tikzcd}
(M_1\times_M M_2)^\la\ar[r]\ar[d] & M_2^\la\ar[d] \\
M_1^\la\ar[r] & M^\la
\end{tikzcd}
\end{equation*}
of analytic stacks over $\cal{N}_{|p|<1}/\R_{>0}$ is cartesian. In other words, the functor $(-)^\la$ commutes with transverse fibre products.
\end{prop}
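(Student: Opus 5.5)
First I reduce to a local statement. Since $M^\la$ is built from a decomposition into disjoint open balls and $(-)^\la$ turns disjoint unions into disjoint unions (this is how the construction in \cref{thm:man-lafunctor} works), the claim is local on $M_1$, $M_2$ and $M$. Concretely, I pick compatible decompositions: a decomposition of $M$ into balls, and then decompositions of $M_1$ and $M_2$ into balls each mapping into a single ball of $M$. Disjoint unions commute with fibre products of analytic stacks, so it suffices to treat $M=\Z_p^d$, $M_1=\Z_p^{d_1}$, $M_2=\Z_p^{d_2}$, after shrinking balls as needed around points of the fibre product; balls over which the fibre product is empty have to be handled too (see the last paragraph).

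Next I rewrite the fibre product as a transverse intersection. I factor $M_1\times_M M_2$ as the intersection of the graph $\Gamma\subseteq M_1\times M_2\times M$, or more simply as the preimage of the diagonal $\Delta_M\subseteq M\times M$ under $f\times g: M_1\times M_2\to M\times M$. Transversality of $f$ and $g$ says exactly that the submanifolds $\Gamma_f\times M_2$ and $M_1\times\Gamma_g$ of $M_1\times M\times M_2$ meet transversally. \cref{lem:man-transverseintersect} then shows that $M_1\times_M M_2$ is a $p$-adic manifold. It also shows that, locally around each point, there are coordinates on a ball $U\cong\Z_p^n$ in which the two submanifolds are coordinate subspaces $V(x_1,\dots,x_r)$ and $V(x_{r+1},\dots,x_{r+s})$. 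The functor is compatible with products, since $(\Z_p^\la)^a\times(\Z_p^\la)^b=(\Z_p^\la)^{a+b}$ by \cref{lem:man-zpdla}, and with graphs. Granting this, the statement reduces to two claims. First, $V(x_1,\dots,x_r)^\la=\Z_p^{n-r}{}^\la\to(\Z_p^\la)^n$ is the evident closed inclusion, so the intersection of two coordinate subspaces has $\la$-version equal to the fibre product. Second, the graph of a map is sent to the graph, i.e.\ the fibre product of $M_1^\la\to M^\la\leftarrow M^\la\times M_2^\la$ computes $(\Gamma_g)^\la$ compatibly. The coordinate case is immediate from the tensor product description in \cref{lem:man-zpdla}: $C^\la(\Z_p^n)\otimes_{C^\la(\Z_p^{a})}\Z(\!(\pi)\!)$, with $x_i\mapsto 0$, equals $C^\la(\Z_p^{n-a})$, because the $\binom{x_i}{m}$ evaluate to $\delta_{m0}$ at $0$.

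The main obstacle is that the coordinate change produced by the inverse function theorem is only a locally analytic isomorphism on a small ball $U$, not on all of $\Z_p^n$. I must know that a locally analytic isomorphism $U\cong U'$ of balls induces an isomorphism $U^\la\cong U'^\la$. This follows from functoriality in \cref{prop:man-endos} and \cref{thm:man-lafunctor}, applied to the map and its inverse. I must also know that the $\la$-stacks of the small balls cover the $\la$-stack of the ambient ball; this holds by the disjoint-decomposition isomorphism in the proof of \cref{thm:man-lafunctor}. Finally, the fibre product $M_1^\la\times_{M^\la}M_2^\la$ must not acquire extra points away from $M_1\times_M M_2$. I would address this by decomposing $M_1$ and $M_2$ into finitely many small balls (after compactness reduction) on which $f(B_1)$ and $g(B_2)$ are either disjoint balls or handled by the local model. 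For disjoint images, the relevant balls of $M$ are distinct summands of the disjoint decomposition, so the fibre product is empty on the $\la$-side as well. The delicate part of this step is checking that the decomposition isomorphism is compatible with base change along $\colim_k\AnSpec\Z_\solid(\!(\pi)\!)\langle p^k/\pi\rangle$.
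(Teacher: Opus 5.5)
Your proposal is correct and follows essentially the same route as the paper: write $M_1\times_M M_2$ as the intersection of the two graph submanifolds of $M_1\times M_2\times M$, use \cref{lem:man-transverseintersect} to get local coordinates in which both are coordinate subspaces, and conclude from compatibility of $(-)^\la$ with finite products. On the $\la$-side the graph step is just the formal identity $X\times_Z Y\cong (X\times Y)\times_{X\times Y\times Z}(X\times Y)$, valid in any category with finite products, and your extra care about coordinate changes on small balls and about pieces where the fibre product is empty makes explicit what the paper compresses into ``working locally on $M$''.
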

\begin{proof}
Let $f_1: M_1\rightarrow M$ and $f_2: M_2\rightarrow M$ be the given maps. Then first note that $M_1\times_M M_2$ identifies with the intersection of the two submanifolds
\begin{equation*}
\begin{split}
M_1\times M_2&\hookrightarrow M_1\times M_2\times M\;, \hspace{0.3cm} (x, y)\mapsto (x, y, f_1(x)) \\
M_1\times M_2&\hookrightarrow M_1\times M_2\times M\;, \hspace{0.3cm} (x, y)\mapsto (x, y, f_2(y))
\end{split}
\end{equation*}
of $M_1\times M_2\times M$. By assumption, these are transverse submanifolds and hence their intersection is a $p$-adic manifold by \cref{lem:man-transverseintersect}. As the fibre product $M_1^\la\times_{M^\la} M_2^\la$ can be described similarly and the functor $M\mapsto M^\la$ commutes with finite products by construction, we may thus reduce to the case where $M_1$ and $M_2$ are submanifolds of $M$. Working locally on $M$, we can furthermore assume that $M\cong \Z_p^d$ and $M_1=V(x_1, \dots, x_r)$ while $M_2=V(x_{r+1}, \dots, x_{r+s})$ by \cref{lem:man-transverseintersect}, where $x_1, \dots, x_d$ are the coordinates on $\Z_p^d$. By induction, we can furthermore reduce to the case $r=1$ and compatibility with finite products then furthermore reduces us to $r+s=d$. In this case, we have $M\cong M_1\times M_2$ and then the claim follows from compatibility with finite products, so we are done.
\end{proof}

Finally, we want to explain that, in the case where the $p$-adic manifold $M$ arises as the $\Z_p$-points of a smooth $\Z_p$-scheme $X$, we can also view $M^\la$ in terms of transmutation. Namely, note that $\Z_p^\la$ is a $\Z_p$-algebra stack by functoriality of $(-)^\la$ and hence, for each analytic ring $R$ over $\cal{N}_{|p|<1}/\R_{>0}$, the $R$-points of $\Z_p^\la$ form an animated $\Z_p$-algebra. In particular, we can consider the $\Z_p^\la(R)$-valued points $X(\Z_p^\la(R))$ of the $\Z_p$-scheme $X$.

\begin{prop}
\label{prop:man-transmutation}
Let $X$ be a smooth $\Z_p$-scheme. Then $X(\Z_p)^\la$ is the sheafification of the presheaf over $\cal{N}_{|p|<1}/\R_{>0}$ given by
\begin{equation*}
R\mapsto X(\Z_p^\la(R))\;,
\end{equation*}
where the mapping space on the right is computed over $\Z_p$. Moreover, the same is true if we replace $\Z_p$ by $\Q_p$ everywhere and additionally assume that $X$ is separated and of finite type.
\end{prop}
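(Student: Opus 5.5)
I will first build a natural comparison map and then check that it is an isomorphism locally on $X$. I reduce to affine $X$ that are étale over affine space, and finally to $X=\A^d$, where the statement holds by construction.

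\textbf{Step 1: the comparison map.} Let $\cal{F}$ denote the presheaf $R\mapsto X(\Z_p^\la(R))$. For affine $X=\Spec A$ with $A$ of finite presentation over $\Z_p$, we have
\begin{equation*}
X(\Z_p^\la(R))=\Hom_{\Z_p}(A,\Z_p^\la(R)).
\end{equation*}
Any polynomial map $\Z_p^n\rightarrow\Z_p^m$ is locally analytic. So functoriality of $(-)^\la$ from \cref{thm:man-lafunctor}, together with compatibility with finite products and transverse fibre products (\cref{prop:man-transverse}), identifies $X(\Z_p)^\la$ with the fibre of a polynomial map $(\Z_p^\la)^n\rightarrow(\Z_p^\la)^m$ whenever $X\subseteq\A^n$ is cut out smoothly by $m$ equations. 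The fibre is taken over the zero section.

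I will argue locally on $X$. By the Jacobian criterion, Zariski locally $X$ is a complete intersection $V(f_1,\dots,f_m)\subseteq\A^{n}$ with Jacobian of full rank along $X$. Then $X(\Z_p)\subseteq\Z_p^n$ is the transverse intersection of the graph of $f=(f_1,\dots,f_m)$ with $\Z_p^n\times\{0\}$. By \cref{prop:man-transverse}, $X(\Z_p)^\la\cong(\Z_p^\la)^n\times_{f,(\Z_p^\la)^m}0$. The right-hand side is exactly the functor $R\mapsto\{a\in\Z_p^\la(R)^n: f(a)=0\}$, which is $\cal{F}$ computed as a derived fibre. It is discrete because the equations form a regular sequence; more precisely, the derived fibre agrees with the one computed in stacks because limits commute with evaluation on $R$.

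\textbf{Step 2: gluing.} Both sides are Zariski-local on $X$. For $X(\Z_p)^\la$, a Zariski cover $\{U_i\}$ of $X$ gives an open cover $\{U_i(\Z_p)\}$ of $X(\Z_p)$, which I refine to a disjoint cover. For $\cal{F}$, I would like $\{U_i(\Z_p^\la(R))\}$ to cover $X(\Z_p^\la(R))$ after sheafification. This is the main obstacle. A point $\Spec\Z_p^\la(R)\rightarrow X$ need not factor through a single $U_i$, since $\Z_p^\la(R)$ is not local. I would handle it by noting that the reduction of $\Z_p^\la$ behaves like $\Z_p$ in the following sense. An element $g$ of $\Z_p^\la(R)$ that is invertible at every point of $\Z_p$ is invertible after passing to a finite disjoint-ball cover. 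Concretely, I would argue via \cref{lem:man-eval} and the decomposition $\Z_p^\la\cong\bigsqcup_{a}(a+p^N\Z_p)^\la$ from the proof of \cref{thm:man-lafunctor}. On a small enough ball, $g=g(a)(1+h)$ with $h$ topologically nilpotent, so $g$ is a unit over $\Z(\!(\pi)\!)\langle\tfrac{p^k}{\pi}\rangle$. Applying this to the functions cutting out the $U_i$ shows that $R$-points of $X$ factor through a single $U_i$ locally on a disjoint-ball decomposition, which is a cover. Sheafification then identifies the two sides.

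\textbf{Step 3: the $\Q_p$ variant.} For $X$ separated of finite type and smooth over $\Q_p$, I would use the analogous local argument with $\Z_p^\la[1/p]$, which represents $\Q_p^\la\cong\colim_n(p^{-n}\Z_p)^\la$. Choose a smooth model over $\Z_p$ of an affine chart, and exhaust $X(\Q_p)$ by the $\Z_p$-points of the chart after rescaling coordinates by $p^{-n}$. Finite type and separatedness ensure that $X(\Q_p)$ is a filtered union of compact open pieces of this form. Colimits then commute with sheafification, and Steps 1--2 apply piecewise.
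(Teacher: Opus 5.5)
Your overall architecture matches the paper's: Zariski descent on both sides, then a standard smooth chart. There $X(\Z_p)$ is a transverse fibre product, so \cref{prop:man-transverse} identifies $X(\Z_p)^\la$ with the fibre of $(\Z_p^\la)^n\to(\Z_p^\la)^m$ over $0$. That fibre agrees with $R\mapsto X(\Z_p^\la(R))$ because $X$ is also the derived fibre product. Step 1 is fine.

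The gap is in Step 2, exactly at the obstacle you flag. There you treat $g\in\Z_p^\la(R)$ as a locally analytic function on $\Z_p$. For an arbitrary test object $R$, however, $g$ is an $R$-point $\AnSpec R\to\Z_p^\la$. Consequently:
\begin{itemize}
\item there is no domain $\Z_p$ to cover by balls;
\item there is no value $g(a)$, since \cref{lem:man-eval} evaluates elements of $C^\la(\Z_p,\Z(\!(\pi)\!))$, i.e.\ functions on $\Z_p^\la$, not its points;
\item ``$h$ topologically nilpotent'' has no meaning in the ring of points $\Z_p^\la(R)$.
\end{itemize}
You also never explain how the actual hypothesis yields pieces on which one $f_i$ is ``invertible at every point''. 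That hypothesis is that the elements $f_i\in\Z_p^\la(R)$ defining the pulled-back cover generate the unit ideal.

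What is needed is a finite decomposition of $\AnSpec R$ itself, together with a unit criterion inside $\Z_p^\la(R)$. The paper gets both from \cref{lem:smoothfp-zpladagger}. The map $\Z_p^\la\to\Z_p^{\Betti}$ is surjective with kernel in $p\Z_p^\la$. Moreover, $p\Z_p^\la$ lies in the Jacobson radical, because inversion $1+p\Z_p\to\Z_p$ is locally analytic and hence induces a map of stacks by \cref{prop:man-endos}. So units are detected in $\Z_p^{\Betti}(R)\cong\Cont(\pi_0\Spec R,\Z_p)$. Since $\Z_p$ is local, the sets $\{s: f_i(s)\in\Z_p^\times\}$ cover the profinite set $\pi_0\Spec R$. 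Refining to finitely many disjoint clopens gives idempotents and a decomposition $R\cong\prod_j R_j$.

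Your ball idea can be rescued at the level of points. Pull $\Z_p^\la\cong\bigsqcup_{a\in[p]}(a+p\Z_p)^\la$ back along each $f_i$ to sort $\AnSpec R$ by residues mod $p$. A piece on which all residues vanish is empty: there $1=\sum c_if_i=py$, so the unit $1-py$ equals $0$. On every other piece, some $f_i\in a+p\Z_p^\la$ with $a\ne 0$, and this $f_i$ is a unit by the inversion argument.

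Separately, Step 3 does not work as written. Rescaling coordinates by $p^{-n}$ destroys smoothness over $\Z_p$: for example, $x_1x_2=1$ becomes $y_1y_2=p^{2n}$, whose special fibre is singular. So your $\Z_p$-case does not apply to the pieces. No detour is needed: the paper reruns the same argument with $\Q_p^\la$. It uses separatedness and finite type only to ensure that $X(\Q_p)$ is a paracompact $p$-adic manifold.
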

\begin{proof}
We only do the $\Z_p$-case, the $\Q_p$-case is analogous; the separatedness and finite type assumptions are only in place to ensure that $X(\Q_p)$ is a paracompact $p$-adic manifold. Clearly, the assignment $X\mapsto X(\Z_p)^\la$ satisfies Zariski descent and we first argue that the same is true for the assignment sending $X$ to the sheafification of $R\mapsto X(\Z_p^\la(R))$, which we momentarily call $X(\Z_p^\la)$. As the latter commutes with fibre products in $X$, we only need to show that $\bigsqcup_i U_i(\Z_p^\la)\rightarrow X(\Z_p^\la)$ is surjective for any Zariski cover $X=\bigcup_i U_i$. In fact, we are going to show

\bigskip

\textbf{Claim.} For any map $\Spec \Z_p^\la(R)\rightarrow X$, there is a finite decomposition $R\cong\prod_j R_j$ such that each induced map $\Spec\Z_p^\la(R_j)\rightarrow \Spec\Z_p^\la(R)\rightarrow X$ factors through a single $U_i$.

\bigskip

\textit{Proof of the claim.} Pulling back the given open cover of $X$ to an open cover of $\Spec\Z_p^\la(R)$, we may assume that $X=\Spec\Z_p^\la(R)$ and that the map $\Spec\Z_p^\la(R)\rightarrow X$ is the identity. Then $X$ is affine and we may thus assume that the given open cover is by distinguished opens $D(f_i)$.

By \cref{lem:smoothfp-zpladagger} below, the map $\Z_p^\la\rightarrow\Z_p^{\Betti}$ is surjective with kernel contained in $p\Z_p^\la\subseteq\Z_p^\la$, which is in turn contained in the Jacobson radical of the ring stack $\Z_p^\la$: Indeed, the inversion map $1+p\Z_p\rightarrow \Z_p$ is locally analytic, hence induces a map $1+p\Z_p^\la\rightarrow \Z_p^\la$ by \cref{prop:man-endos}. Thus, checking whether an element is a unit in $\Z_p^\la(R)$ may be done after mapping to $\Z_p^{\Betti}(R)$. In other words, it suffices to prove the claim for the ring stack $\Z_p^{\Betti}$ in place of $\Z_p^\la$.

Now note that $\Z_p^{\Betti}(R)\cong \Cont(\pi_0 \Spec R, \Z_p)$ with the ring structure induced by the one on $\Z_p$ and where $S\coloneqq \pi_0\Spec R$ is a profinite set. As the $f_i$ generate the unit ideal and $\Z_p$ is local, the open subsets $\{s\in S: f_i(s)\in \Z_p^\times\}$ cover $S$ and we may choose a refinement $S=\bigsqcup_j S_j$ of this cover by finitely many pairwise disjoint clopen subsets of $S$. As $S=\pi_0 \Spec R$, the idempotents of $R$ corresponding to the $S_j$ induce the desired product decomposition $R=\prod_j R_j$ of $R$. \hfill\qedhere

\bigskip

We conclude that both $X(\Z_p)^\la$ and $X(\Z_p^\la)$ satisfy Zariski descent in $X$. Thus, working Zariski locally on $X$ now, we may assume that $X$ is standard smooth cut out by the equations $f_1=\dots=f_m=0$ in $\A^n_{\Z_p}$; in particular, the diagram
\begin{equation*}
\begin{tikzcd}
X\ar[r]\ar[d] & \A^n_{\Z_p}\ar[d, "{(f_1, \dots, f_m)}"] \\
\{0\}\ar[r] & \A^m_{\Z_p}
\end{tikzcd}
\end{equation*}
of derived $\Z_p$-schemes is cartesian. As the claim is clearly true for $X=\A^n_{\Z_p}$, it suffices to check that the diagram
\begin{equation*}
\begin{tikzcd}
X(\Z_p)^\la\ar[r]\ar[d] & (\Z_p^n)^\la\ar[d, "{(f_1, \dots, f_m)}"] \\
\{0\}\ar[r] & (\Z_p^m)^\la
\end{tikzcd}
\end{equation*}
is cartesian as well. However, by construction, the Jacobian matrix of $f_1, \dots, f_m$ has full rank everywhere on $X$, i.e.\ the map $(f_1, \dots, f_m): \Z_p^n\rightarrow \Z_p^m$ of $p$-adic manifolds induces a surjection onto the tangent space of $\Z_p^m$ at zero. Thus, the $p$-adic manifold $X(\Z_p)$ is the transverse fibre product of $\Z_p^n$ and $\{0\}$ over $\Z_p^m$, whence the claim follows from \cref{prop:man-transverse}.
\end{proof}

\begin{rem}
In fact, using similar arguments, one can prove that the sheafification is not necessary, i.e.\ the assignment $R\mapsto X(\Z_p^\la(R))$ is already a sheaf.
\end{rem}

\begin{ex}
Let $\mathbb{G}$ be a smooth algebraic group over $\Q_p$. Then $\mathbb{G}(\Q_p)^\la$ is the transmutation of $\mathbb{G}$ with respect to the $\Q_p$-algebra stack $\Q_p^\la$ over $\cal{N}_{|p|<1}/\R_{>0}$. 
\end{ex}

\section{Complements to \cite{Porat2}}
\label{sect:porat}

Before we turn to discussing the analytic stacks over $\cal{N}_{|p|<1}$ associated to $p$-adic Lie groups and their classifying stacks, we collect some complements to the theory of locally analytic representations in mixed characteristic from \cite{Porat2} and, in particular, introduce the notion of \emph{$h\dagger$-analytic vectors} in mixed characteristic. Let us emphasise that these are all direct integer-coefficients analogues of results and definitions from \cite{SolidLocAnReps} and \cite{Mikami} and our proofs do not require any substantially new inputs.

We begin by quickly recalling the main definitions from \cite{Porat2}. For this, fix a $p$-adic Lie group $G$ and a complete Tate Huber pair $(B, B^+)$ over $(\Z(\!(\pi)\!), \Z[\![\pi]\!])$ of slope $\geq 1$ in the sense of \cite[Def.\ 3.4]{Porat2}, i.e.\ the map $\Z_p[\![\pi]\!][X]\rightarrow B^+$ given by $X\mapsto p$ extends to $\colim_k \Z_p[\![\pi]\!]\langle \pi^{-k-1}X^k\rangle$. We also assume that $(B, B^+)$ is residually of finite type in the sense of \cite[Def.\ 3.6]{Porat2}, i.e.\ the quotient $B^+/\pi$ is a finitely generated $\Z$-algebra. Recall that any $p$-adic Lie group $G$ admits an open subgroup $G_0\subseteq G$ together with elements $g_1, \dots, g_d\in G_0$ such that the map 
\begin{equation*}
\Z_p^d\rightarrow G_0\;, \hspace{0.3cm} (x_1, \dots, x_d)\mapsto g_1^{x_1}\cdots g_d^{x_d}
\end{equation*}
is an isomorphism of $p$-adic manifolds and we fix such a choice once and for all.

\begin{defi}[{\cite[Def.\ 4.8]{Porat2}}]
For $h\in\R$,\footnote{In \cite{Porat2}, it is always assumed that $h\geq 0$, but in fact everything works for general $h\in\R$.} we define
\begin{equation*}
C^{h\text{-}\an}(G_0, B^+)\coloneqq B^+\left\langle\pi^{v^h(\ul{n})}\binom{\ul{x}}{\ul{n}}: \ul{n}\geq 0\right\rangle\;,
\end{equation*}
where we recall that $\binom{\ul{x}}{\ul{n}}\coloneqq \prod_{i=1}^d \binom{x_i}{n_i}$ and $v^h(\ul{n})\coloneqq \sum_{i=1}^d v_p(\lfloor n_i/p^h\rfloor!)$. Similarly, we set
\begin{equation*}
C_{h\text{-}\an}(G_0, B^+)\coloneqq B^+\left\langle\pi^{v_h(\ul{n})}\binom{\ul{x}}{\ul{n}}: \ul{n}\geq 0\right\rangle
\end{equation*}
for $v_h(\ul{n})\coloneqq \lfloor |\ul{n}|/p^h(p-1)\rfloor$, where $|\ul{n}|\coloneqq \sum_{i=1}^d n_i$. We let $C^{h\text{-}\an}(G_0, B)$ and $C_{h\text{-}\an}(G_0, B)$ denote the base changes of these $B^+$-modules to $B$ and call
\begin{equation*}
C^{h\text{-}\an}(G, B)\coloneqq \prod_{g\in G/G_0} C^{h\text{-}\an}(gG_0, B)\;, \hspace{0.3cm} C_{h\text{-}\an}(G, B)\coloneqq \prod_{g\in G/G_0} C_{h\text{-}\an}(gG_0, B)
\end{equation*}
the \emph{upper $h$-analytic} and \emph{lower $h$-analytic functions} on $G$, respectively. Finally, the \emph{locally analytic functions} on $G$ are defined by
\begin{equation*}
C^\la(G, B)\coloneqq \prod_{g\in G/G_0} \colim_h C^{h\text{-}\an}(gG_0, B)=\prod_{g\in G/G_0} \colim_h C_{h\text{-}\an}(gG_0, B)\;.
\end{equation*}
\end{defi}

\begin{rem}
Note that $C^{h\text{-}\an}(G, B)$ and $C_{h\text{-}\an}(G, B)$ depend on the choice of $G_0$ even though this is not apparent from the notation. In contrast, by \cite[Cor.\ 4.31]{Porat2}, the algebra $C^\la(G, B)$ of locally analytic functions on $G$ is independent of the choice of $G_0$.
\end{rem}

It follows from \cite[Lem.\ 4.6]{Porat2} that $C^{h\text{-}\an}(G, B)$ and hence $C^\la(G, B)$ are $(B, B^+)_\solid$-algebras. Moreover, each $C^{h\text{-}\an}(G_0, B)$ is a $B$-Banach space in the sense of \cite[Def.\ 3.12]{Porat2} and, in particular, reflexive over $B$, i.e.\ isomorphic to its own double dual, by \cite[Thm.\ 3.15]{Porat2}. Let us also introduce the following variant, which is the mixed-characteristic analogue of \cite[Def.\ 2.2.(2)]{Mikami}:

\begin{defi}
For any $h\in\R$, the \emph{$h\dagger$-analytic functions} on $G$ are defined by
\begin{equation*}
C^{h\dagger\text{-}\an}(G, B)\coloneqq \prod_{g\in G/G_0} \colim_{h'<h} C^{h'\text{-}\an}(gG_0, B)=\prod_{g\in G/G_0} \colim_{h'<h} C_{h'\text{-}\an}(gG_0, B)\;,
\end{equation*}
where the equality is due to \cite[Lem.\ 4.4.2]{Porat2}.
\end{defi}

\begin{rem}
Note that formally setting $h=\infty$ recovers the definition of locally analytic functions.
\end{rem}

By passing to duals, we obtain spaces of distributions on $G$.

\begin{defi}[{\cite[Def.\ 4.8]{Porat2}}]
For any $h\in\R$, we define
\begin{equation*}
\begin{split}
D^{h\text{-}\an}(G_0, B)&\coloneqq \sHom_{(B, B^+)_\solid}(C^{h\text{-}\an}(G_0, B), B)\;, \\
D_{h\text{-}\an}(G_0, B)&\coloneqq \sHom_{(B, B^+)_\solid}(C_{h\text{-}\an}(G_0, B), B)\;.
\end{split}
\end{equation*}
The spaces of \emph{upper $h$-analytic} and \emph{lower $h$-analytic distributions} on $G$ are defined by
\begin{equation*}
\begin{split}
D^{h\text{-}\an}(G, B)&\coloneqq (B, B^+)_\solid[G]\tensor_{(B, B^+)_\solid[G_0]} D^{h\text{-}\an}(G_0, B) \\
D_{h\text{-}\an}(G, B)&\coloneqq (B, B^+)_\solid[G]\tensor_{(B, B^+)_\solid[G_0]} D_{h\text{-}\an}(G_0, B)\;,
\end{split}
\end{equation*}
respectively.
\end{defi}

Note that $D_{h\text{-}\an}(G, B)$ is a $(B, B^+)_\solid$-algebra, i.e.\ it has a well-defined multiplicative structure, by \cite[Prop.\ 4.10]{Porat2}. We can now make the following crucial definition:

\begin{defi}[{\cite[Def.s 5.3, 5.8, 5.9]{Porat2}}]
\label{defi:porat-lagcompact}
Assume that $G$ is compact and fix $h\in\R$. For any $V\in\D((B, B^+)_\solid[G])$, its (derived) \emph{upper $h$-analytic} and (derived) \emph{lower $h$-analytic vectors} are defined by
\begin{equation*}
V^{h\text{-}\an}\coloneqq \sHom_{(B, B^+)_\solid[G]}(D^{h\text{-}\an}(G, B), V)\;, \hspace{0.3cm} V_{h\text{-}\an}\coloneqq \sHom_{(B, B^+)_\solid[G]}(D_{h\text{-}\an}(G, B), V)\;,
\end{equation*}
respectively. We define the (derived) \emph{locally analytic vectors} of $V$ as
\begin{equation*}
V^\la\coloneqq \colim_h V^{h\text{-}\an}=\colim_h V_{h\text{-}\an}
\end{equation*}
and call $V$ itself \emph{locally analytic} if the natural map $V^\la\rightarrow V$ is an isomorphism. Finally, we let 
\begin{equation*}
\D(\Rep^\la_{(B, B^+)_\solid} G)\subseteq \D((B, B^+)_\solid[G])
\end{equation*}
denote the full subcategory spanned by locally analytic representations.
\end{defi}

\begin{rem}
It follows from \cite[Prop.\ 4.29]{Porat2} that the locally analytic vectors do not depend on the choice of $G_0$. One can also deduce this from the description of $V^\la$ from \cref{lem:porat-removenuclear} below.
\end{rem}

Again, we also introduce an overconvergent variant:

\begin{defi}
\label{defi:porat-hdaggergcompact}
Assume that $G$ is compact and fix $h\in\R$. For any $V\in\D((B, B^+)_\solid[G])$, its (derived) \emph{$h\dagger$-analytic vectors} are defined by
\begin{equation*}
V^{h\dagger\text{-}\an}\coloneqq \colim_{h'<h} V^{h'\text{-}\an}=\colim_{h'<h} V_{h'\text{-}\an}\;.
\end{equation*}
We call $V$ itself \emph{$h\dagger$-analytic} if the natural map $V^{h\dagger\text{-}\an}\rightarrow V$ is an isomorphism and let $\D(\Rep^{h\dagger\text{-}\an}_{(B, B^+)_\solid} G)\subseteq \D((B, B^+)_\solid[G])$ denote the full subcategory spanned by $h\dagger$-analytic representations.
\end{defi}

\begin{rem}
Again, we point out that formally setting $h=\infty$ recovers the definition of locally analytic vectors.
\end{rem}

Our first goal is to extend the definition of locally analytic and $h\dagger$-analytic vectors to representations of arbitrary $p$-adic Lie groups, i.e.\ to remove the compactness assumption. For this, we establish the following alternative description of locally analytic and $h\dagger$-analytic vectors, which is akin to the one from \cite[Def.\ 3.2.3.(3)]{SolidLocAnReps}. Before that, let us make the following definition:

\begin{defi}
For any $V\in\D((B, B^+)_\solid)$, the \emph{locally analytic functions} on $G$ with values in $V$ are defined by
\begin{equation*}
C^\la(G, V)\coloneqq \prod_{g\in G/G_0} C^\la(gG_0, B)\tensor_{(B, B^+)_\solid} V\;.
\end{equation*}
Similarly, we define the \emph{$h\dagger$-analytic functions} on $G$ with values in $V$ as
\begin{equation*}
C^{h\dagger\text{-}\an}(G, V)\coloneqq \prod_{g\in G/G_0} C^{h\dagger\text{-}\an}(gG_0, B)\tensor_{(B, B^+)_\solid} V
\end{equation*}
for any $h\in\R$.
\end{defi}

\begin{rem}
If $G$ is compact, we have $C^\la(G, V)\cong C^\la(G, B)\tensor_{(B, B^+)_\solid} V$ and similarly $C^{h\dagger\text{-}\an}(G, V)\cong C^{h\dagger\text{-}\an}(G, B)\tensor_{(B, B^+)_\solid} V$ for any $h\in\R$.
\end{rem}

The following lemma is key for our subsequent arguments. A similar statement appears in \cite[Prop.\ 5.4]{Porat2}, but needs an additional nuclearity assumption, which we are crucially able to drop here.

\begin{lem}
\label{lem:porat-removenuclear}
Let $G$ be a compact Lie group and $V\in\D((B, B^+)_\solid[G])$.
\begin{enumerate}[label=(\roman*)]
\item There is an isomorphism
\begin{equation*}
V^\la\cong\sHom_{(B, B^+)_\solid[G]}(B, C^\la(G, V)_{\star_{1, 3}})\;,
\end{equation*}
where the subscript $(-)_{\star_{1, 3}}$ indicates that $G$ acts on $C^\la(G, V)$ according to $g.(f\tensor v)=f(g^{-1}(-))\tensor gv$.
\item For any $h\in\R$, there is an isomorphism 
\begin{equation*}
V^{h\dagger\text{-}\an}\cong\sHom_{(B, B^+)_\solid[G]}(B, C^{h\dagger\text{-}\an}(G, V)_{\star_{1, 3}})\;.
\end{equation*}
\end{enumerate}
\end{lem}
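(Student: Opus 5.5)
The plan is to reduce both statements to a statement about each fixed $h$, and then commute the colimit over $h$ with the internal Hom. Fix $h$ and use the Banach space $C^{h\text{-}\an}(G,B)=C^{h\text{-}\an}(G_0,B)$ (here $G$ compact; by the independence of $G_0$ we may as well take $G=G_0$ up to inducing from $G_0$ to $G$, which is harmless since $(B,B^+)_\solid[G]$ is finite free over $(B,B^+)_\solid[G_0]$). By \cite[Thm.\ 3.15]{Porat2}, $C^{h\text{-}\an}(G_0,B)$ is reflexive, so $D^{h\text{-}\an}(G_0,B)$ is the dual of a Banach space, hence compact in a suitable sense. The first step is to establish the identification
\begin{equation*}
\sHom_{(B,B^+)_\solid}(D^{h\text{-}\an}(G_0,B), V)\cong C^{h\text{-}\an}(G_0,B)\tensor_{(B,B^+)_\solid} V
\end{equation*}
up to passing to a transition map $h'<h$. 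This is where the nuclearity assumption of \cite[Prop.\ 5.4]{Porat2} enters, and we want to avoid it. The key point is that the transition maps $C^{h'\text{-}\an}\to C^{h\text{-}\an}$ for $h'<h$ are trace class (compact): one can write the binomial basis with coefficients $\pi^{v^{h}(\ul n)-v^{h'}(\ul n)}$, and these go to zero. For a trace class map $C'\to C$ between Banach spaces with dual map $D\to D'$, the induced map $\sHom(D',V)\to \sHom(D,V)$ factors through $C\tensor V$, and $C'\tensor V\to C\tensor V$ factors through $\sHom(D,V)$, for arbitrary $V$. This is the standard trace-class trick, as in \cite{SolidLocAnReps}.

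The second step takes colimits over $h$ (for (i)) or over $h'<h$ (for (ii)). By the interleaving from the first step, we get
\begin{equation*}
\colim_h \sHom_{(B,B^+)_\solid}(D^{h\text{-}\an}(G,B),V)\cong \colim_h C^{h\text{-}\an}(G,B)\tensor V = C^\la(G,V).
\end{equation*}
Since the interleaving maps are equivariant, the same holds $G\times G$-equivariantly, with the $D$-side carrying the action through the two module structures and $C\tensor V$ carrying the left and right translation actions tensored with the action on $V$. We then use the adjunction
\begin{equation*}
\sHom_{(B,B^+)_\solid[G]}(D,V)=\sHom_{(B,B^+)_\solid[G]}\bigl(B,\sHom_{(B,B^+)_\solid}(D,V)_{\star}\bigr),
\end{equation*}
where $G$ acts diagonally via the right $D$-module structure and $V$; this uses that $D^{h\text{-}\an}(G,B)$ is a $(B,B^+)_\solid[G]$-bimodule. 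Under the identification with functions, this diagonal action becomes the $\star_{1,3}$ action.

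The remaining, and main, obstacle is commuting $\colim_h$ with $\sHom_{(B,B^+)_\solid[G]}(B,-)$. For this I would use that $B$ admits a finite resolution by finite free $(B,B^+)_\solid[G_0]$-modules, via Lazard's result for a uniform pro-$p$ $G_0$ (the Koszul-type resolution of $\Z_p[\![G_0]\!]$ base changed), or at least that $\RHom_{\Z_p[G_0]}(\Z_p,-)$ is computed by continuous cochains with bounded cohomological dimension. Granting compactness of $B$ in $\D((B,B^+)_\solid[G])$ (group cohomology commutes with filtered colimits for compact $p$-adic Lie groups), the colimit passes inside and yields (i). Part (ii) follows by the same argument with the colimit over $h'<h$, using the trace-class maps between $h'$ and $h''$ for $h'<h''<h$, so that only $h'<h$ is needed.
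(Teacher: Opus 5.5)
Your proposal is correct and follows essentially the same route as the paper. The transition maps $C^{h'\text{-}\an}(G_0,B)\to C^{h''\text{-}\an}(G_0,B)$ for $h'<h''$ are trace-class: the dual basis vectors form a nullsequence in $D^{h'\text{-}\an}(G_0,B)$, and the vectors $\pi^{v^{h'}(\ul{n})}\binom{\ul{x}}{\ul{n}}$ form a nullsequence in $C^{h''\text{-}\an}(G_0,B)$. This interleaves $C^{h'\text{-}\an}\tensor V$ with $\sHom_B(D^{h'\text{-}\an},V)$, so the colimits agree, and compactness of $B$ (via Lazard--Serre for the uniform $G_0$) lets the colimit pass through $\sHom_{(B,B^+)_\solid[G]}(B,-)$.

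There are only minor slips. Your second factorization should go through $\sHom(D',V)$ (the dual of the source $C'$), not $\sHom(D,V)$. The decay factor is $\pi^{v^{h'}(\ul{n})-v^{h}(\ul{n})}$, not its inverse. Finally, the tensor--hom step naturally uses the left $G$-action on $D$, which is what produces $\star_{1,3}$; the right action gives the isomorphic $\star_{2,3}$-version.
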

\begin{proof}
We only prove the second part, the first part is proved by formally setting $h=\infty$ in the proof below. Unravelling definitions and using that $B$ is compact as a $(B, B^+)_\solid[G]$-module, which e.g.\ follows from \cite[Thm.\ 6.2]{Porat2}, we have to prove that 
\begin{equation*}
\colim_{h'<h} \sHom_{(B, B^+)_\solid[G]}(D^{h'\text{-}\an}(G, B), V)\cong \colim_{h'<h} \sHom_{(B, B^+)_\solid[G]}(B, C^{h'\text{-}\an}(G, V)_{\star_{1, 3}})\;.
\end{equation*}
In turn, this would follow from
\begin{equation*}
\colim_{h'<h}  \sHom_B(D^{h'\text{-}\an}(G, B), V)\cong \colim_{h'<h} C^{h'\text{-}\an}(G, B)\tensor_{(B, B^+)_\solid} V
\end{equation*}
by applying $\sHom_{(B, B^+)_\solid[G]}(B, -)$ and again using that $B$ is a compact $(B, B^+)_\solid[G]$-module.

However, as $D^{h'\text{-}\an}(G, B)$ is the $B$-linear dual of $C^{h'\text{-}\an}(G, B)$ by definition, there is a natural map 
\begin{equation*}
C^{h'\text{-}\an}(G, B)\tensor_{(B, B^+)_\solid} V\rightarrow \sHom_B(D^{h'\text{-}\an}(G, B), V)
\end{equation*}
and our task is to show that it becomes an isomorphism after taking colimits over $h'<h$. To this end, as both $D^{h'\text{-}\an}(G, B)$ and $C^{h'\text{-}\an}(G, B)$ are a finite direct sum of copies of $D^{h'\text{-}\an}(G_0, B)$ and $C^{h'\text{-}\an}(G_0, B)$, respectively, we may assume that $G=G_0$. We claim that the desired isomorphism follows once we prove that the transition maps $C^{h'\text{-}\an}(G_0, B)\rightarrow C^{h''\text{-}\an}(G_0, B)$ for $h'<h''<h$ are trace-class. To see this, we write $C^{h'}\coloneqq C^{h'\text{-}\an}(G_0, B)$ and $D^{h'}\coloneqq D^{h'\text{-}\an}(G_0, B)$ to ease notation and similarly for $h''$; moreover, we will implicitly take all tensor products over $(B, B^+)_\solid$ in the following. Then the definition of trace-class maps tells us that $C^{h'}\rightarrow C^{h''}$ is induced by a map $\eta: B\rightarrow D^{h'}\tensor C^{h''}$ and hence we obtain a commutative diagram
\begin{equation*}
\begin{tikzcd}
C^{h'}\tensor V\ar[r, "\id\tensor\eta\tensor\id"]\ar[d] & C^{h'}\tensor D^{h'}\tensor C^{h''}\tensor V\ar[r, "\mathrm{ev}\tensor\id\tensor\id"]\ar[d] & C^{h''}\tensor V\ar[d, equals] \\
\sHom_B(D^{h'}, V)\ar[r, "\id\tensor\eta"] & \sHom_B(D^{h'}, V)\tensor D^{h'}\tensor C^{h''}\ar[r, "\mathrm{ev}\tensor\id"] & V\tensor C^{h''}\nospacepunct{\;.}
\end{tikzcd}
\end{equation*}
In particular, we see that the map $C^{h'}\tensor V\rightarrow C^{h''}\tensor V$ factors through $\sHom_B(D^{h'}, V)$ and hence the natural map $C^{h'}\tensor V\rightarrow\sHom_B(D^{h'}, V)$ indeed induces an isomorphism on colimits over $h'<h$ by cofinality.

It thus remains to show that the maps $C^{h'\text{-}\an}(G_0, B)\rightarrow C^{h''\text{-}\an}(G_0, B)$ are indeed trace-class. For this, recall that
\begin{equation*}
C^{h'\text{-}\an}(G_0, B)\cong B^+\left\langle \pi^{v^{h'}(\ul{n})}\binom{\ul{x}}{\ul{n}}: \ul{n}\geq 0\right\rangle\tensor_{B^+} B\;.
\end{equation*}
In particular, we see that
\begin{equation*}
\sum_{\ul{n}} \left(\pi^{v^{h'}(\ul{n})}\binom{\ul{x}}{\ul{n}}\right)^\vee\in D^{h'\text{-}\an}(G_0, B)\;,
\end{equation*}
where $(-)^\vee$ denotes the dual vector; in other words, the dual vectors of $\pi^{v^{h'}(\ul{n})}\binom{\ul{x}}{\ul{n}}$ form a nullsequence in $D^{h'\text{-}\an}(G_0, B)$. As the $\pi^{v^{h'}(\ul{n})}\binom{\ul{x}}{\ul{n}}$ form a nullsequence in $C^{h''\text{-}\an}(G_0, B)$ for any $h'<h''<h$, we see that 
\begin{equation*}
\sum_{\ul{n}} \left(\pi^{v^{h'}(\ul{n})}\binom{\ul{x}}{\ul{n}}\right)^\vee\tensor\pi^{v^{h'}(\ul{n})}\binom{\ul{x}}{\ul{n}}
\end{equation*}
makes sense in $D^{h'\text{-}\an}(G_0, B)\tensor_{(B, B^+)_\solid} C^{h''\text{-}\an}(G_0, B)$ as nullsequences are summable in solid abelian groups. As the map $B\rightarrow D^{h'\text{-}\an}(G_0, B)\tensor_{(B, B^+)_\solid} C^{h''\text{-}\an}(G_0, B)$ classifying this sum induces the map $C^{h'\text{-}\an}(G_0, B)\rightarrow C^{h''\text{-}\an}(G_0, B)$, we are done.
\end{proof}

\begin{rem}
Note that the analogue of \cref{lem:porat-removenuclear} for (both upper and lower) $h$-analytic instead of locally analytic vectors is stated in \cite[Prop.\ 5.4]{Porat2} under the assumption that $V$ is nuclear as a $(B, B^+)_\solid$-module. The key idea above is that one can get rid of the nuclearity assumption by taking the colimit over $h$.
\end{rem}

This motivates the following definition:

\begin{defi}
Let $G$ be an arbitrary $p$-adic Lie group, possibly non-compact, and $V$ a $(B, B^+)_\solid[G]$-module. 
\begin{enumerate}[label=(\roman*)]
\item The (derived) \emph{locally analytic vectors} $V^\la$ of $V$ are defined as
\begin{equation*}
V^\la\coloneqq\sHom_{(B, B^+)_\solid[G]}(B, C^\la(G, V)_{\star_{1, 3}})\;.
\end{equation*}
If the natural map $V^\la\rightarrow V$ is an isomorphism, we call $V$ a \emph{locally analytic} $G$-representation and the corresponding full subcategory of $\D((B, B^+)_\solid[G])$ is denoted by $\D(\Rep^\la_{(B, B^+)_\solid} G)$.
\item For any $h\in\R$, the (derived) \emph{$h\dagger$-analytic vectors} $V^{h\dagger\text{-}\an}$ of $V$ are defined as
\begin{equation*}
V^{h\dagger\text{-}\an}\coloneqq\sHom_{(B, B^+)_\solid[G]}(B, C^{h\dagger\text{-}\an}(G, V)_{\star_{1, 3}})\;.
\end{equation*}
If the natural map $V^{h\dagger\text{-}\an}\rightarrow V$ is an isomorphism, we call $V$ an \emph{$h\dagger$-analytic} $G$-representation and the corresponding full subcategory of $\D((B, B^+)_\solid[G])$ is denoted by $\D(\Rep^{h\dagger\text{-}\an}_{(B, B^+)_\solid} G)$.
\end{enumerate}
\end{defi}

\begin{rem}
By \cref{lem:porat-removenuclear}, we see that the definition above recovers \cref{defi:porat-lagcompact} and \cref{defi:porat-hdaggergcompact}, respectively, in the case where $G$ is compact. 
\end{rem}

Next, we establish that the notions of locally analytic and $h\dagger$-analytic vectors are invariant under passing to an open subgroup:

\begin{lem}
\label{lem:porat-laopensub}
Let $G'\subseteq G$ be any open subgroup and $V\in\D((B, B^+)_\solid[G])$. 
\begin{enumerate}[label=(\roman*)]
\item We have
\begin{equation*}
V^{G\text{-}\la}\cong V^{G'\text{-}\la}
\end{equation*} 
via the natural map, where $V^{G'\text{-}\la}$ refers to the locally analytic vectors in $V$ as a $(B, B^+)_\solid[G']$-module.
\item Assume that $G'$ contains $G_0$ as an open subgroup. Then
\begin{equation*}
V^{G\text{-}h\dagger\text{-}\an}\cong V^{G'\text{-}h\dagger\text{-}\an}
\end{equation*}
for any $h\in\R$ via the natural map.
\end{enumerate}
\end{lem}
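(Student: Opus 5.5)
The plan is to reduce both statements to comparing the function spaces $C^\la(G,V)$ and $C^\la(G',V)$ via induction from $G'$ to $G$, and then to apply a Shapiro-type adjunction.

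Concretely, for (i) we choose $G_0\subseteq G'$. This is harmless: we may shrink $G_0$, since by the independence of $C^\la$ from the choice of $G_0$ (\cite[Cor.\ 4.31]{Porat2}) the locally analytic functions do not change. Decomposing $G/G_0=\bigsqcup_{s\in G/G'} sG'/G_0$ gives
\begin{equation*}
C^\la(G,V)\cong \prod_{s\in G/G'} C^\la(sG',V)\;.
\end{equation*}
As $(B,B^+)_\solid[G]$-modules with the $\star_{1,3}$-action, the right-hand side is the coinduction $\sHom_{(B,B^+)_\solid[G']}((B,B^+)_\solid[G], C^\la(G',V)_{\star_{1,3}})$. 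To see this, define the map by restricting a function on $G$ to $G'$ after translating by coset representatives, and check it directly on each factor. The point is that $f\mapsto (g\mapsto f(g\,\cdot\,))$ intertwines translations, and that for $V$ a $G$-module the $\star_{1,3}$-twist is compatible with this, as in the usual untwisting isomorphism $C(G,V)_{\star_{1,3}}\cong C(G,V)_{\star_{1}}$ for $G$-modules $V$. The adjunction between restriction and coinduction then gives
\begin{equation*}
\sHom_{(B,B^+)_\solid[G]}(B, C^\la(G,V)_{\star_{1,3}})\cong \sHom_{(B,B^+)_\solid[G']}(B, C^\la(G',V)_{\star_{1,3}})\;,
\end{equation*}
which is $V^{G\text{-}\la}\cong V^{G'\text{-}\la}$. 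Finally, we would check that this isomorphism is compatible with the natural maps to $V$, which are given by evaluation at $1$; this holds because coinduction followed by restriction and evaluation at the identity coset recovers evaluation at $1$.

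For (ii) we argue in exactly the same way with $C^{h\dagger\text{-}\an}$. Here the hypothesis $G_0\subseteq G'$ is used precisely so that the same $G_0$ computes both $C^{h\dagger\text{-}\an}(G,V)$ and $C^{h\dagger\text{-}\an}(G',V)$. These spaces depend on $G_0$, so we cannot shrink as in (i), and the product decomposition over $G/G'$ holds verbatim by the definition as a product over $G/G_0$.

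The main obstacle is making the coinduction identification rigorous when $G/G'$ is infinite. In that case the product over cosets must match $\sHom_{(B,B^+)_\solid[G']}((B,B^+)_\solid[G],-)$. We handle this by noting that $(B,B^+)_\solid[G]\cong\bigoplus_{s\in G/G'} s\,(B,B^+)_\solid[G']$ as a right $G'$-module, so $\sHom$ out of it is a product. We also need the $G$-action on this product to agree with the $\star_{1,3}$-action, which requires tracking how left translation by $g$ permutes cosets while twisting by elements of $G'$. A secondary point is that $V$ is derived, but all functors involved are exact, so checking the isomorphism on the underlying objects suffices.
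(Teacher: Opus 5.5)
Your proposal is correct and is essentially the paper's proof: identify $C^{h\dagger\text{-}\an}(G,V)_{\star_{1,3}}$ with the coinduction $\sHom_{(B,B^+)_\solid[G']}((B,B^+)_\solid[G], C^{h\dagger\text{-}\an}(G',V)_{\star_{1,3}})$ and apply the tensor--hom adjunction, with (i) being the case $h=\infty$ (where, as you note, one first shrinks $G_0$ into $G'$, a step the paper leaves implicit). One caution: the coinduction isomorphism must also act on values by the coset representatives, e.g.\ $f\mapsto (x\mapsto x^{-1}f(x\,\cdot\,)|_{G'})$, which only uses the action of individual elements of $G$ on $V$; a global untwisting $C^\la(G,V)_{\star_{1,3}}\cong C^\la(G,V)_{\star_1}$, as your analogy suggests, is not available here, since it would require the orbit maps of $V$ to be locally analytic.
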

\begin{proof}
We only prove (ii), the proof of (i) is then formally obtained by setting $h=\infty$. By construction, we have
\begin{equation*}
C^{h\dagger\text{-}\an}(G, V)=\sHom_{(B, B^+)_\solid[G']}((B, B^+)_\solid[G], C^{h\dagger\text{-}\an}(G', V))\;.
\end{equation*}
Thus, we compute
\begin{equation*}
\begin{split}
V^{G\text{-}h\dagger\text{-}\an}&=\sHom_{(B, B^+)_\solid[G]}(B, C^{h\dagger\text{-}\an}(G, V)_{\star_{1, 3}}) \\
&\cong \sHom_{(B, B^+)_\solid[G]}(B, \sHom_{(B, B^+)_\solid[G']}((B, B^+)_\solid[G], C^{h\dagger\text{-}\an}(G', V))) \\
&\cong \sHom_{(B, B^+)_\solid[G']}(B, C^{h\dagger\text{-}\an}(G', V)_{\star_{1, 3}})=V^{G'\text{-}h\dagger\text{-}\an}
\end{split}
\end{equation*}
using the tensor-hom adjunction.
\end{proof}

\begin{cor}
\label{cor:porat-colimsla}
The endofunctors $V\mapsto V^\la$ and $V\mapsto V^{h\dagger\text{-}\an}$ of the category $\D((B, B^+)_\solid[G])$ commute with all colimits.
\end{cor}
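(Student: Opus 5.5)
By \cref{lem:porat-laopensub}, I would first replace $G$ by the compact open subgroup $G_0$. Restriction $\D((B, B^+)_\solid[G])\rightarrow\D((B, B^+)_\solid[G_0])$ commutes with all colimits and is conservative. Colimits in $\D((B, B^+)_\solid[G])$ are computed on underlying $(B, B^+)_\solid$-modules. So it suffices to check that the natural map $\colim_i V_i^{G_0\text{-}\la}\rightarrow (\colim_i V_i)^{G_0\text{-}\la}$ is an isomorphism after forgetting the action. The same reduction works for $h\dagger$-analytic vectors, since $G_0\subseteq G_0$ trivially satisfies the hypothesis of \cref{lem:porat-laopensub}(ii). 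In fact, for (ii) with $G$ itself we do not even need to shrink, because the definition is taken with respect to the fixed $G_0$.

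For compact $G_0$, \cref{lem:porat-removenuclear} gives
\begin{equation*}
V^{h\dagger\text{-}\an}\cong\sHom_{(B, B^+)_\solid[G_0]}(B, C^{h\dagger\text{-}\an}(G_0, B)\tensor_{(B, B^+)_\solid} V_{\star_{1, 3}})\;,
\end{equation*}
and similarly for $V^\la$ with $C^\la$. The functor $V\mapsto C^{h\dagger\text{-}\an}(G_0, B)\tensor V$ commutes with colimits, since the tensor product does. It therefore remains to see that $\sHom_{(B, B^+)_\solid[G_0]}(B, -)$ commutes with colimits. This holds because $B$ is a compact object of $\D((B, B^+)_\solid[G_0])$, a fact already used in the proof of \cref{lem:porat-removenuclear}, citing \cite[Thm.\ 6.2]{Porat2}. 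For the internal Hom one also uses that $\D((B, B^+)_\solid)$ is compactly generated by objects $P$ such that $P\tensor B$ is compact. Then one can test the colimit-preservation on $\Hom(P, -)$, giving $\Hom_{(B, B^+)_\solid[G_0]}(P\tensor B, -)$, which again commutes with colimits.

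The locally analytic case is the same argument with $h=\infty$. Alternatively, one can write it as $V^\la=\colim_h V^{h\dagger\text{-}\an}$ and use that colimits commute with colimits.

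The only point I expect to need care is the compactness of the trivial representation $B$ over the Iwasawa-type algebra $(B, B^+)_\solid[G_0]$, which amounts to its having a finite resolution by finite free modules, i.e.\ the Lazard-type resolution. Since the paper takes this from \cite{Porat2}, the remaining steps are formal.
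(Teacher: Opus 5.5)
Your proposal is correct and is the paper's argument: reduce to the compact subgroup $G_0$ via \cref{lem:porat-laopensub}, then combine \cref{lem:porat-removenuclear}, the fact that $V\mapsto C^{h\dagger\text{-}\an}(G_0, B)\tensor_{(B, B^+)_\solid} V$ (resp.\ with $C^\la$) commutes with colimits, and compactness of $B$ over $(B, B^+)_\solid[G_0]$ from \cite[Thm.\ 6.2]{Porat2}; your check of the internal $\sHom$ against compact generators $P$ with $P\tensor B$ compact is a useful detail that the paper leaves implicit. Drop the aside claiming that for (ii) one need not shrink $G$ at all: for noncompact $G$ the product over $G/G_0$ in $C^{h\dagger\text{-}\an}(G, V)$ does not commute with colimits, and no compactness of $B$ over $(B, B^+)_\solid[G]$ is available, so the passage to $G_0$ (which you do carry out) is genuinely needed.
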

\begin{proof}
By \cref{lem:porat-laopensub}, we may assume that $G$ is compact. Then $B$ is compact as a $(B, B^+)_\solid[G]$-module by \cite[Thm.\ 6.2]{Porat2} and this implies the claim as $V\mapsto C^\la(G, V)=C^\la(G, B)\tensor_{(B, B^+)_\solid} V$ and $V\mapsto C^\la(G, V)=C^{h\dagger\text{-}\an}(G, B)\tensor_{(B, B^+)_\solid} V$ commute with colimits in $V$.
\end{proof}

For compact $G$, recall from \cite[Thm.\ 6.10]{Porat2} that $D_{h\text{-}\an}(G, B)$ is an idempotent $(B, B^+)_\solid[G]$-algebra. In particular, we conclude that $\D(D_{h\text{-}\an}(G, B))$ is a full subcategory of $\D((B, B^+)_\solid[G])$ for any $h\in\R$.

\begin{lem}
\label{lem:porat-dlagenerators}
Assume that $G$ is compact. 
\begin{enumerate}[label=(\roman*)]
\item The category $\D(\Rep_{(B, B^+)_\solid}^\la G)$ is the full subcategory of $\D((B, B^+)_\solid[G])$ generated under colimits by objects $V\in\D(D_{h\text{-}\an}(G, B))$ for varying $h\in\R$.
\item For any $h\in\R$, the category $\D(\Rep_{(B, B^+)_\solid}^{h\dagger\text{-}\an} G)$ is the full subcategory of $\D((B, B^+)_\solid[G])$ generated under colimits by objects $V\in\D(D_{h'\text{-}\an}(G, B))$ for varying $h'<h$.
\end{enumerate}
\end{lem}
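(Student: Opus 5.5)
We prove (ii); part (i) follows formally by setting $h=\infty$, since $\colim_{h'<\infty}$ runs over all $h'\in\R$. Write $\mathcal{C}_h$ for the full subcategory of $\D((B, B^+)_\solid[G])$ generated under colimits by the subcategories $\D(D_{h'\text{-}\an}(G, B))$ for $h'<h$. We show the two inclusions $\mathcal{C}_h\subseteq \D(\Rep^{h\dagger\text{-}\an}_{(B, B^+)_\solid} G)$ and $\D(\Rep^{h\dagger\text{-}\an}_{(B, B^+)_\solid} G)\subseteq\mathcal{C}_h$.

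\textbf{First inclusion.} By \cref{cor:porat-colimsla}, the functor $V\mapsto V^{h\dagger\text{-}\an}$ commutes with colimits, so $h\dagger$-analytic representations are closed under colimits in $\D((B, B^+)_\solid[G])$. It therefore suffices to show that every $V\in\D(D_{h'\text{-}\an}(G, B))$ with $h'<h$ is $h\dagger$-analytic. By definition, $V^{h\dagger\text{-}\an}=\colim_{h''<h}\sHom_{(B, B^+)_\solid[G]}(D_{h''\text{-}\an}(G, B), V)$. For $h'\le h''<h$, the object $D_{h''\text{-}\an}(G, B)$ is an idempotent $(B, B^+)_\solid[G]$-algebra by \cite[Thm.\ 6.10]{Porat2}, and $V$ is a module over it, since it is a module over $D_{h'\text{-}\an}(G, B)$ and the latter receives a map from $D_{h''\text{-}\an}(G, B)$. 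Restricting the colimit to the cofinal range $h''\geq h'$, idempotence gives
\begin{equation*}
\sHom_{(B, B^+)_\solid[G]}(D_{h''\text{-}\an}(G, B), V)\cong \sHom_{D_{h''\text{-}\an}(G, B)}(D_{h''\text{-}\an}(G, B), V)=V\;.
\end{equation*}
The transition maps in the colimit are then identities compatible with the map to $V$, so $V^{h\dagger\text{-}\an}\cong V$.

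\textbf{Second inclusion.} Let $V$ be $h\dagger$-analytic. Then
\begin{equation*}
V\cong V^{h\dagger\text{-}\an}=\colim_{h'<h} \sHom_{(B, B^+)_\solid[G]}(D_{h'\text{-}\an}(G, B), V)\;.
\end{equation*}
Each term $\sHom_{(B, B^+)_\solid[G]}(D_{h'\text{-}\an}(G, B), V)$ is naturally a $D_{h'\text{-}\an}(G, B)$-module, acting through the source. Because $D_{h'\text{-}\an}(G, B)$ is an idempotent algebra, this internal Hom is precisely the right adjoint of the fully faithful inclusion $\D(D_{h'\text{-}\an}(G, B))\hookrightarrow\D((B, B^+)_\solid[G])$, so it lies in $\D(D_{h'\text{-}\an}(G, B))$. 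Hence $V$ is a colimit of objects of the generating subcategories and lies in $\mathcal{C}_h$.

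\textbf{Main obstacle.} The step requiring care is identifying $\sHom_{(B, B^+)_\solid[G]}(D_{h'\text{-}\an}(G, B), V)$ with the module-category right adjoint. One needs the standard fact that for an idempotent algebra $A$ over $R=(B, B^+)_\solid[G]$, the functor $\sHom_R(A, -)$ is right adjoint to restriction and lands in $\D(A)$. One must also check that the switch between upper and lower $h'$-analytic distributions in the definition of $V^{h\dagger\text{-}\an}$ is harmless. This holds because the upper and lower families are interleaved, by \cite[Lem.\ 4.4.2]{Porat2}, so the colimits over $h'<h$ agree.
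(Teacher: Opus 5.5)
Your proof is correct and follows the same approach as the paper. The paper also uses closure of $h\dagger$-analytic objects under colimits via \cref{cor:porat-colimsla}, idempotence of the algebras $D_{h'\text{-}\an}(G, B)$, and the formula $V^{h\dagger\text{-}\an}=\colim_{h'<h}\sHom_{(B, B^+)_\solid[G]}(D_{h'\text{-}\an}(G, B), V)$; it states this in one line, and you have simply written out the two inclusions (including the correct direction $D_{h''\text{-}\an}(G, B)\rightarrow D_{h'\text{-}\an}(G, B)$ for $h'\leq h''$).
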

\begin{proof}
Again, we only prove (ii) as the proof of (i) is then obtained by formally setting $h=\infty$. By \cref{cor:porat-colimsla}, we see that $\D(\Rep_{(B, B^+)_\solid}^{h\dagger\text{-}\an} G)$ is indeed stable under colimits. As the $D_{h'\text{-}\an}(G, B)$ are idempotent, the claim immediately follows from $V^{h\dagger\text{-}\an}=\colim_{h'<h} V_{h'\text{-}\an}$ and $V_{h'\text{-}\an}=\sHom_{(B, B^+)_\solid[G]}(D_{h'\text{-}\an}(G, B), V)$ for any $V\in\D((B, B^+)_\solid[G])$ by definition.
\end{proof}

\begin{lem}
\label{lem:porat-laidempotent}
For all $h\leq h'$, we have 
\begin{equation*}
(V^{h\dagger\text{-}\an})^{h'\dagger\text{-}\an}\cong V^{h\dagger\text{-}\an}\cong (V^{h'\dagger\text{-}\an})^{h\dagger\text{-}\an}
\end{equation*}
via the natural maps. Similarly, we have 
\begin{equation*}
(V^{h\dagger\text{-}\an})^\la\cong V^{h\dagger\text{-}\an}\cong (V^\la)^{h\dagger\text{-}\an}
\end{equation*}
and $(V^\la)^\la\cong V^\la$. In particular, any $h\dagger$-analytic representation is $h'\dagger$-analytic for all $h'\geq h$ and locally analytic.
\end{lem}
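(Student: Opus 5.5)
By \cref{lem:porat-laopensub} and \cref{cor:porat-colimsla}, all functors involved commute with colimits and can be computed on a compact open subgroup. So I would first reduce to $G$ compact, indeed to $G=G_0$ by shrinking. In that setting \cref{lem:porat-dlagenerators} describes $h\dagger$-analytic representations as the colimit-closure of $\bigcup_{h''<h}\D(D_{h''\text{-}\an}(G,B))$, and locally analytic ones similarly with $h=\infty$. I would also use the formulas
\begin{equation*}
V^{h\dagger\text{-}\an}=\colim_{h''<h} V_{h''\text{-}\an}\;,\qquad V_{h''\text{-}\an}=\sHom_{(B,B^+)_\solid[G]}(D_{h''\text{-}\an}(G,B),V)\;,
\end{equation*}
and the fact (\cite[Thm.\ 6.10]{Porat2}) that each $D_{h''\text{-}\an}(G,B)$ is an idempotent $(B,B^+)_\solid[G]$-algebra.

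The key step is the following: for $W\in\D(D_{h''\text{-}\an}(G,B))$ with $h''<h$, the natural map $W^{h\dagger\text{-}\an}\to W$ is an isomorphism, and likewise for $h'$ and for $\la$.

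\begin{itemize}
\item \emph{Idempotence gives the terms with large index.} For every $h'''\ge h''$ we have $W_{h'''\text{-}\an}=W$. Indeed, $D_{h'''\text{-}\an}\to D_{h''\text{-}\an}$ is a map of idempotent algebras, so $\D(D_{h''\text{-}\an})\subseteq \D(D_{h'''\text{-}\an})$. This requires checking that the transition maps go in the right direction, i.e.\ that larger $h$ gives a larger category, which should follow from the inclusions of function spaces $C_{h''\text{-}\an}\subseteq C_{h'''\text{-}\an}$ together with the idempotent-algebra formalism.
\item \emph{Cofinality.} Since $h''<h$, the colimit over $h'''<h$ is cofinally computed by the indices $h'''\ge h''$, each of which returns $W$ by the previous point. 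Hence $W^{h\dagger\text{-}\an}=W$.
\item \emph{Other indices.} The same argument gives $W^{h'\dagger\text{-}\an}=W$ for $h'\ge h$, and $W^\la=W$.
\end{itemize}

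Passing to colimits then yields the last sentence of the statement: every $h\dagger$-analytic representation is $h'\dagger$-analytic for $h'\ge h$, and is locally analytic.

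Next I would show that $V^{h\dagger\text{-}\an}$ is itself $h\dagger$-analytic. Each $V_{h''\text{-}\an}$ is a $D_{h''\text{-}\an}(G,B)$-module, since it is $\sHom$ out of an idempotent algebra. So $V^{h\dagger\text{-}\an}$ lies in the colimit-closure described in \cref{lem:porat-dlagenerators}, and it is $h\dagger$-analytic. By the previous step it is then also $h'\dagger$-analytic and locally analytic. This gives
\begin{equation*}
(V^{h\dagger\text{-}\an})^{h'\dagger\text{-}\an}\cong V^{h\dagger\text{-}\an}\cong(V^{h\dagger\text{-}\an})^\la\;,
\end{equation*}
and the same reasoning with $h=\infty$ gives $(V^\la)^\la\cong V^\la$.

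For the reverse composites I would show that $V^{h\dagger\text{-}\an}$ behaves as a colocalisation, i.e.\ is right adjoint to the inclusion of $h\dagger$-analytic representations.
\begin{itemize}
\item For $W\in\D(D_{h''\text{-}\an})$, idempotence gives $\Hom(W,V)=\Hom(W,V_{h''\text{-}\an})$.
\item By compactness-type arguments, or directly from the colimit, this agrees with $\Hom(W,V^{h\dagger\text{-}\an})$. The subtle point here is commuting $\Hom$ out of $W$ with $\colim_{h'''}$. I would avoid it by using that $V_{h''\text{-}\an}\to V^{h\dagger\text{-}\an}\to V$ and that applying $(-)_{h''\text{-}\an}$ to $V^{h\dagger\text{-}\an}$ recovers $V_{h''\text{-}\an}$, which I would verify via idempotence.
\end{itemize}
Then $(V^{h'\dagger\text{-}\an})^{h\dagger\text{-}\an}$ is computed term by term:
\begin{equation*}
(V^{h'\dagger\text{-}\an})_{h''\text{-}\an}=\colim_{h'''<h'}\sHom(D_{h''\text{-}\an},V_{h'''\text{-}\an})\;.
\end{equation*}
For $h'''\ge h''$ this equals $\sHom(D_{h''\text{-}\an}\otimes D_{h'''\text{-}\an},V)=V_{h''\text{-}\an}$, because $D_{h''\text{-}\an}\otimes_{B[G]}D_{h'''\text{-}\an}=D_{h''\text{-}\an}$ for idempotent algebras with a map between them. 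Taking $\colim_{h''<h}$ gives $V^{h\dagger\text{-}\an}$, and likewise for $(V^\la)^{h\dagger\text{-}\an}$.

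I expect the main obstacle to be the tensor identity $D_{h''\text{-}\an}\otimes_{(B,B^+)_\solid[G]}D_{h'''\text{-}\an}\cong D_{h''\text{-}\an}$, together with the compatibility of the transition maps between the lower analytic distribution algebras. I would try to deduce both from \cite[Thm.\ 6.10]{Porat2} and its proof.
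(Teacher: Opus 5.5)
Your overall strategy is the paper's: reduce to $G$ compact, then combine idempotence of the $D_{h\text{-}\an}(G,B)$ with the identity $D_{h''\text{-}\an}\tensor_{(B,B^+)_\solid[G]}D_{h'''\text{-}\an}\cong D_{h''\text{-}\an}$ for $h''\leq h'''$. You flag that identity as the main obstacle, but it is formal. The algebra map $D_{h'''\text{-}\an}(G,B)\rightarrow D_{h''\text{-}\an}(G,B)$ makes $D_{h''\text{-}\an}(G,B)$ a module over the idempotent algebra $D_{h'''\text{-}\an}(G,B)$, and tensoring a module with an idempotent algebra it is a module over changes nothing.

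However, one step is not justified as written. For the reverse composites you use
\begin{equation*}
(V^{h'\dagger\text{-}\an})_{h''\text{-}\an}=\colim_{h'''<h'}\sHom(D_{h''\text{-}\an},V_{h'''\text{-}\an})\;.
\end{equation*}
This commutes $\sHom_{(B,B^+)_\solid[G]}(D_{h''\text{-}\an}(G,B),-)$ past a filtered colimit. Nothing established so far says the individual functors $(-)_{h''\text{-}\an}$ preserve colimits. \cref{cor:porat-colimsla} only concerns $(-)^\la$ and $(-)^{h\dagger\text{-}\an}$. Its proof uses compactness of the trivial module $B$ together with the description of \cref{lem:porat-removenuclear}, not any compactness of $D_{h''\text{-}\an}(G,B)$. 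Your opening claim that ``all functors involved commute with colimits'' overstates that corollary in the same way.

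The colocalisation detour does not rescue this step. Verifying $(V^{h\dagger\text{-}\an})_{h''\text{-}\an}\cong V_{h''\text{-}\an}$ ``via idempotence'' needs the same commutation; without it you only get $V_{h''\text{-}\an}$ as a retract. The adjunction you want is \cref{cor:porat-adjunction}, which the paper derives from the present lemma.

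The fix is to commute the \emph{outer} functor instead, using \cref{cor:porat-colimsla} and the definition:
\begin{equation*}
(V^{h'\dagger\text{-}\an})^{h\dagger\text{-}\an}\cong\colim_{h'''<h'}\,(V_{h'''\text{-}\an})^{h\dagger\text{-}\an}=\colim_{h'''<h'}\colim_{h''<h}\,(V_{h'''\text{-}\an})_{h''\text{-}\an}\;.
\end{equation*}
Then apply $(V_{h'''\text{-}\an})_{h''\text{-}\an}\cong V_{\min(h'',h''')\text{-}\an}$ from the idempotence computation, and cofinality gives $V^{h\dagger\text{-}\an}$. The same works for $(V^\la)^{h\dagger\text{-}\an}$.

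This is exactly the paper's reduction: everything comes down to $(V_{h\text{-}\an})_{h'\text{-}\an}\cong V_{h\text{-}\an}\cong(V_{h'\text{-}\an})_{h\text{-}\an}$ for $h\leq h'$. The same two-line argument also gives $(V^{h\dagger\text{-}\an})^{h'\dagger\text{-}\an}$, $(V^{h\dagger\text{-}\an})^\la$ and $(V^\la)^\la$ directly. Your first half, the key step for $W\in\D(D_{h''\text{-}\an}(G,B))$ combined with \cref{lem:porat-dlagenerators}, is correct but a longer route to the same conclusion.
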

\begin{proof}
By \cref{lem:porat-laopensub}, we may assume that $G$ is compact. As $(-)^{h\dagger\text{-}\an}$ and $(-)^\la$ commute with colimits by \cref{cor:porat-colimsla}, it then suffices to show that 
\begin{equation*}
(V_{h\text{-}\an})_{h'\text{-}\an}\cong V_{h\text{-}\an}\cong (V_{h'\text{-}\an})_{h\text{-}\an}
\end{equation*}
for all $h\leq h'$ via the natural maps. However, by definition, we have
\begin{equation*}
\begin{split}
(V_{h\text{-}\an})_{h'\text{-}\an}&=\sHom_{(B, B^+)_\solid[G]}(D_{h'\text{-}\an}(G, B), \sHom_{(B, B^+)_\solid[G]}(D_{h\text{-}\an}(G, B), V)) \\
&\cong \sHom_{(B, B^+)_\solid[G]}(D_{h'\text{-}\an}(G, B)\tensor_{(B, B^+)_\solid[G]} D_{h\text{-}\an}(G, B), V) \\
&\cong \sHom_{(B, B^+)_\solid[G]}(D_{h\text{-}\an}(G, B), V)=V_{h\text{-}\an}\;,
\end{split}
\end{equation*}
where the first isomorphism in the last row uses that $D_{h\text{-}\an}(G, B)$ is an idempotent $(B, B^+)_\solid[G]$-algebra and $D_{h'\text{-}\an}(G, B)$ is a $D_{h\text{-}\an}(G, B)$-module. One similarly proves that $(V_{h'\text{-}\an})_{h\text{-}\an}\cong V_{h\text{-}\an}$ and this finishes the proof.
\end{proof}

\begin{cor}
\label{cor:porat-adjunction}
The functors $V\mapsto V^\la$ and $V\mapsto V^{h\dagger\text{-}\an}$ are right-adjoint to the inclusions 
\begin{equation*}
\D(\Rep_{(B, B^+)_\solid}^\la G)\hookrightarrow \D((B, B^+)_\solid[G]) \hspace{0.3cm}\text{and}\hspace{0.3cm} \D(\Rep_{(B, B^+)_\solid}^{h\dagger\text{-}\an} G)\hookrightarrow \D((B, B^+)_\solid[G])\;,
\end{equation*}
respectively.
\end{cor}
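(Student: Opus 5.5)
We prove the statement for $V\mapsto V^\la$; the case $V\mapsto V^{h\dagger\text{-}\an}$ is identical, formally setting $h=\infty$ or not. By \cref{lem:porat-laidempotent}, $V^\la$ is locally analytic for every $V\in\D((B, B^+)_\solid[G])$, since $(V^\la)^\la\cong V^\la$. Hence $V\mapsto V^\la$ defines a functor $\D((B, B^+)_\solid[G])\rightarrow\D(\Rep^\la_{(B, B^+)_\solid} G)$, and it comes with the natural counit map $\epsilon_V: V^\la\rightarrow V$. It then suffices to show that, for every locally analytic $W$ and every $V$, composition with $\epsilon_V$ gives an equivalence
\begin{equation*}
\Hom(W, V^\la)\rightarrow\Hom(W, V)\;.
\end{equation*}

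For the inverse, we send a map $f: W\rightarrow V$ to the composite
\begin{equation*}
W\cong W^\la\xrightarrow{f^\la} V^\la\;,
\end{equation*}
using functoriality of $(-)^\la$ and the fact that $W^\la\rightarrow W$ is an isomorphism. Naturality of $\epsilon$ gives $\epsilon_V\circ f^\la=f\circ\epsilon_W$, so this is a right inverse. For the other direction, given $g: W\rightarrow V^\la$, we have
\begin{equation*}
(\epsilon_V\circ g)^\la=\epsilon_V^\la\circ g^\la\;.
\end{equation*}
We then need $\epsilon_V^\la: (V^\la)^\la\rightarrow V^\la$ to agree with $\epsilon_{V^\la}$. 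Both are the canonical isomorphisms, and their agreement is exactly what \cref{lem:porat-laidempotent} should provide. Concretely, one checks that the two natural maps $(V^\la)^\la\rightrightarrows V^\la$ coincide.

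The main obstacle is this last compatibility, namely that the idempotence isomorphism is the one induced by $\epsilon$ in both possible ways. To handle it, we reduce to compact $G$ via \cref{lem:porat-laopensub} and write $V^\la=\colim_h \sHom_{(B, B^+)_\solid[G]}(D_{h\text{-}\an}(G, B), V)$. Each $D_{h\text{-}\an}(G, B)$ is an idempotent $(B, B^+)_\solid[G]$-algebra by \cite[Thm.\ 6.10]{Porat2}. For an idempotent algebra $D$, the functor $\sHom(D,-)$ is right adjoint to the inclusion of $D$-modules, and the two maps $\sHom(D,\sHom(D,V))\rightrightarrows\sHom(D,V)$, induced by the two maps $D\rightarrow D\tensor D$, agree because both are inverse to the multiplication isomorphism.

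We then pass to the colimit over $h$, using \cref{cor:porat-colimsla} and the description of the locally analytic category from \cref{lem:porat-dlagenerators}. Alternatively, we can argue directly: every locally analytic $W$ is a colimit of $D_{h\text{-}\an}(G, B)$-modules $W_i$. By the idempotent-algebra adjunction, $\Hom(W_i, V)\cong\Hom(W_i, V_{h\text{-}\an})$, which in turn maps to $\Hom(W_i, V^\la)$. Passing to the limit over $i$ gives the desired adjunction.
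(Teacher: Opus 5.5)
Your outline is the paper's proof: \cref{lem:porat-laidempotent} puts $V^\la$ in $\D(\Rep^\la_{(B, B^+)_\solid} G)$, the counit is the natural map $\epsilon_V\colon V^\la\rightarrow V$, and the unit is $\epsilon_W^{-1}$ for locally analytic $W$. The paper stops after naming unit and counit. You correctly observe that, for $f\mapsto f^\la\circ\epsilon_W^{-1}$ to be a two-sided inverse, one also needs the triangle identity $(\epsilon_V)^\la=\epsilon_{V^\la}$.

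That identity is formal. It needs only what \cref{lem:porat-laidempotent} gives, namely that both natural maps $\epsilon_{V^\la}$ and $(\epsilon_V)^\la$ from $(V^\la)^\la$ to $V^\la$ are isomorphisms for every $V$.

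First, naturality of $\epsilon$ applied to $\epsilon_V$ gives $\epsilon_V\circ\epsilon_{V^\la}=\epsilon_V\circ(\epsilon_V)^\la$. Applying $(-)^\la$ and cancelling the isomorphism $(\epsilon_V)^\la$ yields $(\epsilon_{V^\la})^\la=((\epsilon_V)^\la)^\la$.

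Next, naturality applied to the two maps $\epsilon_{V^\la}$ and $(\epsilon_V)^\la$ gives
\begin{equation*}
\epsilon_{V^\la}\circ\epsilon_{(V^\la)^\la}=\epsilon_{V^\la}\circ(\epsilon_{V^\la})^\la=\epsilon_{V^\la}\circ((\epsilon_V)^\la)^\la=(\epsilon_V)^\la\circ\epsilon_{(V^\la)^\la}\;.
\end{equation*}
Cancelling the isomorphism $\epsilon_{(V^\la)^\la}$ gives $\epsilon_{V^\la}=(\epsilon_V)^\la$. In $\infty$-categorical terms, this is the dual of Lurie's criterion for localisations (\emph{Higher Topos Theory}, Prop.~5.2.7.4).

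This formal argument is better than the route you sketch, which has three weak points:
\begin{enumerate}[label=(\roman*)]
\item An equality of two maps in $\D((B, B^+)_\solid[G])$ cannot be reduced to a compact open subgroup via \cref{lem:porat-laopensub}. Restriction to $G'$ is conservative but not faithful.
\item Passing to the colimit over $h$ forces you to compare maps out of $\sHom(D_{h'\text{-}\an}(G, B), \sHom(D_{h\text{-}\an}(G, B), V))$ landing in $V_{h'\text{-}\an}$ versus $V_{h\text{-}\an}$ with $h\neq h'$. The single-idempotent-algebra fact does not directly apply there.
\item Your ``alternative'' argument only shows that $\Hom(W_i, V^\la)\rightarrow\Hom(W_i, V)$ has a section factoring through $V_{h\text{-}\an}$, not that it is an equivalence.
\end{enumerate}
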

\begin{proof}
We have $V^\la\in \D(\Rep_{(B, B^+)_\solid}^\la G)$ and $V^{h\dagger\text{-}\an}\in \D(\Rep_{(B, B^+)_\solid}^{h\dagger\text{-}\an} G)$ by \cref{lem:porat-laidempotent}, so it suffices to give the units and counits of the adjunctions. However, the counits are the natural maps $V^\la\rightarrow V$ and $V^{h\dagger\text{-}\an}\rightarrow V$, respectively, and the units come from the fact that these maps are isomorphisms if $V$ is locally analytic or $h\dagger$-analytic, respectively.
\end{proof}

Our last goal is to show that $\D(\Rep_{(B, B^+)_\solid}^\la G)$ and $\D(\Rep_{(B, B^+)_\solid}^{h\dagger\text{-}\an} G)$ have canonical $t$-structures for which they identify with the derived category of their hearts. To this end, first recall that $\D((B, B^+)_\solid[G])$ admits a canonical $t$-structure. 

\begin{lem}
\label{lem:porat-cohdimla}
The cohomological dimensions of the endofunctors $V\mapsto V^\la$ and $V\mapsto V^{h\dagger\text{-}\an}$ of $\D((B, B^+)_\solid[G])$ are bounded by $\dim G$.
\end{lem}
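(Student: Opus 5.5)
We first reduce to the case where $G$ is compact. By \cref{lem:porat-laopensub}, passing to an open subgroup $G'$ does not change $V^\la$ or $V^{h\dagger\text{-}\an}$, where $V^{G'\text{-}\la}$ is computed on the restriction of $V$. Restriction to $G'$ is $t$-exact and conservative on hearts, so bounds on cohomological amplitude can be checked after restriction. We may therefore take $G=G_0\cong\Z_p^d$ with $d=\dim G$, where $G_0$ is uniform and the coordinates are $(x_1,\dots,x_d)\mapsto g_1^{x_1}\cdots g_d^{x_d}$.

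In this case \cref{lem:porat-removenuclear} gives
\begin{equation*}
V^{h\dagger\text{-}\an}\cong \sHom_{(B, B^+)_\solid[G]}(B, C^{h\dagger\text{-}\an}(G, B)\tensor_{(B, B^+)_\solid} V)\;,
\end{equation*}
and similarly for $V^\la$. The plan is to bound the two operations separately.

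\textbf{Step 1: the tensor product is exact.} The functor $V\mapsto C^{h\dagger\text{-}\an}(G,B)\tensor_{(B,B^+)_\solid} V$ is $t$-exact. Indeed, $C^{h'\text{-}\an}(G_0,B)$ is the base change to $B$ of the $\pi$-adic completion of a free $B^+$-module. Solid tensor products with such modules, namely $\pi$-completions of countable direct sums, inverted at $\pi$, are exact on solid $(B,B^+)$-modules. This should follow as in \cite{Porat2} from the fact that these are Banach modules, which are flat. Filtered colimits over $h'<h$ preserve exactness. For $V^\la$ we take in addition the colimit over $h$.

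\textbf{Step 2: group cohomology has amplitude $[0,d]$.} The functor $\sHom_{(B,B^+)_\solid[G]}(B,-)$, i.e.\ continuous group cohomology of $G$ with coefficients in solid $(B,B^+)_\solid[G]$-modules, has cohomological amplitude $[0,d]$. We would show this by a Koszul-type resolution of the trivial module $B$ over $(B,B^+)_\solid[G]=(B,B^+)_\solid[[G]]$ of length $d$ by finite free modules. Since $G$ is a compact $p$-adic Lie group it is virtually a Poincaré group of dimension $d$, and $\Z_p[[G_0]]$ has global dimension $d+1$. The cleaner route is Lazard's result that the trivial module $\Z_p$ has a finite free resolution of length $d$ over $\Z_p[[G_0]]$ for uniform $G_0$, and we base change this resolution along $\Z_{p,\solid}[[G_0]]\to (B,B^+)_\solid[G_0]$. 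The same compactness input used in \cite[Thm.\ 6.2]{Porat2} to show that $B$ is compact should supply exactly such a finite resolution. Applying $\sHom$ out of a length-$d$ complex of finite free modules produces a complex in degrees $[0,d]$ of finite sums of copies of the argument, which gives the bound.

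\textbf{Step 3: combine.} For $V$ in the heart, $C^{h\dagger\text{-}\an}(G,V)$ lies in the heart by Step 1, and its invariants are concentrated in degrees $[0,d]$ by Step 2. Hence the functor has cohomological dimension at most $\dim G$. The general case follows by commuting the construction with colimits (\cref{cor:porat-colimsla}) and with Postnikov truncations.

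The main obstacle is Step 2, in particular producing the length-$d$ finite free resolution in the solid $(B,B^+)$-setting without characteristic-zero inputs. We would first try base-changing Lazard's resolution over $\Z_p[[G_0]]$ and checking that the solid tensor product with $(B,B^+)_\solid$ remains exact on it. A secondary point is the flatness in Step 1 over a general Tate $B$.
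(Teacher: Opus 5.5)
Your proposal is correct and follows essentially the same route as the paper's proof. Both reduce to $G=G_0$ via \cref{lem:porat-laopensub} and invoke \cref{lem:porat-removenuclear}. Both get exactness of $V\mapsto C^{h\dagger\text{-}\an}(G_0,V)$ (resp.\ $C^\la(G_0,V)$) from flatness of filtered colimits of $B$-Banach spaces, and both bound $\sHom_{(B,B^+)_\solid[G_0]}(B,-)$ by the length-$d$ Lazard--Serre resolution, which the paper takes from \cite[Thm.\ 6.2]{Porat2}. The obstacle you flag in Step 2 is harmless: base-changing Lazard's finite free resolution along $\Z_{p,\solid}[G_0]\to (B,B^+)_\solid[G_0]$ automatically resolves $(B,B^+)_\solid\tensor_{\Z_{p,\solid}}\Z_p\cong B$, with no characteristic-zero input.
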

\begin{proof}
By \cref{lem:porat-laopensub}, we may assume that $G=G_0$. Then first observe that the functor $V\mapsto C^\la(G, V)$ has cohomological dimension zero as $C^\la(G, B)$ is flat over $(B, B^+)_\solid$, and similarly for $V\mapsto C^{h\dagger\text{-}\an}(G, V)$. Indeed, this is because $C^\la(G, B)=\colim_h C^{h\text{-}\an}(G, B)$ is a colimit of $B$-Banach spaces. Thus, \cref{lem:porat-removenuclear} reduces us to proving that $\sHom_{(B, B^+)_\solid[G]}(B, -)$ has cohomological dimension bounded by $\dim G$, but this follows from the Lazard--Serre resolution, see \cite[Thm.\ 6.2]{Porat2}.
\end{proof}

\begin{lem}
\label{lem:porat-underivedla}
An object $V\in \D((B, B^+)_\solid[G])$ is locally analytic if and only if $H^i(V)$ is locally analytic for all $i\in\Z$. In particular, the $t$-structure on $\D((B, B^+)_\solid[G])$ induces a $t$-structure on $\D(\Rep^\la_{(B, B^+)_\solid} G)$. The same holds for $h\dagger$-analytic representations for any $h\in\R$.
\end{lem}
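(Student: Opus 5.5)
We treat the locally analytic case; the $h\dagger$-analytic case follows by the same argument with $(-)^\la$ replaced by $(-)^{h\dagger\text{-}\an}$ and \cref{lem:porat-dlagenerators}(i) by \cref{lem:porat-dlagenerators}(ii). By \cref{lem:porat-laopensub} we may assume that $G=G_0$ is compact. The idea is to combine two facts. First, the locally analytic representations form the colimit-closed subcategory generated by the $\D(D_{h\text{-}\an}(G, B))$ (\cref{lem:porat-dlagenerators}). Second, the functor $(-)^\la$ commutes with colimits (\cref{cor:porat-colimsla}) and has bounded cohomological dimension (\cref{lem:porat-cohdimla}).

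For the \enquote{if} direction, assume every $H^i(V)$ is locally analytic. For bounded $V$ we induct on the number of nonzero cohomology groups using the fibre sequences $\tau^{\leq n}V\to V\to\tau^{>n}V$. The locally analytic objects form a stable subcategory, because $(-)^\la$ is exact and a counit map on a fibre sequence is an isomorphism if it is so on the outer terms. For bounded below $V$ we write $V=\colim_n\tau^{\leq n}V$ and use that $(-)^\la$ commutes with colimits. For general $V$ we need left completeness. Because $(-)^\la$ has cohomological dimension at most $d=\dim G$, we get $H^i(V^\la)\cong H^i((\tau^{\geq i-d-1}V)^\la)$ up to a bounded correction. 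More precisely, $\fib((\tau^{\geq m}V)^\la\to V^\la)=(\tau^{<m}V)^\la$ is concentrated in degrees $<m+d$. Hence the counit $V^\la\to V$ agrees on $H^i$ with the counit for $\tau^{\geq m}V$ whenever $m\ll i$, and that counit is an isomorphism by the bounded below case. So $V^\la\to V$ is an isomorphism on all cohomology, hence an isomorphism.

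For the \enquote{only if} direction, we show that the counit $H^i(V)^\la\to H^i(V)$ is an isomorphism whenever $V$ is locally analytic. The key step is that $(-)^\la$ is $t$-exact on locally analytic objects, or at least that $\tau^{\leq n}V$ is locally analytic when $V$ is. To see this, apply $(-)^\la$ to $\tau^{\leq n}V\to V\to\tau^{>n}V$. It then suffices to show that $(\tau^{>n}V)^\la\to\tau^{>n}V$ is an isomorphism in degrees $\leq n+1$, so that the boundary maps are compatible, and that $(\tau^{\le n}V)^\la$ lives in degrees $\le n$. This uses the explicit formula $V^\la=\colim_h\sHom_{(B,B^+)_\solid[G]}(D_{h\text{-}\an}(G,B),V)$.

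We then reduce to generators. Each $D_{h\text{-}\an}(G,B)$ is an idempotent algebra over $(B,B^+)_\solid[G]$ (\cite[Thm.\ 6.10]{Porat2}), so $\D(D_{h\text{-}\an}(G,B))\subseteq\D((B,B^+)_\solid[G])$ is closed under truncations. Indeed, the $t$-structure on modules over the connective idempotent algebra $D_{h\text{-}\an}$ is compatible with the forgetful functor, which is $t$-exact. Hence each $H^i$ of a $D_{h\text{-}\an}$-module is again a $D_{h\text{-}\an}$-module and therefore locally analytic. For $V=V^\la=\colim_h V_{h\text{-}\an}$, filtered colimits are $t$-exact, so $H^i(V)=\colim_h H^i(V_{h\text{-}\an})$. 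This is a colimit of $D_{h\text{-}\an}$-modules and is therefore locally analytic by \cref{lem:porat-dlagenerators} and \cref{cor:porat-colimsla}. This argument in fact proves the \enquote{only if} direction directly, without the $t$-exactness step above.

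The main obstacle is verifying that $D_{h\text{-}\an}(G,B)$ is connective and that the idempotent-algebra subcategory is closed under $H^i$. This needs Porat's description of $D_{h\text{-}\an}(G,B)$ as a static solid algebra, which we would take from \cite{Porat2}. The induced $t$-structure statement then follows formally from stability under truncations.
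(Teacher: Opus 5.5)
Your proposal is correct and follows the paper's proof. The ``only if'' direction is the same: write $V=\colim_h V_{h\text{-}\an}$, so $H^i(V)=\colim_h H^i(V_{h\text{-}\an})$ with each term a $D_{h\text{-}\an}(G,B)$-module. The ``if'' direction also matches: use the cohomological-dimension bound of \cref{lem:porat-cohdimla} to reduce to bounded complexes, then induct on the number of nonzero cohomology groups. Your truncation argument only spells out the reduction the paper states in one line; note that the fibre there should be that of $V^\la\to(\tau^{\geq m}V)^\la$. The preliminary $t$-exactness sketch can be dropped, as you yourself observe.
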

\begin{proof}
We only prove the second part, the first is proved by formally setting $h=\infty$ in the proof below. By \cref{lem:porat-laopensub}, we may without loss of generality assume that $G$ is compact. If $V$ is $h\dagger$-analytic, then $V=V^{h\dagger\text{-}\an}=\colim_{h'<h} V_{h'\text{-}\an}$ and hence $H^i(V)=\colim_{h'<h} H^i(V_{h'\text{-}\an})$. As $V_{h'\text{-}\an}$ and hence $H^i(V_{h'\text{-}\an})$ is a $D_{h'\text{-}\an}(G, B)$-module, we conclude that $H^i(V)$ is $h\dagger$-analytic by \cref{lem:porat-dlagenerators}. Conversely, assume that $H^i(V)$ is $h\dagger$-analytic for all $i\in\Z$. By \cref{lem:porat-cohdimla}, we may assume that $V$ is a bounded complex and then induction on the cohomological degree reduces us to the case $V=H^0(V)$ using stability of $h\dagger$-analytic representations under shifts and colimits, in which case the claim is clear.
\end{proof}

\begin{defi}
We let $\Rep^\la_{(B, B^+)_\solid} G$ and $\Rep^{h\dagger\text{-}\an}_{(B, B^+)_\solid} G$ denote the hearts of the $t$-structures on $\D(\Rep^\la_{(B, B^+)_\solid} G)$ and $\D(\Rep^{h\dagger\text{-}\an}_{(B, B^+)_\solid} G)$ from \cref{lem:porat-underivedla}, respectively.
\end{defi}

The following result shows that our notation is not misleading:

\begin{prop}
\label{prop:porat-heart}
The category $\Rep^\la_{(B, B^+)_\solid} G$ is a Grothendieck abelian category whose derived category identifies with $\D(\Rep^\la_{(B, B^+)_\solid} G)$. The same is true for $h\dagger$-analytic representations.
\end{prop}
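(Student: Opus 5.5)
We would follow the standard argument from \cite{SolidLocAnReps} for identifying a stable subcategory with the derived category of its heart. First, reduce to $G$ compact via \cref{lem:porat-laopensub}: local analyticity of an object of $\D((B, B^+)_\solid[G])$ only depends on its restriction to $G_0$, so we may take $G=G_0$. The claims are stable under formally setting $h=\infty$, so we treat the $h\dagger$-analytic case and the locally analytic case together.

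\emph{Grothendieck abelian.} By \cref{lem:porat-underivedla}, the heart $\Rep^{h\dagger\text{-}\an}_{(B, B^+)_\solid} G$ is the full subcategory of the heart $\D((B, B^+)_\solid[G])^\heartsuit$ spanned by the $h\dagger$-analytic objects. We then check the following.
\begin{enumerate}[label=(\alph*)]
\item It is closed under kernels, cokernels and extensions in the ambient heart. A kernel or cokernel of a map in the heart sits in a fibre sequence with objects of $\D(\Rep^{h\dagger\text{-}\an})$; its cohomologies are then analytic by \cref{lem:porat-underivedla}, since that category is stable.
\item It is closed under colimits, which are computed as $H^0$ of colimits in the stable category, using \cref{cor:porat-colimsla}. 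Filtered colimits are exact because they are exact in the ambient Grothendieck heart.
\item It has a set of generators. For this we take $H^0((-)^{h\dagger\text{-}\an})$ of a generating set of the ambient heart, for instance $H^0$ of $\colim_{h'<h} D_{h'\text{-}\an}(G,B)\tensor_{(B,B^+)_\solid} B[S]$ for light profinite $S$. Right adjointness (\cref{cor:porat-adjunction}) together with the fact that $(-)^{h\dagger\text{-}\an}$ is left $t$-exact (it is a right adjoint to a $t$-exact inclusion) shows that these generate: any object $W$ of the heart receives a surjection from an ambient generator, and applying $H^0(-)^{h\dagger\text{-}\an}$ gives a map onto $W$. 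This map is surjective because $\tau^{\le 0}$ of the right adjoint agrees with the identity on $W$.
\end{enumerate}

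\emph{Derived category.} There is a canonical $t$-exact functor $\D(\Rep^{h\dagger\text{-}\an}_{(B, B^+)_\solid} G)\to \D(\Rep^{h\dagger\text{-}\an})$ extending the inclusion of the heart, where the source is the derived category of the heart. We would show it is an equivalence by comparing Ext groups:
\[
\Ext^i_{\heartsuit}(V,W)\cong \pi_0\Hom_{\D(\Rep^{h\dagger\text{-}\an})}(V,W[i])\qquad\text{for all } i\ge 0,\ V,W \text{ in the heart.}
\]
By adjunction, the right side equals $\Hom_{\D((B,B^+)_\solid[G])}(V, W[i])$. We want effaceability of the functor $W\mapsto \Hom_{\D((B,B^+)_\solid[G])}(V,W[i])$ within the analytic heart. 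Given a class, choose an ambient injective embedding $W\hookrightarrow I$ and apply $(-)^{h\dagger\text{-}\an}$. Now $I^{h\dagger\text{-}\an}$ has amplitude $[0,\dim G]$ by \cref{lem:porat-cohdimla}, and $\Hom(V, I^{h\dagger\text{-}\an}[i])=\Hom(V,I[i])=0$ for $i>0$. The plan is then a dimension-shifting argument using the boundedness from \cref{lem:porat-cohdimla}, together with the fact that $H^0(I^{h\dagger\text{-}\an})$ receives $W$ injectively. This should show that the two universal $\delta$-functors agree. One also needs left completeness of $\D(\Rep^{h\dagger\text{-}\an})$ and generation by the heart. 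Both follow from the corresponding properties of $\D((B,B^+)_\solid[G])$: the $t$-structure is compatible with colimits, and \cref{lem:porat-cohdimla} ensures that $(-)^{h\dagger\text{-}\an}$ commutes with Postnikov limits.

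\emph{Main obstacle.} We expect the hardest step to be the Ext comparison. Its difficulty is that $I^{h\dagger\text{-}\an}$ need not be concentrated in degree $0$, so it is not an injective object of the heart. The approach we would try first is an induction on $i$, bounded by $\dim G$, using the truncation triangle $H^0(I^{h\dagger\text{-}\an})\to I^{h\dagger\text{-}\an}\to\tau^{\ge1}I^{h\dagger\text{-}\an}$ to kill classes step by step. If that fails, we would instead work with the explicit generators from (c). Their derived endomorphisms can be computed via the idempotent algebras $D_{h'\text{-}\an}(G,B)$ from \cref{lem:porat-dlagenerators}, which would lead to a Schwede--Shipley-type comparison.
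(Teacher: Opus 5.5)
The decisive step, the $\Ext$ comparison, is left as a plan, and the obstacle you name is not actually there. The missing observation is that $I^{h\dagger\text{-}\an}$ \emph{is} static for $I$ injective in $\D((B,B^+)_\solid[G])^\heartsuit$:
\begin{itemize}
\item Restriction to $G_0$ preserves injectives, because its left adjoint $(B,B^+)_\solid[G]\tensor_{(B,B^+)_\solid[G_0]}-$ is exact. By \cref{lem:porat-laopensub} it also does not change $(-)^{h\dagger\text{-}\an}$.
\item Hence $I^{h\dagger\text{-}\an}=\colim_{h'<h}\sHom_{(B,B^+)_\solid[G_0]}(D_{h'\text{-}\an}(G_0,B),I)$.
\item Since $D_{h'\text{-}\an}(G_0,B)\tensor_{(B,B^+)_\solid}(B,B^+)_\solid[S]$ is static for light profinite $S$ (a Smith space tensored with a compact projective), injectivity of $I$ kills all higher cohomology of these internal Homs.
\end{itemize}
So $I^{h\dagger\text{-}\an}$ lies in the analytic heart. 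It is injective there, being the value of $H^0((-)^{h\dagger\text{-}\an})$, which is right adjoint to an exact inclusion. Your effaceability is then immediate: $W\hookrightarrow I^{h\dagger\text{-}\an}$, and $\Hom(V,I^{h\dagger\text{-}\an}[i])=\Hom(V,I[i])=0$ for $i>0$.

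This is the paper's argument. It applies $H^0((-)^{\la})$ termwise to an ambient injective resolution of an analytic $V$ and obtains an injective resolution of $V=V^{\la}$ inside $\Rep^\la_{(B,B^+)_\solid}G$. Staticness of the $I^{j,\la}$ is exactly what makes this a resolution. Derived Homs therefore agree with those in the ambient heart, and one concludes by left-completeness from \cref{lem:porat-cohdimla}.

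Your proposed dimension shift does not replace this. From the triangle $H^0(I^{h\dagger\text{-}\an})\to I^{h\dagger\text{-}\an}\to\tau^{\geq 1}I^{h\dagger\text{-}\an}$, the image of your class in $\Hom(V,H^0(I^{h\dagger\text{-}\an})[i])$ is forced to vanish only for $i=1$. For $i\geq 2$ it merely comes from $\Hom(V,\tau^{\geq 1}I^{h\dagger\text{-}\an}[i-1])$; for $i=2$ this is $\Hom(V,H^1(I^{h\dagger\text{-}\an}))$, which your sketch does not control.

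Two further steps are wrong as written.

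First, in (c), $H^0((-)^{h\dagger\text{-}\an})$ is only left exact. A surjection $P\to W$ with kernel $K$ yields only $H^0(P^{h\dagger\text{-}\an})\to W\to H^1(K^{h\dagger\text{-}\an})$, and the last term need not vanish, since \cref{lem:porat-cohdimla} allows cohomology up to degree $\dim G$. That $H^0$ of the right adjoint is the identity on $W$ gives no surjectivity. The correct generators are the analytic modules $D_{h'\text{-}\an}(G_0,B)\tensor_{(B,B^+)_\solid}(B,B^+)_\solid[S]$, taken individually for each $h'<h$ (not as a colimit over $h'$) and induced up to $G$. The reason is \cref{lem:porat-dlagenerators}: for compact $G$, any analytic $W$ in the heart equals $\colim_{h'<h}H^0(W_{h'\text{-}\an})$. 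This is a filtered colimit of static $D_{h'\text{-}\an}(G,B)$-modules, each a quotient of a direct sum of such free modules.

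Second, your opening reduction to $G=G_0$ is not legitimate for statements about the categories themselves, since $\Rep^\la_{(B,B^+)_\solid}G$ and $\Rep^\la_{(B,B^+)_\solid}G_0$ are different categories. \cref{lem:porat-laopensub} only lets you compute $(-)^{h\dagger\text{-}\an}$ after restriction, which is all the argument above needs.
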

\begin{proof}
We only prove the first part, the proof of the second is entirely analogous. To show that $\Rep^\la_{(B, B^+)_\solid} G$ is a Grothendieck abelian category, it remains to see that it has a small family of generators. However, the category $\Rep^\la_{(B, B^+)_\solid} G$ is generated by the objects
\begin{equation*}
(B, B^+)_\solid[G]\tensor_{(B, B^+)_\solid[G_0]} D_{h\text{-}\an}(G_0, B)\tensor_{(B, B^+)_\solid} (B, B^+)_\solid[S]
\end{equation*}
for $h\in\R$ and $S$ a light profinite set by \cref{lem:porat-laopensub} and \cref{lem:porat-dlagenerators}.

Now take any $V\in\Rep^\la_{(B, B^+)_\solid} G$ and an injective resolution $I^\bullet$ of $V$ as a $(B, B^+)_\solid[G]$-module. As the functor $V\mapsto H^0(V^\la)$ is right-adjoint to the inclusion $\Rep^\la_{(B, B^+)_\solid} G\rightarrow \D((B, B^+)_\solid[G])^\heartsuit$ by \cref{cor:porat-adjunction}, it preserves injective objects and the complex $H^0(I^{\bullet, \la})$ is an injective resolution of $V=H^0(V^\la)$ in $\Rep^\la_{(B, B^+)_\solid} G$. In particular, we conclude that the derived internal $\Hom$ in $\Rep^\la_{(B, B^+)_\solid} G$ agrees with the one in $\D((B, B^+)_\solid[G])^\heartsuit$. This yields the claim using the fact that $\D(\Rep^\la_{(B, B^+)_\solid} G)$ is left-complete by \cref{lem:porat-cohdimla}.
\end{proof}

The abelian categories $\Rep^\la_{(B, B^+)_\solid} G$ and $\Rep^{h\dagger\text{-}\an}_{(B, B^+)_\solid} G$ may be described as the categories of comodules over $C^\la(G, B)$ and $C^{h\dagger\text{-}\an}(G, B)$, respectively.

\begin{lem}
\label{lem:porat-comodules}
There is an equivalence between $\Rep^\la_{(B, B^+)_\solid} G$ and the abelian category of comodules over the endofunctor $V\mapsto C^\la(G, V)$ of $\D((B, B^+)_\solid)^\heartsuit$. Similarly, the category $\Rep^{h\dagger\text{-}\an}_{(B, B^+)_\solid} G$ is equivalent to the abelian category of comodules over the endofunctor $V\mapsto C^{h\dagger\text{-}\an}(G, V)$ of $\D((B, B^+)_\solid)^\heartsuit$.
\end{lem}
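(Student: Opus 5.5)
We treat only the locally analytic case; the $h\dagger$-analytic case is the same with $C^{h\dagger\text{-}\an}$ in place of $C^\la$. We first check that $V\mapsto C^\la(G,V)$ is a comonad on $\D((B,B^+)_\solid)^\heartsuit$, so that comodules make sense. This functor is exact on the heart because $C^\la(gG_0,B)$ is flat; flatness was used in the proof of \cref{lem:porat-cohdimla}. The counit is evaluation at $1\in G$. The comultiplication comes from the map
\begin{equation*}
C^\la(G,B)\rightarrow C^\la(G\times G,B)
\end{equation*}
induced by multiplication $G\times G\rightarrow G$. We identify $C^\la(G\times G,V)\cong C^\la(G,C^\la(G,V))$ by working coset by coset and using $C^\la(G_0\times G_0,B)\cong C^\la(G_0,B)\tensor C^\la(G_0,B)$. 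This last isomorphism is the $B$-linear analogue of \cref{lem:man-zpdla}, obtained from the explicit bases $\pi^{v_h(\ul{n})}\binom{\ul{x}}{\ul{n}}$.

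Next we construct the functor from comodules to representations. Given a coaction $\rho\colon V\rightarrow C^\la(G,V)$, compose with the map $C^\la(G,V)\rightarrow\Cont(G,V)$ induced by $C^\la(G,B)\rightarrow\Cont(G,B)$. This gives a continuous orbit map, hence a $(B,B^+)_\solid[G]$-module structure on $V$. Coassociativity becomes the action axiom, and counitality becomes unitality. To show the resulting $V$ is locally analytic, we use \cref{lem:porat-removenuclear}(i). The coaction is a $G$-equivariant map $V\rightarrow C^\la(G,V)_{\star_{1,3}}$ (after twisting by the antipode, i.e.\ by $g\mapsto g^{-1}$), so it factors through $H^0(V^\la)\rightarrow V$. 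Counitality then shows that $V$ is a retract of $H^0(V^\la)$ compatibly with the counit. Since $V^\la$ is idempotent by \cref{lem:porat-laidempotent}, we conclude $H^0(V^\la)\cong V$. Finally, \cref{lem:porat-cohdimla} and \cref{lem:porat-underivedla} together with the heart description show that $V$ lies in $\Rep^\la$.

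Conversely, let $V\in\Rep^\la_{(B,B^+)_\solid}G$. Then $V\cong H^0(V^\la)=H^0\sHom_{G}(B,C^\la(G,V)_{\star_{1,3}})$, and composing with the forgetful map to $C^\la(G,V)$ gives the coaction $V\rightarrow C^\la(G,V)$. Coassociativity and counitality can be checked after mapping to continuous functions. For this we need the following \textbf{key step}: the maps $C^\la(G,W)\rightarrow\Cont(G,W)$ and $C^\la(G\times G,W)\rightarrow\Cont(G\times G,W)$ are injective on $H^0$ for $W$ in the heart.

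Given that injectivity, the two constructions are inverse to each other and are functorial, which gives the equivalence. The functor from comodules is fully faithful because morphisms of comodules are exactly the $G$-equivariant maps, again by injectivity.

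I expect the injectivity to be the main obstacle, because $W$ is arbitrary and solid tensor products need not be exact in general. My first attempt would be to reduce to $G_0$ and to each $C^{h\text{-}\an}(G_0,B)\tensor W$. There I would use the Mahler-type orthonormal basis and show that a sum $\sum_{\ul{n}} w_{\ul{n}}\binom{\ul{x}}{\ul{n}}$ is determined by its values on $\Z^d$, by recovering the coefficients via iterated finite differences $w_{\ul{n}}=\Delta^{\ul{n}}f(0)$. This is a continuous linear operation on $\Cont(G_0,W)$. If that route fails, I would instead avoid injectivity altogether by defining the comodule structure directly through the adjunction of \cref{cor:porat-adjunction}.
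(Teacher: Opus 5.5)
Your overall architecture matches the paper's proof. A coaction gives a $(B,B^+)_\solid[G]$-module structure: the paper goes through $C^\la(G,V)\to\sHom_B(D^\la(G,B),V)$ and restricts along $(B,B^+)_\solid[G]\to D^\la(G,B)$, which amounts to your passage to continuous functions. The coaction lands in the $\star_{1,3}$-invariants, so $V$ is a retract of $V^\la$, and one goes back via the orbit map. There is a small slip: idempotence alone does not give $H^0(V^\la)\cong V$. Rather, $V$ is a retract of the locally analytic object $H^0(V^\la)$, hence locally analytic, hence $V^\la\cong V$.

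The genuine gap is your \textbf{key step}: it is false. Everything you route through it (coassociativity, mutual inverseness, full faithfulness) is therefore unproven.

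Here is a counterexample. Take $(B,B^+)=(\Q_p,\Z_p)$ with $\pi=p$, $G=\Z_p$, and the non-quasi-separated $W=\prod_m\Q_p/(\prod_m\Z_p)[1/p]$.
\begin{enumerate}[label=(\arabic*)]
\item By flatness, $C^\la(\Z_p,W)$ is $\colim_h\prod_m C^{h\text{-}\an}(\Z_p,\Q_p)$ modulo $\colim_h C^{h\text{-}\an}(\Z_p,\Q_p)\tensor(\prod_m\Z_p)[1/p]$.
\item This submodule consists of arrays of Mahler coefficients $a_{n,m}$ that are bounded and satisfy $\inf_m v_p(a_{n,m})-v^h(n)\to\infty$ for some $h$.
\item Let $F_m=\sum_n p^{\max(\lceil\sqrt n\rceil,\,n-m)}\binom{x}{n}$. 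The family $F=(F_m)_m$ lies in $\prod_m C^{1\text{-}\an}(\Z_p,\Q_p)$.
\item However, $\inf_m v_p(a_{n,m})=\lceil\sqrt n\rceil$ grows sublinearly, so $F$ is nonzero in $C^\la(\Z_p,W)$.
\item On the other hand, its coefficients are integral with $\inf_m v_p\to\infty$. So $F$ already lies in $\Cont(\Z_p,\Q_p)\tensor(\prod_m\Z_p)[1/p]$, and its image in $\Cont(\Z_p,\Q_p)\tensor W$, hence in $\sHom(\Q_{p,\solid}[\Z_p],W)$, is zero.
\end{enumerate}
Since $W$ with the trivial action is locally analytic, restricting to $W\in\Rep^\la$ does not rescue the claim.

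Your proposed proof of the key step already fails at finite level, for the same reason. For such $W$, an element of $C^{h\text{-}\an}(G_0,B)\tensor W$ is not determined by its coefficients in $\prod_{\ul n}W$. For example, $(p^{v^h(m)}\binom{x}{m})_m$ gives a nonzero element all of whose coefficients vanish, so finite differences cannot see it.

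The fix is to make the paper's ``one easily sees'' formal rather than pointwise.
\begin{itemize}
\item For a comodule $(V,\rho)$, $\rho$ is a section of $\mathrm{ev}_1\colon H^0(V^\la)\to V$. This map is an isomorphism once $V$ is locally analytic, so $\rho$ must be the orbit map.
\item For a locally analytic $(V,\mu)$, the composite $V\to C^\la(G,V)\to\sHom_B((B,B^+)_\solid[G],V)$ lands in the $\star_{1,3}$-invariants of the coinduced module. $\mathrm{ev}_1$ identifies these invariants with $V$, with inverse the map adjoint to $\mu$. Hence the recovered module structure is $\mu$.
\item Coassociativity of the orbit map follows from naturality of $V\mapsto(H^0(V^\la)\to C^\la(G,V))$. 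Apply it to the $G$-equivariant map $\rho\colon V\to C^\la(G,V)$, with the right-regular action on the target. By the first point, the orbit map of that target is $\Delta\tensor\id_V$. The Hopf identities needed here are base changes of identities for $C^\la(G,B)$ itself, where functions are determined by their values.
\item Full faithfulness follows from the same naturality.
\end{itemize}
Your fallback sentence gestures in this direction, but as written the proof rests on a false lemma.
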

\begin{proof}
We only prove the first part, the second part is entirely analogous. First assume that $V\in\D((B, B^+)_\solid)^\heartsuit$ is a $C^\la(G, -)$-comodule. Then the coaction $V\rightarrow C^\la(G, V)$ yields a natural map $V\rightarrow\sHom_B(D^\la(G, B), V)$, which corresponds to a map $D^\la(G, B)\tensor_{(B, B^+)_\solid} V\rightarrow V$ by adjunction; here we write $D^\la(G, B)$ for the $B$-linear dual of $C^\la(G, B)$. In particular, precomposing this map with $(B, B^+)_\solid[G]\rightarrow D^\la(G, B)$ endows $V$ with the structure of a $(B, B^+)_\solid[G]$-module. Moreover, one easily sees that the coaction map $V\rightarrow C^\la(G, V)$ then lands in the invariants of the $\star_{1, 3}$-action of $G$ on the target. Thus, we conclude that $V$ is a retract of $V^\la$ and hence itself locally analytic, i.e.\ $V\in\Rep^\la_{(B, B^+)_\solid} G$. Conversely, given $V\in\Rep^\la_{(B, B^+)_\solid} G$, the orbit map $V\rightarrow C^\la(G, V)$ turns $V$ into a $C^\la(G, -)$-comodule and one easily sees that these constructions are mutually inverse, which concludes the proof.
\end{proof}

Finally, let us record the following useful criterion for checking that a given representation of $G$ is locally analytic: 

\begin{prop}
\label{prop:porat-recogniselocan}
Let $V^+\in \D(B^+_\solid[G])$ be $\pi$-complete and static and assume that there exists some $m\geq 0$ with
\begin{equation*}
(g-1)^mv\in \pi V^+
\end{equation*}
for all $v\in V^+$ and $g\in G$. Then the representation $V\coloneqq V^+\tensor_{B^+} B$ is locally analytic. The same is true if there is some $m\geq 0$ such that
\begin{equation*}
(g-1)^m v\in p V^+
\end{equation*}
for all $v\in V^+$ and $g\in G$.
\end{prop}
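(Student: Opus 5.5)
We first reduce the second statement to the first: by the slope $\geq 1$ assumption, $p$ is topologically nilpotent in $B^+$ with $p^k\in\pi B^+$ for some $k$ (indeed $p^k/\pi^{k+1}$ is powerbounded, so $p^k\in\pi^{k+1}B^+$). If $(g-1)^mv\in pV^+$ for all $v\in V^+$ and $g\in G$, then iterating gives $(g-1)^{mk}v\in p^kV^+\subseteq\pi V^+$. So it suffices to treat the hypothesis $(g-1)^mv\in\pi V^+$. By \cref{lem:porat-laopensub} we may replace $G$ by the open subgroup $G_0\cong\Z_p^d$, with coordinates $g_1^{x_1}\cdots g_d^{x_d}$, and further shrink $G_0$ whenever convenient.

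The plan is to produce the orbit map of $V$ as an element of $C^{h\text{-}\an}(G_0,B^+)\tensor V^+$ for some $h$. Concretely, for $v\in V^+$ and $x\in\Z_p^d$, we write the Mahler expansion
\begin{equation*}
g_1^{x_1}\cdots g_d^{x_d}\,v=\sum_{\ul{n}}\binom{\ul{x}}{\ul{n}}(g_1-1)^{n_1}\cdots(g_d-1)^{n_d}v\;,
\end{equation*}
which is valid on $\Z_{\geq 0}^d$ and extends by continuity once the coefficients tend to zero. The hypothesis gives $(g_i-1)^{n}v\in\pi^{\lfloor n/m\rfloor}V^+$ by iteration, since $(g-1)^m$ preserves $V^+$ and maps it into $\pi V^+$. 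Hence the $\ul{n}$-th coefficient lies in $\pi^{\sum_i\lfloor n_i/m\rfloor}V^+$, which grows linearly in $|\ul{n}|$. Comparing with $v_h(\ul{n})=\lfloor|\ul{n}|/p^h(p-1)\rfloor$, we choose $h$ with $p^h(p-1)\geq 2m$, say. Then $\pi^{-v_h(\ul{n})}$ times the coefficient still tends to zero $\pi$-adically, because $V^+$ is $\pi$-complete. This defines a map
\begin{equation*}
V^+\rightarrow C_{h\text{-}\an}(G_0,B^+)\hat{\tensor}_{B^+}V^+\;.
\end{equation*}
In the solid setting, the $\pi$-completed tensor product with the Banach-type module $B^+\langle\pi^{v_h(\ul{n})}\binom{\ul{x}}{\ul{n}}\rangle$ is the solid tensor product, since $V^+$ is $\pi$-complete and static. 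After inverting $\pi$ we obtain a coaction $V\rightarrow C_{h\text{-}\an}(G_0,V)\rightarrow C^\la(G_0,V)$.

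Next we check that this map is a comodule structure compatible with the given $G$-action. Coassociativity and counitality can be checked on the dense subset $\Z_{\geq0}^d$, where they reduce to the multiplicativity of the action. Then, by \cref{lem:porat-comodules} (or its proof), the coaction lands in the $\star_{1,3}$-invariants, and $V$ is a retract of $V^\la$, hence locally analytic. Alternatively, we can argue via \cref{lem:porat-dlagenerators}: the coefficient bound shows directly that the $(B,B^+)_\solid[G_0]$-action on $V$ extends to $D_{h\text{-}\an}(G_0,B)$. Indeed, one pairs distributions against the orbit expansion, and the idempotence of $D_{h\text{-}\an}$ then puts $V$ in $\D(D_{h\text{-}\an}(G_0,B))$.

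The main obstacle I expect is the passage from the elementwise estimate to a genuine condensed map. The bound must hold uniformly on $S$-valued points, i.e.\ for $V^+(S)$, so that the orbit map is a map of solid modules and not just of underlying sets. One must also check that the non-commutativity of the $g_i$ does not matter for the ordered-product Mahler expansion. The first point should follow from applying the hypothesis to $V^+(S)$, which I interpret it as including. For the second, we may shrink $G_0$ so that the chart $(x_1,\dots,x_d)\mapsto g_1^{x_1}\cdots g_d^{x_d}$ is used literally, so that no commutation is required.
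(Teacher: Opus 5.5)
Your proposal is correct and follows essentially the same route as the paper: reduce the $p$-condition to the $\pi$-condition, pass to the uniform subgroup $G_0$ via \cref{lem:porat-laopensub}, expand $g_1^{x_1}\cdots g_d^{x_d}v=\sum_{\ul{n}}\binom{\ul{x}}{\ul{n}}(g_1-1)^{n_1}\cdots(g_d-1)^{n_d}v$, use the linear $\pi$-adic decay of the coefficients to land in $C^\la(G_0,V)$, and observe that $v\mapsto(g\mapsto gv)$ is a section of $V^\la\rightarrow V$, so that $V$ is a retract of a locally analytic representation. The extra details you supply (deriving $p^k\in\pi B^+$ instead of quoting $\pi\mid p$, choosing an explicit $h$ with $p^h(p-1)\geq 2m$, and checking the comodule axioms and $\star_{1,3}$-invariance) are fine; the one point that you and the paper both leave implicit is that dividing the coefficients by powers of $\pi$ is unambiguous only when $V^+$ has bounded $\pi$-torsion, and in that case one may first replace $V^+$ by the torsionfree lattice $\pi^kV^+$.
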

\comment{
Before we begin the proof, we establish an easy lemma which says that higher locally analytic vectors do not play a role if we want to detect local analyticity.

\begin{lem}
\label{lem:porat-underivedlocan}
Assume that $V\in \D((B, B^+)_\solid[G])$ is static. Then $V$ is locally analytic if and only if $H^0(V^\la)\cong V$ via the natural map.
\end{lem}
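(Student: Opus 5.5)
The plan is to reduce the lemma formally to the structural results just established, namely \cref{lem:porat-laidempotent} and \cref{lem:porat-underivedla}; no analytic input should be needed. One direction is immediate. If $V$ is locally analytic, the natural map $V^\la\rightarrow V$ is an isomorphism. Since $V$ is static, applying $H^0$ gives that $H^0(V^\la)\rightarrow H^0(V)=V$ is an isomorphism.

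For the converse, assume $H^0(V^\la)\rightarrow V$ is an isomorphism. We first note that $V^\la$ is itself locally analytic. Indeed, \cref{lem:porat-laidempotent} gives $(V^\la)^\la\cong V^\la$ via the natural map, so $V^\la\in\D(\Rep^\la_{(B, B^+)_\solid} G)$. By \cref{lem:porat-underivedla}, an object is locally analytic if and only if all its cohomology groups are. Hence each $H^i(V^\la)$ is locally analytic, and in particular $H^0(V^\la)$ is.

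Now transport this along the given isomorphism $H^0(V^\la)\cong V$. The category of locally analytic representations is a full subcategory of $\D((B, B^+)_\solid[G])$ defined by the counit $W^\la\rightarrow W$ being an isomorphism. This condition is invariant under isomorphisms of $(B, B^+)_\solid[G]$-modules, so $V$ is locally analytic.

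The only point requiring care is that the identification $H^0(V^\la)\cong V$ is one of $(B, B^+)_\solid[G]$-modules. This holds because the counit $V^\la\rightarrow V$ is $G$-equivariant, being a morphism in $\D((B, B^+)_\solid[G])$, and $H^0$ is computed via the canonical $t$-structure on $\D((B, B^+)_\solid[G])$. I expect no genuine obstacle: in particular, we never need to show separately that the higher cohomology $H^i(V^\la)$ vanishes for $i\neq 0$. That vanishing follows a posteriori, since $V\cong V^\la$ once $V$ is known to be locally analytic.
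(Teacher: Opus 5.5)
Your proof is correct and is essentially the paper's argument, repackaged. The paper reduces to compact $G$ and observes directly that $H^0(V^\la)=\colim_h H^0(V_{h\text{-}\an})$ exhibits $V$ as a colimit of $D_{h\text{-}\an}(G, B)$-modules, hence locally analytic by \cref{lem:porat-dlagenerators}. You instead use \cref{lem:porat-laidempotent} to see that $V^\la$ is locally analytic, and then the forward direction of \cref{lem:porat-underivedla}, whose proof is exactly this colimit argument. So your route leaves the reduction to compact $G$ inside the cited lemmas, at the cost of a harmless extra appeal to \cref{lem:porat-laidempotent}.
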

\begin{proof}
Without loss of generality, we may assume that $G$ is compact. If $V$ is locally analytic, we have $H^0(V^\la)\cong V^\la\cong V$ as $V$ is static by assumption. Conversely, assume that $H^0(V^\la)\cong V$. Due to $H^0(V^\la)=\colim_h H^0(V_{h\text{-}\an})$, this shows that $V$ is a colimit of $D_{h\text{-}\an}(G, B)$-modules and hence locally analytic by \cref{lem:porat-dlagenerators}.
\end{proof}

Now we can prove \cref{prop:porat-recogniselocan}.
}
\begin{proof}
First note that the second condition implies the first as $\pi$ divides $p$ in $B^+$ by our standing slope assumption. Without loss of generality, we can assume that $G=G_0$ is uniform. Choosing an isomorphism $G_0\cong \Z_p^d$, we obtain elements $g_1, g_2, \dots, g_d\in G_0$ such that any element $g\in G_0$ is of the form $g=\prod_i g_i^{x_i}$ for some $x_i\in \Z_p$. For any fixed $v\in V^+$, one easily shows by induction that
\begin{equation*}
gv=\sum_{\ul{n}} \binom{\ul{x}}{\ul{n}}(g_1-1)^{n_1}\cdots (g_d-1)^{n_d}v
\end{equation*}
for any $d$-tuple $\ul{x}=(x_1, \dots, x_d)$ of integers and then the formula also holds for $x_i\in\Z_p$ by continuity. By assumption, we know that $(g_1-1)^{n_1}\cdots (g_d-1)^{n_d}v$ is divisible by $\pi$ at least $|\ul{n}|/m$ times and hence the orbit map of $v$ yields an element in $C^\la(G, V)$ by definition. Thus, the assignment $v\mapsto (g\mapsto gv)$ yields a section of the map $V^\la\rightarrow V$ and we conclude that $V$ is a retract of a locally analytic representation, hence itself locally analytic. This finishes the proof.
\end{proof}

\begin{ex}
\label{ex:porat-adjointlocan}
For any uniform $p$-adic Lie group $G_0$ of dimension $d$, there is an associated Lie algebra $\Lambda(G_0)$ defined in \cite[§7.2]{ProPGroups}, whose underlying $\Z_p$-module is free of rank $d$. There is an adjoint action of $G_0$ on $\Lambda(G_0)$ which recovers the usual adjoint action on $\mathfrak{g}_0=\Lambda(G_0)\tensor_{\Z_p} \Q_p$. By \cite[§9, Exc.\ 10]{ProPGroups}, this action satisfies the condition from \cref{prop:porat-recogniselocan} and thus $\Lambda(G_0)\tensor_{\Z_p} B$ is a locally analytic representation of $G_0$.
\end{ex}

\section{Locally analytic representations over $\cal{N}_{|p|<1}/\R_{>0}$}

Let $G$ be a $p$-adic Lie group. We continue to fix an open subgroup $G_0\subseteq G$ such that $G_0\cong\Z_p^d$ as $p$-adic manifolds for some $d\geq 0$. Our aim is to interpret locally analytic representations of $G$ in the sense of the previous section as sheaves on the classifying stack of $G^\la$ and to give a similar stacky interpretation for $h\dagger$-analytic representations.

To this end, we first introduce an ``$h\dagger$-analytic analogue'' of the group stack $G^\la$. Namely, for any $k\geq 1$, let $m\coloneqq p^k$ and consider the Banach pair
\begin{equation}
\label{eq:locan-bb+}
(B_k, B_k^+)\coloneqq (\Z(\!(\pi^{1/m})\!)\langle\tfrac{p^m}{\pi}\rangle, \widetilde{\Z+B_k^{\circ\circ}})
\end{equation}
equipped with the pseudouniformiser $\pi^{1/m}$, where $\widetilde{\Z+B_k^{\circ\circ}}$ denotes the integral closure of $\Z+B_k^{\circ\circ}$ inside $B_k$. This has slope $\geq 1$, is residually of finite type and the analytic ring $(B_k, B_k^+)_\solid$ has the induced analytic ring structure from $\Z_\solid$. Moreover, there is a canonical map
\begin{equation*}
B_k=\Z(\!(\pi^{1/m})\!)\langle\tfrac{p^m}{\pi}\rangle\rightarrow \Z(\!(\pi^{1/m})\!)\langle p\rangle_{\leq (1/4)^{1/m}}\coloneqq \colim_{\epsilon>0} \Z(\!(\pi)\!)\langle p/\pi^{2/m-\epsilon}\rangle
\end{equation*}
and we define
\begin{equation*}
C^{(h-k)\dagger\text{-}\an}(G_0, \Z(\!(\pi^{1/m})\!)\langle p\rangle_{\leq (1/4)^{1/m}})\coloneqq C^{(h-k)\dagger\text{-}\an}(G_0, B_k)\tensor_{(B_k, B_k^+)_\solid} \Z(\!(\pi^{1/m})\!)\langle p\rangle_{\leq (1/4)^{1/m}}\;.
\end{equation*}
As $\Z(\!(\pi)\!)\rightarrow\Z(\!(\pi^{1/m})\!)$ is descendable, the map
\begin{equation*}
\AnSpec\Z_\solid(\!(\pi^{1/m})\!)\langle p\rangle_{\leq (1/4)^{1/m}}\rightarrow \cal{N}_{|p|\leq (1/4)^{1/m}}
\end{equation*}
is a surjection by \cref{cor:norms-pres} and a similar argument as in the proof of \cref{lem:man-descent} shows that $\AnSpec C^{(h-k)\dagger\text{-}\an}(G_0, \Z(\!(\pi^{1/m})\!)\langle p\rangle_{\leq (1/4)^{1/m}})$ descends to $\cal{N}_{|p|\leq (1/4)^{1/m}}$. We denote the descent by $(G_0^{h\dagger\text{-}\an})_{|p|\leq (1/4)^{1/m}}$ and observe that this has the structure of a group stack by virtue of the fact that $C^{(h-k)\dagger\text{-}\an}(G_0, B_k)$ is a Hopf algebra.

Now observe that the group stacks $(G_0^{h\dagger\text{-}\an})_{|p|\leq (1/4)^{1/m}}$ we have constructed are compatible under base change, i.e.\ there are canonical isomorphisms
\begin{equation*}
(G_0^{h\dagger\text{-}\an})_{|p|\leq (1/4)^{1/m}}\times_{\cal{N}_{|p|\leq (1/4)^{1/m}}} \cal{N}_{|p|\leq (1/4)^{1/m'}}\cong (G_0^{h\dagger\text{-}\an})_{|p|\leq (1/4)^{1/m'}}
\end{equation*}
for all $k'\leq k$, where $m'\coloneqq p^{k'}$. Indeed, given a Banach pair $(B, B^+)$ with pseudouniformiser $\pi$, replacing $\pi$ by $\pi^p$ replaces $C^{h\dagger\text{-}\an}$ by $C^{(h-1)\dagger\text{-}\an}$. Thus, the various $(G_0^{h\dagger\text{-}\an})_{|p|\leq (1/4)^{1/m}}$ glue to a group stack $G_0^{h\dagger\text{-}\an}$ over
\begin{equation*}
\colim_{m=p^k} \cal{N}_{|p|\leq (1/4)^{1/m}}\cong \cal{N}_{|p|<1}\;.
\end{equation*}
Finally, we define
\begin{equation*}
G^{h\dagger\text{-}\an}\coloneqq \coprod_{g\in G/G_0} (gG_0)^{h\dagger\text{-}\an}\;.
\end{equation*}

\begin{rem}
Note that $G^{h\dagger\text{-}\an}$ does \emph{not} descend further to $\cal{N}_{|p|<1}/\R_{>0}$ as the $\R_{>0}$-action changes the value of $h$.
\end{rem}

\begin{defi}
Let $(B, B^+)$ be a complete Tate Huber pair for which the canonical map $\AnSpec\hspace{1pt}(B, B^+)_\solid\rightarrow \cal{N}/\R_{>0}$ from \cref{rem:norms-maptatehuber} factors through $\cal{N}_{|p|<1}/\R_{>0}$. 
\begin{enumerate}[label=(\roman*)]
\item A $(B, B^+)$-linear \emph{locally analytic representation} of $G$ is defined to be an object of the category 
\begin{equation*}
\D(\AnSpec\hspace{1pt}(B, B^+)_\solid/G^\la)\;,
\end{equation*}
where the quotient is by the trivial action of $G^\la$. 

\item If the map $\AnSpec (B, B^+)_\solid\rightarrow\cal{N}_{|p|<1}/\R_{>0}$ is in addition equipped with a lift to $\cal{N}_{|p|<1}$, we define a $(B, B^+)$-linear \emph{$h\dagger$-analytic representation} of $G$ to be an object of 
\begin{equation*}
\D(\AnSpec\hspace{1pt}(B, B^+)_\solid/G^{h\dagger\text{-}\an})
\end{equation*}
for any $h\in\R$.
\end{enumerate}
\end{defi}

We now prove that this is compatible with the terminology from the previous section whenever the pair $(B, B^+)$ satisfies the conditions from loc.\ cit.

\begin{thm}
\label{thm:locan-compareporat}
Assume that $(B, B^+)$ is a complete Tate Huber pair over $(\Z(\!(\pi)\!), \Z[\![\pi]\!])$ of slope $\geq 1$ which is residually of finite type. Then pullback along $\AnSpec\hspace{1pt}(B, B^+)_\solid\rightarrow \AnSpec\hspace{1pt}(B, B^+)_\solid/G^\la$ induces an equivalence
\begin{equation*}
\D(\AnSpec\hspace{1pt}(B, B^+)_\solid/G^\la)\cong \D(\Rep^\la_{(B, B^+)_\solid} G)\;.
\end{equation*}
Similarly, pullback along $\AnSpec\hspace{1pt}(B, B^+)_\solid\rightarrow \AnSpec\hspace{1pt}(B, B^+)_\solid/G^{h\dagger\text{-}\an}$ induces an equivalence 
\begin{equation*}
\D(\AnSpec\hspace{1pt}(B, B^+)_\solid/G^{h\dagger\text{-}\an})\cong \D(\Rep^{h\dagger\text{-}\an}_{(B, B^+)_\solid} G)
\end{equation*}
for any $h\in\R$.
\end{thm}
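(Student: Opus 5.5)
We prove the $h\dagger$-analytic statement; the locally analytic one then follows by passing to the colimit over $h$, i.e.\ formally setting $h=\infty$.

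\textbf{Reduction to $G_0$.} First reduce to $G=G_0$. Since $G^{h\dagger\text{-}\an}=\coprod_{g\in G/G_0}(gG_0)^{h\dagger\text{-}\an}$, the stack $*/G_0^{h\dagger\text{-}\an}\rightarrow */G^{h\dagger\text{-}\an}$ is a finite étale-type cover with fibre the discrete set $G/G_0$. Hence $\D(*/G^{h\dagger\text{-}\an})$ is obtained from $\D(*/G_0^{h\dagger\text{-}\an})$ by adding a compatible action of the discrete group $G$ through the Betti stack of $G/G_0$. The representation side admits the matching description: by \cref{lem:porat-laopensub}, a $G$-representation is $h\dagger$-analytic if and only if it is so as a $G_0$-representation. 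On both sides, the extra data is a $(B,B^+)_\solid[G]$-module structure extending the $G_0$-one. So we may assume $G=G_0\cong\Z_p^d$.

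\textbf{Base change and Barr--Beck.} Next, compute the base change of $G_0^{h\dagger\text{-}\an}$ to $\AnSpec\,(B,B^+)_\solid$. The map $\AnSpec\,(B,B^+)_\solid\rightarrow\cal{N}_{|p|<1}$ is determined by the chosen pseudouniformiser $\pi$. The slope $\geq 1$ assumption (the map $X\mapsto p$ extends to $\colim_k\Z_p[\![\pi]\!]\langle\pi^{-k-1}X^k\rangle$) ensures that $|p|\leq|\pi|$ up to the appropriate rescaling, so the map factors through $\cal{N}_{|p|\leq(1/4)^{1/m}}$ after pulling back along the $\pi^{1/m}$-root cover. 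Unwinding the descent defining $G_0^{h\dagger\text{-}\an}$, I expect the base change to be $\AnSpec C^{h\dagger\text{-}\an}(G_0,B)$ with its induced analytic ring structure, as a group object. This requires checking that $C^{(h-k)\dagger\text{-}\an}(G_0,B_k)\tensor_{(B_k,B_k^+)_\solid}B\cong C^{h\dagger\text{-}\an}(G_0,B)$, where the index shift by $k$ is exactly compensated by the change of pseudouniformiser $\pi^{1/m}$ versus $\pi$. This is a direct comparison of the defining $\pi$-adic growth conditions on Mahler coefficients, using \cref{lem:man-zpdla}-style basis arguments.

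Given this, $f\colon \AnSpec B\rightarrow\AnSpec B/G_0^{h\dagger\text{-}\an}$ is a cover whose Čech nerve consists of the affine stacks $\AnSpec C^{h\dagger\text{-}\an}(G_0^n,B)$ with induced analytic structure. Hence $f^*$ is conservative and comonadic, and descent identifies $\D(\AnSpec B/G_0^{h\dagger\text{-}\an})$ with the $\infty$-category of comodules over the comonad $V\mapsto C^{h\dagger\text{-}\an}(G_0,B)\tensor_{(B,B^+)_\solid}V=C^{h\dagger\text{-}\an}(G_0,V)$ on $\D((B,B^+)_\solid)$. To justify this, check that $f^*$ preserves the relevant totalizations, or work directly with the cosimplicial limit over the Čech nerve. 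Flatness of $C^{h\dagger\text{-}\an}(G_0,B)$, established in the proof of \cref{lem:porat-cohdimla}, will be used here.

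\textbf{Identification with representations.} It remains to identify derived $C^{h\dagger\text{-}\an}(G_0,-)$-comodules with $\D(\Rep^{h\dagger\text{-}\an}_{(B,B^+)_\solid}G_0)$. On hearts this is \cref{lem:porat-comodules}. The step I expect to be the main obstacle is passing from hearts to derived categories. The plan is as follows. The comodule category carries a $t$-structure because the comonad is $t$-exact by flatness. It is left-complete, since it is a limit of left-complete categories $\D(C^{h\dagger\text{-}\an}(G_0^n,B))$ along $t$-exact functors. By \cref{prop:porat-heart} and \cref{lem:porat-cohdimla}, $\D(\Rep^{h\dagger\text{-}\an}G_0)$ is the left-complete derived category of its heart. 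The functor from comodules to $(B,B^+)_\solid[G_0]$-modules (restricting along $B[G_0]\rightarrow D^{h\dagger}$, as in the proof of \cref{lem:porat-comodules}) is $t$-exact and lands in $h\dagger$-analytic objects by \cref{lem:porat-underivedla}.

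For full faithfulness, compare mapping spectra. In descent, $\RHom(\O,E)$ is the totalization of $E\rightarrow C^{h\dagger}(G_0,E)\rightrightarrows\cdots$, which is the standard complex computing $\sHom_{B[G_0]}(B,C^{h\dagger\text{-}\an}(G_0,E)_{\star_{1,3}})=E^{h\dagger\text{-}\an}$ by \cref{lem:porat-removenuclear}(ii) together with Shapiro's lemma. By \cref{cor:porat-adjunction}, this equals the $G_0$-invariants of $E$ when $E$ is $h\dagger$-analytic, and internal Homs reduce to this case. Essential surjectivity then follows from \cref{lem:porat-dlagenerators}: both sides are generated under colimits by the hearts, and these agree.
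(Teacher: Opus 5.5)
Your reduction to $G_0$, the identification of the base change of $G_0^{h\dagger\text{-}\an}$ with $\AnSpec C^{h\dagger\text{-}\an}(G_0,B)$, and the comonadic description of $\D(\AnSpec B/G_0^{h\dagger\text{-}\an})$ are fine. There is, however, a genuine gap in the step you yourself flag: passing from hearts to derived categories.

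\textbf{The comparison functor.} It is only constructed on hearts. \cref{lem:porat-comodules} produces a module structure from a comodule structure as a property check in the abelian category. You do not explain how to obtain a homotopy-coherent $(B,B^+)_\solid[G_0]$-module structure from an $\infty$-categorical comodule.

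\textbf{The mapping-spectrum computation.} This is wrong as written. The totalization of $E\to C^{h\dagger\text{-}\an}(G_0,E)\rightrightarrows\cdots$ is $R\Gamma(*/G_0^{h\dagger\text{-}\an},E)$, i.e.\ derived $G_0$-invariants, not $E^{h\dagger\text{-}\an}$. For $E=\O$ you get the $G_0$-cohomology of $B$, which has nonzero $H^1$, whereas $B^{h\dagger\text{-}\an}=B$. Moreover, \cref{cor:porat-adjunction} gives $E^{h\dagger\text{-}\an}\cong E$ for $h\dagger$-analytic $E$; it does not say this equals the invariants.

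\textbf{General $\RHom(M,E)$.} Reducing to $M=\O$ via internal Homs is unjustified. $\sHom_B(M,E)$ with the diagonal action need not be $h\dagger$-analytic, so you would first need to know how your comparison functor interacts with internal Hom.

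What is missing is an Ext-comparison showing that the stack side is the derived category of its heart. The paper gets exactly this from \cite[Prop.\ A.1.2]{MannThesis}. The map $*\to */G_0^{\la}$ is prim, with fibre $\AnSpec C^\la(G_0,B)$ carrying the induced analytic ring structure and flat over $(B,B^+)_\solid$. Hence $\D(*/G_0^\la)$ is the derived category of the abelian category of descent data, i.e.\ of $C^\la(G_0,B)$-comodules in $\D((B,B^+)_\solid)^\heartsuit$. Then \cref{lem:porat-comodules} and \cref{prop:porat-heart} finish the proof, and no derived-level functor has to be built by hand.

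If you want to keep your direct route, the correct input is the following. The complex $E\to C^{h\dagger\text{-}\an}(G_0^{\bullet+1},E)$ resolves $E$ by objects acyclic for $\RHom_{(B,B^+)_\solid[G_0]}(B,-)$. This acyclicity is \cref{lem:porat-removenuclear}(ii) applied to trivial representations, which is the genuine Shapiro lemma here. It identifies the cobar totalization with $\RHom_{(B,B^+)_\solid[G_0]}(B,E)$. You would still need the coherent comparison functor and a separate generation argument to handle general $M$.
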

\begin{proof}
We prove the first part, the second is entirely analogous. For simplicity, write $*\coloneqq\AnSpec\hspace{1pt}(B, B^+)_\solid$ and work over this base throughout. Noting that $G^\la/G_0^\la\cong G/G_0$ is the Betti stack of a discrete set and using \cref{lem:porat-laopensub}, we are reduced to the case $G=G_0$ by descent along $*/G_0^\la\rightarrow */G^\la$. In this case, by \cref{prop:porat-heart} and \cref{lem:porat-comodules}, we have to show that
\begin{equation*}
\D(*/G_0^\la)\cong \D(\CoMod_{C^\la(G_0, B)}(\D((B, B^+)_\solid)^\heartsuit))\;.
\end{equation*}
For this, note that $C^\la(G_0, B)$ is a flat $(B, B^+)_\solid$-module as it is a colimit of $B$-Banach spaces in the sense of \cite[Def.\ 3.12]{Porat2}. Together with the fact that $G_0^\la=\AnSpec C^\la(G_0, B)$ is prim as $C^\la(G_0, B)$ has the induced analytic ring structure from $(B, B^+)_\solid$, this implies that $\D(*/G_0^\la)$ identifies with the derived category of the abelian category of descent data along $*\rightarrow */G_0^\la$ by \cite[Prop.\ A.1.2]{MannThesis}. As the latter is equivalent to the abelian category of $C^\la(G_0, B)$-comodules in $\D((B, B^+)_\solid)^\heartsuit$, we are done.
\end{proof}

The above proposition lets us descend many results of the previous section to the stack of norms (or its quotient by $\R_{>0}$). An example of this is the following:

\begin{cor}
\label{cor:locan-embedgreps}
Let $\O[G]$ denote the algebra object in $\D(\cal{N}_{|p|<1}/\R_{>0})$ or $\D(\cal{N}_{|p|<1})$, respectively, obtained by pulling back $\Z_\solid[G]$ from $\AnSpec\Z_\solid$. Then there are fully faithful embeddings
\begin{equation*}
\D(*/G^\la)\hookrightarrow \Mod_{\O[G]}(\D(\cal{N}_{|p|<1}/\R_{>0}))\;, \hspace{0.3cm} \D(*/G^{h\dagger\text{-}\an})\hookrightarrow\Mod_{\O[G]}(\D(\cal{N}_{|p|<1}))
\end{equation*}
for all $h\in\R$, where $*=\cal{N}_{|p|<1}/\R_{>0}$ or $*=\cal{N}_{|p|<1}$, respectively. In particular, the pushforwards along
\begin{equation*}
*/G^\la\rightarrow \cal{N}_{|p|<1}/\R_{>0}\hspace{0.5cm}\text{and}\hspace{0.5cm}*/G^{h\dagger\text{-}\an}\rightarrow\cal{N}_{|p|<1}
\end{equation*}
compute condensed $G$-cohomology.
\end{cor}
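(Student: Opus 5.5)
The plan is to reduce to the concrete comodule description of \cref{thm:locan-compareporat} by working locally on the base. Write $*$ for $\cal{N}_{|p|<1}/\R_{>0}$; the case of $\cal{N}_{|p|<1}$ and $G^{h\dagger\text{-}\an}$ is identical.

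\emph{Constructing the functor.} Let $e: *\rightarrow */G^\la$ be the canonical section. The map $G\rightarrow G^\la(*)$, given by the Betti points of the $p$-adic manifold $G$ (a map $G^{\Betti}\rightarrow G^\la$ of group stacks obtained from evaluation as in \cref{lem:man-eval}), makes $e^*E$ an $\O[G]$-module for every $E\in\D(*/G^\la)$. In other words, $G$ acts through the comodule structure. This gives a functor $\Phi: \D(*/G^\la)\rightarrow\Mod_{\O[G]}(\D(*))$, and it is compatible with base change along maps $\AnSpec\hspace{1pt}(B, B^+)_\solid\rightarrow *$.

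\emph{Full faithfulness.} Both source and target satisfy descent along a cover of $*$. For the cover I would use $\colim_k\AnSpec\Z_\solid(\!(\pi^{1/m})\!)\langle p\rangle_{\leq (1/4)^{1/m}}$, or equivalently the Banach pairs $(B_k, B_k^+)$ from (\ref{eq:locan-bb+}), together with their \v{C}ech nerves.

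\begin{itemize}
\item For $\D(*/G^\la)$, descent holds because the cover is a $!$-cover (by \cref{lem:norms-suavecover} and descendability of $\Z(\!(\pi)\!)\rightarrow\Z(\!(\pi^{1/m})\!)$), and base change gives $\D(*/G^\la)\times_*U\cong\D(U/G^\la)$.
\item For $\O[G]$-modules, descent holds because $\O[G]$ is pulled back, so $\Mod_{\O[G]}$ is also a limit over the \v{C}ech nerve.
\end{itemize}

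A limit of fully faithful functors is fully faithful, so it suffices to prove full faithfulness over each term of the \v{C}ech nerve. These terms are affine with the induced analytic ring structure. Over a term of the form $\AnSpec\hspace{1pt}(B, B^+)_\solid$ satisfying the slope and finiteness hypotheses, \cref{thm:locan-compareporat} identifies $\D(\AnSpec\hspace{1pt}(B, B^+)_\solid/G^\la)$ with $\D(\Rep^\la_{(B, B^+)_\solid}G)$. That category is a full subcategory of $\D((B, B^+)_\solid[G])$ by definition, via the functor $\Phi$.

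\emph{Main obstacle.} The higher \v{C}ech terms, such as $U\times_*U$, are not themselves Tate Huber pairs of slope $\geq 1$ with the required residual finiteness. So I would not invoke \cref{thm:locan-compareporat} verbatim there. Instead I would rerun its proof over these terms. That proof only used three inputs:

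\begin{enumerate}
\item flatness of $C^\la(G_0, -)$;
\item primness of $G_0^\la$, via \cite[Prop.\ A.1.2]{MannThesis};
\item the Section~4 fact that locally analytic vectors form a colocalisation, namely \cref{lem:porat-removenuclear} and \cref{cor:porat-adjunction}.
\end{enumerate}

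Inputs (1) and (3) rely on compactness of the unit in $\D(B[G])$ (Lazard--Serre) and on the trace-class argument. I expect both to carry over to any $\pi$-adically complete base with the induced structure. Alternatively, one could realise the terms $U\times_*U$ as colimits of such Banach pairs, since they are of the shape $\Z(\!(\pi)\!)\langle T^{\pm1}\rangle_{\leq 1}$-algebras. In either case, full faithfulness then follows from the adjunction of \cref{cor:porat-adjunction}, whose counit $V^\la\rightarrow V$ is an isomorphism on the essential image.

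\emph{Cohomology statement.} Let $f: */G^\la\rightarrow *$ be the structure map. Then $f_*E\cong\RHom_{*/G^\la}(\O, E)$. By full faithfulness this equals $\RHom_{\O[G]}(\O, \Phi E)$, which is condensed $G$-cohomology.
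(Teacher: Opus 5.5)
Your overall strategy matches the paper's: pass to the charts $\AnSpec\hspace{1pt}(B_k,B_k^+)_\solid$, apply \cref{thm:locan-compareporat}, use that $\D(\Rep^\la_{(B,B^+)_\solid}G)$ is full in $\D((B,B^+)_\solid[G])$, descend, and read off cohomology from internal $\Hom$. However, your construction of the global functor $\Phi$ is wrong. There is no map of group stacks $G^{\Betti}\rightarrow G^\la$ inducing evaluation at points of $G$. The canonical map goes the other way, $G^\la\rightarrow G^\sm=G^{\Betti}$, induced by $C^\sm\subseteq C^\la$ (cf.\ \cref{lem:smoothfp-zpladagger}), and your map would split it. Take $G=\Z_p$ and restrict to the generic fibre $\cal{N}_{0<|p|<1}/\R_{>0}\cong\AnSpec\Q_{p,\solid}$. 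A splitting is then a Hopf algebra retraction $C^\la(\Z_p,\Q_p)\rightarrow C^\sm(\Z_p,\Q_p)$ of the inclusion of smooth functions. It must send the primitive function $x$ to an additive locally constant function, i.e.\ to $0$. This is absurd, because $x\cdot 1_{1+p\Z_p}$ is a unit in $1_{1+p\Z_p}C^\la(\Z_p,\Q_p)$ while $1_{1+p\Z_p}\neq 0$. Conceptually, restricting along a group map $G^{\Betti}\rightarrow G^\la$ would make every object of $\D(*/G^\la)$ a smooth representation. The $\O[G]$-module structure must instead be built on the level of modules: compose the coaction $e^*E\rightarrow\O(G^\la)\tensor e^*E$ with evaluation at points of $G$ as in \cref{lem:man-eval}, i.e.\ restrict along $\O[G]\rightarrow\O(G^\la)^\vee$. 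On a chart this is the map $(B,B^+)_\solid[G]\rightarrow D^\la(G,B)$ used in \cref{lem:porat-comodules}. Your sentence ``$G$ acts through the comodule structure'' is the right mechanism; drop the parenthetical, or, as the paper does, simply descend the chartwise embeddings.

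Second, your claim that the \v{C}ech terms are affine with induced analytic ring structure is false over $\cal{N}_{|p|<1}/\R_{>0}$. By the proof of \cref{lem:norms-suavecover}, the self-fibre product of $\AnSpec\Z_\solid(\!(\pi)\!)$ over $\cal{N}/\R_{>0}$ is the punctured open disc $\overcirc{\DD}^\times_{\Z(\!(\pi)\!)}$, which is not affine, so the two cases are not ``identical''. Over $\cal{N}_{|p|<1}$ the relevant fibre product is $\ol{\T}$, which is affine. So treat $\cal{N}_{|p|<1}$ first, then deduce the quotient case: pullback along the $\R_{>0}$-torsor $\cal{N}_{|p|<1}\rightarrow\cal{N}_{|p|<1}/\R_{>0}$ is fully faithful on both sides (contractibility of $\R_{>0}$, as in the proof of \cref{prop:norms-embed}) and commutes with the comparison functors. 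Over $\cal{N}_{|p|<1}$, the obstacle you flag for the higher terms $U_n$ (which the paper's proof also does not spell out) needs neither a rerun of \cref{sect:porat} over non-Banach bases nor a colimit argument; ``I expect both to carry over'' is not a proof. Instead, $U_n\rightarrow U$ is affine with induced analytic ring structure. Hence $\D(U_n/G^\la)\simeq\Mod_{A_n}(\D(U/G^\la))$ and $\Mod_{\O[G]}(\D(U_n))\simeq\Mod_{A_n}(\Mod_{\O[G]}(\D(U)))$, where $A_n\coloneqq\O(U_n)$ carries the trivial action. The embedding over $U$ is fully faithful and symmetric monoidal (diagonal action on tensor products), so it stays fully faithful on $A_n$-modules. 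With these two repairs your argument becomes the paper's.
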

\begin{proof}
In the construction of $G^{h\dagger\text{-}\an}$ above, we have given a family of Banach pairs $(B, B^+)$ over $(\Z(\!(\pi)\!), \Z[\![\pi]\!])$ which are residually of finite type and of slope $\geq 1$ together with maps $\AnSpec\hspace{1pt}(B, B^+)_\solid\rightarrow\cal{N}_{|p|<1}$ which jointly cover $\cal{N}_{|p|<1}$. Then the claim follows using \cref{thm:locan-compareporat} by descending the fact that $\D(\Rep^\la_{(B, B^+)_\solid} G)$ and $\D(\Rep^{h\dagger\text{-}\an}_{(B, B^+)_\solid} G)$ are full subcategories of $\D((B, B^+)_\solid[G])$. Finally, the last assertion is deduced by comparing internal $\Hom$ under the fully faithful embeddings above.
\end{proof}

Let us also note the following identification of pushforwards under the equivalence from \cref{thm:locan-compareporat}:

\begin{lem}
\label{lem:locan-pushforward}
Assume that $(B, B^+)$ is a complete Tate Huber pair over $(\Z(\!(\pi)\!), \Z[\![\pi]\!])$ of slope $\geq 1$ which is residually of finite type. For any $h\in\R$, pushforward along the map
\begin{equation*}
\AnSpec\hspace{1pt}(B, B^+)_\solid/G^\la\rightarrow \AnSpec\hspace{1pt}(B, B^+)_\solid/G^{h\dagger\text{-}\an}
\end{equation*}
identifies with the functor $(-)^{h\dagger\text{-}\an}$ under the equivalence from \cref{thm:locan-compareporat}. The same is true for pushforward along the map
\begin{equation*}
\AnSpec\hspace{1pt}(B, B^+)_\solid/G^{h'\dagger\text{-}\an}\rightarrow \AnSpec\hspace{1pt}(B, B^+)_\solid/G^{h\dagger\text{-}\an}
\end{equation*}
for any $h'\geq h$.
\end{lem}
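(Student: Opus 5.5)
Write $*\coloneqq\AnSpec\hspace{1pt}(B, B^+)_\solid$ and $f\colon */G^\la\rightarrow */G^{h\dagger\text{-}\an}$. The plan is to identify $f^*$ under \cref{thm:locan-compareporat} with the inclusion $\D(\Rep^\la_{(B, B^+)_\solid} G)\hookleftarrow\D(\Rep^{h\dagger\text{-}\an}_{(B, B^+)_\solid} G)$. The claim then follows from uniqueness of adjoints: \cref{cor:porat-adjunction} gives $(-)^{h\dagger\text{-}\an}$ as right adjoint to the inclusion into $\D((B, B^+)_\solid[G])$. Restricted to locally analytic objects, it is still right adjoint to the inclusion $\D(\Rep^{h\dagger\text{-}\an})\hookrightarrow\D(\Rep^\la)$. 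Here one uses \cref{lem:porat-laidempotent}: $h\dagger$-analytic objects are locally analytic, and $(V)^{h\dagger\text{-}\an}$ lands in $h\dagger$-analytic objects, which lie inside $\D(\Rep^\la)$.

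The main step is to check that $f^*$ corresponds to this inclusion. Both equivalences of \cref{thm:locan-compareporat} are given by pullback to $*$, and the composite $*\rightarrow */G^\la\rightarrow */G^{h\dagger\text{-}\an}$ is the canonical point. Hence the underlying $(B,B^+)_\solid$-module is preserved by $f^*$, and it remains to see that the $G$-action is preserved. First reduce to $G=G_0$ via \cref{lem:porat-laopensub} and descent along the discrete cover $G/G_0$, exactly as in the proof of \cref{thm:locan-compareporat}. For $G=G_0$, the map $G_0^\la\rightarrow G_0^{h\dagger\text{-}\an}$ of group stacks corresponds to the Hopf algebra map $C^{h\dagger\text{-}\an}(G_0,B)\rightarrow C^\la(G_0,B)$. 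Pullback of comodules along it is corestriction, which under \cref{lem:porat-comodules} leaves the underlying $G$-module (obtained by restricting the dual action along $(B,B^+)_\solid[G]\rightarrow D^{\bullet}(G,B)$) unchanged.

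This identification holds on hearts; it then passes to derived categories by \cref{prop:porat-heart}, together with the fact that $f^*$ is $t$-exact, since both sides are computed as comodules over flat coalgebras via \cite[Prop.\ A.1.2]{MannThesis}. The second assertion, for $*/G^{h'\dagger\text{-}\an}\rightarrow */G^{h\dagger\text{-}\an}$ with $h'\geq h$, follows from the same argument, using the map $C^{h\dagger\text{-}\an}\rightarrow C^{h'\dagger\text{-}\an}$ and the first identity of \cref{lem:porat-laidempotent}.

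The main obstacle I expect is a rigorous verification that the derived equivalence of \cref{thm:locan-compareporat} is compatible with the pullback functor at the derived level, not just on hearts. I would handle this by noting that $f^*$ commutes with colimits and is $t$-exact. The target categories are derived categories of their hearts, so a $t$-exact colimit-preserving functor between them that is determined on hearts is determined by its heart restriction; I expect this to follow from the universal property in \cite[Prop.\ A.1.2]{MannThesis}.
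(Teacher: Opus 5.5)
Your proposal is correct and follows the same route as the paper: identify pullback along the map of classifying stacks with the inclusion of $h\dagger$-analytic representations into locally analytic (resp.\ $h'\dagger$-analytic) ones. The pushforward is then its right adjoint, which is $(-)^{h\dagger\text{-}\an}$ by \cref{cor:porat-adjunction}. The paper treats the identification of pullback with the forgetful functor as immediate, since both equivalences are given by pullback to the point, compatibly with the $G$-actions; your comodule-level check and the $t$-exactness discussion are extra verification rather than a different argument.
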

\begin{proof}
We only prove the second part, the proof of the first part is entirely analogous. For this, note that pullback identifies with the forgetful functor
\begin{equation*}
\D(\Rep^{h\dagger\text{-}\an}_{(B, B^+)_\solid} G)\hookrightarrow \D(\Rep^{h'\dagger\text{-}\an}_{(B, B^+)_\solid} G)
\end{equation*}
and hence pushforward identifies with the right-adjoint of this forgetful functor, which is given by $(-)^{h\dagger\text{-}\an}$ by \cref{cor:porat-adjunction}, as desired.
\end{proof}

\begin{ex}
We give a nontrivial example of an object in the category $\D(*/\Z_p^{\times, \la})$, where $*\coloneqq\cal{N}_{|p|<1}/\R_{>0}$. For this, first note that there is a universal norm (up to rescaling) 
\begin{equation*}
|\cdot|: \P^1\rightarrow [0, \infty]/\R_{>0}
\end{equation*}
over $*=\cal{N}_{|p|<1}/\R_{>0}$ and we let $\overcirc{\DD}$ be the preimage of $[0, 1)/\R_{>0}$ under this map. Letting $T$ denote the coordinate on $\P^1$, we observe that there is an action of $\Z_p^{\times, \la}$ on $\overcirc{\DD}$ given by 
\begin{equation*}
x.T\coloneqq (1+T)^x-1=\sum_{n\geq 1} \binom{x}{n}T^n
\end{equation*}
for $x\in\Z_p^\times$; indeed, the right-hand side is a locally analytic function since $T^n$ has exponential decay on $\overcirc{\DD}$ with respect to the norm. Using this action, we obtain a map $f: \overcirc{\DD}/\Z_p^{\times, \la}\rightarrow */\Z_p^{\times, \la}$ and we can consider
\begin{equation*}
f_!\O\in\D(*/\Z_p^{\times, \la})\;.
\end{equation*}
Let us make this representation a little more explicit by pulling back along $\AnSpec\F_{p, \solid}(\!(\pi)\!)\rightarrow\cal{N}_{|p|<1}/\R_{>0}$. Then we have
\begin{equation*}
\overcirc{\DD}_{\F_p(\!(\pi)\!)}=\colim_k \AnSpec\F_{p, \solid}(\!(\pi)\!)\langle\tfrac{T^k}{\pi}\rangle\;,
\end{equation*}
which is exactly the complement of the closed embedding
\begin{equation*}
\AnSpec\F_{p, \solid}(\!(\pi)\!)\langle T^{\pm 1}\rangle_{\leq 1}\xrightarrow{i} \AnSpec\F_{p, \solid}(\!(\pi)\!)\langle T\rangle_{\leq 1}\;.
\end{equation*}
As both source and target of $i$ are weakly cohomologically proper over $\AnSpec \F_{p, \solid}(\!(\pi)\!)$, we can use excision to compute the compactly supported cohomology of $\DD_{\F_p(\!(\pi)\!)}$ as
\begin{equation*}
\fib(\F_p(\!(\pi)\!)\langle T\rangle_{\leq 1}\rightarrow \F_p(\!(\pi)\!)\langle T^{\pm 1}\rangle_{\leq 1})=T^{-1}\F_p(\!(\pi)\!)\langle T^{-1}\rangle_{\leq 1}[-1]
\end{equation*}
and hence $f_!\O$ is concentrated in degree $1$. Computing the action of $\Z_p^\times$ on the first few powers of $T^{-1}$, we get
\begin{equation*}
x.T^{-1}=x^{-1}T^{-1}\;, \hspace{0.5cm} x.T^{-2}=x^{-2}T^{-2}-(x-1)x^{-2}T^{-1}\;, \hspace{0.5cm} \dots
\end{equation*}
In particular, we see that the action of $\Z_p^\times$ is not smooth.
\end{ex}

\section{Comparison with smooth representations over $\F_p$}

In this section, we prove the following theorem:

\begin{thm}
\label{thm:smoothfp-main}
Let $G$ be a $p$-adic Lie group. For any solid $\F_p$-algebra $B$, there is a fully faithful embedding
\begin{equation*}
\widehat{\D}(\Rep^\sm_B G)\hookrightarrow \D((\cal{N}_B/\R_{>0})/G^\la)\;,
\end{equation*}
where $\widehat{\D}(\Rep^\sm_B G)$ denotes the left-complete derived category of smooth $G$-representations on solid $B$-modules.
\end{thm}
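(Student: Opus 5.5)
We plan to factor the embedding through the Betti stack of $G$. Over an $\F_p$-algebra, $p=0$, so the condition $|p|<1$ holds automatically and $\cal{N}_B/\R_{>0}$ maps to $\cal{N}_{|p|<1}/\R_{>0}$. The structure map $\Z_p^\la\rightarrow\Z_p^{\Betti}$ (surjective with kernel inside $p\Z_p^\la$, as invoked in the proof of \cref{prop:man-transmutation} via \cref{lem:smoothfp-zpladagger}) extends by functoriality to a map of group stacks $G^\la\rightarrow G^{\Betti}$. This gives a morphism
\begin{equation*}
q: (\cal{N}_B/\R_{>0})/G^\la\rightarrow (\cal{N}_B/\R_{>0})/G^{\Betti}\;.
\end{equation*}
We will show separately that:
\begin{enumerate}[label=(\arabic*)]
\item $\widehat{\D}(\Rep^\sm_B G)$ embeds fully faithfully into $\D(\AnSpec B/G^{\Betti})$;
\item pullback along $\cal{N}_B/\R_{>0}\rightarrow\AnSpec B$ stays fully faithful after quotienting by $G^{\Betti}$;
\item $q^*$ is fully faithful.
\end{enumerate}

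For (1), reduce by descent along $*/G_0\rightarrow */G$, with $G/G_0$ discrete, to a compact open pro-$p$ subgroup $G_0$. Then $\D(\AnSpec B/G_0^{\Betti})$ is given by comodules over $\Cont(G_0,B)=\colim_{N}\Cont(G_0/N,B)$. Smooth representations are exactly the comodules in the heart, and the left-complete derived category of the heart maps fully faithfully in because $\Cont(G_0,B)$ is flat and $G_0$ has finite mod $p$ cohomological dimension. This is essentially the argument of \cite[Prop.\ A.1.2]{MannThesis}.

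For (2), apply \cref{prop:norms-embed} with $S=\AnSpec B/G^{\Betti}$. This is legitimate because that proposition allows an arbitrary base $S$: primness of $\cal{N}\rightarrow *$ and $f_*\O\cong\O$ are stable under base change.

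The main step is (3). It suffices to show that the unit $\O\rightarrow q_*\O$ is an isomorphism and that $q$ satisfies the projection formula. Pulling back along the cover $\cal{N}_B/\R_{>0}\rightarrow(\cal{N}_B/\R_{>0})/G^{\Betti}$, this reduces to showing that the fibre $K=\ker(G^\la\rightarrow G^{\Betti})$, the ``germ'' of $G$ at the identity, has trivial classifying-stack cohomology: $\D(*)\rightarrow\D(*/K)$ should be fully faithful, with $*/K$ prim and $\Gamma(*/K,\O)=\O$.

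Since $G_0^\la=\colim$ over open subgroups $N$ of the stacks $N^\la$, and $G^{\Betti}$ is the corresponding limit of the discrete quotients, $K$ is $\lim_N N^\la$. Concretely, over the chart $\Z(\!(\pi)\!)\langle p/\pi\rangle$ base changed to $\F_p$, its ring of functions is the colimit over $N=p^r\Z_p^d$ of the rings $C^\la(N,\F_p(\!(\pi)\!))$ modulo the ideal generated by the indicator functions of the cosets. Working on the chart $\AnSpec B(\!(\pi)\!)$, the target is to show that the group cohomology (the cobar complex) of this Hopf algebra $\O(K)$ is $\O$ in degree $0$.

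We will first try the case $d=1$ and then take products. For $d=1$, the key computation is that in characteristic $p$ one has $\binom{x}{p^r}\equiv x_r \bmod p$, where $x_r$ is the $r$-th $p$-adic digit of $x$ (Lucas's theorem). Hence $\O(\Z_p^\la)\otimes\F_p$ is generated by functions depending on finitely many digits, namely the smooth functions, together with ``decaying'' monomials. Passing to the germ at $0$ kills the smooth part, so $\O(K)$ should be a colimit of Tate-type algebras $\F_p(\!(\pi)\!)\langle\pi^{\epsilon}\cdot\rangle$ in which every nonconstant coordinate is topologically nilpotent.

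The plan is then to construct an explicit contracting homotopy for the cobar complex, i.e.\ an extra codegeneracy given by ``evaluation at $0$'' in the first variable. This mirrors the extra codegeneracies in \cref{prop:norms-embed} and \cref{prop:norms-embedzp}, and convergence would be controlled by the estimates of \cref{lem:app-binompseries}. I expect this contraction to be the main obstacle: one must check that evaluation at the identity is well defined on the colimit and compatible with the coproduct.
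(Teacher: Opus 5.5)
Your steps (1) and (2) are fine, and so is reducing (3) to the relative statement that $(\mathcal{N}_B/\R_{>0})/K\to\mathcal{N}_B/\R_{>0}$ pushes $\mathcal{O}$ to $\mathcal{O}$. The gap is in how you propose to prove that statement.

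\emph{Evaluation at the identity is not an extra codegeneracy.} Let $d^1$ be the pullback along multiplication and $s^{-1}$ evaluation at $e$ in the first variable. On $C^1=\mathcal{O}(K)$ one has $s^{-1}d^1f=f$, whereas $d^0s^{-1}f$ is the constant $f(e)$. So the required identity $s^{-1}d^1=d^0s^{-1}$ holds only if every function on $K$ is constant.

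\emph{Functions on $K$ are not constant on the chart.} Work over $\AnSpec\F_{p,\solid}(\!(\pi)\!)$. By Lucas, $\binom{p^rx}{n}\equiv\binom{x}{n/p^r}$ mod $p$ if $p^r\mid n$, and $\equiv 0$ otherwise. Hence the pullback of $\sum_n\pi^n\binom{x}{n}$ along $x\mapsto p^rx$ is $\sum_m\pi^{p^rm}\binom{x}{m}$, which is nonconstant for every $r$. Conceptually, the same formula would also contract the cobar complex of each $G_r^\la$, but $H^1(*/G_r^\la,\mathcal{O})=\Hom(G_r,\F_p)\otimes\F_p(\!(\pi)\!)\neq0$. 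The acyclicity of $K$ is purely a colimit phenomenon.

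\emph{The analogy and the product reduction both fail.} In \cref{prop:norms-embed}, $s^{-1}$ is the projection to $\ol{\T}$-degree zero, i.e.\ averaging over a torus, not an evaluation. Reducing to $d=1$ and taking products only covers abelian $G_0$. The isomorphism $K\cong(0\subseteq(\Z_p^d)^\la)^\dagger$ holds as stacks, not as group stacks, and a cobar complex depends on the group law.

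The paper avoids this by not splitting off (2). It proves $f_*\mathcal{O}\cong\mathcal{O}$ for the composite $f:(\mathcal{N}_{\F_p}/\R_{>0})/G_0^\la\to\AnSpec\F_{p,\solid}/G_0^\sm$, which is prim by \cref{lem:smoothfp-glagsmprim}. It checks this after base change to $\AnSpec\F_{p,\solid}$. In other words, it computes the \emph{absolute} cohomology of $(\mathcal{N}_{\F_p}/\R_{>0})/K$ via the \v{C}ech nerve of $\mathcal{N}_{\F_p}/\R_{>0}\to(\mathcal{N}_{\F_p}/\R_{>0})/K$.
\begin{itemize}
\item Each term satisfies $K^m\cong(0\subseteq(\Z_p^{md})^\la)^\dagger$, and only the underlying stack matters here.
\item By \cref{lem:smoothfp-cohzpla}, each term has absolute cohomology $\colim_{\ul{x}\mapsto p\ul{x}}\F_p[\binom{\ul{x}}{\ul{n}}]$. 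Passing to $\ol{\T}$-degree zero discards all analytic growth and leaves only the finite $\F_p$-combinations of the $\binom{\ul{x}}{\ul{n}}$.
\item This colimit is $\F_p$, because for $n>0$ and $i\gg0$, $\binom{p^ix}{n}$ is a $p\Z$-linear combination of the $\binom{x}{j}$.
\end{itemize}
So the cosimplicial object is constant, and no homotopy is needed.

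If you want to keep your relative version of (3), replace the contraction by the following chain over $\F_p(\!(\pi)\!)$:
\begin{itemize}
\item $R\Gamma(*/K,\mathcal{O})=\colim_r R\Gamma(*/G_r^\la,\mathcal{O})$, since the cobar complexes are static and filtered colimits are exact.
\item This equals $\colim_r R\Gamma(G_r,\F_p)\otimes\F_p(\!(\pi)\!)$, by \cref{thm:locan-compareporat}, \cref{cor:locan-embedgreps} and Lazard--Serre.
\item By continuity of profinite group cohomology, $\colim_r H^i(G_r,\F_p)=H^i(\bigcap_r G_r,\F_p)=0$ for $i\geq1$.
\end{itemize}
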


To prove this, we will make use of the fact that smooth representations have an interpretation in terms of stacks as well. Namely, recall from \cite[§II.1]{RealLLC} that there is a colimit-preserving functor
\begin{equation*}
(-)^\Betti: \mathrm{CondAni}^{\mathrm{light}}\rightarrow \{\text{analytic stacks over $\Z_\solid$}\}
\end{equation*}
which sends any light profinite set $S$ to $\AnSpec\Cont(S, \Z)$, where we endow $\Cont(S, \Z)$ with the induced analytic ring structure from $\Z_\solid$. (We have already made use of the Betti stack of $\R_{>0}$.)

\begin{defi}
For any locally profinite group $G$, we let $G^\sm$ denote the analytic stack $G^\Betti$.
\end{defi}

By definition, we can also describe $G^\sm$ as
\begin{equation*}
G^\sm\cong \coprod_{g\in G/G_0} \AnSpec C^\sm(gG_0, \Z)
\end{equation*}
for any compact open subgroup $G_0\subseteq G$, where $C^\sm(G_0, \Z)=\Cont(G_0, \Z)$ is the algebra of smooth $\Z$-valued functions on $G_0$ and we endow it with the induced analytic ring structure from $\Z_\solid$. The notation $G^\sm$ is justified by the following result:

\begin{prop}
For any locally profinite group $G$ and any solid $\Z$-algebra $B$, there is an equivalence of categories
\begin{equation*}
\D(\AnSpec B/G^\sm)\cong \widehat{\D}(\Rep^\sm_B G)\;,
\end{equation*}
where we use the induced analytic ring structure on $B$ from $\Z_\solid$.
\end{prop}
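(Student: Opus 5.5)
We will follow the strategy of the proof of \cref{thm:locan-compareporat}, replacing locally analytic functions by smooth functions. Write $*\coloneqq\AnSpec B$ and work over this base throughout. First we reduce to the case where $G=G_0$ is compact. Let $G_0\subseteq G$ be a compact open subgroup. Then $G^\sm/G_0^\sm\cong (G/G_0)^\Betti$ is the Betti stack of a discrete set, so $*/G_0^\sm\rightarrow */G^\sm$ is a disjoint-union-type cover with discrete fibres. Descent along this map expresses $\D(*/G^\sm)$ as $G$-equivariant objects of $\D(*/G_0^\sm)$. On the representation side, a smooth $G$-representation is precisely a $B[G]$-module that is smooth for $G_0$, since smoothness only depends on an open subgroup. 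The left-completed derived categories glue in the same way, because left-completeness is checked after forgetting to $B$.

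For compact $G_0$, the algebra $C^\sm(G_0,B)=\Cont(G_0,\Z)\tensor_\Z B$ is a filtered colimit of finite products of copies of $B$. It is therefore flat over $B$, and $G_0^\sm=\AnSpec C^\sm(G_0,B)$ carries the induced analytic ring structure, so it is prim over $*$. This is the same situation as in the proof of \cref{thm:locan-compareporat}. By \cite[Prop.\ A.1.2]{MannThesis}, $\D(*/G_0^\sm)$ is the left-completed derived category of the abelian category of descent data along $*\rightarrow */G_0^\sm$. That abelian category is the category of $C^\sm(G_0,B)$-comodules in $\D(B)^\heartsuit$.

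It remains to identify these comodules with smooth representations. Given a comodule $V\rightarrow C^\sm(G_0,V)=\colim_{N}\Cont(G_0/N,\Z)\tensor V$, with $N$ ranging over open normal subgroups, we get a $G_0$-action. Because $\Cont(G_0/N,\Z)\tensor V\cong\prod_{G_0/N}V$ is a finite product, the action on each element factors through some finite quotient, so $V=\colim_N V^N$ and $V$ is smooth. Conversely, a smooth $V$ with $V=\colim_N V^N$ has an orbit map that defines a comodule structure. These two constructions are mutually inverse.

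The main obstacle is making "smooth" precise in the condensed setting, namely that the orbit map lands in $\colim_N \prod_{G_0/N}V$ rather than merely in $\Cont(G_0,V)$. We would take this colimit condition as the definition of a smooth solid representation. It is also the reason the left-completion $\widehat{\D}$ appears: \cite[Prop.\ A.1.2]{MannThesis} yields the left-completed derived category of the heart, and the prim/flat hypotheses there must be checked for the colimit $C^\sm(G_0,B)$. Flatness passes through filtered colimits in the solid setting, so we expect this check to be routine.
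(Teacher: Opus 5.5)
Your argument is correct in outline, but it is not the route the paper takes. The paper runs no descent argument here. It notes that $X\mapsto\D(X^\Betti\times\AnSpec B)$ on light condensed anima is, by construction, the six-functor formalism of $B$-valued sheaves from \cite[§3.5]{HeyerMann}, whose results carry over to analytic-ring coefficients, so the statement is literally \cite[Prop.\ 5.1.12]{HeyerMann}.

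You instead reprove that proposition in the present setting by transposing the proof of \cref{thm:locan-compareporat}, with $C^\la$ replaced by $C^\sm$. Flatness of $\Cont(G_0,\Z)\tensor B$ (a filtered colimit of finite free $B$-modules) and primness of the induced-structure affine $G_0^\sm$ feed into \cite[Prop.\ A.1.2]{MannThesis}. The resulting comodules are then identified with representations satisfying $V=\colim_N V^N$.

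Your route makes the parallel between $G^\sm$ and $G^\la$ explicit, and it shows exactly where the left completion enters. For $\D(\Rep^\la_{(B,B^+)_\solid}G)$, left-completeness came from \cref{lem:porat-cohdimla}, whereas $\D(\Rep^\sm_B G)$ need not be left-complete, since $G$ may have infinite cohomological dimension over $B$.

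The price is that you must fix the meaning of ``smooth solid representation'' yourself; the paper sidesteps this by inheriting the notion from \cite{HeyerMann}. Your colimit condition is the natural condensed form of the classical open-stabiliser condition. You must also carry out the reduction to compact $G_0$, which is your least precise step. The \v{C}ech nerve of $*/G_0^\sm\to */G^\sm$ is not a description of ``$G$-equivariant objects of $\D(*/G_0^\sm)$''. Its terms are built from double-coset stacks such as $*/G_0^\sm\times_{*/G^\sm}*/G_0^\sm$, which are disjoint unions of classifying stacks $*/H^\sm$ for the compact open subgroups $H=G_0\cap gG_0g^{-1}$. So you need the compact case for all such $H$, compatibly with restriction, together with the matching descent statement for $\widehat{\D}(\Rep^\sm_B\,\cdot\,)$.

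One minor correction: the coaction factors through a finite stage $\prod_{G_0/N}V$ because the generators $B\tensor\Z_\solid[S]$ are compact against filtered colimits. It does not follow from the finiteness of that product.
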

\begin{proof}
By definition, the assignment $X\mapsto \D(X^\Betti\times\AnSpec B)$ for light condensed anima $X$ agrees with the six-functor formalism of $\Lambda$-valued sheaves on light condensed anima from \cite[§3.5]{HeyerMann} for $\Lambda$ being the analytic ring $B$.\footnote{Note that all results and constructions from loc.\ cit.\ also work for $\Lambda$ being an analytic ring.} Then the claim is \cite[Prop.\ 5.1.12]{HeyerMann}.
\end{proof}

By virtue of the above proposition, we will realise the equivalence in \cref{thm:smoothfp-main} as pullback along the map
\begin{equation*}
(\cal{N}_B/\R_{>0})/G^\la\rightarrow (\cal{N}_B/\R_{>0})/G^\sm\rightarrow \AnSpec B/G^\sm
\end{equation*}
induced by the inclusion of smooth functions into locally analytic functions. Let us first recall the so-called Lazard--Serre and Kohlhaase resolutions in our setting.

\begin{prop}
\label{prop:smoothfp-lazardserre}
Let $G_0$ be a uniform $p$-adic Lie group of dimension $d$. For any solid $\Z_p$-algebra $B^+$, there is a resolution
\begin{equation*}
\begin{tikzcd}[column sep=small]
0\ar[r] & B^+_\solid[G_0]^{\binom{d}{d}}\ar[r] & B^+_\solid[G_0]^{\binom{d}{d-1}}\ar[r]& \cdots\ar[r] & B^+_\solid[G_0]^{\binom{d}{0}}\ar[r] & B^+
\end{tikzcd}
\end{equation*}
of the trivial representation of $G_0$.
\end{prop}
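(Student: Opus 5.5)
We will construct the resolution as the Koszul complex of the Iwasawa algebra on the elements $b_i\coloneqq g_i-1$, where $g_1,\dots,g_d$ are ordered topological generators of $G_0$, and then reduce to the classical Lazard--Serre statement over $\Z_p$ by base change.

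\textbf{Step 1: reduction to $B^+=\Z_p$.} The solid group algebra satisfies $B^+_\solid[G_0]=B^+\tensor_{\Z_{p,\solid}}\Z_{p,\solid}[G_0]$. Since $G_0$ is profinite, $\Z_{p,\solid}[G_0]=\lim_n\Z_p[G_0/G_n]$ is the Iwasawa algebra $\Z_p[\![G_0]\!]$ with its compact topology. This module is a product of copies of $\Z_p$, hence flat in solid $\Z_p$-modules. A finite exact complex of such flat modules remains exact after applying $B^+\tensor_{\Z_{p,\solid}}-$, because its cycles are flat by induction from the right. Hence it suffices to produce the resolution for $B^+=\Z_p$ with terms $\Z_p[\![G_0]\!]^{\binom{d}{i}}$.

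\textbf{Step 2: the Koszul-type complex.} Lazard's theory, as used in \cite[§7]{ProPGroups} and the Lazard--Serre resolution cited as \cite[Thm.\ 6.2]{Porat2}, gives the following description of $\Lambda\coloneqq\Z_p[\![G_0]\!]$. As a left module it is topologically free on the ordered monomials $b^{\ul{n}}=b_1^{n_1}\cdots b_d^{n_d}$. Its graded ring for the filtration by total degree in the $b_i$ is a polynomial ring $\F_p[\ol{b}_1,\dots,\ol{b}_d]$ (suitably, over $\gr\Z_p$), with the $\ol{b}_i$ commuting. Lazard's construction then gives a complex $\Lambda\tensor_{\Z_p}\bigwedge^\bullet\Z_p^d\to\Z_p$ with filtered differentials, whose associated graded is the Koszul complex of the regular sequence $\ol{b}_1,\dots,\ol{b}_d$ in the commutative graded ring. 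We take this complex as our resolution, since the existence of such a filtered lift is exactly the content of the classical Lazard--Serre theorem. The $\binom{d}{i}$ in the statement is the rank of $\bigwedge^i\Z_p^d$.

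\textbf{Step 3: exactness and the condensed upgrade.} Exactness of the graded Koszul complex, together with completeness and separatedness of the filtration, gives exactness of the filtered complex by the usual spectral-sequence argument. All terms are compact projective or profinite $\Z_p$-modules and all maps are continuous. Exactness of underlying abstract groups therefore upgrades to exactness in solid modules, because limits of finite-type $\Z_p$-modules compute condensed values correctly.

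\textbf{Main obstacle.} The delicate step is Step 2: lifting the Koszul differentials to genuine $\Lambda$-linear maps $d$ with $d^2=0$ despite the noncommutativity of $\Lambda$. For this we would simply invoke Lazard's (or Kohlhaase's) construction, which already handles the noncommutativity.
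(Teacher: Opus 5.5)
Your proposal is correct and essentially matches the paper. The paper simply cites the solid Lazard--Serre resolution \cite[Thm.\ 5.7]{SolidLocAnReps1}, which rests on the same classical input you use: Lazard's filtered Koszul-type resolution of $\Z_p$ over the Iwasawa algebra, viewed as an exact complex of profinite (hence solid) modules and then base changed. In Step 1 you do not even need the flatness argument, since the exact functor $B^+\tensor^L_{\Z_{p,\solid}}-$ carries the quasi-isomorphism $\Z_{p,\solid}[G_0]\tensor\bigwedge^\bullet\Z_p^d\to\Z_p$ to a quasi-isomorphism, and this also covers non-static $B^+$.
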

\begin{proof}
See \cite[Thm.\ 5.7]{SolidLocAnReps1}.
\end{proof}

\begin{prop}
\label{prop:smoothfp-kohlhaase}
Let $G_0$ be a uniform $p$-adic Lie group of dimension $d$ and let $h\geq 0$. For any complete Tate Huber pair $(B, B^+)$ over $(\Z(\!(\pi)\!), \Z[\![\pi]\!])$ of slope $\geq 1$ which is residually of finite type, there is a resolution
\begin{equation*}
\begin{tikzcd}[column sep=small]
0\ar[r] & D_{h\text{-}\an}(G_0, B)^{\binom{d}{d}}\ar[r] & D_{h\text{-}\an}(G_0, B)^{\binom{d}{d-1}}\ar[r]& \cdots\ar[r] & D_{h\text{-}\an}(G_0, B)^{\binom{d}{0}}\ar[r] & B
\end{tikzcd}
\end{equation*}
of the trivial representation of $G_0$ extending the Lazard--Serre resolution.
\end{prop}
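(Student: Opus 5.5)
The plan is to obtain the Kohlhaase resolution by base-changing the Lazard--Serre resolution of \cref{prop:smoothfp-lazardserre} along $B^+_\solid[G_0]\rightarrow D_{h\text{-}\an}(G_0, B)$, in the spirit of Kohlhaase's original argument and of its integral version in \cite[Thm.\ 6.2]{Porat2}. First apply \cref{prop:smoothfp-lazardserre} to $B^+$ and invert $\pi$. This gives a finite free resolution $P_\bullet\rightarrow B$ of the trivial representation by $(B, B^+)_\solid[G_0]$-modules, with terms $(B,B^+)_\solid[G_0]^{\binom{d}{i}}$. Its differentials are explicit: they are Koszul-type matrices whose entries are built from the elements $g_i-1$ for the chosen ordered basis $g_1,\dots,g_d$ of $G_0$.

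Next, form $D_{h\text{-}\an}(G_0,B)\tensor_{(B,B^+)_\solid[G_0]} P_\bullet$. Because each term of $P_\bullet$ is free, this is termwise the complex displayed in the statement. Its homology is $\mathrm{Tor}^{(B,B^+)_\solid[G_0]}_*(D_{h\text{-}\an}(G_0,B), B)$, so the claim reduces to showing that this Tor equals $B$ concentrated in degree $0$. Since $D_{h\text{-}\an}(G_0,B)$ is an idempotent $(B,B^+)_\solid[G_0]$-algebra (\cite[Thm.\ 6.10]{Porat2}), it is enough to show that $B$ is a $D_{h\text{-}\an}(G_0,B)$-module. For this I would observe that the trivial representation is $h$-analytic for every $h$: its orbit maps are constant functions, which lie in $C_{h\text{-}\an}(G_0,B)$. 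Alternatively, $B$ is a module over $D_{h\text{-}\an}$ through the augmentation given by evaluation at $1$, the dual of the constant function $1$. Idempotence then gives $D_{h\text{-}\an}\tensor^L_{(B,B^+)_\solid[G_0]} B\cong D_{h\text{-}\an}\tensor^L_{D_{h\text{-}\an}} B\cong B$. The augmentation $D_{h\text{-}\an}\rightarrow B$ is compatible with the one on $P_0$, so the new resolution extends the Lazard--Serre one via the natural map $P_\bullet\rightarrow D_{h\text{-}\an}\tensor P_\bullet$.

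The main obstacle is the idempotence input together with the identification of $B$ as a $D_{h\text{-}\an}$-module through a ring map extending the $G_0$-action. These are exactly the delicate analytic statements of \cite{Porat2}. If I wanted to avoid invoking Thm.\ 6.10 of loc.\ cit.\ directly, I would instead prove exactness by hand. The approach would be to filter $D_{h\text{-}\an}(G_0,B)$ by the $\pi$-adic/degree filtration in the variables $b_i=g_i-1$, so that the associated graded of the Koszul complex becomes the Koszul complex of the regular sequence $b_1,\dots,b_d$ in a (completed) polynomial-type ring. Exactness on graded pieces, plus completeness of the filtration, would then give the result. Controlling the weights $v_h(\ul{n})$ so that the filtration is compatible with the differentials is the step I expect to require care.
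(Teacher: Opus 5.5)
Your formal steps are correct. Write $R\coloneqq(B,B^+)_\solid$ and $D\coloneqq D_{h\text{-}\an}(G_0,B)$. Then $D\tensor_{R[G_0]}P_\bullet$ computes $D\tensor^L_{R[G_0]}B$, and the counit $\mu\mapsto\mu(\mathbf{1})$ is a ring map $D\to B$ extending $g\mapsto 1$. So idempotence would give $D\tensor^L_{R[G_0]}B\cong D\tensor^L_D B\cong B$, compatibly with the Lazard--Serre augmentation.

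The problem is that this does not amount to a proof, because the input is circular. The paper simply cites \cite[Prop.\ 6.5]{Porat2}. The idempotence result \cite[Thm.\ 6.10]{Porat2} that you invoke instead is the stronger statement, and in \cite{Porat2} it comes after Prop.~6.5. The usual proof of idempotence goes as follows:
\begin{enumerate}[label=(\arabic*)]
\item write $D\tensor^L_{R[G_0]}D\cong(D\tensor_R D)\tensor^L_{R[G_0]}B$, with the diagonal action twisted by the antipode;
\item observe that $D\tensor_R D$ is a $D$-module via the coproduct;
\item conclude using $D\tensor^L_{R[G_0]}B\cong B$.
\end{enumerate}
Step (3) is exactly the exactness you want to prove. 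So up to standard formalities the two statements are equivalent, and your main route has no content of its own. Tellingly, it never uses $h\geq 0$ or the slope hypothesis except through the citation.

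Everything therefore rests on your fallback, which is only a sketch and leaves its one nontrivial step open. For non-abelian $G_0$ the Lazard--Serre differentials are not Koszul matrices in the $b_i=g_i-1$; only their symbols are. What has to be proved is the following. Filter the integral lattice $D_{h\text{-}\an}(G_0,B^+)$ by giving $\ul{b}^{\ul{n}}$ weight $|\ul{n}|/p^h(p-1)$ in $\pi$-adic units; the floors in $v_h$ only cost a bounded factor. Then the associated graded must be a \emph{commutative polynomial} ring over the graded ring of $B^+$ in the symbols $\bar b_1,\dots,\bar b_d$.

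This is exactly where the hypotheses enter. Uniformity puts the commutators $(g_i,g_j)$ into $G_0^{p}$ (for $p$ odd; $G_0^4$ for $p=2$). Hence $b_ib_j-b_jb_i$ lies in the closed two-sided ideal generated by the elements $(1+b_k)^p-1=b_k^p+p(\cdots)$ (resp.\ $(1+b_k)^4-1$). These have strictly higher filtration than $b_ib_j$ only because $|p|<|\pi|\leq|\pi|^{1/p^h(p-1)}$, i.e.\ by slope $\geq 1$ and $h\geq 0$.

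One then checks that the Lazard--Serre differentials respect this filtration, with associated graded the Koszul complex of the regular sequence $\bar b_1,\dots,\bar b_d$. From exactness on graded pieces one deduces exactness of the $\pi$-complete integral complex, and finally inverts $\pi$. That argument is the substance behind the cited result, and your proposal does not supply it.
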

\begin{proof}
See \cite[Prop.\ 6.5]{Porat2}.
\end{proof}

We now start working towards the proof of \cref{thm:smoothfp-main} by proving some preparatory lemmas.

\begin{lem}
\label{lem:smoothfp-desc}
Let $G_0$ be a uniform $p$-adic Lie group. Then the morphism 
\begin{equation*}
f: \cal{N}_{|p|<1}/\R_{>0}\rightarrow (\cal{N}_{|p|<1}/\R_{>0})/G_0^\la 
\end{equation*}
is prim and $f_*\O\in \D((\cal{N}_{|p|<1}/\R_{>0})/G_0^\la)$ is descendable.
\end{lem}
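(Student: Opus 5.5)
Write $X\coloneqq\cal{N}_{|p|<1}/\R_{>0}$ and $Y\coloneqq X/G_0^\la$, so that $f\colon X\rightarrow Y$ is the canonical $G_0^\la$-torsor. Both assertions are local on $Y$ and compatible with base change, so I will check them after pulling back along the covers of $\cal{N}_{|p|<1}$ already used in the construction of $G^{h\dagger\text{-}\an}$. Concretely, I would use the maps $\AnSpec\hspace{1pt}(B_k, B_k^+)_\solid\rightarrow \cal{N}_{|p|<1}\rightarrow X$ for the Banach pairs $(B_k, B_k^+)$ from (\ref{eq:locan-bb+}). These pairs are of slope $\geq 1$ and residually of finite type, and they have the induced analytic ring structure from $\Z_\solid$.

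\textbf{Primness.} The base change of $f$ along $X\rightarrow Y$ is the projection $G_0^\la\rightarrow X$. After base change to $\AnSpec\hspace{1pt}(B, B^+)_\solid$ for such a pair, this becomes $\AnSpec C^\la(G_0, B)\rightarrow \AnSpec\hspace{1pt}(B, B^+)_\solid$. Here the analytic ring structure on $C^\la(G_0, B)$ is induced from $(B, B^+)_\solid$, so this map is prim. Primness of $f$ then follows because it can be checked after pullback along the cover $X\rightarrow Y$, and in turn along the cover of $X$; this is the same argument as in \cref{lem:norms-prim}, using \cite[Cor.\ 4.7.5]{HeyerMann}.

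\textbf{Descendability.} By \cref{thm:locan-compareporat}, after base change to $\AnSpec\hspace{1pt}(B, B^+)_\solid$ the object $f_*\O$ corresponds to the regular representation $C^\la(G_0, B)$, namely the $C^\la(G_0, B)$-comodule $C^\la(G_0, B)$. The unit $\O$ corresponds to the trivial representation $B$. Descendability of $f_*\O$ means that $\O$ lies in the thick tensor ideal generated by $f_*\O$. I would prove this by resolving $B$ in a way that avoids infinite limits, using the Kohlhaase resolution of \cref{prop:smoothfp-kohlhaase}. Dualising that resolution should exhibit $B$ as a finite limit, of length $d=\dim G_0$, of objects $C^{h\text{-}\an}(G_0, B)^{\oplus\binom{d}{i}}$. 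Passing to the colimit over $h$, these become objects of the form $C^\la(G_0, B)\tensor W$, each of which is a module over the algebra $f_*\O$.

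More precisely, I expect the Chevalley--Eilenberg-type complex
\begin{equation*}
B\rightarrow C^\la(G_0, B)\rightarrow C^\la(G_0, B)\tensor\Lambda^1\mathfrak{g}^\vee\rightarrow\cdots\rightarrow C^\la(G_0, B)\tensor\Lambda^d\mathfrak{g}^\vee
\end{equation*}
to be exact and $G_0$-equivariant. Here $\mathfrak{g}=\Lambda(G_0)\tensor B$ carries the locally analytic adjoint action of \cref{ex:porat-adjointlocan}, so each term is a locally analytic representation. Tensoring a comodule over $f_*\O$ with the finite free object $\Lambda^i\mathfrak{g}^\vee$ keeps it in the ideal generated by $f_*\O$. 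Hence $\O$ is a finite totalisation of $f_*\O$-modules, which gives descendability of index $\leq d+1$.

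It remains to descend this complex to $Y$ itself. The terms are defined over $X$ once $\Lambda(G_0)$ is viewed as a representation on the $\Z_p$-lattice pulled back via \cref{prop:norms-embedzp}. Exactness of the complex can then be checked on the cover.

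\textbf{Main obstacle.} The hardest step is establishing exactness of this de Rham/Lazard-type complex for $C^\la$ in mixed characteristic. In particular, one needs that the colimit over $h$ of the dualised Kohlhaase resolutions is still exact and is compatible as $h$ varies. My first attempt would be to dualise \cref{prop:smoothfp-kohlhaase}, using that the modules $D_{h\text{-}\an}(G_0, B)$ are duals of Banach spaces and hence reflexive, and then pass to the colimit via the trace-class argument of \cref{lem:porat-removenuclear}. If that fails, the fallback is to construct an explicit contracting homotopy using the Mahler basis $\binom{\ul{x}}{\ul{n}}$ in the coordinates of the chosen isomorphism $\Z_p^d\cong G_0$.
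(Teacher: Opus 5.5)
Your proposal is essentially the paper's proof: primness is checked on affine charts, where the base change of $f$ becomes $\AnSpec C^\la(G_0,B)\rightarrow\AnSpec\hspace{1pt}(B,B^+)_\solid$ with induced analytic ring structure. Descendability then follows, after using primness to base change $f_*\mathcal{O}$ to the charts, by dualising the Kohlhaase resolution of \cref{prop:smoothfp-kohlhaase} and passing to the colimit over $h$; this realises the unit as a finite iterated fibre of copies of $C^\la(G_0,B)$ via a canonical complex that descends.

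The one correction is that you should drop your ``more precise'' Chevalley--Eilenberg description. The derivations coming from $\Lambda(G_0)$ do not preserve $C^\la(G_0,B)$ when $p$ is not invertible in $B$; for example, $\tfrac{d}{dx}\binom{x}{p}$ takes the value $\pm 1/p$ at $x=0$. So the dualised complex is $B\rightarrow C^\la(G_0,B)^{\binom{d}{0}}\rightarrow\cdots\rightarrow C^\la(G_0,B)^{\binom{d}{d}}$, with differentials dual to the Koszul-type Lazard--Serre maps. Its exactness needs only reflexivity of the $D_{h\text{-}\an}(G_0,B)$ and exactness of filtered colimits, not a trace-class argument or an explicit homotopy.
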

\begin{proof}
As primness can be checked locally on the target, it suffices to check that 
\begin{equation*}
G_0^\la\times_{\cal{N}_{|p|<1}/\R_{>0}}\AnSpec\Z_\solid(\!(\pi)\!)\langle\tfrac{p^k}{\pi}\rangle_{\leq 1}\rightarrow \AnSpec\Z_\solid(\!(\pi)\!)\langle\tfrac{p^k}{\pi}\rangle_{\leq 1} 
\end{equation*}
is prim for all $k\geq 1$, which follows from the fact that this is a map between affine analytic stacks with induced analytic ring structure. 

For the second part, observing that $f_*$ satisfies base change by primness of $f$, we prove descendability of $f_*\O$ after pullback along the maps
\begin{equation*}
\AnSpec\hspace{1pt}(B_k, B_k^+)_\solid\rightarrow \cal{N}_{|p|<1}
\end{equation*} 
from (\ref{eq:locan-bb+}), which jointly cover the target for $k\geq 1$, and note that our argument will descend. In other words, we show that $f_{k*}\O$ is descendable for all $k$, where
\begin{equation*}
f_k: \AnSpec\hspace{1pt}(B_k, B_k^+)_\solid\rightarrow \AnSpec\hspace{1pt}(B_k, B_k^+)_\solid/G_0^\la\;.
\end{equation*}
Unraveling definitions, we see that $f_{k*}\O$ identifies with the regular representation $C^\la(G_0, B_k)$ under the equivalence from \cref{thm:locan-compareporat}. Taking the dual of the Kohlhaase resolution from \cref{prop:smoothfp-kohlhaase} and then taking the colimit as $h\rightarrow\infty$, we obtain an exact sequence
\begin{equation*}
\begin{tikzcd}[column sep=small]
0\ar[r] & B_k\ar[r] & C^\la(G_0, B_k)^{\binom{d}{0}}\ar[r] & \cdots\ar[r] & C^\la(G_0, B_k)^{\binom{d}{d-1}}\ar[r] & C^\la(G_0, B_k)^{\binom{d}{d}}\ar[r] & 0
\end{tikzcd}
\end{equation*}
of locally analytic $G_0$-representations, where $d$ denotes the dimension of $G_0$. This shows that the trivial representation $B_k$ can be obtained from $C^\la(G_0, B_k)$ by taking finitely many direct sums, fibres and cofibres, hence $C^\la(G_0, B_k)$ is descendable, as desired.
\end{proof}

\begin{lem}
\label{lem:smoothfp-glagsmprim}
Let $G_0$ be a uniform $p$-adic Lie group. Then the morphism
\begin{equation*}
f: (\cal{N}_{\F_p}/\R_{>0})/G_0^\la\rightarrow (\cal{N}_{\F_p}/\R_{>0})/G_0^\sm\rightarrow \AnSpec \F_{p, \solid}/G_0^\sm
\end{equation*}
is prim.
\end{lem}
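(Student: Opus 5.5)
We factor $f$ as $f_2\circ f_1$ with
\begin{equation*}
f_1: (\cal{N}_{\F_p}/\R_{>0})/G_0^\la\rightarrow (\cal{N}_{\F_p}/\R_{>0})/G_0^\sm\;, \hspace{0.5cm} f_2: (\cal{N}_{\F_p}/\R_{>0})/G_0^\sm\rightarrow \AnSpec\F_{p, \solid}/G_0^\sm\;.
\end{equation*}
We show that each factor is prim and then use that prim maps compose.

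For $f_2$, note that it is the base change of $\cal{N}_{\F_p}/\R_{>0}\rightarrow\AnSpec\F_{p, \solid}$ along $\AnSpec\F_{p, \solid}/G_0^\sm\rightarrow \AnSpec\F_{p, \solid}$. The $G_0^\sm$-action on $\cal{N}_{\F_p}/\R_{>0}$ is trivial, so the quotient is just the product. Primness is stable under base change, and $\cal{N}/\R_{>0}\rightarrow\AnSpec\Z_\solid$ is prim by \cref{lem:norms-primmodr}. Hence its base change $\cal{N}_{\F_p}/\R_{>0}\rightarrow\AnSpec\F_{p,\solid}$ is prim, and therefore so is $f_2$.

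For $f_1$, primness is local on the target, so we pull back along the cover $*\coloneqq\cal{N}_{\F_p}/\R_{>0}\rightarrow */G_0^\sm$. The pullback is $*/\ker$, where $\ker$ denotes the fibre of the group map $G_0^\la\rightarrow G_0^\sm$. More concretely, the fibre of $*/G_0^\la\rightarrow */G_0^\sm$ is $G_0^\sm/G_0^\la$, i.e.\ the quotient of $G_0^\sm$ by $G_0^\la$ acting through this map. I would show primness of $*/G_0^\la\rightarrow */G_0^\sm$ by a factorisation argument. The composite $*\rightarrow */G_0^\la\rightarrow */G_0^\sm$ has the following properties:
\begin{enumerate}[label=(\roman*)]
\item $*\rightarrow */G_0^\la$ is prim with descendable $f_*\O$ by \cref{lem:smoothfp-desc}, base changed to $\F_p$.
\item $*\rightarrow */G_0^\sm$ is prim, because after base change along itself it becomes $G_0^\sm\times *\rightarrow *$. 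This is affine, namely $\AnSpec$ of $\Cont(G_0,\Z)$ with induced structure, pulled back to the affine pieces of $*$.
\end{enumerate}
By the criterion \cite[Cor.\ 4.7.5]{HeyerMann} used in \cref{lem:norms-prim}, primness of $g\circ h$ together with primness of $h$ and descendability of $h_*\O$ gives primness of $g$. Here $h=*\rightarrow */G_0^\la$ and $g=f_1$. If that corollary is stated only for covers, I would instead argue directly: pull $f_1$ back along the prim cover $*\rightarrow */G_0^\sm$ and deal with $*/\ker\rightarrow *$ by the same descendability argument, since $*\rightarrow */\ker$ is a base change of $*\rightarrow */G_0^\la$.

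The main obstacle is making sure the descendability from \cref{lem:smoothfp-desc} survives base change to $\F_p$ and to the smooth quotient. This holds because descendability is preserved under pullback: the finite exact sequence of $C^\la(G_0,B_k)$-terms obtained from the dual Kohlhaase resolution pulls back. It also requires that the affine charts $\AnSpec(B_k,B_k^+)_\solid\times_{\Z}\F_p$ still cover the $\F_p$-fibre. This is the case since $\cal{N}_{\F_p}\subseteq\cal{N}_{|p|<1}$ (there $|p|=0$). Finally, I would check that the identification of the relevant pushforward with $C^\la(G_0,B_k)\tensor\F_p$ is compatible with the map to $\Cont(G_0,\F_p)$-comodules. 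This is bookkeeping rather than a new idea.
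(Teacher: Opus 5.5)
Your proposal is correct and matches the paper's proof. You factor $f$ through $(\cal{N}_{\F_p}/\R_{>0})/G_0^\sm$ and deduce primness of the second map from \cref{lem:norms-primmodr} by base change. For the first map you apply \cite[Cor.\ 4.7.5]{HeyerMann} to the prim map $\cal{N}_{\F_p}/\R_{>0}\rightarrow(\cal{N}_{\F_p}/\R_{>0})/G_0^\la$ with descendable pushforward from \cref{lem:smoothfp-desc}, together with primness of the $G_0^\sm$-torsor $\cal{N}_{\F_p}/\R_{>0}\rightarrow(\cal{N}_{\F_p}/\R_{>0})/G_0^\sm$. The final bookkeeping you mention is not needed: the only input is the base change of \cref{lem:smoothfp-desc} from $\cal{N}_{|p|<1}$ to $\cal{N}_{\F_p}$, and pullback automatically preserves both primness and descendability.
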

\begin{proof}
To check that $(\cal{N}_{\F_p}/\R_{>0})/G_0^\la\rightarrow (\cal{N}_{\F_p}/\R_{>0})/G_0^\sm$ is prim, it suffices to check primness of $\cal{N}_{\F_p}/\R_{>0}\rightarrow (\cal{N}_{\F_p}/\R_{>0})/G_0^\sm$ by combining \cref{lem:smoothfp-desc} with \cite[Cor.\ 4.7.5]{HeyerMann}. However, this is a $G_0^\sm$-torsor and $G_0^\sm=\AnSpec C^\sm(G_0, \Z)$ is an affine analytic stack equipped with the induced analytic ring structure from $\Z_\solid$, hence prim over $\AnSpec\Z_\solid$. It thus remains to check that $(\cal{N}_{\F_p}/\R_{>0})/G_0^\sm\rightarrow\AnSpec\F_{p, \solid}/G_0^\sm$ is prim as well, but this follows from \cref{lem:norms-primmodr}.
\end{proof}

\begin{lem}
\label{lem:smoothfp-cohzpla}
For any $d\geq 0$, consider the map
\begin{equation*}
g: (\Z_p^d)^\la\times_{\cal{N}_{|p|<1}/\R_{>0}} \cal{N}_{\F_p}/\R_{>0}\rightarrow \AnSpec\F_{p, \solid}\;.
\end{equation*}
Then we have
\begin{equation*}
g_*\O\cong \F_p\left[\binom{\ul{x}}{\ul{n}}: \ul{n}\geq 0\right]\;.
\end{equation*}
\end{lem}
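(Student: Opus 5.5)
The plan is to compute $g_*\O$ by descent along the cover $\AnSpec\F_{p,\solid}(\!(\pi)\!)\rightarrow\cal{N}_{\F_p}$, reusing the extra-codegeneracy argument from the proof of \cref{prop:norms-embed}.

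\emph{Removing $\R_{>0}$ and the condition $|p|<1$.} First I dispose of the quotient by $\R_{>0}$. By \cite[Cor.\ II.1.2]{RealLLC}, pullback along the $\R_{>0}$-torsor
\begin{equation*}
(\Z_p^d)^\la\times_{\cal{N}_{|p|<1}/\R_{>0}}\cal{N}_{\F_p}\rightarrow (\Z_p^d)^\la\times_{\cal{N}_{|p|<1}/\R_{>0}}\cal{N}_{\F_p}/\R_{>0}
\end{equation*}
is fully faithful, so it does not change global sections of $\O$. It therefore suffices to compute $R\Gamma(\O)$ over $\cal{N}_{\F_p}$. Over $\F_p$ we have $p=0$, so the condition $|p|<1$ holds automatically. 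Hence the pullback of $(\Z_p^d)^\la$ to $\AnSpec\F_{p,\solid}(\!(\pi)\!)$ is $\AnSpec$ of
\begin{equation*}
C^\la(\Z_p^d,\F_p(\!(\pi)\!))=\colim_{\epsilon>0}\F_p(\!(\pi)\!)\left\langle\pi^{\epsilon|\ul{n}|}\binom{\ul{x}}{\ul{n}}\right\rangle,
\end{equation*}
using \cref{lem:man-zpdla} and the fact that all the base changes $\langle p^k/\pi\rangle$ become trivial mod $p$.

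\emph{The Čech nerve.} Using \cref{cor:norms-pres} and the descent datum of \cref{lem:man-descent}, $g_*\O$ is the totalisation of the cosimplicial algebra
\begin{equation*}
C^n=C^\la(\Z_p^d,\F_p(\!(\pi)\!))\,\widehat{\otimes}\,\F_p(\!(\pi)\!)\langle X_1^{\pm1},\dots,X_n^{\pm1}\rangle_{\leq1}.
\end{equation*}
Its coface maps are as in \cref{prop:norms-embed}: $d^0$ sends $\pi\mapsto\pi X_1$ and fixes the binomial coefficients. By \cref{lem:man-descent} this is legitimate, since the same colimit can be written in terms of $\pi T$. Thus, in the basis $\pi^m\binom{\ul{x}}{\ul{n}}\ul{X}^{\ul{\alpha}}$, the binomial coefficients are simply inert coefficients. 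The augmentation is the inclusion of $\F_p[\binom{\ul{x}}{\ul{n}}]$, i.e.\ finite $\F_p$-linear combinations; these lie in $C^0$, where the binomial coefficients multiply as in \cref{lem:app-prodbinom}.

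\emph{Extra codegeneracy.} To prove exactness I define
\begin{equation*}
s^{-1}\Big(\sum c_{m,\ul{n},\ul{\alpha}}\pi^m\tbinom{\ul{x}}{\ul{n}}\ul{X}^{\ul{\alpha}}\Big)=\sum\delta_{\alpha_1,m}\,c_{m,\ul{n},\ul{\alpha}}\,\pi^m\tbinom{\ul{x}}{\ul{n}}\ul{X}^{\ul{\alpha}'},
\end{equation*}
exactly as in \eqref{eq:norms-extradegen}, with $\ul{n}$ carried along. On $C^0$, the map $s^{-1}$ extracts the $\pi^0$-coefficient of each $\binom{\ul{x}}{\ul{n}}$. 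The resulting series $\sum c_{0,\ul{n}}\binom{\ul{x}}{\ul{n}}$ has coefficients in $\F_p$ whose $\pi$-adic valuation is $0$ or $\infty$. The growth condition $v_\pi\geq\epsilon|\ul{n}|-C$ therefore forces all but finitely many $c_{0,\ul{n}}$ to vanish, so $s^{-1}$ lands in $\F_p[\binom{\ul{x}}{\ul{n}}]$.

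\emph{Main obstacle.} The cosimplicial identities follow formally as in \cref{prop:norms-embed}. The real work is convergence in the higher $C^n$. One must check that $s^{-1}$ preserves the joint decay condition in $(\ul{n},\ul{\alpha})$ for some (possibly smaller) $\epsilon$; this uses $|\ul{\alpha}'|\leq|\ul{\alpha}|$ together with the cofinality estimate from \cref{lem:man-descent}. This weighted bookkeeping is where I expect the obstacle to lie. Granting it, the cosimplicial object is a resolution of $\F_p[\binom{\ul{x}}{\ul{n}}]$, which gives $g_*\O\cong\F_p[\binom{\ul{x}}{\ul{n}}:\ul{n}\geq0]$.
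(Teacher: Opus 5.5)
Your proposal is correct and is essentially the paper's argument. The paper also computes $g_*\mathcal{O}$ via the Čech nerve of $\AnSpec\mathbb{F}_{p,\solid}(\!(\pi)\!)\rightarrow\mathcal{N}_{\mathbb{F}_p}$ (after discarding $\mathbb{R}_{>0}$ as in \cref{prop:norms-embed}) and uses the extra codegeneracy of \cref{prop:norms-embed} with the $\overline{\mathbb{T}}$-invariant binomial coefficients carried along, identifying $g_*\mathcal{O}$ with the $\overline{\mathbb{T}}$-degree-zero part of $C^{\mathrm{la}}(\mathbb{Z}_p^d,\mathbb{Z}(\!(\pi)\!))\otimes_{\mathbb{Z}}\mathbb{F}_p$, which is $\mathbb{F}_p[\binom{\underline{x}}{\underline{n}}]$ by \cref{lem:man-zpdla}; the convergence step you flag as the main obstacle is immediate and needs no cofinality estimate, since every surviving term has $m=\alpha_1\geq\epsilon(|\underline{n}|+|\underline{\alpha}'|)+\epsilon|\alpha_1|-C$, so the output satisfies the same decay bound with the same $\epsilon$.
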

\begin{proof}
Using a similar calculation as in the proofs of \cref{prop:norms-embed} and \cref{prop:norms-embedzp}, we see that $g_*\O$ is given by the degree zero subspace of $C^\la(\Z_p^d, \Z(\!(\pi)\!))\tensor_{\Z} \F_p$ with respect to the $\ol{\T}$-action (and the higher cohomology groups vanish). However, this is precisely given by the right-hand side of the claimed isomorphism due to \cref{lem:man-zpdla}.
\comment{
Using the presentation $\cal{N}_{\F_p}\cong \AnSpec\F_{p, \solid}(\!(\pi)\!)\,/\,\ol{\T}$ from \cref{cor:norms-pres} and applying \cref{lem:man-zpdla}, we see that the pushforward of $\O$ along 
\begin{equation*}
(\Z_p^d)^\la\times_{\cal{N}_{|p|<1}/\R_{>0}} \cal{N}_{\F_p}\rightarrow \cal{N}_{\F_p}
\end{equation*}
is given by the $\F_p(\!(\pi)\!)$-algebra
\begin{equation*}
\colim_{\epsilon>0} \F_p(\!(\pi)\!)\left\langle\pi^{\epsilon |\ul{n}|}\binom{\ul{x}}{\ul{n}}: \ul{n}\geq 0\right\rangle\;,
\end{equation*}
where $\ol{\T}$ acts on $\pi$ by multiplication, and our task is to compute the cohomology of this sheaf on $\cal{N}_{\F_p}$. However, by \cref{prop:norms-dnmodrasmodules}, this cohomology is given by the degree zero subspace with respect to the $\ol{\T}$-action (and the higher cohomology groups vanish), which yields the claim.
}
\end{proof}

\begin{lem}
\label{lem:smoothfp-zpladagger}
For any $d\geq 0$, the map
\begin{equation*}
(\Z_p^d)^\la\rightarrow (\Z_p^d)^\sm\times_{\AnSpec\Z_\solid} \cal{N}_{|p|<1}/\R_{>0}
\end{equation*}
is surjective with kernel
\begin{equation*}
(0\subseteq (\Z_p^d)^\la)^\dagger\coloneqq\lim_{\ul{x}\mapsto p\ul{x}} (\Z_p^d)^\la\;.
\end{equation*}
\end{lem}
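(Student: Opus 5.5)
First I will reduce to $d=1$. The functor $(-)^\la$ commutes with finite products (\cref{prop:man-transverse}), and so does $(-)^\sm$. Limits along $\ul{x}\mapsto p\ul{x}$ are computed coordinatewise, and surjectivity and kernels of group maps are stable under products. I will also work after base change to the cover $\colim_k\AnSpec\Z_\solid(\!(\pi)\!)\langle\tfrac{p^k}{\pi}\rangle$ of $\cal{N}_{|p|<1}/\R_{>0}$. There $\Z_p^\la$ is affine, with ring of functions $C^\la(\Z_p,\Z(\!(\pi)\!))\tensor\Z(\!(\pi)\!)\langle\tfrac{p^k}{\pi}\rangle$. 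The map to $\Z_p^\sm$ corresponds to the inclusion of $\Cont(\Z_p,\Z)=\colim_n\Cont(\Z_p/p^n,\Z)$. For this to make sense I first check that each indicator function $1_{a+p^n\Z_p}$ is an element of the locally analytic algebra after this base change. The proof of \cref{thm:man-lafunctor} gives exactly this: its Mahler coefficients have the required decay.

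\textbf{Surjectivity.} I will use the decomposition from \cref{thm:man-lafunctor},
\[
\Z_p^\la\cong\bigsqcup_{a\in\Z_p/p^n}(a+p^n\Z_p)^\la.
\]
Under the map to $(\Z_p/p^n)^{\Betti}$, each piece $(a+p^n\Z_p)^\la$ lands in the point $a$. Each piece has a section over the base, namely the point $a$ itself, using evaluation from \cref{lem:man-eval}. So $\Z_p^\la\to(\Z_p/p^n)^\Betti\times *$ is a surjection for every $n$, and in fact splits over each component. Surjectivity onto $\Z_p^\sm=\lim_n(\Z_p/p^n)^\Betti$ is more delicate, because surjectivity does not obviously pass to the limit. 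I will instead argue directly: a map $\AnSpec R\to\Z_p^\sm$ is a continuous map $\pi_0\Spec R\to\Z_p$. Such a map is a limit of locally constant maps, and the constant sections assemble, via the evaluation map of \cref{lem:man-eval}, into a map $\Z_p^\sm\times *\to\Z_p^\la$. This is a section of the original map. Concretely, it is the map $\Cont(\Z_p,\Z)\leftarrow C^\la$ given by composing with $\mathrm{ev}$, and \cref{lem:man-eval} shows it is well defined and continuous in the point. A section gives surjectivity immediately.

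\textbf{Kernel.} The kernel $K$ is the fibre of $\Z_p^\la\to\Z_p^\sm$ over $0$. Since $\Z_p^\sm=\lim_n(\Z_p/p^n)^\Betti$ and fibres commute with limits, $K=\lim_n K_n$, where $K_n=\Z_p^\la\times_{(\Z_p/p^n)^\Betti}\{0\}=(p^n\Z_p)^\la$ is the component at $a=0$ in the decomposition above. Multiplication by $p^n$ identifies $\Z_p\cong p^n\Z_p$ as $p$-adic manifolds, so $K_n\cong\Z_p^\la$ with transition maps $\ul{x}\mapsto p\ul{x}$. This yields
\[
K\cong\lim_{\ul{x}\mapsto p\ul{x}}\Z_p^\la=(0\subseteq\Z_p^\la)^\dagger.
\]

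\textbf{Main obstacle.} I expect the hardest step to be the section $\Z_p^\sm\times*\to\Z_p^\la$. Here the evaluation map of \cref{lem:man-eval} has to be upgraded from points $x\in\Z_p$ to a ring map
\[
C^\la(\Z_p,\Z(\!(\pi)\!))\tensor\Z(\!(\pi)\!)\langle\tfrac{p^k}{\pi}\rangle\to\Cont(\Z_p,\Z(\!(\pi)\!)\langle\tfrac{p^k}{\pi}\rangle).
\]
This needs the estimates in that proof to be uniform in $x$. I expect they are, since the bounds on the $\lambda_{n,\ul{j}}$ do not depend on $x$. If the section approach fails, the fallback is to prove surjectivity only after the $!$-cover provided by the decompositions at all finite levels, and then pass to the limit using the fact that the transition maps admit compatible sections.
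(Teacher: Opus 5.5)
Your reduction to $d=1$ and your kernel computation are fine: writing the fibre over $0$ as $\lim_n \Z_p^\la\times_{(\Z/p^n)^{\Betti}}\{0\}=\lim_n (p^n\Z_p)^\la$ is essentially the paper's argument.

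\textbf{The gap is in surjectivity.} The section $\Z_p^\sm\times *\to\Z_p^\la$ that you build from $\mathrm{ev}$ does not exist, and making the estimates of \cref{lem:man-eval} uniform in $x$ cannot produce it.

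\begin{itemize}
\item Over $R=\Z(\!(\pi)\!)\langle\tfrac{p^k}{\pi}\rangle$, the ring of functions of $\Z_p^\sm\times\AnSpec R$ is $C^\sm(\Z_p,\Z)\tensor_\solid R=\colim_n\prod_{\Z/p^n}R$. These are the \emph{locally constant} $R$-valued functions, not $\Cont(\Z_p,R)$.
\item The evaluation map sends $\binom{x}{1}$ to the function $x\mapsto x\in\Z_p\subseteq R$, which is not locally constant. So it does not land in the right ring.
\item In fact no section exists at all. After base change to $\F_p(\!(\pi)\!)$, Lucas' theorem shows that $\binom{x}{p^j}$ is the image of the digit function $x\mapsto x_j\in\F_p$. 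This function is locally constant of exact level $j+1$.
\item Hence any retraction $s$ would send the null sequence $\pi^{p^j}\binom{x}{p^j}$ to $\pi^{p^j}x_j$. But a null sequence in $\colim_n\prod_{\Z/p^n}\F_p(\!(\pi)\!)$ factors through a finite stage, because $\mathbb{N}\cup\{\infty\}$ is quasicompact. This is a contradiction.
\end{itemize}

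Your pointwise argument only lifts points with values in suitably complete rings. That is not what surjectivity, i.e.\ being a $!$-cover, means for analytic stacks.

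\textbf{How to fix it.} The ingredient you need is one you already wrote down. For each $n$, the map $C^\sm(\Z/p^n,\Z)\tensor R\to C^\la(\Z_p,\Z(\!(\pi)\!))\tensor R$ has a retraction: evaluate at a fixed set of integer representatives of $\Z/p^n$. So it is descendable of index $1$, uniformly in $n$.

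The paper then uses \cite[Prop.\ 2.7.2]{MannThesis}: if $A=\colim_n A_n$ and each $A_n\to B$ is descendable of uniformly bounded index, then $A\to B$ is descendable. Apply this to $C^\sm(\Z_p,\Z)\tensor R=\colim_n C^\sm(\Z/p^n,\Z)\tensor R$. Since all analytic ring structures are induced from $\Z_\solid$, the map $\Z_p^\la\to\Z_p^\sm\times *$ becomes a $!$-cover over each chart, hence is surjective.

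Your fallback via ``compatible sections'' does not replace this. Since no retraction of $C^\sm(\Z_p,\Z)\tensor R\to C^\la\tensor R$ exists, the finite-level retractions cannot be assembled into one. What passes to the colimit is descendability of bounded index, and no compatibility between levels is required.
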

\begin{proof}
The claim immediately reduces to the case $d=1$ by taking products. To prove surjectivity, we can base change along the jointly surjective maps
\begin{equation*}
\AnSpec\Z_\solid(\!(\pi)\!)\langle\tfrac{p^k}{\pi}\rangle_{\leq 1}\rightarrow\cal{N}_{|p|<1}/\R_{>0}
\end{equation*}
for $k\geq 1$ and then our task is to show that the map
\begin{equation*}
C^\sm(\Z_p, \Z(\!(\pi)\!))\tensor_{\Z(\!(\pi)\!)} \Z(\!(\pi)\!)\langle\tfrac{p^k}{\pi}\rangle_{\leq 1}\rightarrow C^\la(\Z_p, \Z(\!(\pi)\!))\tensor_{\Z(\!(\pi)\!)} \Z(\!(\pi)\!)\langle\tfrac{p^k}{\pi}\rangle_{\leq 1}
\end{equation*}
is descendable as all analytic ring structures in sight are induced from $\Z_\solid$. In fact, we claim that already $C^\sm(\Z_p, \Z(\!(\pi)\!))\rightarrow C^\la(\Z_p, \Z(\!(\pi)\!))$ is descendable. Indeed, as $C^\sm(\Z_p, \Z(\!(\pi)\!))=\colim_n C^\sm(\Z/p^n, \Z(\!(\pi)\!))$, we can use \cite[Prop.\ 2.7.2]{MannThesis} to reduce to showing that the maps $C^\sm(\Z/p^n, \Z(\!(\pi)\!))\rightarrow C^\la(\Z_p, \Z(\!(\pi)\!))$ are descendable of bounded index as $n\geq 1$ varies. However, for each $n$, the map $C^\sm(\Z/p^n, \Z(\!(\pi)\!))\rightarrow C^\la(\Z_p, \Z(\!(\pi)\!))$ has a section given by evaluating a locally analytic function on $\Z_p$ on a fixed set of representatives of $\Z/p^n$ in $\Z\subseteq\Z_p$ and is thus descendable of index $1$, as desired.

To prove the claim about the kernel, we observe that $*=\lim_{x\mapsto px} \Z_p^\sm=\lim_n (p^n\Z_p)^\sm$ since the map
\begin{equation*}
\colim_n C^\sm(p^n\Z_p, \Z)\xrightarrow{\mathrm{ev}_0} \Z
\end{equation*}
is an isomorphism as any locally constant function on $\Z_p$ must become constant on $p^n\Z_p$ for $n\gg 0$. As the kernel of $\Z_p^\la\rightarrow\Z_p^\sm$ is by definition the pullback of $\Z_p^\la$ along $*\rightarrow\Z_p^\sm$, the claim follows from $\Z_p^\la\times_{\Z_p^\sm} (p^n\Z_p)^\sm\cong (p^n\Z_p)^\la$, which is true by construction.
\end{proof}

We are now in a position to prove \cref{thm:smoothfp-main}.

\begin{proof}[Proof of \cref{thm:smoothfp-main}]
As discussed earlier, we will show that pullback along
\begin{equation*}
f: (\cal{N}_B/\R_{>0})/G^\la\rightarrow (\cal{N}_B/\R_{>0})/G^\sm\rightarrow \AnSpec B/G^\sm
\end{equation*}
is fully faithful. For this, we may replace $G$ by an open uniform subgroup $G_0\subseteq G$: Indeed, we can describe $\D(\AnSpec B/G^\sm)$ by descent from $\D(\AnSpec B/G_0^\sm)$ and all terms of the resulting \v{C}ech nerve will be finite disjoint unions of copies of $\AnSpec B/G_0^\sm$ since $G/G_0$ is discrete. As the analogous statement also holds for $\D((\cal{N}_B/\R_{>0})/G^\la)$, this establishes the desired reduction.

Replacing $G$ by $G_0$ from now on, we claim that $f$ is prim and has the property that $f_*\O\cong\O$; this will imply the desired full faithfulness by the projection formula. As the claimed property of $f$ is stable under base change, we may reduce to the case $B=\F_p$. Then the primness is the content of \cref{lem:smoothfp-glagsmprim}, so it remains to show that $f_*\O\cong\O$, which we may test after pullback along $\AnSpec \F_{p, \solid}\rightarrow \AnSpec \F_{p, \solid}/G_0^\sm$. However, pulling back $f$ along this map yields
\begin{equation*}
f': (\cal{N}_{\F_p}/\R_{>0})/(1\subseteq G_0^\la)^\dagger\rightarrow \AnSpec \F_{p, \solid}
\end{equation*}
by \cref{lem:smoothfp-zpladagger} and hence our task is to show that
\begin{equation*}
R\Gamma((\cal{N}_{\F_p}/\R_{>0})/(1\subseteq G_0^\la)^\dagger, \O)\cong \F_p\;.
\end{equation*}
Computing the left-hand side via the \v{C}ech nerve of the cover $(\cal{N}_{\F_p}/\R_{>0})\rightarrow (\cal{N}_{\F_p}/\R_{>0})/(1\subseteq G_0^\la)^\dagger$, it then suffices to prove that
\begin{equation*}
R\Gamma(((1\subseteq G_0^\la)^\dagger)^m_{\F_p}, \O)\cong \F_p
\end{equation*}
for all $m\geq 0$. As $G_0$ is uniform, we have $((1\subseteq G_0^\la)^\dagger)^m\cong (0\subseteq (\Z_p^{mn})^\la)^\dagger$ as analytic stacks (but not group stacks!), where $n$ is the dimension of $G_0$, and thus it is furthermore sufficient to show
\begin{equation*}
R\Gamma((0\subseteq (\Z_p^d)^\la)^\dagger_{\F_p}, \O)\cong \F_p
\end{equation*}
for all $d\geq 0$.

To carry out this computation, first recall from \cref{lem:smoothfp-zpladagger} that
\begin{equation*}
(0\subseteq (\Z_p^d)^\la)^\dagger=\lim_{\ul{x}\mapsto p\ul{x}} (\Z_p^d)^\la
\end{equation*}
and observe that $(\Z_p^d)^\la$ and hence also $(0\subseteq (\Z_p^d)^\la)^\dagger$ is relatively affine over $\cal{N}_{|p|<1}/\R_{>0}$, i.e.\ its pullback to any affine analytic stack over $\cal{N}_{|p|<1}/\R_{>0}$ is affine. As $\cal{N}_{\F_p}/\R_{>0}$ is prim over $\F_p$ and hence taking cohomology preserves colimits, we thus see that
\begin{equation*}
\begin{split}
R\Gamma((0\subseteq (\Z_p^d)^\la)^\dagger_{\F_p}, \O)\cong \colim_{\ul{x}\mapsto p\ul{x}} R\Gamma((\Z_p^d)^\la_{\F_p}, \O)\cong \colim_{\ul{x}\mapsto p\ul{x}} \F_p\left[\binom{\ul{x}}{\ul{n}}: \ul{n}\geq 0\right]
\end{split}
\end{equation*}
by \cref{lem:smoothfp-cohzpla}. Proving that this is isomorphic to $\F_p$ now amounts to showing that, for any nonzero $\ul{n}\geq 0$, there is some $i\gg 0$ with $\binom{p^i\ul{x}}{\ul{n}}=0$, which clearly reduces to the one-variable case $d=1$. 

In this case, our task is to prove that, for any $n>0$, there is some $i$ such that 
\begin{equation}
\label{eq:smoothfp-expandpichoosen}
\binom{p^i x}{n}=\sum_{j\leq n} \lambda_j\binom{x}{j}
\end{equation}
with $\lambda_j\in p\Z$ for all $j$. The coefficients $\lambda_j$ in this expansion are uniquely determined integers by \cref{prop:app-intbasis} and the main claim from the proof of \cref{lem:app-binompseries} shows that they satisfy
\begin{equation*}
v_p(\lambda_j)+\frac{n}{p-1}\geq ij\;.
\end{equation*}
Thus, taking $i>n/(p-1)$ ensures $v_p(\lambda_j)\geq 1$ for all $j\geq 1$ and it remains to take care of $\lambda_0$. However, the left-hand side of (\ref{eq:smoothfp-expandpichoosen}) vanishes for $x=0$ while setting $x=0$ on the right-hand side yields $\lambda_0$, so we conclude $\lambda_0=0$.
\end{proof}

\begin{rem}
One easily sees that the argument above works just the same if $B$ is merely a $\Z/p^n$-algebra for some $n\geq 1$, i.e.\ whenever $p^nB=0$ for some $n\geq 1$. In other words, the fully faithful embedding 
\begin{equation*}
\widehat{\D}(\Rep^\sm_B G)\hookrightarrow \D((\cal{N}_B/\R_{>0})/G^\la)\;,
\end{equation*}
from \cref{thm:smoothfp-main} generalises to all solid $p$-power torsion rings $B$.
\end{rem}

\section{Poincaré duality}

We continue to fix a $p$-adic Lie group $G$ and a uniform open subgroup $G_0\subseteq G$. In this last part, we establish some properties of the classifying stacks $*/G^\la$ and $*/G^{h\dagger\text{-}\an}$ with respect to the six-functor formalism on analytic stacks. In particular, we prove Poincaré duality for cohomology of locally analytic representations, i.e.\ we show that $*/G^\la$ and $*/G^{h\dagger\text{-}\an}$ are cohomologically smooth (at least for $h$ large enough). Here and in the following, we will always write $*\coloneqq \cal{N}_{|p|<1}$, but of course the statements descend to $\cal{N}_{|p|<1}/\R_{>0}$ in the case of $G^\la$ as they are always local on the target.

\begin{prop}
\label{prop:duality-prim}
Assume that $G$ is compact. For any real numbers $h'\geq h$, all morphisms in the diagram
\begin{equation*}
*/G^\la\rightarrow */G^{h'\dagger\text{-}\an}\rightarrow */G^{h\dagger\text{-}\an}\rightarrow *
\end{equation*}
are weakly cohomologically proper in the sense of \cite[Def.\ 3.1.19]{dRStack}. In particular, they are prim and there is an identification between lower-$!$ and lower-$*$.
\end{prop}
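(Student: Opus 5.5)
We would first recall the notion from \cite[Def.\ 3.1.19]{dRStack}: a map $f\colon X\rightarrow Y$ is weakly cohomologically proper if it is prim, i.e.\ $f_*$ satisfies base change and the projection formula in the appropriate sense, and the natural map $f_!\rightarrow f_*$ is an isomorphism. We then check it locally on the target, since both primness and the comparison $f_!\cong f_*$ are local. The plan is to treat the three morphisms separately. For the final map $*/G^{h\dagger\text{-}\an}\rightarrow *$ we reduce, exactly as in the proof of \cref{thm:locan-compareporat}, to $G=G_0$, using that $G/G_0$ is finite because $G$ is compact. We then consider the factorisation $*\xrightarrow{f}*/G_0^{h\dagger\text{-}\an}\xrightarrow{g}*$ and argue as in \cref{lem:norms-prim} and \cref{lem:smoothfp-desc}. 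The map $f$ is prim, since after base change along the covers $\AnSpec\hspace{1pt}(B_k,B_k^+)_\solid\rightarrow *$ it becomes the map $\AnSpec C^{(h-k)\dagger\text{-}\an}(G_0,B_k)\rightarrow\AnSpec\hspace{1pt}(B_k,B_k^+)_\solid$, which has the induced analytic ring structure and is therefore weakly cohomologically proper. Moreover $f_*\O$ is descendable: dualising the Kohlhaase resolution of \cref{prop:smoothfp-kohlhaase} and taking the colimit over $h'<h$ exhibits the trivial representation as a finite limit of copies of the regular representation $C^{h\dagger\text{-}\an}(G_0,B_k)$.

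The key step is to show that the composite $g\circ f=\id$ is weakly cohomologically proper and then descend this property along $f$. We expect a statement of the following form in \cite{dRStack} or \cite[§4.7]{HeyerMann}: if $f$ is a weakly cohomologically proper cover with $f_*\O$ descendable and $g\circ f$ is weakly cohomologically proper, then $g$ is weakly cohomologically proper. This is the analogue of \cite[Cor.\ 4.7.5]{HeyerMann} for primness, which we already used. If such a criterion is not available in that form, we would verify the two properties directly. Primness of $g$ follows from \cite[Cor.\ 4.7.5]{HeyerMann}. For $g_!\cong g_*$, we write $g_!$ as the geometric realisation of $g_!f_{n!}f_n^!$ over the \v{C}ech nerve. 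Descendability makes this realisation effectively finite, as in \cref{lem:norms-compactifycover}, so it commutes with $g_*$. On each term $f_{n!}=f_{n*}$ because the $f_n$ are affine with induced structure.

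For the two remaining maps, $*/G^\la\rightarrow */G^{h'\dagger\text{-}\an}$ and $*/G^{h'\dagger\text{-}\an}\rightarrow */G^{h\dagger\text{-}\an}$, we pull back along the cover $*\rightarrow */G^{h\dagger\text{-}\an}$ (respectively $*/G^{h'\dagger\text{-}\an}$). This yields the maps $G^{h\dagger\text{-}\an}/G^{h'\dagger\text{-}\an}\rightarrow *$ and $G^{h'\dagger\text{-}\an}/G^\la\rightarrow *$. We would prove these are weakly cohomologically proper by the same two-step argument: the map $G^{h\dagger\text{-}\an}\rightarrow G^{h\dagger\text{-}\an}/G^{h'\dagger\text{-}\an}$ is a torsor under the group $G^{h'\dagger\text{-}\an}$, which is weakly proper over $*$, and its structure sheaf pushforward is descendable by the preceding paragraph after base change. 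Alternatively, we can deduce this directly from two-out-of-three, since the composites with the final map are already known to be weakly cohomologically proper, provided the cancellation property holds for the class.

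The main obstacle is the descent step: transferring $f_!\cong f_*$ from the source of a descendable prim cover to the target. This requires knowing that weak cohomological properness is $!$-local or descendable-local on the source. The last clause of the statement then follows immediately, since weakly cohomologically proper maps are prim by definition and satisfy $f_!\cong f_*$.
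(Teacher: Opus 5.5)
\paragraph{Where the proposal matches the paper.} Your primness argument is the paper's: the torsor $*\to */G_0^{h\dagger\text{-}\an}$ is prim with descendable pushforward (via the dualised Kohlhaase resolution), and then you apply \cite[Cor.\ 4.7.5]{HeyerMann}. Your fallback cancellation argument for the two intermediate maps is also exactly how the paper treats them.

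\paragraph{The gap: the diagonal condition.} As the paper's proof makes explicit, \cite[Def.\ 3.1.19]{dRStack} asks that $f$ be prim \emph{and} that $\Delta_f$ be weakly cohomologically proper (inductively). Your stated definition omits the diagonal, and this matters in three ways.
\begin{enumerate}[label=(\roman*)]
\item The canonical comparison $f_!\to f_*$ is built from an identification $\Delta_{f!}\cong\Delta_{f*}$. It is this diagonal condition that forces the codualising object of a prim map to be trivial, and hence gives $f_!\cong f_*$.
\item The diagonal condition is also exactly what makes your cancellation step work: factor $X\to Y$ over $*$ as a base change of $\Delta_{Y/*}$ followed by a base change of $X\to *$.
\item You never verify it. Instead you try to descend ``$f_!\cong f_*$'' along $*\to */G_0^{h\dagger\text{-}\an}$ using a criterion you only conjecture. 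Your \v{C}ech-nerve fallback at best yields an abstract identification $g_!\cong g_*$, which does not establish the definition.
\end{enumerate}
The missing step is short and needs no descent. Pull back $\Delta\colon */G^\la\to */(G^\la\times G^\la)$ along $*\to */(G^\la\times G^\la)$. This gives $G^\la\to *$, which over each chart is affine with the induced analytic ring structure, so $\Delta$ is weakly cohomologically proper by \cite[Lem.\ 3.1.21]{dRStack}. The same argument applies verbatim to $G^{h\dagger\text{-}\an}$, and to $G$ itself, with no reduction to $G_0$. After this, only primness remains.

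\paragraph{A second gap: the reduction to $G=G_0$.} This reduction does not work ``exactly as in \cref{thm:locan-compareporat}''. That theorem descends categories of sheaves. Here you need \cite[Cor.\ 4.7.5]{HeyerMann} for $g\colon */G_0^\la\to */G^\la$, i.e.\ descendability of $g_*\mathcal{O}=\mathcal{O}[G/G_0]$ in $\D(*/G^\la)$.

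This is genuinely problematic on the characteristic $p$ locus. Any $G$-equivariant map $\mathcal{O}[G/G_0]\to\mathcal{O}$, composed with the unit $\mathcal{O}\to\mathcal{O}[G/G_0]$, is a multiple of $[G:G_0]$. So the retraction $gG_0\mapsto 1$ used in the paper only splits the unit where the index is invertible.

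In fact the conclusion itself can fail. Take $G=\Z/p$, so $G_0=1$ and $G^\la$ is the Betti stack of $G$. Base change $*/G^\la\to *$ to $\AnSpec\F_{p,\solid}(\!(\pi)\!)$. The pushforward is then homotopy fixed points of $\Z/p$ in characteristic $p$. This does not commute with the direct sum $\bigoplus_{i\geq 0}\F_p(\!(\pi)\!)[2i]$: its $H^0$ is the product rather than the sum of the groups $H^{2i}(\Z/p,\F_p(\!(\pi)\!))$. Hence the map is not even prim.

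So this step, and the statement, need an extra hypothesis such as $G$ having no $p$-torsion. In that case one can instead show descendability of $C^\la(G,B)$ directly, as in \cref{lem:smoothfp-desc}. Replace the Kohlhaase resolution there by a finite resolution of $\Z_p$ by finitely generated projective $\Z_p[\![G]\!]$-modules, base changed to $D_{h\text{-}\an}(G,B)$; such a resolution exists by Serre's theorem on $p$-cohomological dimension.
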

\begin{proof}
We first show that $f: */G^\la\rightarrow *$ is weakly cohomologically proper. For this, we first observe that pulling back the diagonal $\Delta_f: */G^\la\rightarrow */(G^\la\times G^\la)$ of $f$ along $*\rightarrow */(G^\la\times G^\la)$ yields the morphism $G^\la\rightarrow *$. As $G^\la$ is an affine analytic stack with induced analytic ring structure from $\Z_\solid$, this shows that $\Delta_f$ is weakly cohomologically proper by \cite[Lem.\ 3.1.21]{dRStack}. Then it remains to show that $f$ is prim. For this, we first observe that $g: */G_0^\la\rightarrow */G^\la$ is prim as it pulls back to $G/G_0\rightarrow *$, which is a finite disjoint union of points, and that $g_*\O$ is descendable in $\D(*/G^\la)$. Indeed, after pulling back to $\AnSpec (B, B^+)_\solid$ for any Tate Huber pair $(B, B^+)$ over $(\Z(\!(\pi)\!), \Z[\![\pi]\!])$ which has slope $\geq 1$ and is residually of finite type, we can identify $g_*\O$ with the locally analytic representation $(B, B^+)_\solid[G/G_0]$ of $G$ under the equivalence from \cref{thm:locan-compareporat} and then the map $B\rightarrow (B, B^+)_\solid[G/G_0]$ obtained by adjunction has a $G$-equivariant section given by sending each coset in $G/G_0$ to $1$. Thus, we can reduce to the case $G=G_0$ by \cite[Cor.\ 4.7.5]{HeyerMann}. Here, combining \cref{lem:smoothfp-desc} with another application of \cite[Cor.\ 4.7.5]{HeyerMann} reduces us to showing primness of the identity on the point, which is clear.

An analogous argument as in the previous paragraph also shows weak cohomological properness of $*/G^{h\dagger\text{-}\an}\rightarrow *$ for any $h\in\R$; indeed, note that also the proof of \cref{lem:smoothfp-desc} readily adapts to the case of $G_0^{h\dagger\text{-}\an}$ in place of $G_0^\la$. It thus remains to show that the maps $*/G^\la\rightarrow */G^{h'\dagger\text{-}\an}$ and $*/G^{h'\dagger\text{-}\an}\rightarrow */G^{h\dagger\text{-}\an}$ are weakly cohomologically proper. However, since we already know that both source and target of these maps are weakly cohomologically proper over the point, it suffices to check that the diagonal of $*/G^{h'\dagger\text{-}\an}\rightarrow *$ or $*/G^{h\dagger\text{-}\an}\rightarrow *$, respectively, is weakly cohomologically proper. As this has already been established in the first paragraph, we are done.
\end{proof}

We now turn to establishing Poincaré duality, which requires a bit more effort. We begin by making some preparations.

\begin{lem}
\label{lem:duality-dualisingsheaf}
For any solid $\Z_p$-algebra $B^+$, the $B^+$-linear dual
\begin{equation*}
L\coloneqq \sHom_{B^+_\solid[G_0]}(B^+, B^+_\solid[G_0])^\vee
\end{equation*}
of the continuous group cohomology of $B^+_\solid[G_0]$ is a free $B^+$-module of rank $1$ concentrated in degree $-d$, where $d$ is the dimension of $G_0$.
\end{lem}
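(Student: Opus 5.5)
We compute $\sHom_{B^+_\solid[G_0]}(B^+, B^+_\solid[G_0])$ with the Lazard--Serre resolution from \cref{prop:smoothfp-lazardserre}. Write $\Lambda\coloneqq B^+_\solid[G_0]$. The resolution is a finite resolution of $B^+$ by finite free $\Lambda$-modules of ranks $\binom{d}{i}$. Hence $\sHom_\Lambda(B^+,\Lambda)$ is computed by the dual complex
\begin{equation*}
\Lambda^{\binom{d}{0}}\rightarrow\Lambda^{\binom{d}{1}}\rightarrow\dots\rightarrow\Lambda^{\binom{d}{d}}
\end{equation*}
in cohomological degrees $0,\dots,d$. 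This complex consists of right $\Lambda$-modules, which we turn into left modules via the inversion $g\mapsto g^{-1}$.

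Since the Lazard--Serre resolution is base changed from the universal case $B^+=\Z_p$ (it is built over $\Z_{p,\solid}[G_0]$ and then tensored with $B^+$), everything reduces to the case $B^+=\Z_p$. There, the complex is the classical Koszul-type complex for the Iwasawa algebra $\Z_p[\![G_0]\!]$. The first key step is to show that its cohomology vanishes outside degree $d$, and that $H^d\cong\Z_p$ is free of rank one, carrying the twisted action via the determinant of the adjoint representation. This is Lazard's duality: $\Z_p[\![G_0]\!]$ is an Auslander-regular (Poincaré duality) ring of dimension $d$, and $\Ext^i_{\Z_p[\![G_0]\!]}(\Z_p,\Z_p[\![G_0]\!])=\delta_{i,d}\Z_p$.

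Concretely, I would argue via the graded ring: the $\mathfrak m$-adic filtration on $\Z_p[\![G_0]\!]$ has associated graded a polynomial ring $\F_p[\epsilon,y_1,\dots,y_d]$, with $\epsilon$ the symbol of $p$. The resolution is filtered and induces the Koszul complex on $y_1,\dots,y_d$ on graded pieces, whose dual has cohomology only in top degree, free of rank one. A filtered spectral sequence argument, with completeness in hand, then lifts this to the statement for $\Z_p[\![G_0]\!]$. Since $\Z_p[\![G_0]\!]$ is compact and the terms are compact projective solid $\Z_p$-modules, the solid and classical computations agree.

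Next, base change to $B^+$. Because $\Z_{p,\solid}[G_0]\otimes_{\Z_p}B^+=B^+_\solid[G_0]$ and each term of the dual complex is finite free over $\Lambda$, the complex for $B^+$ is the solid base change of the complex for $\Z_p$. A bounded complex of flat (compact projective) solid $\Z_p$-modules whose cohomology is $\Z_p[-d]$ is quasi-isomorphic to $\Z_p[-d]$. Consequently $\sHom_\Lambda(B^+,\Lambda)\cong B^+[-d]$, and its $B^+$-linear dual is $L\cong B^+[d]$, i.e.\ free of rank one concentrated in degree $-d$.

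The main obstacle is the first step: carefully establishing the cohomology of the dual Lazard--Serre complex, in particular the vanishing below degree $d$. I would try citing Lazard's theorem, equivalently Poincaré duality for the Iwasawa algebra of a uniform group, before resorting to the graded Koszul argument.
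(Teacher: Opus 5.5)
Your proposal is correct and follows the paper's route: compute $\sHom_{B^+_\solid[G_0]}(B^+, B^+_\solid[G_0])$ via the dual of the Lazard--Serre resolution, and use its Koszul nature (self-duality) to see that this complex is concentrated in degree $d$, where it is free of rank one. The paper asserts this self-duality directly over arbitrary $B^+$; you instead reduce to $B^+=\Z_p$, justify the concentration via Lazard's duality or a graded Koszul argument, and base change back, which only spells out details the paper leaves implicit.
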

\begin{proof}
By the Lazard--Serre resolution from \cref{prop:smoothfp-lazardserre}, we have
\begin{equation*}
\sHom_{B^+_\solid[G_0]}(B^+, B^+_\solid[G_0])\cong (B^+_\solid[G_0]^{\binom{d}{0}}\rightarrow B^+_\solid[G_0]^{\binom{d}{1}}\rightarrow\dots\rightarrow B^+_\solid[G_0]^{\binom{d}{d}})\;,
\end{equation*}
where the first term $B_\solid^+[G_0]^{\binom{d}{0}}$ sits in degree zero. As the Lazard--Serre resolution is constructed from a Koszul resolution, it is self-dual in the sense that the right-hand side of the above isomorphism is concentrated in degree $d$, where it is given by a free $B^+$-module of rank $1$. This finishes the proof.
\end{proof}

Recalling that we use $\O[G_0]$ to denote the pullback of $\Z_{p, \solid}[G_0]$ along $\cal{N}_{|p|<1}\rightarrow\AnSpec\Z_{p, \solid}$, we deduce that
\begin{equation*}
L\coloneq \sHom_{\O[G_0]}(\O, \O[G_0])^\vee
\end{equation*}
is an invertible object in $\D(\cal{N}_{|p|<1})$ by descent along the charts $\AnSpec\hspace{1pt}(B_k, B_k^+)_\solid\rightarrow\cal{N}_{|p|<1}$ from (\ref{eq:locan-bb+}). We note that $L$ admits the structure of an $\O[G_0]$-module induced by right multiplication on $\O[G_0]$.

\begin{lem}
\label{lem:duality-dualisingsheaflocan}
The $\O[G_0]$-module $L$ in $\D(\cal{N}_{|p|<1})$ is locally analytic, i.e.\ it lies in the image of the fully faithful embedding 
\begin{equation*}
\D(*/G_0^\la)\hookrightarrow \Mod_{\O[G_0]}(\D(\cal{N}_{|p|<1}))
\end{equation*}
from \cref{cor:locan-embedgreps}. If $G_0$ is abelian, then $L$ is in fact the trivial representation.
\end{lem}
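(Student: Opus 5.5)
Since local analyticity of an $\O[G_0]$-module can be checked locally on $\cal{N}_{|p|<1}$, I will pull back along the charts $\AnSpec\hspace{1pt}(B_k, B_k^+)_\solid\rightarrow\cal{N}_{|p|<1}$ from (\ref{eq:locan-bb+}). These jointly cover $\cal{N}_{|p|<1}$, and the pairs $(B_k, B_k^+)$ have slope $\geq 1$ and are residually of finite type. The essential image in \cref{cor:locan-embedgreps} was obtained by descending \cref{thm:locan-compareporat} from these charts. It therefore suffices to show that $L_k\coloneqq \sHom_{B_k^+[G_0]}(B_k^+, B_k^+[G_0])^\vee\tensor_{B_k^+} B_k$ is a locally analytic $G_0$-representation in the sense of \cref{sect:porat}. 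By \cref{lem:duality-dualisingsheaf}, $L_k$ is $B_k$ placed in degree $-d$, with $G_0$ acting $B_k$-linearly by a character $\chi: G_0\rightarrow (B_k^+)^\times$; in fact $\chi$ already takes values in $\Z_p^\times$, since the construction is base-changed from $\Z_p$. By \cref{lem:porat-underivedla} we may pass to $H^{-d}$.

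The key step is to identify $\chi$ explicitly. The Lazard--Serre resolution is a Koszul-type complex built from $\Lambda(G_0)$, so its top exterior power suggests that $\chi$ is, up to sign, the inverse of $\det$ of the adjoint action of $G_0$ on the free $\Z_p$-module $\Lambda(G_0)$ of rank $d$. I would verify this either by tracking the differentials of the resolution in \cite[Thm.\ 5.7]{SolidLocAnReps1}, or by comparing with the classical fact that the dualising module of a uniform group is $\det(\mathrm{ad})^{-1}$ computed via $\Z_p[\![G_0]\!]$. I expect this identification to be the main obstacle. As a fallback, it is enough to show that $\chi$ factors as $G_0\rightarrow \Z_p^\times$ with $\chi(g)\equiv 1 \bmod p$ for all $g$. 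This would follow from the differentials of the Lazard--Serre complex reducing mod $p$ to an ordinary Koszul complex on a $\F_p[\![G_0]\!]$ on which $G_0$ acts unipotently.

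Given $\chi$, local analyticity follows from \cref{prop:porat-recogniselocan} applied to $V^+=B_k^+$ with the character action. By \cref{ex:porat-adjointlocan}, $\Lambda(G_0)$ satisfies $(g-1)^m v\in p\Lambda(G_0)$. Passing to the top exterior power, or directly using $\chi(g)\in 1+p\Z_p$, gives $(g-1)v\in pB_k^+$ for all $v$, so $V=B_k$ is locally analytic. Since $\chi$ is canonical, the verification is compatible with the descent data, and we obtain $L\in\D(*/G_0^\la)$.

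Finally, if $G_0$ is abelian, the adjoint action on $\Lambda(G_0)$ is trivial, so $\chi=\det(\mathrm{ad})^{-1}$ is trivial. Alternatively, and more directly, for abelian $G_0$ right and left multiplication on $\O[G_0]$ agree, and the Lazard--Serre resolution is a Koszul complex on the commuting elements $g_i-1$. Its top cohomology is then $\O[G_0]/(g_1-1,\dots,g_d-1)\cong\O$ with trivial action, so $L$ is the trivial representation.
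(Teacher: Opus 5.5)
Your argument is correct. Your primary plan is the paper's: pull back to the charts $\AnSpec\hspace{1pt}(B_k, B_k^+)_\solid$, identify the character $\chi$ with the determinant of the dual adjoint representation, and conclude via \cref{ex:porat-adjointlocan}. The paper does not track any differentials for the identification you flag as the main obstacle. Since $\chi$ takes values in $\Z_p^\times\subseteq\Q_p^\times$ (the paper phrases this as $B_k$ being $p$-torsionfree), one may invert $p$, and the rational statement is then quoted from \cite[Eq.\ (24)]{SolidLocAnReps1}.

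Your fallbacks form a genuinely different and more elementary route, and they prove the whole lemma without that identification.

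\begin{itemize}
\item \emph{Local analyticity.} The condition $\chi(g)\equiv 1 \bmod p$ needs no mod-$p$ Koszul argument. $\chi$ is a continuous homomorphism from the pro-$p$ group $G_0$ to $\Z_p^\times$, so its reduction to $\F_p^\times$, a group of order prime to $p$, is trivial; for $p=2$ there is nothing to check. Then \cref{prop:porat-recogniselocan} with $m=1$ applies directly to $V^+=B_k^+$.
\item \emph{Abelian case.} Your second argument is sound and does not need the Lazard--Serre resolution to be literally Koszul. For abelian $G_0$ the ring $\Z_{p,\solid}[G_0]$ is commutative. So the action on $\sHom_{\Z_{p,\solid}[G_0]}(\Z_p,\Z_{p,\solid}[G_0])$ through right multiplication on the second argument agrees with the action through the first argument, and the latter is trivial. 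Equivalently, Ext over a commutative ring is killed by the augmentation ideal.
\end{itemize}

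The trade-off is this. Your route is self-contained, avoids the rational comparison, and even gives the explicit bound $(g-1)v\in pB_k^+$. The paper's route yields the finer identification $L\cong\det(\mathfrak{g}^\vee)$, which it cites in the proof of \cref{thm:duality-main}(i). However, only the abelian triviality is used there, and your direct argument supplies that equally well.
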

\begin{proof}
We check the claim after pulling back to $\AnSpec\hspace{1pt}(B_k, B_k^+)_\solid\rightarrow\cal{N}_{|p|<1}$ for all $k\geq 1$. By \cref{lem:duality-dualisingsheaf}, we already know that $L$ is a character of $G_0$ and we claim that it is actually the determinant of the dual of the adjoint representation of $G_0$ from \cref{ex:porat-adjointlocan}. This would imply both claims: The adjoint representation is locally analytic by \cref{ex:porat-adjointlocan} and it is trivial if $G_0$ is abelian. To determine the $G_0$-action on $L$, we may extend scalars along $\Z_p\rightarrow\Q_p$ as $B_k$ is $p$-torsionfree, which then reduces us to showing that
\begin{equation*}
\sHom_{(\Q_p, \Z_p)_\solid[G_0]}(\Q_p, (\Q_p, \Z_p)_\solid[G_0])^\vee
\end{equation*}
is the determinant of the dual of the adjoint representation of $G_0$. However, this is true by \cite[Eq. (24)]{SolidLocAnReps1}, so we are done.
\end{proof}

We are now ready to prove that $*/G^\la$ is cohomologically smooth over $*=\cal{N}_{|p|<1}$. Indeed, we will show more explicitly that $L$ is the dualising sheaf on $*/G_0^\la$. 

\begin{thm}
\label{thm:duality-main}
The map
\begin{equation*}
*/G^\la\rightarrow *
\end{equation*}
is cohomologically smooth. Moreover, for any $h\in\R$, the map
\begin{equation*}
*/G^{h\dagger\text{-}\an}\rightarrow *
\end{equation*}
is suave; it is even cohomologically smooth \dots
\begin{enumerate}[label=(\roman*)]
\item \dots if $G$ is abelian or
\item \dots after base change to $\cal{N}_{|p|<1-\epsilon}$ for some $\epsilon>0$ and $h$ sufficiently large depending on $\epsilon$.
\end{enumerate}
\end{thm}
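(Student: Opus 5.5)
The plan is to reduce to $G=G_0$ uniform, compute $f^!$ explicitly, and check that $f^!\O\cong L$ is invertible and that the natural map $f^!\O\tensor f^*(-)\rightarrow f^!(-)$ is an isomorphism. Here $f:*/G_0^\la\rightarrow *$ and $L$ is the object from \cref{lem:duality-dualisingsheaflocan}.

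\textbf{Reduction to $G_0$.} The map $*/G_0^\la\rightarrow */G^\la$ pulls back to the discrete set $G/G_0$, so it is étale. Cohomological smoothness and suaveness are local in this sense, so we may assume $G=G_0$. We may also work locally on $*=\cal{N}_{|p|<1}$. For $*/G^\la$ the statements then descend to $\cal{N}_{|p|<1}/\R_{>0}$, because the constructions are compatible with the $\R_{>0}$-action.

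\textbf{Computing $f^!$.} By \cref{prop:duality-prim}, $f$ is weakly cohomologically proper, so $f_!=f_*$. By \cref{cor:locan-embedgreps}, $f_*$ is condensed $G_0$-cohomology $\sHom_{\O[G_0]}(\O,-)$ restricted to locally analytic objects. Pulling back along the charts $\AnSpec\hspace{1pt}(B_k,B_k^+)_\solid$ from (\ref{eq:locan-bb+}), the Kohlhaase resolution (\cref{prop:smoothfp-kohlhaase}), combined with the idempotence of $D_{h\text{-}\an}$, computes $f_*E$ by a finite Koszul-type complex in $E$. This complex commutes with colimits and is linear over the base. Its right adjoint $f^!$ is therefore given by $M\mapsto (\text{dual of the Koszul complex})\tensor M$ with the locally analytic structure. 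The dual complex is exactly $\sHom_{\O[G_0]}(\O,\O[G_0])^\vee$ made locally analytic, namely $L$. Hence
\begin{equation*}
f^!M\cong L\tensor f^*M .
\end{equation*}
By \cref{lem:duality-dualisingsheaf} this is invertible, concentrated in degree $-d$, and it lies in $\D(*/G_0^\la)$ by \cref{lem:duality-dualisingsheaflocan}.

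\textbf{Suaveness versus cohomological smoothness.} I would run this through the criterion that $f$ is suave iff $f^!\O$ is dualizable in the appropriate kernel category and $f^!$ is given by tensoring with it; cohomological smoothness additionally requires $f^!\O$ to be invertible. For $G^\la$ the formula above suffices.

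For $G^{h\dagger\text{-}\an}$ the same Koszul argument works with $D_{h'\text{-}\an}$, $h'<h$, and shows that $f^!$ is a twist of $f^*$ by the complex computing $\sHom_{\O[G_0]}(\O,\O[G_0])^\vee$ made $h\dagger$-analytic. The object $L$ is a character, namely the determinant of the coadjoint representation. It is always locally analytic, but it need not be $h\dagger$-analytic, so in general $f^!\O$ is the derived $h\dagger$-analytic vectors of $L$. This object is still dualizable in kernels, which gives suaveness, but it is not invertible. It is invertible in two cases:
\begin{enumerate}[label=(\roman*)]
\item if $G$ is abelian, since then $L$ is trivial by \cref{lem:duality-dualisingsheaflocan};
\item if $|p|\leq 1-\epsilon$ and $h$ is large enough, since then the bound $(g-1)^mv\in pV^+$ from \cref{ex:porat-adjointlocan} and \cref{prop:porat-recogniselocan} shows that the determinant character is $h\dagger$-analytic.
\end{enumerate}

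\textbf{Main obstacle.} The hardest step is making the identification of $f^!$ rigorous at the stack level, and showing that the Koszul dual of the Kohlhaase resolution yields an $\O$-linear natural isomorphism $f^!\cong L\tensor f^*$ compatible with base change along the charts. I would first try to verify suaveness via the diagonal and Barr–Beck on the descendable cover $*\rightarrow */G_0^\la$ (\cref{lem:smoothfp-desc}). Concretely, $f\circ s=\id$ for $s:*\rightarrow */G_0^\la$, and $s$ is a $G_0^\la$-torsor, so $s^!$ is computable through $G_0^\la\rightarrow *$. This reduces suaveness of $f$ to the self-duality of the Lazard–Serre complex. A second delicate point is the $h$-dependence in (ii): one must track the radius bound needed for the adjoint action to be $h'$-analytic in terms of $\epsilon$.
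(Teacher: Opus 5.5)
Your reduction to $G_0$ and your dualising objects ($L$, resp.\ $L^{h\dagger\text{-}\mathrm{an}}$) agree with the paper. The gap is the step that actually proves smoothness, and you flag it yourself. Write $g\colon */G_0^{\mathrm{la}}\to *$ for your $f$, and $f\colon *\to */G_0^{\mathrm{la}}$ for your section $s$, as in the paper. On a chart, Lazard--Serre does give $g_*E\cong L^\vee\otimes_{(B,B^+)_\solid[G_0]}E$, and taking right adjoints gives $g^!\cong L\otimes g^*$ there. But computing the right adjoint of $g_!$ over a fixed base is not cohomological smoothness. Suaveness of $\mathcal{O}$ is a statement in the category of kernels. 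At the very least, the base change map $\pi_1^*g^!\mathcal{O}\to\pi_2^!\mathcal{O}$ on $*/G_0^{\mathrm{la}}\times_* */G_0^{\mathrm{la}}$ must be an isomorphism, so your formula has to survive base change along $g$ itself. Compatibility ``along the charts'' only expresses locality on the target and does not give this. Also, ``$f^!\mathcal{O}$ dualizable in kernels and $f^!$ given by tensoring with it'' is not a suaveness criterion.

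The paper fills this with \cite[Lem.\ 4.2.2.(i)]{dRStack}. Since $g$ is weakly cohomologically proper, $g_!\cong g_*$. Since $f$ is a $G_0^{\mathrm{la}}$-torsor, it is weakly cohomologically proper too. So it suffices to produce two maps with $\eta\circ g_!s=\mathrm{id}$:
\begin{itemize}
\item a $G_0$-equivariant $s\colon C^{\mathrm{la}}(G_0,B)\cong f_!\mathcal{O}\to L$, obtained (via $C^{\mathrm{la}}=\colim_h C_{h\text{-}\mathrm{an}}$ and reflexivity) as the dual of $L^\vee\to(B,B^+)_\solid[G_0]\to D_{h\text{-}\mathrm{an}}(G_0,B)$, where the first arrow projects the Lazard--Serre complex onto its degree-zero term;
\item a map $\eta\colon \sHom_{(B,B^+)_\solid[G_0]}(B,L)\to B$, given by evaluation $L^\vee\otimes_{(B,B^+)_\solid[G_0]}L\to B$.
\end{itemize}
Your last paragraph (section plus self-duality of Lazard--Serre) gestures at this but never states a criterion that yields suaveness.

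The same gap affects the $h\dagger$ part: the claim that $L^{h\dagger\text{-}\mathrm{an}}$ is ``still dualizable in kernels, which gives suaveness'' has no argument behind it. The paper gets suaveness of $*/G_0^{h\dagger\text{-}\mathrm{an}}$ for free from the first part. The map $f_h\colon */G_0^{\mathrm{la}}\to */G_0^{h\dagger\text{-}\mathrm{an}}$ is weakly cohomologically proper (\cref{prop:duality-prim}), so $f_{h,!}=f_{h,*}$ preserves suave objects \cite[Lem.\ 4.5.16]{HeyerMann}. By \cref{lem:locan-pushforward}, $f_{h,*}$ is $(-)^{h\dagger\text{-}\mathrm{an}}$, so $f_{h,!}\mathcal{O}\cong\mathcal{O}$ is suave. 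The dualising complex is then $f_{h,!}L=L^{h\dagger\text{-}\mathrm{an}}$ by \cite[Lem.\ 4.4.9.(ii)]{HeyerMann}, and you do not need your claim that it fails to be invertible in general.

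For (ii), tracking radii via \cref{prop:porat-recogniselocan} could be made to work, but it is unnecessary. Reduce to a single chart $B_k$ whose image contains $\mathcal{N}_{|p|<1-\epsilon}$. The character $\chi$ of $L$ is an element of $C^{\mathrm{la}}(G_0,B_k)=\colim_h C^{h\dagger\text{-}\mathrm{an}}(G_0,B_k)$, so it already lies in a finite stage.
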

\begin{proof}
First note that we can reduce to the case $G=G_0$ in all cases: Indeed, cohomological smoothness may be checked smooth locally on the source and the map $*/G_0^\la\rightarrow */G^\la$ becomes a finite disjoint union of points after pullback along $*\rightarrow */G^\la$; the same argument works for $*/G^{h\dagger\text{-}\an}$. Moreover, we may also check all assertions after pullback along the charts $\AnSpec\hspace{1pt}(B_k, B_k^+)_\solid\rightarrow \cal{N}_{|p|<1}$ from (\ref{eq:locan-bb+}); as $\cal{N}_{|p|<1-\epsilon}$ will eventually lie in the image of a single one of these charts for $k\gg 0$ depending on $\epsilon$, assertion (ii) then turns into the claim that $\AnSpec\hspace{1pt}(B_k, B_k^+)_\solid/G^{h\dagger\text{-}\an}\rightarrow \AnSpec\hspace{1pt}(B_k, B_k^+)_\solid$ will be cohomologically smooth for $h\gg 0$ large enough depending on $k$. Thus, write $*\coloneqq \AnSpec\hspace{1pt}(B_k, B_k^+)_\solid$ from now on; for simplicity, we will also abbreviate $(B_k, B_k^+)$ by $(B, B^+)$.

To prove the first assertion, consider the maps $g: */G_0^\la\rightarrow *$ and $f: *\rightarrow */G_0^\la$. Our goal is to apply \cite[Lem.\ 4.2.2.(i)]{dRStack} to prove that $g$ is cohomologically smooth with dualising sheaf $L$. Our task is to produce maps $s: f_!\O\rightarrow L$ and $\eta: g_!L\rightarrow \O$ such that $\eta\circ g_!s$ is the identity. To this end, recall from \cref{prop:duality-prim} that $g_!\cong g_*$ and note that $f$ is weakly cohomologically proper as well by \cite[Lem.\ 3.1.21]{dRStack} as it is a $G_0^\la$-torsor and $G_0^\la$ is affine and equipped with the induced analytic ring structure from $\Z_\solid$. Using that $g_*$ computes condensed $G$-cohomology by \cref{cor:locan-embedgreps} and base change for $f_*$, which holds as $f$ is prim, we see that our task unwinds to producing maps
\begin{equation*}
s: C^\la(G_0, B)\rightarrow L\;, \hspace{0.5cm} \eta: \sHom_{(B, B^+)_\solid[G_0]}(B, L)\rightarrow B
\end{equation*}
with the desired compatibility, where $s$ is supposed to be $G_0$-equivariant.

To produce $s$, we recall that $C^\la(G_0, B)=\colim_h C_{h\text{-}\an}(G_0, B)$ and hence it suffices to produce compatible maps $C_{h\text{-}\an}(G_0, B)\rightarrow L$. As both the source and target are reflexive, it then suffices to produce compatible maps
\begin{equation*}
\sHom_{(B, B^+)_\solid}(B, (B, B^+)_\solid[G_0])=L^\vee\rightarrow D_{h\text{-}\an}(G_0, B)\;.
\end{equation*}
However, recalling from the proof of \cref{lem:duality-dualisingsheaf} that the left-hand side is given by the complex
\begin{equation}
\label{eq:duality-lvee}
(B, B^+)_\solid[G_0]^{\binom{d}{0}}\rightarrow (B, B^+)_\solid[G_0]^{\binom{d}{1}}\rightarrow\dots\rightarrow (B, B^+)_\solid[G_0]^{\binom{d}{d}}\;,
\end{equation}
where the first term sits in degree zero, we see that this admits a canonical map to $(B, B^+)_\solid[G_0]$ given by projecting onto the degree zero term of the above complex. This yields the desired maps after postcomposing with $(B, B^+)_\solid[G_0]\rightarrow D_{h\text{-}\an}(G_0, B)$ and it is clear that this is $G_0$-equivariant.

Now we produce $\eta$. For this, we use the Lazard--Serre resolution from \cref{prop:smoothfp-lazardserre} to write $\sHom_{(B, B^+)_\solid[G_0]}(B, L)$ as the complex
\begin{equation*}
L^{\binom{d}{0}}\rightarrow L^{\binom{d}{1}}\rightarrow\dots\rightarrow L^{\binom{d}{d-1}}\rightarrow L^{\binom{d}{d}}\;,
\end{equation*}
where the first term sits in degree zero. However, using the explicit complex computing $L^\vee$ from (\ref{eq:duality-lvee}), we immediately identify the above as $L^\vee\tensor_{(B, B^+)_\solid[G_0]} L$ and now the desired map
\begin{equation*}
\sHom_{(B, B^+)_\solid[G_0]}(B, L)\cong L^\vee\tensor_{(B, B^+)_\solid[G_0]} L\rightarrow B
\end{equation*}
is the adjoint of the identity on $L^\vee$. It is not hard to check that the two maps we have constructed satisfy $\eta\circ g_!s=\id$, as desired.

To prove the second part of the statement, recall from \cref{prop:duality-prim} that $f_h: */G_0^\la\rightarrow */G_0^{h\dagger\text{-}\an}$ is weakly cohomologically proper and hence $f_{h, !}\cong f_{h, *}$ preserves suave objects by \cite[Lem.\ 4.5.16]{HeyerMann}. As $f_{h, *}$ is given by taking $h\dagger$-analytic vectors by \cref{lem:locan-pushforward} and the trivial representation is $h\dagger$-analytic for all $h\in\R$, we conclude that $f_{h, !}\O\cong \O\in\D(*/G_0^{h\dagger\text{-}\an})$ and hence $*/G_0^{h\dagger\text{-}\an}$ is suave. Using \cite[Lem.\ 4.4.9.(ii)]{HeyerMann}, we see that the dualising complex is given by $f_{h, !}L=L^{h\dagger\text{-}\an}$ and hence we can conclude that $*/G_0^{h\dagger\text{-}\an}$ is cohomologically smooth whenever $L$ is already an $h\dagger$-analytic representation. As $L$ is given by the determinant of the dual of the adjoint representation of $G_0$ by the proof of \cref{lem:duality-dualisingsheaflocan}, it is trivial if $G_0$ is abelian and hence in particular $h\dagger$-analytic for all $h\in\R$; this proves (i). To establish (ii), we note that the character $\chi: G\rightarrow B^\times$ defining the action of $G$ on $L$ is locally analytic since $L$ is and due to 
\begin{equation*}
C^\la(G_0, B)=\colim_h C^{h\dagger\text{-}\an}(G_0, B)
\end{equation*}
must in fact be $h\dagger$-analytic for some $h\gg 0$. Consequently, the representation $L$ is $h\dagger$-analytic for some $h\gg 0$, which finishes the proof.
\end{proof}

\appendix

\section{Some facts about binomial coefficients}

Here we collect some miscellaneous results involving binomial coefficients used in the body of the paper. While some of these are standard, others are tailored to our applications; in any case, the proofs are elementary and combinatorial in flavour. 

\subsection{Integer-valued polynomials}

Recall that a polynomial $f$ in $d$ variables with rational coefficients is called \emph{integer-valued} if $f(x_1, \dots, x_d)$ is an integer whenever $x_1, \dots, x_d\in\Z$.

\begin{prop}
\label{prop:app-intbasis}
The ring of integer-valued polynomials in one variable has a $\Z$-basis given by the polynomials $\binom{x}{n}$ for $n\geq 0$.
\end{prop}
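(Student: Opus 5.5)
The plan is to prove the two halves separately: first that the $\binom{x}{n}$ lie in the ring and are linearly independent over $\Z$, then that they span it over $\Z$.

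\textbf{Membership and independence.} Each $\binom{x}{n}=x(x-1)\cdots(x-n+1)/n!$ is integer-valued. For $x\geq 0$ it counts $n$-element subsets. For negative integers, the identity
\begin{equation*}
\binom{-y}{n}=(-1)^n\binom{y+n-1}{n}
\end{equation*}
reduces to the case $x\geq 0$. Since $\binom{x}{n}$ has degree exactly $n$, the family is $\Q$-linearly independent, hence also $\Z$-linearly independent. Moreover, the polynomials of degree $\leq N$ form a $\Q$-vector space of dimension $N+1$, and $\binom{x}{0},\dots,\binom{x}{N}$ form a $\Q$-basis of it. So any integer-valued $f$ of degree $\leq N$ can be written uniquely as
\begin{equation*}
f=\sum_{n\leq N} c_n\binom{x}{n}, \qquad c_n\in\Q.
\end{equation*}

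\textbf{Integrality of the coefficients.} It remains to show $c_n\in\Z$. The tool is the forward difference operator $(\Delta f)(x)\coloneqq f(x+1)-f(x)$. It preserves integer-valuedness, and Pascal's rule gives
\begin{equation*}
\Delta\binom{x}{n}=\binom{x}{n-1}.
\end{equation*}
Since $\binom{0}{n}=\delta_{n0}$, evaluating the iterated difference at zero gives $(\Delta^n f)(0)=c_n$. The left-hand side is an integer because $\Delta^n f$ is integer-valued. Explicitly, this yields
\begin{equation*}
c_n=\sum_{k=0}^n (-1)^{n-k}\binom{n}{k} f(k)\in\Z.
\end{equation*}
Equivalently, one can argue by induction: evaluating the expansion at $x=0,1,\dots,N$ gives a unitriangular integer system, which forces every $c_n$ to be an integer.

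No step here is a genuine obstacle. The only points needing care are the sign identity for negative arguments and keeping the $\Delta$ computation tidy.
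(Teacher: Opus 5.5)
Your proof is correct and essentially the paper's. The paper shows $\lambda_{k+1}=g_k(k+1)\in\Z$ inductively, where $g_k$ is the integer-valued remainder; this is exactly the unitriangular evaluation argument you mention as an alternative. Your computation $c_n=(\Delta^n f)(0)$ packages the same triangularity into the closed formula $c_n=\sum_k(-1)^{n-k}\binom{n}{k}f(k)$, which is the Mahler-coefficient formula of \cref{thm:app-mahler}, and you also verify integer-valuedness at negative arguments, which the paper simply takes as clear.
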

\begin{proof}
The following proof is taken from \cite[Prop.\ I.1.1]{IntegerPolys}. Clearly, the polynomials $\binom{x}{n}, n\geq 0$ are a $\Q$-basis of $\Q[x]$ and are all integer-valued, so it suffices to show that any integer-valued polynomial $f$ is a $\Z$-linear combination of these. Thus, write
\begin{equation*}
f=\sum_{j=0}^d \lambda_j\binom{x}{j}
\end{equation*}
for uniquely determined $\lambda_j\in\Q$. By induction on $j$, we will prove that, in fact, $\lambda_j\in\Z$. Indeed, we have $\lambda_0=f(0)\in\Z$ and, assuming that $\lambda_j\in\Z$ for $j\leq k<n$, we obtain an integer-valued polynomial
\begin{equation*}
g_k=f-\sum_{j=0}^k \lambda_j\binom{x}{j}=\sum_{j=k+1}^d \lambda_j\binom{x}{j}
\end{equation*}
and it is clear that $\lambda_{k+1}=g(k+1)\in\Z$, which completes the induction step.
\end{proof}

\begin{rem}
\label{rem:app-intbasiszp}
The same proof shows that the polynomials $\binom{x}{n}$ form a $\Z_p$-basis of the ring of $\Z_p$-valued polynomials over $\Q_p$ in one variable.
\end{rem}

\begin{prop}
\label{prop:app-intbasismult}
The ring of integer-valued polynomials in $d$ variables has a $\Z$-basis given by the polynomials
\begin{equation*}
\binom{x_1}{n_1}\binom{x_2}{n_2}\cdots\binom{x_d}{n_d}
\end{equation*}
for $n_1, \dots, n_d\geq 0$.
\end{prop}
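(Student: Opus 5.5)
We extend the inductive argument of \cref{prop:app-intbasis} to several variables, inducting on $d$; the case $d=1$ is \cref{prop:app-intbasis} itself. First, the products $\binom{\ul{x}}{\ul{n}}=\prod_i\binom{x_i}{n_i}$ are integer-valued. They also form a $\Q$-basis of $\Q[x_1,\dots,x_d]$: $\Q[x_1,\dots,x_d]\cong\Q[x_1]\otimes_\Q\cdots\otimes_\Q\Q[x_d]$, and each $\binom{x_i}{n_i}$ has degree exactly $n_i$, so the $\binom{x_i}{n}$ form a $\Q$-basis of $\Q[x_i]$. It therefore remains to show that every integer-valued $f\in\Q[x_1,\dots,x_d]$ has integral coefficients in this basis.

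Write $f=\sum_{\ul{n}}\lambda_{\ul{n}}\binom{\ul{x}}{\ul{n}}$ with $\lambda_{\ul{n}}\in\Q$, and regroup by powers of the last variable:
\begin{equation*}
f=\sum_{n_d\geq 0} g_{n_d}(x_1,\dots,x_{d-1})\binom{x_d}{n_d},
\end{equation*}
where $g_{n_d}\coloneqq\sum_{n_1,\dots,n_{d-1}}\lambda_{(n_1,\dots,n_d)}\prod_{i<d}\binom{x_i}{n_i}$ and only finitely many $g_{n_d}$ are nonzero.

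Fix integers $a_1,\dots,a_{d-1}$. Then $x_d\mapsto f(a_1,\dots,a_{d-1},x_d)=\sum_{n_d} g_{n_d}(\ul{a})\binom{x_d}{n_d}$ is an integer-valued polynomial in one variable. By \cref{prop:app-intbasis} and uniqueness of coefficients in the $\Q$-basis $\binom{x_d}{n_d}$, we get $g_{n_d}(\ul{a})\in\Z$ for every $n_d$. Since the $a_i$ were arbitrary, each $g_{n_d}$ is an integer-valued polynomial in $d-1$ variables. The induction hypothesis then gives $\lambda_{(n_1,\dots,n_d)}\in\Z$ for all multiindices.

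No step is a serious obstacle. The only point needing care is the uniqueness used to pass from integrality of the one-variable specialisation to integrality of each $g_{n_d}(\ul{a})$, and this holds because the $\binom{x_d}{n}$ are $\Q$-linearly independent. The same argument applies verbatim over $\Z_p$ via \cref{rem:app-intbasiszp}.
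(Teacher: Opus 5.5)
Your argument is correct, but it takes a different route from the paper.

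The paper also inducts on $d$, but inside the inductive step it runs a second induction on the $x_d$-degree of $f$. The base case ($f$ independent of $x_d$) is handled by the outer hypothesis. For the step, it applies the difference operator $f(x_1,\dots,x_{d-1},x_d+1)-f(x_1,\dots,x_d)$. This is again integer-valued, has smaller $x_d$-degree, and has the coefficients $a_{\ul{n}}$ with $n_d\geq 1$ in front of $\binom{x_1}{n_1}\cdots\binom{x_d}{n_d-1}$. The remaining coefficients with $n_d=0$ then come, implicitly, from the outer hypothesis applied to what is left over.

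You instead slice. Specialising $x_1,\dots,x_{d-1}$ to integers and invoking the one-variable \cref{prop:app-intbasis} together with $\Q$-linear independence shows directly that each coefficient polynomial $g_{n_d}$ is integer-valued. A single application of the $(d-1)$-variable hypothesis then finishes.

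Your version avoids the nested induction and the difference operator, and it treats all coefficients uniformly. It also makes transparent that the $\Z$-lattice of integer-valued polynomials in $d$ variables is the tensor product of the one-variable lattices. The paper's version mirrors the finite-difference structure of one-variable interpolation and never needs to apply the one-variable result fibre by fibre; the outer base case is its only entry point.

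Both arguments carry over verbatim to the $\Z_p$-valued setting of \cref{rem:app-intbasiszpmult}.
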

\begin{proof}
See also \cite[Prop.\ XI.1.12]{IntegerPolys}. We use induction on $d$, the base case $d=1$ being the content of \cref{prop:app-intbasis}. Again, the polynomials from the statement are clearly all integer-valued and form a $\Q$-basis of $\Q[x_1, \dots, x_d]$, so any integer-valued polynomial $f\in\Q[x_1, \dots, x_d]$ may be expressed as
\begin{equation*}
f(x_1, \dots, x_d)=\sum_{\ul{n}} a_{\ul{n}} \binom{x_1}{n_1}\cdots \binom{x_d}{n_d}\;,
\end{equation*}
where $\ul{n}=(n_1, \dots, n_d)$ is a multiindex. Our task is to show that $a_{\ul{n}}\in\Z$.

To prove this, we use induction on the $x_d$-degree of $f$. In the base case, the polynomial $f$ is independent of $x_d$ and hence an integer-valued polynomial in $d-1$ variables, so the claim follows by the induction hypothesis of the ``outer'' induction over $d$. For the induction step, note that
\begin{equation*}
f(x_1, \dots, x_{d-1}, x_d+1)-f(x_1, \dots, x_{d-1}, x_d)=\sum_{\ul{n}: n_d=0} a_{\ul{n}} \binom{x_1}{n_1}\cdots \binom{x_{d-1}}{n_{d-1}}\binom{x_d}{n_d-1}
\end{equation*}
is integer-valued as well and has lower $x_d$-degree than $f$. Thus, we conclude $a_{\ul{n}}\in\Z$ for all $\ul{n}$ by the induction hypothesis and this finishes the proof.
\end{proof}

\begin{rem}
\label{rem:app-intbasiszpmult}
Again, the same proof shows that the polynomials $\binom{x_1}{n_1}\cdots \binom{x_d}{n_d}$ form a $\Z_p$-basis of the ring of $\Z_p$-valued polynomials over $\Q_p$ in $d$ variables.
\end{rem}

\subsection{Mahler expansions}

We recall the following result from $p$-adic analysis:

\begin{thm}[Mahler]
\label{thm:app-mahler}
There is a bijection
\begin{equation*}
\{\text{continuous functions $f: \Z_p\rightarrow\Q_p$}\}\overset{1:1}{\longleftrightarrow}\{\text{nullsequences $(a_n)_n$ in $\Q_p$}\}
\end{equation*}
given by sending a nullsequence $(a_n)_n$ to the function $f: \Z_p\rightarrow\Q_p$ defined by
\begin{equation*}
f(x)\coloneqq \sum_n a_n\binom{x}{n}\;.
\end{equation*}
Conversely, the sequence $(a_n)_n$ can be recovered from $f$ via
\begin{equation*}
a_n=\sum_{\ell\leq n} (-1)^{n-\ell}\binom{n}{\ell}f(\ell)\;.
\end{equation*}
\end{thm}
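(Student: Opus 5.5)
The plan is to prove Mahler's theorem in three steps: the Mahler coefficients of a polynomial, the case of $\Z_p$-valued functions, and a scaling argument for the general case.

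\textbf{Well-definedness of $(a_n)\mapsto f$.} Since $\binom{x}{n}$ maps $\Z$ into $\Z$, and $\Z$ is dense in $\Z_p$ with $\binom{x}{n}$ a polynomial, it maps $\Z_p$ into $\Z_p$. Hence for a nullsequence $(a_n)_n$ the series $\sum_n a_n\binom{x}{n}$ converges uniformly on $\Z_p$. Its sum is therefore a continuous function $\Z_p\rightarrow\Q_p$, and
\begin{equation*}
\sup_{x\in\Z_p} |f(x)|\leq \sup_n |a_n|\;.
\end{equation*}
Next, the inversion formula holds formally. Write $\Delta f(x)\coloneqq f(x+1)-f(x)$. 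Then $\Delta\binom{x}{n}=\binom{x}{n-1}$, so $(\Delta^n f)(0)=a_n$ by uniform convergence. Expanding $\Delta^n=(\tau-1)^n$ with $\tau$ the shift gives
\begin{equation*}
a_n=\sum_{\ell\leq n}(-1)^{n-\ell}\binom{n}{\ell}f(\ell)\;.
\end{equation*}
This proves injectivity. It remains to show that every continuous $f$ has a nullsequence of Mahler coefficients, because then $f$ and $\sum_n a_n\binom{x}{n}$ are continuous functions agreeing on $\Z$ (both have the same finite differences at $0$), so they are equal.

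\textbf{Surjectivity for $\Z_p$-valued $f$.} By compactness, any continuous $f$ is bounded, so after scaling by a power of $p$ we may assume $f$ is $\Z_p$-valued. The main step is to show that, for each $k\geq 1$, the coefficients $a_n$ of $f$ are divisible by $p$ for $n\gg 0$. Iterating then shows they tend to zero: subtract the finite sum $\sum_{n<N}a_n\binom{x}{n}$, divide the remainder by $p$, and repeat. For the key step I reduce mod $p$. By uniform continuity, $f\bmod p$ is locally constant, hence factors through $\Z/p^m$ for some $m$. It then suffices to show $\Delta^{p^m}g\equiv 0 \pmod p$ for every $g$ that is $p^m$-periodic on $\Z$. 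This follows from
\begin{equation*}
(\tau-1)^{p^m}\equiv \tau^{p^m}-1 \pmod p
\end{equation*}
together with $\tau^{p^m}g=g$. Consequently $\Delta^n f\equiv 0\pmod p$ for all $n\geq p^m$, and evaluating at $0$ gives $p\mid a_n$ for $n\geq p^m$.

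\textbf{Iteration.} Set $g_1=f-\sum_{n<p^m}a_n\binom{x}{n}$. This is continuous, $\Z_p$-valued and $\equiv 0\pmod p$ everywhere, since it is divisible by $p$ on $\Z$ and hence, by continuity, on $\Z_p$. Applying the same argument to $g_1/p$, and so on, shows that $v_p(a_n)\rightarrow\infty$.

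The main obstacle is the mod $p$ finite-difference step, which is handled by the Frobenius identity $(\tau-1)^{p^m}\equiv\tau^{p^m}-1$ in characteristic $p$. The rest is bookkeeping with uniform convergence and density of $\Z$ in $\Z_p$.
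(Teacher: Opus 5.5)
Your proof is correct, and it takes a genuinely different route from the paper. The paper gives no argument and simply cites Mahler's original paper \cite{Mahler}. You instead give the standard self-contained finite-difference proof:
\begin{itemize}
\item the inversion formula and injectivity come from $\Delta\binom{x}{n}=\binom{x}{n-1}$ and $\Delta^n=(\tau-1)^n$;
\item the decay of the coefficients comes from the congruence $(\tau-1)^{p^m}\equiv\tau^{p^m}-1 \pmod p$, applied to the locally constant reduction of $f$ modulo $p$, followed by the subtract-and-divide-by-$p$ induction.
\end{itemize}

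The citation keeps the appendix short for a classical fact that is only used as a black box. For example, it is used in Section 3 to read off the Mahler coefficients of the function equal to $\binom{(x-a)/p}{n}$ on $a+p\Z_p$ and zero elsewhere. Your argument is elementary and gives slightly more for free. Your bound $\sup_x|f(x)|\leq\sup_n|a_n|$, combined with the inversion formula (which writes $a_n$ as an integral combination of values of $f$), shows that the bijection is an isometry for the sup norms.

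There are two harmless imprecisions. First, Newton's formula $f(\ell)=\sum_{n\leq\ell}\binom{\ell}{n}(\Delta^nf)(0)$ only pins down values at nonnegative integers. So ``agreeing on $\Z$'' and ``divisible by $p$ on $\Z$'' should read ``on $\mathbb{N}$''; since $\mathbb{N}$ is still dense in $\Z_p$, nothing changes. Second, in the sentence announcing the main step, ``divisible by $p$'' should presumably be ``divisible by $p^k$''. The mod $p$ argument is then the case $k=1$, and the iteration supplies the rest.
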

\begin{proof}
This is the main result of \cite{Mahler}.
\end{proof}

Using the rate of decay of the Mahler coefficients $a_n$, one can characterise the open balls on which $f$ is actually an analytic function.

\begin{thm}[Amice]
\label{thm:app-amice}
Let $f: \Z_p\rightarrow\Q_p$ be any continuous function. Then $f$ is analytic on each ball $x+p^h\Z_p$ for $x\in \Z/p^h$ if and only if
\begin{equation*}
v_p(a_n)-v_p(\lfloor n/p^h\rfloor !)\rightarrow \infty
\end{equation*}
as $n\rightarrow\infty$. In particular, $f$ is locally analytic if and only if $v_p(a_n)$ grows at least linearly.
\end{thm}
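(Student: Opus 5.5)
The plan is to reformulate the statement as an orthonormal-basis statement for a Banach space. This is the classical approach of Amice, as presented for instance by Colmez.

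Fix $h\geq 0$ and let $\mathrm{LA}_h$ denote the space of functions $f:\Z_p\rightarrow\Q_p$ such that, for every $a\in\{0,\dots,p^h-1\}$, we have $f(a+p^hy)=\sum_k b_{a,k}y^k$ with $b_{a,k}\rightarrow 0$. We give it the norm $\|f\|_h\coloneqq\max_{a,k}|b_{a,k}|$. This is a $\Q_p$-Banach space, and "analytic on each ball $x+p^h\Z_p$" means $f\in\mathrm{LA}_h$. Write $\mathrm{LA}_h^0$ for its unit ball. Then $\mathrm{LA}_h^0/p\cong\mathrm{Maps}(\Z/p^h,\F_p[y])$, where we send $f$ to the family of the reductions of the power series $\sum_k b_{a,k}y^k$.

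The key claim is that the functions $e_n\coloneqq\lfloor n/p^h\rfloor!\binom{x}{n}$ form an orthonormal basis of $\mathrm{LA}_h$. Granting this, Amice's theorem follows directly. Given continuous $f=\sum_na_n\binom{x}{n}$, write $f=\sum_n(a_n/\lfloor n/p^h\rfloor!)e_n$. By orthonormality, this series converges in $\mathrm{LA}_h$ if and only if $v_p(a_n)-v_p(\lfloor n/p^h\rfloor!)\rightarrow\infty$. Conversely, if $f\in\mathrm{LA}_h$, expand it in the basis $(e_n)_n$ with null coefficients. This expansion converges uniformly, so by uniqueness in \cref{thm:app-mahler} its coefficients are $a_n/\lfloor n/p^h\rfloor!$, which gives the "only if" direction. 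For the last sentence, note $v_p(\lfloor n/p^h\rfloor!)\approx n/(p^h(p-1))$. Hence linear growth of $v_p(a_n)$ at some rate $\epsilon$ gives the criterion for any $h$ with $1/(p^h(p-1))<\epsilon$. Conversely, the criterion for some $h$ forces growth at rate about $1/(p^h(p-1))$. Moreover, locally analytic on compact $\Z_p$ means analytic on all balls of some fixed radius $p^{-h}$.

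To prove the claim, I would proceed in two steps.

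\emph{First, $e_n\in\mathrm{LA}_h^0$.} Write $n=n_0+p^hm$ with $0\leq n_0<p^h$. Expanding $\binom{a+p^hy}{n}$ in $y$, the coefficient of $y^k$ is a sum of products of the form $p^{hk}\cdot(\text{elementary symmetric expression in }1/j)$ divided by $n!$-type factors. One checks its valuation is at least $-v_p(m!)$, for example via Vandermonde: $\binom{a+p^hy}{n}=\sum_j\binom{a}{n-j}\binom{p^hy}{j}$. It then suffices to show that $\binom{p^hy}{j}$ has Taylor coefficients of valuation $\geq -v_p(\lfloor j/p^h\rfloor!)$. I would get this by writing $\binom{p^hy}{j}=\frac{1}{j!}\prod_{i<j}(p^hy-i)$ and counting: the factors with $p^h\mid i$ contribute $p^h(y-i/p^h)$, which compensates the corresponding $p$-part of $j!$ except for exactly $\lfloor j/p^h\rfloor!$.

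\emph{Second, the reductions $\bar e_n$ form an $\F_p$-basis of $\mathrm{Maps}(\Z/p^h,\F_p[y])$.} The goal is the congruence
\begin{equation*}
\bar e_{n_0+p^hm}(a,y)\equiv u_m\binom{a}{n_0}y^m+(\text{terms of lower }y\text{-degree})\pmod p
\end{equation*}
with $u_m\in\F_p^\times$, in the spirit of a Lucas-type congruence. Given this, triangularity in $m$ reduces the basis claim to a statement about functions on $\Z/p^h$. Namely, the functions $a\mapsto\binom{a}{n_0}$ for $n_0<p^h$ form an $\F_p$-basis of $\mathrm{Maps}(\Z/p^h,\F_p)$, which holds because their matrix on $a=0,\dots,p^h-1$ is unitriangular. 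An $\F_p$-basis of the reduction of a unit ball lifts to an orthonormal basis by the standard $p$-adic completeness argument.

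I expect the main obstacle to be this mod $p$ computation, namely identifying the leading term of $\bar e_n$ precisely and verifying that $u_m$ is a unit. I would first try to establish it by induction on $m$, using the identity $\binom{x}{n}\binom{x-n}{p^h}=\binom{n+p^h}{p^h}\binom{x}{n+p^h}$ together with Lucas' theorem for the factor $\binom{n+p^h}{p^h}$.
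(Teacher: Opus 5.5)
Your proposal is correct, but it proves more than the paper does. The paper does not prove the equivalence: it cites \cite[§10, Cor.\ 2]{Amice} for it and only derives the ``in particular'' part from $v_p(\lfloor n/p^h\rfloor!)\sim n/(p^h(p-1))$, which is exactly your final step. You also make explicit the compactness argument giving a uniform radius $p^{-h}$, which the paper leaves implicit.

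Your route is the Banach-space form of Amice's theorem, as presented for instance by Colmez. Showing that $\lfloor n/p^h\rfloor!\binom{x}{n}$ is an orthonormal basis of $\mathrm{LA}_h$ makes the result self-contained. It also gives the sharper isometric statement $\|f\|_h=\sup_n|a_n/\lfloor n/p^h\rfloor!|$, at the price of the mod $p$ computation. Your deduction of the theorem from the basis claim is sound: $\sup_{\Z_p}|f|\le\|f\|_h$ turns $\mathrm{LA}_h$-convergence into uniform convergence, and Mahler uniqueness applies because $c_n\lfloor n/p^h\rfloor!$ is again a nullsequence.

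The step you flag as the main obstacle needs no induction; it already follows from your Step~1 bookkeeping.
\begin{itemize}
\item In $m!\binom{a+p^hy}{n}=\sum_s\binom{a}{s}\,m!\binom{p^hy}{n-s}$, only indices $j=n-s\in(p^h(m-1),n]$ occur, because $a<p^h$.
\item For such $j$ with $p^h\nmid j$, your count leaves an extra factor $p^{h-v_p(j)}$, multiplied by $p^{v_p(m)}$ when $\lfloor j/p^h\rfloor=m-1$. So these terms vanish modulo $p$.
\item For $p^h\nmid i$ we have $p^hy-i=-i\,(1-\tfrac{p^h}{i}y)$ with $1-\tfrac{p^h}{i}y\equiv 1$ modulo $p$.
\end{itemize}
Hence only $j=p^hm$ survives, and
\[
\bar e_{n_0+p^hm}(a,y)=u_m\binom{a}{n_0}\prod_{t=0}^{m-1}(y-t)\qquad\text{with } u_m\in\F_p^\times \text{ independent of } a .
\]
This gives the triangularity at once. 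Your inductive route through $\binom{x}{n}\binom{x-n}{p^h}=\binom{n+p^h}{p^h}\binom{x}{n+p^h}$ also works, since Kummer's theorem gives $v_p\binom{n+p^h}{p^h}=v_p(m+1)$, but it is unnecessary.
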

\begin{proof}
The first part is \cite[§10, Cor.\ 2]{Amice}. The second part follows since $v_p(\lfloor n/p^h\rfloor!)\sim \tfrac{n}{p^h(p-1)}$ as $n$ grows, i.e.\
\begin{equation*}
\frac{v_p(\lfloor n/p^h\rfloor!)}{n/p^h(p-1)}\rightarrow 1
\end{equation*}
as $n\rightarrow\infty$.
\end{proof}

\begin{rem}
\label{rem:app-amicemult}
The previous two theorems also have analogues in several variables. In particular, it is also proved in \cite{Amice} that any locally analytic function $f: \Z_p^d\rightarrow\Q_p$ is given by
\begin{equation*}
f(x_1, \dots, x_d)=\sum_{\ul{n}} a_{\ul{n}}\binom{\ul{x}}{\ul{n}}
\end{equation*}
for $p$-adic integers $a_{\ul{n}}$ whose $p$-adic valuation grows at least linearly in $|\ul{n}|\coloneqq n_1+\dots+n_d$, where $\ul{n}=(n_1, \dots, n_d)$ is a multiindex and we set $\binom{\ul{x}}{\ul{n}}\coloneqq \binom{x_1}{n_1}\cdots \binom{x_d}{n_d}$.
\end{rem}

\subsection{Miscellany}

\begin{lem}
\label{lem:app-prodbinom}
For all $m, n\geq 0$, we have
\begin{equation}
\label{eq:app-prodbinom}
\binom{x}{m}\binom{x}{n}=\sum_j \binom{x}{j}\binom{j}{m+n-j}\binom{2j-m-n}{j-n}
\end{equation}
as polynomials in $x$, where we use the convention that, for $k, \ell\in\mathbb{Z}$, the binomial coefficient $\binom{k}{\ell}$ is only nonzero if $0\leq \ell\leq k$.
\end{lem}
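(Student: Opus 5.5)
We prove the identity by a counting argument. Both sides are polynomials in $x$ of degree at most $m+n$, so it suffices to verify equality for all integers $x\geq 0$. For such $x$, fix a set $X$ with $|X|=x$. The left-hand side counts pairs $(A,B)$ with $A,B\subseteq X$, $|A|=m$ and $|B|=n$.

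We sort these pairs according to $J\coloneqq A\cup B$ and set $j\coloneqq |J|$. Since $|A\cap B|=m+n-j$, we have $\max(m,n)\leq j\leq m+n$. There are $\binom{x}{j}$ choices of $J$. It then remains to count the pairs $(A,B)$ of subsets of a fixed $j$-element set $J$ with $|A|=m$, $|B|=n$ and $A\cup B=J$, and to show this number equals $\binom{j}{m+n-j}\binom{2j-m-n}{j-n}$.

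Such a pair is the same as a partition of $J$ into three blocks:
\begin{itemize}
\item $A\cap B$, of size $m+n-j$;
\item $A\setminus B$, of size $j-n$;
\item $B\setminus A$, of size $j-m$.
\end{itemize}
First choose $A\cap B\subseteq J$, which gives $\binom{j}{m+n-j}$ options. Then split the remaining $2j-m-n$ elements into $A\setminus B$ of size $j-n$ and $B\setminus A$ of size $j-m$, which gives $\binom{2j-m-n}{j-n}$ options. Multiplying and summing over $j$ yields exactly the right-hand side of \eqref{eq:app-prodbinom}.

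It remains to check that the stated convention makes the terms outside $\max(m,n)\leq j\leq m+n$ vanish. If $j>m+n$, then $m+n-j<0$, so $\binom{j}{m+n-j}=0$. If $j<n$, then $j-n<0$, so $\binom{2j-m-n}{j-n}=0$. If $n\leq j<m$, then $j-n>2j-m-n$, so again $\binom{2j-m-n}{j-n}=0$. In the range $\max(m,n)\leq j\leq m+n$ all arguments are nonnegative and the binomial coefficients are the genuine counts above.

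This index bookkeeping is the only point requiring care; the rest is routine. As a consequence, the coefficients $a_j$ used in the definition of $C^\la(\Z_p,\Z(\!(\pi)\!))$ are integers, vanish unless $j\leq m+n$, and are unique because the $\binom{x}{j}$ form a $\Q$-basis of $\Q[x]$.
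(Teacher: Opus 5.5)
Your proof is correct and follows the same double-counting argument as the paper: reduce to $x\in\mathbb{N}$, then sort the pairs $(A,B)$ by $A\cup B$, choose $A\cap B$, and finally choose which elements of the symmetric difference lie in $A$. Your explicit check that the stated binomial convention kills every term outside $\max(m,n)\leq j\leq m+n$ is a detail the paper leaves implicit.
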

\begin{proof}
Since both sides are polynomials in $x$, it suffices to prove that they agree for all $x\in\mathbb{N}$. In that case, the left-hand side counts the number of pairs $(A, B)$ of subsets of a set $S$ of size $x$ with $|A|=m, |B|=n$. However, we can also count this number in a different way: we first choose the union $A\cup B\subseteq S$, afterwards select the elements of $A\cup B$ that lie in both $A$ and $B$ and finally select those elements of $(A\cup B)\setminus (A\cap B)$ that belong to $A$. If $|A\cup B|=j$, this means that we first select $j$ elements from $S$, for which there are a total of $\binom{x}{j}$ possibilities; then we select the $m+n-j$ elements contained in $A\cap B$ from the $j$ elements of $A\cup B$, for which there are $\binom{j}{m+n-j}$ possibilities; finally we select the remaining $j-n$ elements of $(A\cup B)\setminus (A\cap B)$ which belong to $A$, for which there are $\binom{2j-m-n}{j-n}$ possibilities since $|(A\cup B)\setminus (A\cap B)|=2j-m-n$. Summing over $j=|A\cup B|$, this yields the expression on the right-hand side.
\end{proof}

\begin{lem}
\label{lem:app-binompseries}
For each $n\geq 0$, there is an identity of formal power series of the form
\begin{equation*}
\begin{split}
\binom{\sum_{i\geq 0} p^ix_i}{n}=\sum_{|\ul{j}|\leq n} \lambda_{\ul{j}}\binom{\ul{x}}{\ul{j}}\;,
\end{split}
\end{equation*}
where each $\lambda_{\ul{j}}$ is an integer, $\ul{j}=(j_0, j_1, \dots)$ is a multiindex with almost all entries zero, for which we set $|\ul{j}|\coloneqq \sum_i j_i$, and we write $\binom{\ul{x}}{\ul{j}}\coloneqq \prod_i \binom{x_i}{j_i}$. Moreover, we have
\begin{equation*}
\frac{1}{k}v_p(\lambda_{\ul{j}})+\epsilon n\geq \sum_i \frac{i j_i}{k}
\end{equation*}
for any $k\geq 1$ and $\epsilon>0$ satisfying $\epsilon k\geq \frac{1}{p-1}$.
\end{lem}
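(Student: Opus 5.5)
The plan is to induct on the number of nonzero variables, using Vandermonde's identity to split off one variable at a time, and to reduce everything to a single estimate: the expansion of $\binom{p^i y}{n}$ in the basis $\binom{y}{j}$.

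\textbf{Reduction to one variable.} By Vandermonde's identity,
\begin{equation*}
\binom{\sum_i p^i x_i}{n}=\sum_{\sum_i n_i=n}\prod_i \binom{p^i x_i}{n_i}\;.
\end{equation*}
This holds formally: both sides are polynomial identities after truncating to finitely many variables, and the terms stabilise. Integrality is then immediate. Each $\binom{p^i y}{m}$ is an integer-valued polynomial in $y$ of degree $m$, so by \cref{prop:app-intbasis} we can write
\begin{equation*}
\binom{p^i y}{m}=\sum_{j\leq m}\mu^{(i)}_{m,j}\binom{y}{j}
\end{equation*}
with $\mu^{(i)}_{m,j}\in\Z$. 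Multiplying out gives $\lambda_{\ul{j}}=\sum_{\sum n_i=n}\prod_i\mu^{(i)}_{n_i,j_i}$, and the condition $j_i\leq n_i$ forces $|\ul{j}|\leq n$.

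\textbf{Main claim.} The valuation bound follows from
\begin{equation*}
v_p\bigl(\mu^{(i)}_{m,j}\bigr)\geq ij-\frac{m}{p-1}\;.
\end{equation*}
Assuming this, each summand $\prod_i\mu^{(i)}_{n_i,j_i}$ has valuation at least $\sum_i ij_i-\frac{n}{p-1}$, and hence so does $\lambda_{\ul{j}}$. Dividing by $k$ and using $\epsilon n\geq \frac{n}{k(p-1)}$, which is exactly the hypothesis $\epsilon k\geq\frac1{p-1}$, yields the stated inequality.

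\textbf{Proof of the main claim.} This is the key step and the main obstacle. I would compute the $\mu^{(i)}_{m,j}$ as Mahler coefficients via \cref{thm:app-mahler}. A cleaner route is through generating functions. Since $\sum_m\binom{y}{m}t^m=(1+t)^y$, we have
\begin{equation*}
\sum_m\binom{p^iy}{m}t^m=\bigl((1+t)^{p^i}\bigr)^y=(1+u)^y=\sum_j\binom{y}{j}u^j\;,
\end{equation*}
where $u\coloneqq(1+t)^{p^i}-1$. Comparing coefficients of $t^m$ gives $\mu^{(i)}_{m,j}=[t^m]\,u^j$.

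It therefore suffices to show that every coefficient of $t^m$ in $u$ has valuation at least $i-\frac{m-1}{p-1}$. Granting this, $u^j$ is a sum of products of $j$ coefficients whose degrees $m_1,\dots,m_j$ sum to $m$, each product of valuation at least
\begin{equation*}
\sum_{\ell=1}^j\Bigl(i-\frac{m_\ell-1}{p-1}\Bigr)\geq ij-\frac{m}{p-1}\;.
\end{equation*}

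For the coefficient bound, the coefficient of $t^m$ in $u$ is $\binom{p^i}{m}$ for $m\geq1$. Its valuation is $i-v_p(m)$, and $v_p(m)\leq\frac{m-1}{p-1}$ because $p^{v}\leq m$ implies $v\leq \frac{p^v-1}{p-1}\leq\frac{m-1}{p-1}$. This completes the argument.

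I expect the only real care to be needed in justifying the formal-power-series manipulations: the infinitely many variables, and the fact that $\binom{x_i}{0}=1$ for almost all $i$. Both are handled by truncation, since any fixed coefficient involves only finitely many variables.
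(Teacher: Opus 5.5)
Your proof is correct. The reduction step matches the paper's: double counting, i.e.\ Vandermonde, reduces the statement to expanding a single $\binom{p^i y}{m}$ in the basis $\binom{y}{j}$ and then multiplying the one-variable bounds. The one-variable estimate itself, however, you prove by a genuinely different route.

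The paper works in two stages. It first expands $\binom{p^ix}{n}$ in the monomial basis $x^j$ and bounds those rational coefficients by counting multiples of $p^t$ in the denominators. It then runs the triangular change of basis from $x^j$ to $\binom{x}{j}$ one step at a time, checking at each step that subtracting a multiple of the expansion of $\binom{x}{j}$ does not destroy the bound. Integrality is obtained separately from \cref{prop:app-intbasis}.

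Your identity $(1+t)^{p^iy}=(1+u)^y$, with $u=(1+t)^{p^i}-1$, gives the binomial-basis coefficients in closed form, $\mu^{(i)}_{m,j}=[t^m]u^j$. This buys several things:
\begin{itemize}
\item Integrality is automatic, since $u$ has integer coefficients.
\item There is no change-of-basis bookkeeping.
\item The whole estimate reduces to $v_p\binom{p^i}{m}\geq i-v_p(m)$, which already follows from $\binom{p^i}{m}=\frac{p^i}{m}\binom{p^i-1}{m-1}$, so you do not even need equality.
\item You get the slightly sharper bound $v_p(\mu^{(i)}_{m,j})\geq ij-\frac{m-j}{p-1}$.
\item You never need an explicit formula for the monomial coefficients, which is easy to get slightly wrong. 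The coefficient of $x^j$ in $\binom{p^ix}{n}$ is $\pm\frac{p^{ij}}{n}\sum_{1\leq m_1<\dots<m_{j-1}\leq n-1}\frac{1}{m_1\cdots m_{j-1}}$, and the paper's version has an index slip there (harmless for the valuation bound).
\end{itemize}

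The paper's route, in exchange, stays entirely within elementary polynomial manipulations and mirrors the proof of \cref{prop:app-intbasis} that it relies on. Your route is in the same spirit as the generating-function argument the paper itself uses for \cref{lem:app-valuationmahlercoeffs}.
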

\begin{proof}
Notice that, by double counting, we have
\begin{equation*}
\binom{\sum_{i\geq 0} p^ix_i}{n}=\sum_{\substack{(j_i)_i\in\mathbb{N} \\ \sum_i j_i=n}} \prod_{i\geq 0} \binom{p^ix_i}{j_i}
\end{equation*}
and hence, by \cref{prop:app-intbasis}, it suffices to prove the following
\bigskip

\textbf{Claim.} For any $i, j\geq 0$, the binomial coefficient $\binom{p^i x}{n}$ is an integer linear combination of $x, \binom{x}{2}, \dots$ with the property that the coefficient $\lambda_j$ of $\binom{x}{j}$ satisfies 
\begin{equation*}
\frac{1}{k}v_p(\lambda_j)+\epsilon n\geq \frac{ij}{k}\;.
\end{equation*}

\bigskip

\textit{Proof of the claim.} We first show that, if we expand $\binom{p^i x}{n}$ as a polynomial in $x$ with rational coefficients, then the coefficient $\lambda_j$ in front of $x^j$ satisfies
\begin{equation}
\label{eq:app-padicvalcoeffs}
\frac{1}{k}v_p(\lambda_j)+\epsilon n\geq \frac{ij}{k}\;.
\end{equation}
To this end, note that expanding the brackets in the expression
\begin{equation*}
\binom{p^ix}{n}=\frac{p^ix(p^ix-1)\cdots (p^ix-(j-1))}{n!}
\end{equation*}
yields
\begin{equation*}
\lambda_j=\pm p^{ij}\sum_{1\leq m_1<m_2<\dots<m_j\leq n-1} \frac{1}{m_1m_2\cdots m_j}
\end{equation*}
and hence, using the given condition on $k$ and $\epsilon$, our task lies in showing
\begin{equation*}
v_p\left(\sum_{1\leq m_1<\dots<m_j\leq n-1} \frac{1}{m_1\cdots m_j}\right)\geq -\frac{n}{p-1}\;.
\end{equation*}
However, as there are at most $n/p^t$ numbers below $n$ divisible by $p^t$ for each $t\geq 1$, the left-hand side is bounded below by
\begin{equation*}
-\frac{n}{p}-\frac{n}{p^2}-\dots=-\frac{n}{p-1}\;,
\end{equation*}
as desired.

The fact that the polynomial $\binom{p^i x}{n}=\sum_{j\leq n} \lambda_j x^j$ with rational coefficients can be rewritten as an integer linear combination of $x, \binom{x}{2}, \dots$ now follows from \cref{prop:app-intbasis} and we have to check that this rewriting does not disturb the inequality on the $p$-adic valuations of the coefficients that we have established. Going through the proof of loc.\ cit., we see that this rewriting starts by writing
\begin{equation*}
\sum_{j\leq n} \lambda_j x^j=\lambda_n'\binom{x}{n}+\sum_{j<n} \lambda'_j x^j\;.
\end{equation*}
Noting that 
\begin{equation*}
\lambda_n'=n!\cdot\lambda_n\;,
\end{equation*}
we see that $v_p(\lambda_n')\geq v_p(\lambda_n)$ and hence the inequality (\ref{eq:app-padicvalcoeffs}) is still satisfied for $j=n$ and $\lambda_j'$ in place of $\lambda_j$. Moreover, expanding 
\begin{equation*}
\binom{x}{n}=\frac{x(x-1)\cdots(x-(n-1))}{n!}
\end{equation*}
as a rational polynomial in $x$, we see that the $p$-adic valuation of each coefficient is at least as large as the $p$-adic valuation of the leading coefficient. From this, we conclude
\begin{equation*}
v_p(\lambda_j'-\lambda_j)\geq v_p(\lambda_n)
\end{equation*}
for each $j<n$ and hence, using that the $\lambda_j$ satisfy (\ref{eq:app-padicvalcoeffs}), we get
\begin{equation*}
v_p(\lambda_j')\geq \min\{v_p(\lambda_j), v_p(\lambda_n)\}\geq \min\{ij-\epsilon k n, in-\epsilon k n\}=ij-\epsilon k n\;,
\end{equation*}
i.e.\ the inequality (\ref{eq:app-padicvalcoeffs}) is still satisfied for all $j<n$ with $\lambda_j'$ in place of $\lambda_j$. 

Now one repeats the argument iteratively: next, we have to write
\begin{equation*}
\sum_{j<n} \lambda_j'x^j=\lambda_{n-1}''\binom{x}{n-1}+\sum_{j<n-1} \lambda_j''x^j
\end{equation*}
and can use the same argument to show that the $\lambda_j''$ still satisfy (\ref{eq:app-padicvalcoeffs}) etc. This can be done until the whole polynomial is rewritten as a linear combination of $x, \binom{x}{2}, \dots$ and since our initial polynomial in $x$ was integer-valued, this will produce an integer linear combination by \cref{prop:app-intbasis}. As the argument above shows that the coefficients satisfy the desired inequality, we are done.
\end{proof}

\begin{lem}
\label{lem:app-genfunc}
For any $k\geq 0$, there is an identity of formal power series
\begin{equation*}
\frac{1}{(1-T)^{k+1}}=\sum_n \binom{n+k}{k}T^n\;.
\end{equation*}
\end{lem}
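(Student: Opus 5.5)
The plan is to prove the identity by induction on $k$, working throughout in the ring of formal power series $\Z[\![T]\!]$.

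For the base case $k=0$, the claim reads $\frac{1}{1-T}=\sum_n T^n$. This is the geometric series: the product $(1-T)\sum_n T^n$ telescopes to $1$.

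For the induction step, assume the identity for $k$. Then
\begin{equation*}
\frac{1}{(1-T)^{k+2}}=\frac{1}{1-T}\cdot\frac{1}{(1-T)^{k+1}}=\Big(\sum_m T^m\Big)\Big(\sum_\ell \binom{\ell+k}{k}T^\ell\Big)\;,
\end{equation*}
so the coefficient of $T^n$ is $\sum_{\ell=0}^{n}\binom{\ell+k}{k}$. By the hockey-stick identity, this sum equals $\binom{n+k+1}{k+1}$, which is exactly the claimed coefficient for $k+1$.

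The hockey-stick identity is the only point needing justification, and it follows by a second induction on $n$. For $n=0$ both sides equal $1$. For the step, Pascal's rule gives $\binom{n+k+1}{k+1}+\binom{n+1+k}{k}=\binom{n+k+2}{k+1}$.

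There is no genuine obstacle here. As an alternative, one could differentiate $\frac{1}{1-T}=\sum_n T^n$ a total of $k$ times and divide by $k!$. That route requires dividing by $k!$, which is harmless over $\Q$, and the resulting coefficients are integers in any case. The inductive route avoids this division altogether and stays within $\Z[\![T]\!]$.
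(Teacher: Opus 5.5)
Your proof is correct, but it takes a different route from the paper's. The paper proves the lemma in one line. It invokes the binomial series for a negative integer exponent, $(1-T)^{-k-1}=\sum_n\binom{-k-1}{n}(-T)^n$, and then simplifies the coefficients via $(-1)^n\binom{-k-1}{n}=\frac{(k+1)(k+2)\cdots(k+n)}{n!}=\binom{n+k}{n}=\binom{n+k}{k}$.

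You instead build up $(1-T)^{-k-1}$ one factor of $(1-T)^{-1}$ at a time. The coefficient identity you then need is the hockey-stick identity $\sum_{\ell\leq n}\binom{\ell+k}{k}=\binom{n+k+1}{k+1}$, which you correctly reduce to Pascal's rule. Your base cases and the index bookkeeping in both inductions check out.

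The paper's argument is shorter, but it treats the generalised binomial theorem for negative exponents as known. As an identity of formal power series, that theorem itself needs a justification, e.g.\ exactly your induction or the differentiation argument you mention. Your argument is fully self-contained, uses only Pascal's rule, and never leaves $\Z[\![T]\!]$.
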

\begin{proof}
Using the binomial expansion, we obtain
\begin{equation*}
\frac{1}{(1-T)^{k+1}}=\sum_n \binom{-k-1}{n}(-T)^n=\sum_n \binom{n+k}{n}T^n=\sum_n \binom{n+k}{k}T^n\;,
\end{equation*}
as desired.
\end{proof}

\begin{lem}
\label{lem:app-valuationmahlercoeffs}
Let $a\in \{0, 1, \dots, p-1\}$ and $n\geq 0$. Setting
\begin{equation*}
b_{mn}\coloneqq \sum_{a+p\ell\leq m} (-1)^{m-a-p\ell}\binom{m}{a+p\ell}\binom{\ell}{n}
\end{equation*}
for any $m\geq 0$, we have
\begin{equation*}
v_p(b_{mn})\geq \frac{m-pn}{p-1}-1\;.
\end{equation*}
\end{lem}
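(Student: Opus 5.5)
The plan is to compute $b_{mn}$ as the $m$-th Mahler coefficient of the function $\phi$ on $\Z_p$ given by $\binom{(x-a)/p}{n}$ on $a+p\Z_p$ and $0$ elsewhere, as observed in the proof of \cref{thm:man-lafunctor}. The idea is to split off the indicator function of $a+p\Z_p$ with a roots-of-unity filter. Work over $\Z_p[\zeta_p]$ and write
\begin{equation*}
\mathbf{1}_{a+p\Z_p}(x)=\frac{1}{p}\sum_{\zeta^p=1}\zeta^{-a}\zeta^{x}\;,
\end{equation*}
so that $\phi(x)=\frac1p\sum_\zeta \zeta^{-a}\zeta^xP(x)$ with the polynomial $P(x)\coloneqq\binom{(x-a)/p}{n}$. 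The Mahler coefficients are $b_{mn}=(\Delta^m\phi)(0)$, where $\Delta=E-1$ and $E$ is the shift, by \cref{thm:app-mahler}. Since $\Delta(\zeta^xQ)=\zeta^x\bigl((\zeta-1)+\zeta\Delta\bigr)Q$, we get
\begin{equation*}
(\Delta^m\zeta^xP)(0)=\sum_{k\leq n}\binom{m}{k}(\zeta-1)^{m-k}\zeta^k(\Delta^kP)(0)\;.
\end{equation*}
For $\zeta\neq1$ we have $v_p(\zeta-1)=\tfrac1{p-1}$. For $\zeta=1$ only the term $k=m$ survives, and it vanishes for $m>n$; for $m\leq n$ the claimed bound is trivially negative enough, and we handle it with the same estimate on $(\Delta^mP)(0)$. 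Since the final $\tfrac1p$ costs exactly the $-1$ in the statement, everything reduces to the estimate
\begin{equation*}
v_p\bigl((\Delta^kP)(0)\bigr)\geq \frac{k-pn}{p-1}\qquad(0\leq k\leq n)\;.
\end{equation*}
Indeed, this gives each summand valuation at least $\frac{m-k}{p-1}+\frac{k-pn}{p-1}=\frac{m-pn}{p-1}$.

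To prove this estimate, I would first move the base point from $0$ to $a$. The Mahler coefficients at $0$ are obtained from those at $a$ via $E^{-a}=(1+\Delta)^{-a}$, which has $\Z_p$-coefficients, so $(\Delta^kP)(0)=\sum_{j\geq k}\binom{-a}{j-k}(\Delta^jP)(a)$. The target bound $\frac{j-pn}{p-1}$ is increasing in $j$, so it suffices to prove the estimate for $(\Delta^kP)(a)$. Now substitute $x=a+py$, so that $P=\binom{y}{n}$ and, formally, $E_x=E_y^{1/p}$, i.e.\ $\Delta_x=(1+\Delta_y)^{1/p}-1=\sum_{r\geq1}\binom{1/p}{r}\Delta_y^r$. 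Since $P$ is a polynomial, these operator identities are legitimate as finite sums. Using $(\Delta_y^s\binom{y}{n})(0)=\delta_{s,n}$, we find that $(\Delta_x^kP)(a)$ equals the coefficient of $u^n$ in $\bigl(\sum_{r\ge1}\binom{1/p}{r}u^r\bigr)^k$. This coefficient is a sum of products $\prod_{t=1}^k\binom{1/p}{r_t}$ with $r_t\geq1$ and $\sum_t r_t=n$.

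The key valuation input is $v_p\binom{1/p}{r}=-r-v_p(r!)\geq -r-\frac{r-1}{p-1}$, which follows from Legendre's formula $v_p(r!)=\frac{r-s_p(r)}{p-1}\leq\frac{r-1}{p-1}$. Summing over the $k$ factors gives a lower bound of
\begin{equation*}
-n-\frac{n-k}{p-1}=\frac{k-pn}{p-1}\;,
\end{equation*}
exactly as required. The step I expect to need the most care is making this operator calculus rigorous. Concretely, one must justify the identity $\Delta_x=(1+\Delta_y)^{1/p}-1$ on polynomials: it holds because both sides are power series in the locally nilpotent operator $\Delta_y$ and $\bigl((1+\Delta_y)^{1/p}\bigr)^p=E_y=E_x^p$. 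A second point of care is tracking the $-1$ from Legendre's formula: using only the cruder bound $v_p(r!)\leq \frac{r}{p-1}$ would lose $\frac{k}{p-1}$ and break the estimate. Finally, the exact vanishing of the $\zeta=1$ term for $m>n$ and the fact that $b_{mn}\in\Z$ (so the result only needs a bound, not integrality in $\Z_p[\zeta_p]$) finish the argument.
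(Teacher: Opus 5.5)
Your proof is correct, but it takes a genuinely different route from the paper's. The paper never leaves $\Z[\![T]\!]$. It sums the defining formula for $b_{mn}$ against $T^m$ and uses $\sum_{m\geq c}\binom{m}{c}(-T)^{m-c}=(1+T)^{-c-1}$ to obtain the closed form $\sum_m b_{mn}T^m=T^{a+pn}(1+T)^{p-a-1}/((1+T)^p-T^p)^{n+1}$. It then reads off the bound from the fact that $(1+T)^p-T^p-1$ is $pT$ times an integral polynomial of degree $p-2$, so each factor of $p$ comes with at most $p-1$ powers of $T$; the $-1$ comes from the numerator having degree $pn+p-1$.

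You instead view $b_{mn}$ as $(\Delta^m\phi)(0)$ and split the indicator of $a+p\Z$ into characters $\zeta^x$. This reduces everything to the polynomial estimate $v_p((\Delta^kP)(0))\geq\frac{k-pn}{p-1}$. You prove that estimate via the shift $E^{-a}$, the rescaling $x=a+py$ (which turns $\Delta_x$ into $(1+\Delta_y)^{1/p}-1$), and the sharp Legendre bound on $v_p\binom{1/p}{r}$. The individual steps all check out: the twisted expansion of $\Delta^m(\zeta^xP)$, the vanishing of the $\zeta=1$ term for $m>n$, the identity $(\Delta_y^s\binom{y}{n})(0)=\delta_{s,n}$, and the count $-n-\frac{n-k}{p-1}=\frac{k-pn}{p-1}$.

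The two arguments are closely linked. We have $(1+T)^p-T^p=\prod_{\zeta^p=1}(1-(\zeta-1)T)$, and $1/(1-(\zeta-1)T)$ is the generating function of the forward differences of $x\mapsto\zeta^x$ at $0$. So your character decomposition is in effect a partial-fraction decomposition of the paper's rational function.

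The paper's version is shorter. It avoids the ramified extension, the fractional-shift calculus, and the delicate $s_p(r)\geq 1$ saving in Legendre's formula. Yours makes visible where each part of the bound comes from: the slope $\frac{1}{p-1}=v_p(\zeta_p-1)$, the offset $pn$ from the rescaling by $p$, and the $-1$ from the normalisation $\frac1p$ of the character sum.

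For the operator identity, the most direct justification is Newton's formula $g(y+t)=\sum_r\binom{t}{r}(\Delta_y^r g)(y)$, valid for polynomials $g$ and arbitrary $t$. Your uniqueness-of-unipotent-$p$-th-roots argument also works.
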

\begin{proof}
We consider the generating function $f_n(T)\coloneqq \sum_m b_{mn}T^m$. By \cref{lem:app-genfunc}, we have
\begin{equation*}
\begin{split}
f_n(T)&=\sum_m\sum_{a+p\ell\leq m} (-1)^{m-a-p\ell}\binom{m}{a+p\ell}\binom{\ell}{n}T^m=\sum_\ell \binom{\ell}{n}T^{a+p\ell}\sum_{m\geq a+p\ell} \binom{m}{a+p\ell}(-T)^{m-(a+p\ell)} \\
&=\sum_\ell \binom{\ell}{n}T^{a+p\ell}\frac{1}{(1+T)^{a+p\ell+1}}=\frac{T^{a+pn}}{(1+T)^{a+pn+1}}\sum_\ell \binom{\ell}{n}\left(\frac{T}{1+T}\right)^{p(\ell-n)} \\
&=\frac{T^{a+pn}}{(1+T)^{a+pn+1}}\frac{1}{\left(1-\frac{T^p}{(1+T)^p}\right)^{n+1}}=\frac{T^{a+pn}(1+T)^{p-a-1}}{((T+1)^p-T^p)^{n+1}}\;.
\end{split}
\end{equation*}
Noting that $(T+1)^p-T^p=1-pT(\,\dots)$, expanding the denominator into a power series then yields
\begin{equation*}
f_n(T)=T^{a+pn}(1+T)^{p-a-1}\sum_\ell \binom{-n-1}{\ell}(pT(\,\dots))^\ell\;.
\end{equation*}
As the $(\,\dots)$ term here is a polynomial in $T$ of degree $p-2$ here, the coefficient in front of the degree $j$ term of the sum must be divisible by $p$ at least $j/(p-1)$ times for every $j\geq 0$. Accounting for the degree $pn+(p-1)$ factor $T^{a+pn}(1+T)^{p-a-1}$ in front, this means that the degree $m$ coefficient of $f_n(T)$ is divisible by $p$ at least
\begin{equation*}
\frac{m-pn-(p-1)}{p-1}=\frac{m-pn}{p-1}-1
\end{equation*}
times, as desired.
\end{proof}

\bibliographystyle{alpha}
\bibliography{References}
\end{document}